\documentclass[12pt,reqno]{amsart}
\usepackage{graphicx,indentfirst}
\usepackage{amsmath,amssymb,mathrsfs}
\usepackage{amsthm,amscd}
\usepackage{verbatim}
\usepackage{appendix}
\usepackage{enumitem,titletoc}
\usepackage{mathtools}
\usepackage[utf8]{inputenc}
\usepackage{imakeidx}
\makeindex[columns=2, title=Alphabetical Index]
\usepackage{fancyhdr}
\usepackage{amsfonts,color}
\usepackage[all]{xy}
\usepackage{tikz-cd}
\usepackage{syntonly}
\usepackage{fancyhdr}
\usepackage{array}
\usepackage[left=2.3cm,right=2.3cm,bottom=3cm,top=3cm]{geometry}
\usepackage{microtype}

\usepackage{tikz}
\usepackage{extarrows}
\usepackage{hyperref}
\usepackage{setspace}
\allowdisplaybreaks[2]

\def\XXint#1#2#3{{\setbox0=\hbox{$#1{#2#3}{\int}$ }
		\vcenter{\hbox{$#2#3$ }}\kern-.6\wd0}}

\newtheorem{thm}{Theorem}[section]
\newtheorem{prop}[thm]{Proposition}

\newtheorem{defn}[thm]{Definition}
\newtheorem{lem}[thm]{Lemma}
\newtheorem{cor}[thm]{Corollary}
\newtheorem{conj}[thm]{Conjecture}
\newtheorem{rk}{Remark}

\newtheorem{assumption}{Assumption}[section]

\allowdisplaybreaks
\newcommand{\del}{\partial}

\newcommand{\eps}{\varepsilon}

\newcommand{\C}{\mathbb C}
\newcommand{\R}{\mathbb R}

\newcommand{\cO}{\mathcal{O}}

\newcommand{\bT}{\mathbb{T}}
\newcommand{\bR}{\mathbb{R}}
\newcommand{\vphi}{\varphi}
\newcommand{\inte}{\operatorname{int}}
\newcommand{\diag}{\operatorname{diag}}
\newcommand{\Hess}{\operatorname{Hess}}

\makeindex

\DeclareMathOperator{\tr}{tr}

\definecolor{citationblue}{HTML}{008000}
\hypersetup{
	colorlinks=true,
	citecolor=citationblue,
	filecolor=green,
	linkcolor=blue,
	urlcolor=black
}

\numberwithin{equation}{section}

\author{Tristan C. Collins}

\author{Genggeng Huang}
\email{\href{mailto:genggenghuang@fudan.edu.cn}{genggenghuang@fudan.edu.cn}}
\address{School of Mathematical Sciences, Fudan University, Shanghai, China}

\author{Freid Tong}

\author{Yulun Xu}
\email{\href{mailto:tristanc@math.toronto.edu}{tristanc@math.toronto.edu}}
\email{\href{mailto:freid.tong@utoronto.ca}{freid.tong@utoronto.ca}}
\email{\href{mailto:yulun.xu@utoronto.ca}{yulun.xu@utoronto.ca}}
\address{Department of Mathematics, University of Toronto, 40 St. George Street, Toronto, ON, Canada}
\date{\today}
\newcommand{\nc}{\newcommand}
\nc{\p}{\partial}
\nc{\pb}{\partial_b}
\nc{\pc}{\partial_c}
\nc{\pd}{\partial_d}
\nc{\pe}{\partial_e}
\nc{\pf}{\partial_f}
\nc{\pg}{\partial_g}
\nc{\ph}{\partial_h}
\nc{\pari}{\partial_i}
\nc{\pj}{\partial_j}
\nc{\pk}{\partial_k}
\nc{\pl}{\partial_l}
\nc{\pell}{\partial_\ell}
\nc{\parm}{\partial_m}
\nc{\pn}{\partial_n}
\nc{\po}{\partial_o}
\nc{\pp}{\partial_p}
\nc{\pq}{\partial_q}
\nc{\pr}{\partial_r}
\nc{\ps}{\partial_s}
\nc{\pt}{\partial_t}
\nc{\pu}{\partial_u}
\nc{\pv}{\partial_v}
\nc{\pw}{\partial_w}
\nc{\px}{\partial_x}
\nc{\py}{\partial_y}
\nc{\pz}{\partial_z}

\nc{\pabar}{\partial_{\ol{a}}}
\nc{\pbbar}{\partial_{\ol{b}}}
\nc{\pcbar}{\partial_{\ol{c}}}
\nc{\pdbar}{\partial_{\ol{d}}}
\nc{\pebar}{\partial_{\ol{e}}}
\nc{\pfbar}{\partial_{\ol{f}}}
\nc{\pgbar}{\partial_{\ol{g}}}
\nc{\phbar}{\partial_{\ol{h}}}
\nc{\pibar}{\partial_{\ol{i}}}
\nc{\pjbar}{\partial_{\ol{j}}}
\nc{\pkbar}{\partial_{\ol{k}}}
\nc{\plbar}{\partial_{\ol{l}}}
\nc{\pellbar}{\partial_{\ol{\ell}}}
\nc{\pmbar}{\partial_{\ol{m}}}
\nc{\pnbar}{\partial_{\ol{n}}}
\nc{\pobar}{\partial_{\ol{o}}}
\nc{\ppbar}{\partial_{\ol{p}}}
\nc{\pqbar}{\partial_{\ol{q}}}
\nc{\prbar}{\partial_{\ol{r}}}
\nc{\psbar}{\partial_{\ol{s}}}
\nc{\ptbar}{\partial_{\ol{t}}}
\nc{\pubar}{\partial_{\ol{u}}}
\nc{\pvbar}{\partial_{\ol{v}}}
\nc{\pwbar}{\partial_{\ol{w}}}
\nc{\pxbar}{\partial_{\ol{x}}}
\nc{\pybar}{\partial_{\ol{y}}}
\nc{\pzbar}{\partial_{\ol{z}}}

\nc{\nababar}{\nabla_{\ol{a}}}
\nc{\nabbbar}{\nabla_{\ol{b}}}
\nc{\nabcbar}{\nabla_{\ol{c}}}
\nc{\nabdbar}{\nabla_{\ol{d}}}
\nc{\nabebar}{\nabla_{\ol{e}}}
\nc{\nabfbar}{\nabla_{\ol{f}}}
\nc{\nabgbar}{\nabla_{\ol{g}}}
\nc{\nabhbar}{\nabla_{\ol{h}}}
\nc{\nabibar}{\nabla_{\ol{i}}}
\nc{\nabjbar}{\nabla_{\ol{j}}}
\nc{\nabkbar}{\nabla_{\ol{k}}}
\nc{\nablbar}{\nabla_{\ol{l}}}
\nc{\nabelllbar}{\nabla_{\ol{\ell}}}
\nc{\nabmbar}{\nabla_{\ol{m}}}
\nc{\nabnbar}{\nabla_{\ol{n}}}
\nc{\nabobar}{\nabla_{\ol{o}}}
\nc{\nabpbar}{\nabla_{\ol{p}}}
\nc{\nabqbar}{\nabla_{\ol{q}}}
\nc{\nabrbar}{\nabla_{\ol{r}}}
\nc{\nabsbar}{\nabla_{\ol{s}}}
\nc{\nabtbar}{\nabla_{\ol{t}}}
\nc{\nabubar}{\nabla_{\ol{u}}}
\nc{\nabvbar}{\nabla_{\ol{v}}}
\nc{\nabwbar}{\nabla_{\ol{w}}}
\nc{\nabxbar}{\nabla_{\ol{x}}}
\nc{\nabybar}{\nabla_{\ol{y}}}
\nc{\nabzbar}{\nabla_{\ol{z}}}

\nc{\naba}{\nabla_{a}}
\nc{\nabb}{\nabla_{b}}
\nc{\nabc}{\nabla_{c}}
\nc{\nabd}{\nabla_{d}}
\nc{\nabe}{\nabla_{e}}
\nc{\nabf}{\nabla_{f}}
\nc{\nabg}{\nabla_{g}}
\nc{\nabh}{\nabla_{h}}
\nc{\nabi}{\nabla_{i}}
\nc{\nabj}{\nabla_{j}}
\nc{\nabk}{\nabla_{k}}
\nc{\nabl}{\nabla_{l}}
\nc{\nabm}{\nabla_{m}}
\nc{\nabn}{\nabla_{n}}
\nc{\nabo}{\nabla_{o}}
\nc{\nabp}{\nabla_{p}}
\nc{\nabq}{\nabla_{q}}
\nc{\nabr}{\nabla_{r}}
\nc{\nabs}{\nabla_{s}}
\nc{\nabt}{\nabla_{t}}
\nc{\nabu}{\nabla_{u}}
\nc{\nabv}{\nabla_{v}}
\nc{\nabw}{\nabla_{w}}
\nc{\nabx}{\nabla_{x}}
\nc{\naby}{\nabla_{y}}
\nc{\nabz}{\nabla_{z}}

\nc{\ola}{\ol{a}}
\nc{\olb}{\ol{b}}
\nc{\olc}{\ol{c}}
\nc{\old}{\ol{d}}
\nc{\ole}{\ol{e}}
\nc{\olf}{\ol{f}}
\nc{\olg}{\ol{g}}
\nc{\olh}{\ol{h}}
\nc{\oli}{\ol{i}}
\nc{\olj}{\ol{j}}
\nc{\olk}{\ol{k}}
\nc{\oll}{\ol{l}}
\nc{\olm}{\ol{m}}
\nc{\oln}{\ol{n}}
\nc{\olo}{\ol{o}}
\nc{\olp}{\ol{p}}
\nc{\olq}{\ol{q}}
\nc{\olr}{\ol{r}}
\nc{\ols}{\ol{s}}
\nc{\olt}{\ol{t}}
\nc{\olu}{\ol{u}}
\nc{\olv}{\ol{v}}
\nc{\olw}{\ol{w}}
\nc{\olx}{\ol{x}}
\nc{\oly}{\ol{y}}
\nc{\olz}{\ol{z}}

\title[]{Shrinking K\"ahler-Ricci solitons on toric fano fibrations}

\begin{document}
\begin{abstract}
We prove that a smooth toric Fano fibration admits a complete gradient shrinking K\"ahler-Ricci soliton.
\end{abstract}
\maketitle
\tableofcontents
\section{Introduction}

A complete gradient K\"ahler--Ricci shrinker is a quadruple
$(X,J,\omega,f)$ consisting of a complete K\"ahler manifold $(X,J,\omega)$ and a real-valued
function $f$ such that $J\nabla f$ is a real holomorphic Killing field and,
after normalization,
\begin{equation}
  Ric(\omega)+\sqrt{-1}\,\partial\bar\partial f=\omega.
  \label{eq:KRS}
\end{equation}
Equivalently, in a commonly used Riemannian normalization,
\begin{equation}
  Ric(g)+\nabla^2 f=\frac12 g,
  \qquad \nabla^{1,0}f\ \text{holomorphic}.
\end{equation}
If $f$ is constant, \eqref{eq:KRS} is the positive
K\"ahler--Einstein equation.  Generally, Ricci shrinkers are self-similar solutions of the Ricci flow and hence play a fundamental role in the singularity analysis of the Ricci flow, a viewpoint that goes back to Hamilton's original work on singularity formation \cite{Hamilton1995} and  Perelman's subsequent work \cite{Perelman2002}.  Indeed, by work of Naber \cite{Naber} and Enders-M\"uller-Topping \cite{EMT2011}, it is known that a parabolic blow-up of the Ricci flow at a Type~I singular point converges subsequentially to a nontrivial gradient shrinking soliton \cite{EMT2011}. Thus K\"ahler--Ricci shrinkers
serve simultaneously as canonical metrics and as local models for Type~I
singularities of the (K\"ahler)--Ricci flow.

Let us give a brief summary of the known constructions of shrinking K\"ahler-Ricci solitons. If $X$ is compact, taking cohomology in \eqref{eq:KRS} gives
$[\omega]=2\pi c_1(X)$ up to normalization; hence $X$ is Fano. When $f=0$ in \eqref{eq:KRS}, the K\"ahler-Ricci soliton equation reduces to the K\"ahler-Einstein equation.  The existence of K\"ahler-Einstein metrics on compact Fano manifolds is equivalent to the algebro-geometric notion of $K$-stability, thanks to Chen-Donaldson-Sun's resolution of the Yau-Tian-Donaldson conjecture \cite{ChenDonaldsonSun2015I,ChenDonaldsonSun2015II,ChenDonaldsonSun2015III}. K\"ahler--Ricci shrinkers therefore extend positive K\"ahler--Einstein metrics precisely in the setting where
holomorphic vector fields may obstruct the Einstein equation.  Tian and Zhu \cite{TianZhu2000,TianZhu02}
introduced the relevant modified Futaki invariant, identified a
distinguished soliton vector field, and proved uniqueness modulo holomorphic
automorphisms \cite{TianZhu2000,TianZhu02}.   Wang-Zhu \cite{WangZhu2004} (see also Berman-Berndtsson \cite{BermanBerndtsson13}) established that every toric Fano manifold admits a (possibly trivial) K\"ahler-Ricci soliton.  Datar-Sz\'ekelyhidi \cite{DatarSzekelyhidi2016}, in combination with work of Berman-Witt-Nystr\"om \cite{ BermanWittNystrom2014} extended the resolution of the Yau-Tian-Donaldson conjecture to solitons, showing that a Fano variety admits a K\"ahler-Ricci soliton if and only if it satisfies a version of $K$-stability relative to the soliton vector field; see also the recent work of Boucksom-Jonsson \cite{BJ} for a non-Archimedean approach.   

Non-Einstein examples of non-compact, shrinking, gradient K\"ahler-Ricci solitons were constructed by Koiso \cite{Koiso1990} and Cao \cite{Cao1996, Cao1997}, and Feldman-Ilmanen-Knopf \cite{FIK2003}.   For non-compact K\"ahler manifolds, a systematic classification of gradient shrinking (and expanding) K\"ahler-Ricci solitons with quadratic curvature decay was initiated by Conlon-Deruelle-Sun \cite{CDS2024}.  In complex dimension $2$, a complete classification of gradient, shrinking K\"ahler-Ricci solitons is now available thanks to the combined works of Conlon-Deruelle-Sun \cite{CDS2024},  Cifarelli--Conlon--Deruelle \cite{CCDTypeI2024}, Bamler--Cifarelli--Conlon--Deruelle \cite{BCCD2024} and Li-Wang \cite{LiWang2026}.  The complete list of non-trivial gradient, shrinking K\"ahler-Ricci solitons, up to biholomorphism, is as follows:
\begin{enumerate}
  \item a compact Fano surface with its K\"ahler--Ricci shrinker;
  \item the Gaussian shrinker on $\C^2$;
  \item the FIK shrinker on $Bl_0\C^2$;
  \item the standard product shrinker on $\C P^1\times\C$; and
  \item the Bamler-Cifarelli-Conlon-Deruelle (BCCD) shrinker on $Bl_p(\C P^1\times\C)$.
\end{enumerate}

It is interesting to note that, in contrast to the other non-compact examples in the above list, the BCCD soliton is constructed indirectly, making use of Bamler's structure theory for the Ricci flow \cite{Bam20a, Bam20b, Bam20c}.  In addition, note that all the non-compact examples in the above list are toric. 

Recently, Sun-Zhang \cite{SZ} proposed a far-reaching generalization of the Yau-Tian-Donaldson conjecture concerning the existence of K\"ahler-Ricci solitons on polarized Fano fibrations (see Definition~\ref{defn: polarizedFanoFibration}).
\begin{conj}[Sun-Zhang Conjecture]\label{conj sun zhang}
A polarized Fano fibration $(\pi\colon X\longrightarrow Y,\xi)$ admits a complete gradient, shrinking K\"ahler--Ricci soliton unique up to the action of \(\operatorname{Aut}(X,\xi)\), if and only if
the polarized Fano fibration is \(K\)-polystable.  
\end{conj}

Recent work of Hallgren-Zhang \cite{HallgrenZhang2026} shows that polarized Fano fibrations with K\"ahler-Ricci solitons arise naturally as singularity models of the K\"ahler-Ricci flow.  To motivate Conjecture~\ref{conj sun zhang} it is helpful to consider the two extreme cases; namely when $Y$ is a point, and when $X=Y$.  When $Y$ is a point the conjecture reduces to the claim that a compact Fano variety $X$ admits a K\"ahler-Ricci soliton if and only if $X$ is modified $K$-polystable, which was established in \cite{DatarSzekelyhidi2016, BermanWittNystrom2014, BJ}, as discussed above.  When $X=Y$, the existence of a K\"ahler-Ricci soliton is implied by the existence of a conical Calabi-Yau metric on $Y$ (or equivalently a Sasaki-Einstein metric on the link of $Y$), which was established by Collins-Sz\'ekelyhidi \cite{CoSz, CoSz1} building on the work of Chen-Donaldson-Sun \cite{ChenDonaldsonSun2015I,ChenDonaldsonSun2015II,ChenDonaldsonSun2015III}, Sz\'ekelyhidi \cite{Szekelyhidi}, and Datar-Szekelyhidi \cite{DatarSzekelyhidi2016}.  Sun-Zhang \cite[Section 6]{SZ} note that the notion of $K$-stability relevant to existence of Calabi-Yau cone metrics is equivalent to the notion of $K$-polystability of the assocoaited Fano fibrations $(\pi:Y\rightarrow Y, \xi)$, and hence conjecture that all K\"ahler-Ricci solitons in this setting arise from Calabi-Yau cone metrics. Conjecture~\ref{conj sun zhang} has recently generated a great deal of interest. Beyond the  extreme cases discussed above there has been some progress on the Sun-Zhang conjecture, primarily in the setting of asymptotically conical solitons. In this setting it was proved by Esparza in \cite{Esparza2025} that K\"ahler-Ricci shrinkers are unique. Cifarelli-Esparza in \cite{CifarelliEsparza2025} showed that the existence of an asymptotically conical K\"ahler shrinker implies K-polystability.  Cifarelli-Conlon-Deruelle \cite{CCD2024Aubin} developed an Aubin continuity path towards constructing K\"ahler-Ricci solitons on certain toric Fano fibrations and proved existence at the initial time. The algebraic theory of $K$-stability for Fano fibrations was further developed by Odaka \cite{Odaka}.

Our main result is the following

\begin{thm}\label{thm main 2}
    Let $(\pi:X\rightarrow Y, \bT,\xi)$ be a toric Fano fibration with $\xi \in {\rm Lie}(\bT)$ the volume minimizing Reeb vector field.  Then $(\pi:X\rightarrow Y, \bT,\xi)$ admits a complete, shrinking K\"ahler-Ricci soliton.  Furthermore, we have the volume growth estimate
    \[
    {\rm Vol}(B_{R}(p)) \sim R^{\dim_{\bR}Y} \quad \text{ for } R \gg 1.
    \]
\end{thm}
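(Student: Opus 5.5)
We reduce the soliton equation to a real Monge--Ampère equation on the moment polyhedron and solve it by continuity, following the Wang--Zhu approach to compact toric Fano manifolds with the noncompactness handled by weighted estimates.

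\textbf{Setup.} Let $P\subset\bR^n$ be the (non-compact, rational) moment polyhedron of the polarized toric Fano fibration. Its recession cone is dual to the cone of $Y$. The volume-minimizing condition on $\xi$ says that the weighted barycenter
\[
\int_P x\, e^{-\langle \xi, x\rangle}\,dx
\]
vanishes in the appropriate normalization. This is the exact analogue of the Tian--Zhu/Wang--Zhu modified Futaki vanishing.

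On the open orbit, write $\omega=\sqrt{-1}\partial\bar\partial\phi(\log|z|^2)$ with $\phi$ strictly convex on $\bR^n$ and $\nabla\phi(\bR^n)=P$. Equation \eqref{eq:KRS} with $f=\langle\xi,\nabla\phi\rangle+c$ then becomes the real Monge--Ampère equation
\[
\det D^2\phi = e^{-\phi+\langle \xi,\nabla\phi\rangle}.
\]

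\textbf{Continuity path.} Fix a reference potential $\phi_0$, built from the Guillemin boundary behaviour near the facets of $P$ and a model cone potential at infinity, so that $\det D^2\phi_0=e^{F-\phi_0+\langle\xi,\nabla\phi_0\rangle}$ with $F$ bounded together with its derivatives. Run the path
\[
\det D^2\phi_t=e^{(1-t)F-t\phi_t+\langle\xi,\nabla\phi_t\rangle+(t-1)\phi_0}\cdot(\text{normalization}),
\]
or the Aubin-type path of Cifarelli--Conlon--Deruelle, whose solvability at $t=0$ we may take as given.

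\textbf{Openness.} Linearize in weighted H\"older spaces adapted to the drift Laplacian $\Delta_\phi+\xi$. The operator $\Delta+\xi\cdot\nabla+t$ is invertible on functions of polynomial growth for $t<1$, because the first eigenvalue of the drift Laplacian with weight $e^{-f}$ is at least $1$.

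\textbf{Closedness.}
\begin{itemize}
\item[(a)] Bound $\|\nabla\phi_t\|$ on compact sets: this is automatic, since $\nabla\phi_t\in P$.
\item[(b)] Prove a uniform $C^0$ estimate for $\phi_t-\phi_0$ and a properness estimate that the minimum point $x_t$ of $\phi_t+\text{linear}$ stays in a fixed compact set. As in Wang--Zhu, one integrates $\nabla\phi_t$ against the measure $e^{-t\phi_t+\cdots}$. The vanishing of the weighted barycenter forces a lower bound $\phi_t(x)\ge\epsilon|x-x_t|-C$, uniformly in $t$. Here non-compactness of $P$ enters: the weight $e^{-\langle\xi,x\rangle}$ is integrable on $P$ precisely because $\xi$ lies in the interior of the dual cone, and this supplies the needed integrability.
\item[(c)] Obtain second-order estimates from Calabi/Yau-type maximum principle arguments with the drift term, using $f$ as a barrier at infinity. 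The drift term has a sign, and $f$ grows quadratically along the noncompact directions.
\item[(d)] Obtain higher regularity from Evans--Krylov.
\end{itemize}

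\textbf{Completeness and volume growth.} Comparing $\phi_t$ with $\phi_0$ uniformly at infinity gives completeness. Asymptotically, $\phi_0$ is a cone potential over $Y$ (a product with the compact fibres near infinity), so $\omega$ is uniformly equivalent to the model metric whose volume growth is $R^{\dim_\bR Y}$. Alternatively, $f\sim r^2/4$ on shrinkers, and the weighted volume of $\{f\le R^2\}$ can be computed on $P$ as the volume of $\{x\in P:\langle\xi,x\rangle\le R^2\}$, which scales like $R^{2\dim_\C Y}$.

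\textbf{Main obstacle.} The hardest step is (b), together with the uniform-in-$t$ control at infinity in (c): keeping the solution uniformly comparable to the reference cone behaviour at infinity as $t\to1$. The plan is to derive a weighted $L^1$ bound and convexity properness from the barycenter condition first, and then to upgrade it to a pointwise comparison using a maximum principle against $\pm C+\epsilon f$.
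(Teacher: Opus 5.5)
Your route, a continuity path on the noncompact manifold closed by Wang--Zhu-type estimates, is genuinely different from the paper's. It has a genuine gap exactly where you place the main obstacle: (b) and (c) are asserted rather than proved, and the mechanism you invoke does not survive noncompactness.

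Even the justification of (a) is wrong. $P$ is unbounded, so $\nabla\phi_t\in P$ gives no gradient bound; local gradient bounds must come from local $C^0$ bounds plus convexity. The paper needs the separate contradiction argument of Proposition~\ref{prop gra loc}, using $\nabla u(P)=\mathbb{R}^n$, just to keep $\nabla\vphi_R$ bounded on sections.

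In Wang--Zhu, turning bounds on the minimum $m_t$ and its location $x_t$ into $\sup|\phi_t-\phi_0|\le C$ relies on $P$ and $X$ being compact. For instance, every $\phi_t$ is then uniformly Lipschitz and stays within bounded distance of the support function $h_P$. Here none of this holds:
\begin{itemize}
\item $h_P\equiv+\infty$ on an open cone;
\item $\nabla\phi_t$ is unbounded;
\item the Legendre duals grow superlinearly along $\operatorname{rec}(P)$; for the Gaussian on $\mathbb{C}$, $u(y)=(y+1)\log(y+1)-(y+1)$.
\end{itemize}
The barycenter condition, fed through the Cordero-Erausquin--Klartag functional as in Lemma~\ref{lem: uRholderbounds}, yields only $\phi_t\ge\epsilon|x|-C$ and bounds on compact sets. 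It says nothing about $\phi_t-\phi_0$ at infinity.

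A maximum principle against $\pm C+\epsilon f$ cannot supply this either. Write $\psi_t=\phi_t-\phi_0$. With the Fano sign $-t\phi_t$ in the exponent, evaluating at an extremum of $\psi_t\mp\epsilon f$ only produces inequalities of the wrong sign, such as $\sup\psi_t\ge -C$ and $\inf\psi_t\le C$. That is precisely why a global properness estimate is needed, and you do not provide one.

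Likewise, (c) presupposes three things before $\epsilon f$ can serve as a barrier: bounded geometry of $\omega_0$, the global $C^0$ bound, and a priori growth control of $\operatorname{tr}_{\omega_0}\omega_t$. The inputs you take as given are also not available in this generality. Cifarelli--Conlon--Deruelle set up such a path only for certain toric Fano fibrations, and prove solvability at the initial time, not closedness. Openness needs a Fredholm theory for the drift Laplacian in weighted spaces; along the path one only has $\lambda_1>t$, not $\lambda_1\ge 1$.

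The gap propagates to your completeness and first volume argument. A $C^0$ comparison of potentials does not give completeness, and uniform equivalence $\omega\simeq\omega_0$ is a global version of (c).

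The paper avoids every global estimate on $X$:
\begin{itemize}
\item it truncates to the compact polytopes $P_R=\{y\in P:\langle\xi,y\rangle\le R\}$ and solves there by Legendre's theorem;
\item it gets interior convergence from moment-measure theory;
\item it gets interior $C^2$ bounds for $\vphi_R$ from a Pogorelov estimate in sections;
\item it gets uniform Guillemin boundary conditions by induction on the codimension of faces, via a maximum principle for $e^{u_p}u_{pp}$ plus an Evans--Krylov estimate after local complexification.
\end{itemize}
Completeness then follows from the much weaker bound $u^{\xi\xi}\le C(\langle\xi,y\rangle+C)$ of Proposition~\ref{prop: linGrowthApprox}. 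It is proved by a maximum principle on each $P_R$, where the Guillemin conditions force an interior maximum, and it makes $\sqrt{\langle\xi,y\rangle+C}$ a proper function with bounded gradient.

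Your second volume argument, Cao--Zhou plus $\operatorname{Vol}\{y\in P:\langle\xi,y\rangle\le R^2\}\sim R^{2\dim_{\mathbb{C}}Y}$, is the paper's argument and is fine once completeness is known. Note that it concerns the Riemannian volume, which equals $(2\pi)^n$ times the Euclidean volume of the corresponding region of $P$, not a weighted volume.
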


In particular, our results give a direct PDE construction of the BCCD soliton. In fact, our results are slightly more general and imply existence of K\"ahler-Ricci solitons on certain toric Fano fibrations with mild singularities.  

Our proof of Theorem~\ref{thm main 2} is by real Monge-Amp\`ere techniques.  We consider the following problem.  Let $P\subset \mathbb{R}^n$ be a non-compact polytope.  Throughout the paper, we make the following assumption;

\begin{assumption}\label{ass 1}
A polytope $P \subset \mathbb{R}^n$, which may  be unbounded, is assumed to satisfy the following properties:
\begin{enumerate}
       \item
    $P\subset \mathbb{R}^n$ is a full-dimensional simple
    rational polytope with an irredundant facet presentation
    \[
        P
        =
        \left\{
            x\in \mathbb{R}^n :
            \langle x,\nu_i\rangle\geq -1,
            \quad i=1,\ldots,d
        \right\},
    \]
    where $\nu_1,\ldots,\nu_d\in \mathbb{R}^n$ are primitive inward
    facet normals. In particular,
    $0\in\operatorname{int}(P)$.

    \item
    The recession cone
    \[
        C:=\operatorname{rec}(P)
        =
        \left\{
            u\in \mathbb{R}^n :
            \langle u,\nu_i\rangle\geq 0,
            \quad i=1,\ldots,d
        \right\}
    \]
    is strongly convex, namely $C\cap(-C)=\{0\}$.
\end{enumerate}
\end{assumption}
For a polytope satisfying Assumption~\ref{ass 1} we consider the following real Monge-Amp\`ere equation for a convex function $u:P \rightarrow \mathbb{R}$.
 \begin{equation}\label{e main equ}
        \det (D^2u) = e^{\langle y, \nabla u (y) \rangle -u(y) -\langle \xi, y \rangle }, \quad \text{ and }\quad  \nabla u(P)= \R^n.
    \end{equation}
where $\xi \in \R^n$ is the unique vector satisfying the barycenter condition $\int_P y e^{-\langle \xi, y \rangle }dy=0$.  Equation~\eqref{e main equ} can be equivalently framed in terms of the Legendre transform $u^*:=\varphi:\mathbb{R}^n\rightarrow \mathbb{R}$ as

\begin{equation}\label{eq: mainLegendre}
\det(D^2\varphi)= e^{\langle \xi, \nabla \varphi \rangle -\varphi},\quad \text{ and }\quad \nabla\varphi(\mathbb{R}^n)=P
\end{equation}

In fact, it follows from work of Cordero-Erausquin-Klartag \cite{CEK} (which itself builds on the work of Berman-Berndtsson \cite{BermanBerndtsson13}) that a weak solution of~\eqref{e main equ} exists.  The challenging question, addressed in this paper, concerns the boundary regularity of the solution.  By work of Guillemin \cite{Guillemin1994} it is known that if $P$ is a polytope associated to a smooth toric manifold $X$, then a convex function $u \in C^{0}(\overline{P})$ defines a smooth, toric K\"ahler metric on $X$ if and only if it satisfies the {\bf Guillemin boundary conditions}: 
    \begin{itemize}
        \item[(i)]the restriction of $u$ to $P$, or to the interior of any face of $P$, is smooth and strictly convex,
        \item[(ii)] For suitable linear functions $L_i(y)$ defining the boundary of $P$, if we set
        \[
        u_{0}:= \sum_{i=1}^{k} L_i(y)\log(L_i(y))
        \]
        then $u-u_0$ is the restriction of a smooth function defined on an open neighborhood of $\overline{P}$.
        \end{itemize}

We refer the reader to Section~\ref{sec back} for more detailed discussion.  We summarize the main analytic challenges as follows:
\begin{itemize}
    \item Prove that the solution $u$ of~\eqref{e main equ} satisfies the Guillemin boundary conditions, and therefore extends to a smooth gradient K\"ahler-Ricci soliton on the toric variety $X$.
    \item Prove that the resulting metric is complete.
\end{itemize}

Both of these difficulties are new in the non-compact setting.  For compact Fano manifolds Wang-Zhu \cite{WangZhu2004} and Berman-Berndtsson \cite{BermanBerndtsson13} derive a $C^0$ estimate for the potential $u$ and then deduce higher-order estimates by passing to the compact complex manifold and applying suitable modifications of Yau's global estimates for the complex Monge-Amp\`ere equation \cite{Yau1978}. In the non-compact case Yau's estimates are not available. As a result, we develop new PDE techniques to derive local higher order estimates directly on $P$. In this direction, the second author \cite{Huang2023}, in combination with earlier work of Rubin \cite{Rubin}, established the Guillemin boundary condition for a related real Monge-Amp\`ere equation on compact polyhedra in two-dimensions. This was generalized by the second author and Shen \cite{HuangShen2026} to higher dimension.

The strategy of the proof of the Theorem \ref{thm main 2} is as follows: we approximate the unbounded polytope $P$ by a sequence of bounded polytopes $P_R$. By a result of Legendre \cite{Leg} we obtain a solution $u_R$ to (\ref{e main equ}) on $P_R$, satisfying the Guillemin boundary conditions. We derive locally uniform $C^2$ estimate for $\varphi_R=u_{R}^*$, the Legendre dual of $u_R$. The key step is to derive locally uniform weighted $C^{1,1}$ estimate for $u_R$.  This step employs an argument by induction on the codimension of faces of $P$. An Evans-Krylov type estimate implies uniform higher order estimates for $u_{R}$ with suitable weights. This allows us to take a convergent subsequence of $u_R$ as $R\rightarrow \infty$ which converges to a solution $u$ to (\ref{e main equ}) which satisfies the Guillemin boundary condition. Finally, to show that the limiting K\"ahler-Ricci shrinker defined by $u$ is complete, we prove a growth rate for components of the inverse Hessian $(D^{2}u)^{-1}$ in the direction of the critical Reeb field $\xi$.

The key point is that the proof strategy does not require passing to the K\"ahler manifold and applying global estimates.  In particular, our estimates are proved directly on the polytope $P$ bypassing the difficult problem of establishing purely local estimates for complex Monge-Amp\'ere equation.   As a result, we expect such a framework can be useful for the study of other Monge-Amp\`ere type equations on non-compact toric manifolds and toric manifolds with singularities.

The outline of the paper is as follows: Section~\ref{sec back} contains relevant background needed throughout the paper, including the general description of Fano fibrations, as well as some basic toric geometry.  We reformulate Theorem~\ref{thm main 2} in terms of the real Monge-Amp\`ere equation~\eqref{e main equ}.  Section~\ref{sec: momentMeasures} recalls the work of Cordero-Erausquin-Klartag \cite{CEK} on moment measures.  We show that the weak solution of the Monge-Amp\`ere equation~\eqref{e main equ} can be approximated in suitable norms by solutions of the same problem on truncated compact polytopes $P_{R}$.  Section~\ref{sec: completeness} establishes the growth estimate that leads eventually to the completeness of the K\"ahler-Ricci solitons we construct.  Section~\ref{sec: C2estimateRn} establishes uniform $C^2$ estimates for the Legendre dual potential $\varphi_{R}$ along the approximation. Section~\ref{sec: EKest} establishes an Evans-Krylov type estimate, which is used in Section~\ref{sec: Guillemin} to establish uniform control on the Guillemin boundary conditions along the approximation. Using these results we finally give a proof of the main existence and regularity theorem for the Monge-Amp\`ere equation~\eqref{e main equ}.  Section~\ref{sec: volumeGrowth} proves a general result concerning the volume growth of toric gradient K\"ahler-Ricci shrinkers, and finally we prove Theorem~\ref{thm main 2}.  Section~\ref{sec sta rel} discusses the relationship between the Sun-Zhang volume minimization procedure, which is used to select the critical Reeb vector field, and the notion of weighted stability, as defined by Lahdili \cite{Lahdili} and studied in the present context by Cifarelli \cite{Cifarelli2024}.

\bigskip

\noindent{\bf Acknowledgements:} T.C.C.~is supported in part by NSERC Discovery grant RGPIN-2024-03853.  
G. Huang is  supported in part by National Key R$\&$D Program of China 2025YFA1017600 and
National Natural Science Foundation of China under Grants 12526202.
F.T.~is supported in part by NSERC Discovery grant RGPIN-2025-06760. We thank Ronan Conlon for very helpful discussions. 

\bigskip

\noindent{\bf Tool and computational resource disclosure:}
ChatGPT 5.6-Sol was used to perform literature review and as an adversarial reader.  In the course of review, ChatGPT suggested some simplifications to the proofs of Proposition~\ref{prop: linGrowthApprox}, Lemma~\ref{lem:corner-domain} and Proposition~\ref{prop hig dim c2} that were incorporated into the final draft.  The authors retain exclusive responsibility for the correctness, accuracy, and validity of the results presented in this work.

\section{Background}\label{sec back}

\subsection{Toric Fano Fibrations}
We follow the terminology of Sun-Zhang \cite{SZ},
\begin{defn}\label{defn: FanoFibration}
     A {\bf Fano fibration} is a surjective projective morphism
\[
\pi:X\longrightarrow Y
\]
between normal varieties, with $\pi_*\mathcal{O}_X=\cO_Y$, such that $X$ has klt singularities and $-K_X$ is relatively ample.
\end{defn}

Now assume that there is an algebraic torus $\mathbb{T}^{\mathbb{C}}$ acting on $X$ and $Y$ such that $\pi:X\rightarrow Y$ is $\mathbb{T}^{\C}$-equivariant.  Let $\mathbb{T}$ denote the maximal compact sub-torus of $\bT^{\C}$.  Let $R_{X}$ (resp. $R_{Y}$) denote the ring of regular functions on $X$ (resp. $Y$)
\begin{defn}\label{defn: ReebField}
    We say that $\xi \in \mathfrak{t} = {\rm Lie}(\bT)$  is a {\bf Reeb vector field} if, under the action of $\bT$ on $Y$ we have
    \[
    R_{Y} = \bigoplus_{\alpha\in \mathfrak{t}^*}(R_{Y})_{\alpha}
    \]
    and $-i\alpha(\xi)>0$ for all $\alpha \in \mathfrak{t}^*\setminus \{0\}$ for which the weight space $(R_{Y})_{\alpha} \ne  \emptyset$.  We denote by $\mathcal{R}(P_{X})$ the convex, polyhedral cone of all Reeb vector fields.
\end{defn}

\begin{defn}\label{defn: polarizedFanoFibration}
A polarized Fano fibration is a triple  $(\pi: X\rightarrow Y, \bT, \xi)$ consisting of a Fano fibration with an equivariant $\bT^{\C}$-action, together with a Reeb vector field $\xi\in  {\rm Lie}(\bT)$.
\end{defn}

Polarized Fano fibrations interpolate between Fano varieties, obtained when $Y$ is a point, and Fano cones, obtained when $\pi$ is the identity. Sun-Zhang \cite{SZ} propose that K-polystable polarized Fano fibrations should correspond to complete shrinking K\"ahler–Ricci solitons.

From now on we assume that $(\pi: X\rightarrow Y, \bT, \xi)$ is toric.

\begin{defn}
    Let $(\pi:X \rightarrow Y, \bT, \xi)$ be a polarized Fano fibration with $\dim_{\mathbb{C}}X=n_{X}$ and $\dim_{\mathbb{C}}Y=n_{Y}$.  We say that $(\pi:X \rightarrow Y, \xi)$ is {\bf toric} if:
    \begin{itemize}
    \item[(i)]there is an algebraic torus $\mathbb{T}_{Y}^{\C}\subset {\rm Aut}(Y)$ with $\dim_{\C}\bT^{\C}_{Y}=n_{Y}$ 
        \item[(ii)] there is an algebraic torus $\mathbb{T}_{X}^{\C}\subset {\rm Aut}(X)$ with $\dim_{\C}\bT_{X}^{\C}=n_{X}$ 
        \item[(iii)] $\bT_{X}^{\C}$ (resp. $\bT_{Y}^{\C})$ acts effectively on $X$ (resp. $Y$), and for a generic point $p_{X}\in X$ (resp. $p_{Y}\in Y$) the orbit $\bT_{X}^{\C}\cdot p_{X}$ is dense in $X$ (resp. $\bT_{Y}^{\C}\cdot p_{Y}$ is dense in $Y$),
        \item[(iv)]There are homomorphisms
\[
\rho:\bT_X^{\mathbb C}\longrightarrow\bT_Y^{\mathbb C}
\]
and
 \[
        \iota_{X}: \bT\rightarrow \bT_{X}, \quad \text{ and }\quad \iota_{Y}:\bT \rightarrow \bT_{Y}
\]
inducing the given $\bT$-actions on $X$ and $Y$, such that $\pi$ is $\rho$-equivariant; ie. for any $t\in \bT_{X}^{\C}$ we have $\pi(t\cdot x) = \rho(t)\cdot \pi(x)$.
    \end{itemize}
\end{defn}

Roughly speaking, a toric polarized Fano fibration is a polarized Fano fibration for which the generic fiber is compact toric Fano variety and the polarized affine cone $Y$ is also toric.  

Toric polarized Fano fibrations can be described in terms of certain convex sets together with a distinguished direction.  From now on we assume that $\dim_{\C}X >\dim_{\C}Y >0$. Let 
\[
\begin{aligned}
M_{X}:= {\rm Hom}(\bT_{X}^{\C}, \C^*), \qquad M_{Y}:= {\rm Hom}(\bT_{Y}^{\C}, \C^*)\\
N_{X}:= {\rm Hom}(\C^*,\bT_{X}^{\C}), \qquad N_{Y}:= {\rm Hom}(\C^*, \bT_{Y}^{\C})
\end{aligned}
\]
denote the character and co-character lattices of $\bT_{X}^{\C}$ and $\bT_{Y}^{\C}$ respectively, and let $M_{X,\bR}= M_{X}\otimes_{\mathbb{Z}}\bR$ and $M_{Y,\bR} =M_{Y}\otimes_{\mathbb{Z}}\bR $ (and similarly for $N_{X,\R}, N_{Y,\R})$.  The map $\rho: \bT_{X}^{\C}\rightarrow \bT_{Y}^{\C}$ induces an inclusion $M_{Y,\R}\hookrightarrow M_{X, \bR}$ and a projection $N_{X,\R}\rightarrow N_{Y, \bR}$.

Let $P_{Y}\subset M_{Y,\bR}$ denote the moment cone of the toric polarized affine variety $Y$. $P_{Y}$ is a $n_{Y}$-dimensional, strongly convex rational polyhedral cone with vertex at the origin.  The toric polarized Fano fibration
\[
\pi:X\to Y
\]
is described by an unbounded, rational moment polytope
\[
P_{X}\subset M_{X,\mathbb R}.
\]
That is, $P_{X}$ can be written as
\[
P_{X}=\{y\in M_{X,\mathbb R}:\langle y,\nu_i\rangle\geq -1,\ i=1,\dots,d\},
\]
where the facet normals $\nu_i\in N_X= M_{X}^{\vee}$.  In particular, we see that $0\in P_{X}$.

\begin{defn}
    The polytope $P_{X}$ is simple if exactly $\dim_{\mathbb C}X$ facets meet at every vertex.
\end{defn}

$P_{X}$ being simple is equivalent to the normal fan being simplicial and hence to $X$ being $\mathbb Q$-factorial \cite{CLS}. In the toric setting, simplicial singularities are finite abelian quotient singularities. If $X$ is smooth, then $P$ satisfies the Delzant condition

\begin{defn}
      The polytope $P_{X}$ is Delzant if, at every vertex $v\in P_{X}$, the primitive inward normals to the facets meeting at $v$ form a $\mathbb Z$-basis of $N_X$.
\end{defn}

The equivariant fibration $\pi:X\rightarrow Y$ implies that $P_{Y}$ is precisely the recession cone of $P_{X}$, where we view $P_{Y}\subset M_{X,\R}$ by the inclusion $M_{Y,\R}\subset M_{X,\R}$.  In particular we can write $P_{X}$ as a Minkowski sum
\[
P_{X} = Q + P_{Y}
\]
where $Q\subset M_{X,\R}$ is a compact polytope.

In this setting the Reeb cone $\mathcal{R}(P_{X})$ is a convex polyhedral cone in $N_{X,\bR}$. A vector field $\xi \in N_{X,\bR}$ is a Reeb vector field if $\langle \xi, p \rangle >0$ for all non-zero $p\in P_{Y}\subset P_{X}$.  Equivalently, since $P_{Y}$ is the recession cone of $P_{X}$, we can characterize Reeb vector fields by the condition that 
\[
P_{\xi, R}:= \{ p \in P_{X} : \langle \xi, p \rangle \leq R\}
\]
is compact. 
We define the Sun-Zhang weighted-volume function by
\[
\mathcal{R}(P_{X}) \ni \xi \mapsto {\rm Vol}(\xi):=  \int_{P_{X}}e^{-\langle \xi, y\rangle} dy
\]
Evidently this function is finite on $\mathcal{R}(P_{X})$ and
\[
\lim _{\xi \rightarrow \del \mathcal{R}(P_{X})} {\rm Vol}(\xi) = +\infty.
\]
Furthermore, since $0$ is an interior point of $P_{X}$ we have
\[
\lim_{t\rightarrow 0} \int_{P_{X}}e^{-t\langle \xi, y\rangle} dy= +\infty
\]
and so ${\rm Vol}(\xi)$ is proper on the open cone $\mathcal{R}(P_{X})$.  By direct calculation one verifies that ${\rm Vol}(\xi)$ is strictly convex and hence we obtain

\begin{lem}
    The function
    \[
    \mathcal{R}(P_{X}) \ni \xi \mapsto {\rm Vol}(\xi)
    \]
    has a unique minimum in $\mathcal{R}(P_{X})$.  Let $\xi_{*}$ denote the minimizer then we have
    \[
    \int_{P_{X}} \vec{y}e^{-\langle \xi_*, y\rangle} dy=0
    \]
\end{lem}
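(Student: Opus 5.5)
We prove the lemma by showing that ${\rm Vol}$ is finite, smooth, strictly convex and proper on the open cone $\mathcal{R}(P_{X})$. Existence and uniqueness of a minimizer then follow. The barycenter identity is the first-order condition at that interior minimum.

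\emph{Finiteness and smoothness.} Fix $\xi$ in a compact subset $K\subset\mathcal{R}(P_{X})$. Write $P_{X}=Q+P_{Y}$ with $Q$ compact. Since $\langle\xi,p\rangle>0$ on $P_{Y}\setminus\{0\}$ and $P_{Y}$ is a closed cone, compactness of $K$ times the unit sphere gives $c_K>0$ with $\langle \xi,p\rangle\geq c_K|p|$ for all $p\in P_{Y}$ and $\xi\in K$. Hence
\[
\langle\xi,y\rangle\geq c_K|y|-C_K \quad \text{for } y\in P_{X},\ \xi\in K.
\]
Thus $e^{-\langle\xi,y\rangle}$, and every polynomial multiple of it, is dominated on $P_X$ by $Ce^{-c_K|y|}$, which is integrable. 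Dominated convergence therefore lets us differentiate under the integral sign to all orders. In particular
\[
\nabla{\rm Vol}(\xi)=-\int_{P_X}y\,e^{-\langle\xi,y\rangle}dy, \qquad D^2{\rm Vol}(\xi)=\int_{P_X}y\otimes y\,e^{-\langle\xi,y\rangle}dy .
\]
Since $P_X$ has nonempty interior, $\int\langle v,y\rangle^2e^{-\langle\xi,y\rangle}dy>0$ for every $v\neq0$. So ${\rm Vol}$ is strictly convex on the convex open set $\mathcal{R}(P_X)$.

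\emph{Properness.} We must show ${\rm Vol}(\xi_k)\to\infty$ whenever $\xi_k$ leaves every compact subset of $\mathcal{R}(P_X)$. There are two cases.
\begin{enumerate}
\item $|\xi_k|\to\infty$. Pass to a subsequence with $\xi_k/|\xi_k|\to\eta$. The limit $\eta$ lies in the closed dual cone $\overline{\mathcal{R}}$, so $\langle\eta,p\rangle\geq0$ on $P_Y$. This case is exactly the $t\to0$ statement quoted before the lemma, applied uniformly. Since $0\in\operatorname{int}P_X$, there is a small open set $U\ni 0$ in $P_X$ with $\langle\eta,y\rangle<0$ on a fixed open subset $U'\subset U$ (take $y$ near $-\epsilon\eta$). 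On $U'$ we have $\langle\xi_k,y\rangle\to-\infty$, so ${\rm Vol}(\xi_k)\geq\int_{U'}e^{-\langle\xi_k,y\rangle}dy\to\infty$.
\item $\xi_k\to\xi_\infty\in\partial\mathcal{R}(P_X)$. Then $\langle\xi_\infty,p_0\rangle=0$ for some nonzero $p_0\in P_Y$. Fatou's lemma gives ${\rm Vol}(\xi_k)$ eventually at least $\int_{P_X}e^{-\langle\xi_\infty,y\rangle}dy-1$, or arbitrarily large. That last integral is infinite: for any small ball $B\subset P_X$, the set $B+\mathbb{R}_{\geq0}p_0\subset P_X$ has infinite volume, and on it $e^{-\langle\xi_\infty,y\rangle}$ is bounded below.
\end{enumerate}

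Because ${\rm Vol}$ is continuous and proper on $\mathcal{R}(P_X)$, its sublevel sets are compact, so a minimizer $\xi_*$ exists. Strict convexity makes it unique. Since $\mathcal{R}(P_X)$ is open, $\nabla{\rm Vol}(\xi_*)=0$, which is precisely $\int_{P_X}\vec y\,e^{-\langle\xi_*,y\rangle}dy=0$.

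The main obstacle is the properness step, specifically obtaining the needed lower bounds uniformly in both degeneration regimes. The uniform linear lower bound $c_K|p|$ on $\langle\xi,p\rangle$ is what justifies differentiating under the integral and is the only genuinely delicate estimate. Everything else is direct computation.
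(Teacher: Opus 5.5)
Your proof is correct and follows the same route as the paper's brief argument. In both, ${\rm Vol}$ is finite and smooth on $\mathcal{R}(P_X)$ and blows up both at $\partial\mathcal{R}(P_X)$ and as $|\xi|\to\infty$, hence is proper; strict convexity of the second-moment Hessian then gives a unique interior minimizer, at which the gradient (i.e.\ the weighted barycenter) vanishes. One cosmetic point: your case $|\xi_k|\to\infty$ is the $t\to\infty$ degeneration of $t\xi$, which is where $0\in\operatorname{int}P_X$ enters, whereas $t\to 0$ is a degeneration to the apex $0\in\partial\mathcal{R}(P_X)$ and is already handled by your boundary case.
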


The following definition gives an equivalent formulation of a toric Fano fibration in terms of combinatorial data encoded by a non-compact polytope.

\begin{defn}
\label{defn: toricFanoPoly}
Let $M_X$ be a lattice of rank $n_X$, let
$N_X=\operatorname{Hom}(M_X,\mathbb Z)$, and write
$M_{X,\mathbb R}=M_X\otimes_{\mathbb Z}\mathbb R$ and
$N_{X,\mathbb R}=N_X\otimes_{\mathbb Z}\mathbb R$.
A polyhedral presentation of a toric Fano fibration with
Reeb field consists of a pair $(P_X,\xi)$ satisfying the
following conditions:
\begin{enumerate}
    \item
    $P_X\subset M_{X,\mathbb R}$ is a full-dimensional
    rational polytope with an irredundant facet presentation
    \[
        P_X
        =
        \left\{
            x\in M_{X,\mathbb R} :
            \langle x,\nu_i\rangle\geq -1,
            \quad i=1,\ldots,d
        \right\},
    \]
    where $\nu_1,\ldots,\nu_d\in N_X$ are primitive inward
    facet normals. In particular,
    $0\in\operatorname{int}(P_X)$.

    \item
    The recession cone
    \[
        C:=\operatorname{rec}(P_X)
        =
        \left\{
            u\in M_{X,\mathbb R} :
            \langle u,\nu_i\rangle\geq 0,
            \quad i=1,\ldots,d
        \right\}
    \]
    is strongly convex, namely $C\cap(-C)=\{0\}$.
    Set
    \[
        M_Y
        :=
        M_X\cap\operatorname{span}_{\mathbb R}(C),
        \qquad
        N_Y:=\operatorname{Hom}(M_Y,\mathbb Z),
        \qquad
        n_Y:=\dim C.
    \]

    \item
    $\xi\in N_{X,\mathbb R}$ satisfies
    \[
        \langle u,\xi\rangle>0
        \qquad
        \text{for every }u\in C\setminus\{0\}.
    \]
\end{enumerate}
The associated toric Fano fibration is
\[
    \pi:X_{\Sigma_{P_X}}
    \longrightarrow
    Y:=\operatorname{Spec}\mathbb C[C\cap M_Y],
\]
where $\Sigma_{P_X}$ is the inward normal fan of $P_X$.
The morphism $\pi$ is induced by the surjective lattice map
\[
    q:N_X\longrightarrow N_Y
\]
dual to the inclusion $M_Y\hookrightarrow M_X$.
The vector $\xi$ induces the Reeb field
$\bar\xi:=q_{\mathbb R}(\xi)$ on $Y$.

The Reeb domain of $P_X$ is
\[
    \mathcal R(P_X)
    :=
    \left\{
        \xi\in N_{X,\mathbb R} :
        \langle u,\xi\rangle>0
        \text{ for all }u\in C\setminus\{0\}
    \right\}.
\]
Equivalently, $\xi\in\mathcal R(P_X)$ if and only if
\[
    P_{\xi,R}
    :=
    \left\{
        x\in P_X:\langle x,\xi\rangle\leq R
    \right\}
\]
is compact for every $R\in\mathbb R$.
\end{defn}

\begin{rk}
To restrict the Reeb field to a chosen lift of the base
torus, fix a lattice splitting
\[
    s:N_Y\longrightarrow N_X,
    \qquad q\circ s=\operatorname{id}_{N_Y},
\]
and take $\xi=s_{\mathbb R}(\bar\xi)$, where
\[
    \bar\xi\in
    \operatorname{int}_{N_{Y,\mathbb R}}(C^\vee),
    \qquad
    C^\vee
    :=
    \left\{
        v\in N_{Y,\mathbb R}:
        \langle u,v\rangle\geq 0
        \text{ for all }u\in C
    \right\}.
\]
This splitting is not canonically determined by $P_X$.
\end{rk}
\bigskip

\noindent {\bf Notational Convention}:
\\
Throughout the paper we shall identify $N_{X,\bR}\simeq \bR^n$ and similarly for the dual space $M_{X,\bR}$, which we identify with $(\bR^n)^*$.  With these identifications we view the polytope $P\subset (\mathbb{R}^n)^*$. We denote functions defined on subsets of  $(\mathbb{R}^n)^*$ by Roman symbols (e.g $u,v,w$ etc).  On $\mathbb{R}^n$, we use Greek letters for functions; e.g. $\vphi, \psi$ etc.
\bigskip

\subsection{Guillemin Boundary Conditions}

\begin{defn}
    A labeled polytope  $(P,\nu, b)$ is a triple consisting of an open, bounded, convex polytope $P\subset M_{X,\bR}$ with $k$ facets and $\nu= \{\nu_1,\ldots,\nu_k\} \subset N_{X,\bR}$ a collection of vectors, and $b=\{b_1, \ldots, b_k\} \in \mathbb{R}^k$ such that
        \[
        P = \bigcap_{i=1}^{k} \big\{y \in M_{X,\bR} : \langle y, \nu_i\rangle  >b_i\big\}.
        \]
    A labeled polytope is said to be {\bf monotone} with preferred center $0$ if $0\in P$ and $b_1=\cdots =b_k$.
    \end{defn}

\begin{lem}\label{lem: goodTruncation}
    Let $P_{X}$ be a polytope as before with inward pointing normal vectors $\nu_1,\ldots, \nu_{k-1}$, and $\xi$ a Reeb vector field.  Let $\nu_k= -\frac{\xi}{R}$.  Set
    \[
    P_{\xi, R}:= \{ p \in P_{X} : \langle \xi, p \rangle \leq R\} \Subset P_{X}
    \]
    Then, for all $R$ sufficiently large, $(P_{\xi, R}, \{\nu_1,,\ldots, \nu_{k-1}, -\frac{\xi}{R}\}, \{-1, \ldots, -1\})$ is a simple, labeled monotone polytope with preferred center $0$.
\end{lem}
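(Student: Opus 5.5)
Three things must be checked: that $P_{\xi,R}$ is a bounded polytope with preferred center $0$, that it has the monotone facet presentation with label $-1$ on every facet, and that it is simple once $R$ is large. The main work is simplicity near the new facet $\{\langle \xi,p\rangle = R\}$. The plan is to reduce that to the asymptotic cone $C=\operatorname{rec}(P_X)$ by rescaling.

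\emph{Compactness, center and monotonicity.} Compactness of $P_{\xi,R}$ is the characterization of Reeb fields recorded in Definition~\ref{defn: toricFanoPoly}: $\langle u,\xi\rangle>0$ on $C\setminus\{0\}$ implies that $\{x\in P_X:\langle x,\xi\rangle\le R\}$ has recession cone $\{0\}$. Since $0\in\operatorname{int}(P_X)$ and $\langle 0,\xi\rangle=0<R$, the origin is interior to $P_{\xi,R}$. The new constraint $\langle p,\xi\rangle\le R$ is equivalent to $\langle p,-\xi/R\rangle\ge -1$, so every facet carries the label $-1$, which is monotonicity. To see that this presentation is irredundant for large $R$, note the following. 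The hyperplane $\{\langle p,\xi\rangle=R\}$ meets $\operatorname{int}(P_X)$, for instance at $p=tc$ with $c\in\operatorname{int}(C)$ and suitable $t>0$, because $C$ is full dimensional in the span relevant to the unbounded directions. So it cuts out a genuine facet. Each old facet $F_i$ is unbounded, or it is compact and then contained in $\{\langle p,\xi\rangle<R\}$ for $R$ large. In either case $F_i\cap P_{\xi,R}$ still has nonempty relative interior.

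\emph{Simplicity away from the new facet.} Simplicity is local at vertices. Every vertex of $P_{\xi,R}$ is either a vertex of $P_X$ or lies on $H_R=\{\langle p,\xi\rangle=R\}$. Choose $R>\max\{\langle v,\xi\rangle : v \text{ a vertex of } P_X\}$. Then vertices of the first kind lie strictly below $H_R$, and at such a vertex exactly $n$ facets meet, because $P_X$ is simple.

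\emph{Simplicity on the new facet; the main obstacle.} Vertices of the second kind are the vertices of $P_X\cap H_R$, and I must show each lies on exactly $n-1$ old facets. Write $P_X=Q+C$ and rescale by $1/R$. The slices $\frac1R(P_X\cap H_R)$ converge in Hausdorff distance to $C\cap H_1$ as $R\to\infty$. A vertex $w$ of $P_X\cap H_R$ lies on an edge of $P_X$: either a bounded edge, excluded by the choice of $R$, or an unbounded ray $v+tr$ with $v$ a vertex of $P_X$ and $r$ an extremal ray of $C$. Since $P_X$ is simple, the ray emanating from the vertex $v$ lies on exactly $n-1$ of the facets through $v$. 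A point in the relative interior of that edge therefore lies on exactly those $n-1$ facets and on no others. Adding $H_R$ gives exactly $n$ facets at $w$, with linearly independent normals: $\xi$ is independent of the $n-1$ normals, because $\langle r,\xi\rangle>0$ while $r$ annihilates those normals. The remaining danger is that $H_R$ passes through a vertex of $P_X$, and the choice of $R$ rules this out. I expect the care to lie in making the edge description of vertices of a hyperplane section rigorous, using that the vertices of $P\cap H$ are exactly the intersections of $H$ with edges of $P$ transverse to $H$. With that in place, Lemma~\ref{lem: goodTruncation} follows.
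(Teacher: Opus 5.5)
Your argument is correct and is the paper's own: you take $R>\max_v\langle\xi,v\rangle$ over the vertices $v$ of $P_X$, so that $H_R$ avoids every vertex. Then each new vertex lies in the relative interior of an unbounded edge $v+\mathbb{R}_{\ge 0}r$ and meets exactly $n$ facets with independent normals, which spells out what the paper's two-line proof leaves implicit. Two small remarks. First, the Hausdorff rescaling to $C\cap H_1$ is never used and can be dropped. Second, in the irredundancy step the real reason $tc\in\operatorname{int}(P_X)$ is that $0\in\operatorname{int}(P_X)$ and $P_X+C\subset P_X$; this works for every $c\in C\setminus\{0\}$, so you do not need any interiority of $c$ in $C$.
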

\begin{proof}
$P_{\xi, R}$ will be simple as long as $\{\langle \xi,p\rangle =R\} \cap \overline{P_{X}}$ does not contain a vertex of $P_{X}$.  Since all vertices of $P_{X}$ lie in a compact set, we conclude that $P_{\xi, R}$ is simple for all $R$ sufficiently large.  That $(P_{\xi, R}, \{\nu_1,,\ldots, \nu_{k-1}, -\frac{\xi}{R}\}, \{-1, \ldots, -1\})$ is a labeled monotone polytope with preferred center $0$ is clear.
\end{proof}

\begin{defn}\label{def: GuilleminBoundary}
    Let $(P, \nu, b)$ be a labeled polytope. A symplectic potential is a continuous function $u \in C^0(\overline{P})$ such that:
    \begin{itemize}
        \item[(i)]the restriction of $u$ to $P$, or to the interior of any face of $P$, is smooth and strictly convex,
        \item[(ii)] if we set $L_i= \langle \nu_i, y\rangle - b_i$ and
        \[
        u_{0}:= \sum_{i=1}^{k} L_i(y)\log(L_i(y))
        \]
        then $u-u_0$ is the restriction of a smooth function defined on an open neighborhood of $\overline{P}$.
    \end{itemize}
    We denote by $S(P, \nu,b)$ the set of all symplectic potentials on $(P,\nu, b)$. Note that the definition of usual Guillemin boundary condition uses $\frac{1}{2}\sum_{i=1}^{k} L_i(y)\log(L_i(y))$ instead, which is essentially the same.
\end{defn}

From now on we denote $P_{\xi, R}=P_{R}$, with the dependence on the Reeb field understood.  We also denote
\[
(P_{R}, \nu_{R}, b) = ((P_{\xi, R}, \{\nu_1,,\ldots, \nu_{k-1}, -\frac{\xi}{R}\}, \{-1, \ldots, -1\})
\]
the simple, labeled monotone polytope with preferred center $0$ provided by Lemma~\ref{lem: goodTruncation}.  The following result is due to Tian-Zhu \cite[Lemma 2.2]{TianZhu02}; see also \cite[\S3.8]{BermanBerndtsson13}.

\begin{lem}
 Suppose that \(R\geq R_{0}\) is such that
\[
    P_R\subset M_{X,\mathbb R}
\]
is a compact, full-dimensional convex polytope satisfying
\(0\in\operatorname{int}P_R\). Then the function
\[
\operatorname{Vol}_R\colon N_{X,\mathbb R}\longrightarrow
\mathbb R_{>0},
\qquad
\operatorname{Vol}_R(\eta)
   =\int_{P_R}e^{-\langle\eta,y\rangle}\,dy,
\]
is smooth, strictly convex, and proper. Consequently, it admits a
unique minimizer \(\xi_R\in N_{X,\mathbb R}\), characterized by
\[
    \int_{P_R}y\,e^{-\langle\xi_R,y\rangle}\,dy=0.
\]
\end{lem}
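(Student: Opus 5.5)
The approach is to treat $\operatorname{Vol}_R$ as a Laplace transform of the compactly supported measure $\mathbf{1}_{P_R}\,dy$, and to get smoothness, strict convexity, properness and the critical-point equation directly from that representation. Since $P_R$ is compact, $|\langle\eta,y\rangle|\le |\eta|\,\sup_{P_R}|y|$ on $P_R$. Hence the integrand and all its $\eta$-derivatives, $(-1)^{m}y_{i_1}\cdots y_{i_m}e^{-\langle\eta,y\rangle}$, are bounded uniformly for $\eta$ in compact sets. Differentiation under the integral sign is then justified, so $\operatorname{Vol}_R$ is $C^\infty$ (in fact real-analytic) on all of $N_{X,\mathbb R}$, with
\[
\nabla\operatorname{Vol}_R(\eta)=-\int_{P_R}y\,e^{-\langle\eta,y\rangle}\,dy,\qquad
D^2\operatorname{Vol}_R(\eta)[v,v]=\int_{P_R}\langle v,y\rangle^2e^{-\langle\eta,y\rangle}\,dy.
\]
Because $P_R$ is full-dimensional, for $v\neq0$ the function $\langle v,y\rangle$ vanishes only on a hyperplane, which has measure zero. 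So the Hessian is positive definite everywhere, and $\operatorname{Vol}_R$ is strictly convex.

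Properness is the only step that uses the hypothesis $0\in\operatorname{int}P_R$, and it is where I would take care. Choose $\delta>0$ with $B_{2\delta}(0)\subset P_R$. Given $\eta\neq0$, write $\hat\eta=\eta/|\eta|$ and consider the region
\[
U_\eta:=\{y\in B_{2\delta}(0):\langle \hat\eta,y\rangle\le-\delta\}.
\]
This is a cap of the ball, and its Lebesgue measure $c(\delta)>0$ does not depend on the direction $\hat\eta$, by rotation invariance. On $U_\eta$ we have $-\langle\eta,y\rangle\ge\delta|\eta|$, so
\[
\operatorname{Vol}_R(\eta)\ge c(\delta)\,e^{\delta|\eta|}\to\infty\quad\text{as }|\eta|\to\infty.
\]
Thus the sublevel sets are bounded, and they are closed by continuity, so they are compact. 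This proves properness.

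Existence of a minimizer now follows from properness and continuity, and uniqueness follows from strict convexity. The minimizer is characterized by the vanishing of the gradient, since a smooth strictly convex function has at most one critical point and any critical point is a global minimum. By the gradient formula above, the critical-point equation is exactly $\int_{P_R}y\,e^{-\langle\xi_R,y\rangle}\,dy=0$. None of these steps is a serious obstacle; the only point requiring care is the uniform lower bound in the properness argument, which is handled by the direction-independent cap estimate.
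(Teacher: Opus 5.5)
Your proof is correct. It is the standard argument: differentiate the Laplace transform under the integral sign to get the gradient and a positive-definite Hessian, then use the ball $B_{2\delta}(0)\subset P_R$ for coercivity. The paper gives no proof of its own, deferring to \cite{TianZhu02} and \cite{BermanBerndtsson13}, and the same cone-near-the-origin estimate appears in its proof of Lemma~\ref{lem: convergenceOfReebFields}, where it bounds $|\xi_R|$ uniformly in $R$ by integrating over a cone in the direction $-\xi_R$.
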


Apostolov--Calderbank--Gauduchon--T{\o}nnesen-Friedman \cite{ApostolovCalderbank2004} gave the
following alternative description of the boundary condition.

\begin{prop}[Proposition 1, \cite{ApostolovCalderbank2004}]\label{prop H gui}
Given a labelled polytope $(P,\nu)$, a strictly convex
function $u\in C^\infty(P)$ is a symplectic potential of $(P,\nu)$ if
and only if, denoting
\[
\mathbf{H}=(\operatorname{Hess}\frac{u}{2})^{-1},
\]
the following conditions hold:
\begin{itemize}
    \item $\mathbf{H}$ is the restriction to $P$ of a smooth
    $S^2\mathfrak{t}^*$-valued function on $\overline{P}$;

    \item for every $k=1,\ldots,d$ and every $y$ in the interior of
    the facet $F_k$,
    \begin{equation}
    \mathbf{H}_y(\nu_k,\cdot)=0
    \qquad\text{and}\qquad
    d\mathbf{H}_y(\nu_k,\nu_k)=2\nu_k;
    \tag{7}
    \end{equation}

    \item the restriction of $\mathbf{H}$ to the interior of any face
    $F\subset P$ is a positive-definite
    $S^2(\mathfrak{t}/\mathfrak{t}_F)^*$-valued function.
\end{itemize}
\end{prop}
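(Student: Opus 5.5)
We reduce both directions to a local statement near each face, using Guillemin's normal form for the potential and the explicit inverse of the Hessian of $u_0$. Put $G=\Hess(u/2)$ and $\mathbf H=G^{-1}$. With $L_i=\langle\nu_i,y\rangle-b_i$, one has
\[
\Hess\Bigl(\tfrac12 u_0\Bigr)=\tfrac12\sum_{i}\frac{\nu_i\otimes\nu_i}{L_i}.
\]

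\textbf{($\Rightarrow$).} Assume $u-u_0=h$ is smooth up to the boundary. Fix a point in the interior of a face $F$ of codimension $r$, cut out by $L_{i_1}=\cdots=L_{i_r}=0$. For a simple polytope the normals $\nu_{i_1},\dots,\nu_{i_r}$ are linearly independent, so we may choose affine coordinates $y=(y',y'')$ with $y'_j=L_{i_j}$. Near the point the other $L_i$ are positive, hence contribute smooth terms, and
\[
G=\tfrac12\Bigl(\diag(1/y'_1,\dots,1/y'_r)\oplus 0\Bigr)+S(y),
\]
with $S$ smooth on a neighborhood of $\overline P$.

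Let $D=\diag(y'_1,\dots,y'_r)\oplus I$, so that $G=D^{-1/2}\bigl(\tfrac12 E+D^{1/2}SD^{1/2}\bigr)D^{-1/2}$ with $E=I_r\oplus 0$. This form involves square roots, so instead write
\[
G=\Lambda^{-1}A,\qquad \Lambda=\diag(y'_1,\dots,y'_r,1,\dots,1),
\]
where $A=\tfrac12 E+\Lambda S$ is smooth. Then $\mathbf H=A^{-1}\Lambda$. Block-decompose $S=\begin{pmatrix}S_{11}&S_{12}\\ S_{21}&S_{22}\end{pmatrix}$. On the face ($y'=0$),
\[
A=\begin{pmatrix}\tfrac12 I&0\\ S_{21}&S_{22}\end{pmatrix},
\]
and $S_{22}$ is the Hessian of $\tfrac12 u|_F$ there, which is positive definite by condition (i). Hence $A$ is invertible near the point and $\mathbf H$ is smooth.

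At the face, $\mathbf H=A^{-1}\Lambda$ kills $\nu_{i_j}$ because $\Lambda$ vanishes in those directions. Its restriction to $\mathfrak t/\mathfrak t_F$ is $S_{22}^{-1}$, which is positive definite. For the derivative condition, differentiate along $y'_j$ on a facet: this gives $d\mathbf H(\nu_k,\nu_k)=2\nu_k$, the factor $2$ coming from $A^{-1}\approx 2$ on that block. Symmetry of $\mathbf H$ is automatic.

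\textbf{($\Leftarrow$).} Given the conditions on $\mathbf H$, set $G=\mathbf H^{-1}$. This is the harder direction, since we must show that $G-\Hess(u_0/2)$ is smooth. Work near a face as above. From $\mathbf H(\nu_{i_j},\cdot)=0$ on $\{y'_j=0\}$ and smoothness, we get the factorization
\[
\mathbf H=B\Lambda
\]
with $B$ smooth: each column corresponding to $\nu_{i_j}$ vanishes on $y'_j=0$, so it is divisible by $y'_j$ by Hadamard's lemma. Symmetry lets us arrange the factorization compatibly.

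On the face, $B=\begin{pmatrix}B_{11}&0\\ *&B_{22}\end{pmatrix}$. The derivative condition gives $B_{11}=2I$ there. Here one must also check that the off-diagonal entries of $B_{11}$ vanish; this uses $\mathbf H_{jk}=0$ on both hyperplanes $y'_j=0$ and $y'_k=0$, so $\mathbf H_{jk}$ is divisible by $y'_jy'_k$. The positivity condition gives $B_{22}>0$. Hence $B$ is invertible, and
\[
G=\Lambda^{-1}B^{-1}=\Lambda^{-1}\bigl(\tfrac12 E+O(y')\bigr).
\]
The key step, which I expect to be the main obstacle, is to show that $\Lambda^{-1}B^{-1}-\tfrac12\sum_j \frac{e_j\otimes e_j}{y'_j}$ is smooth rather than merely bounded times $1/y'$. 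For this we use symmetry of $G$. The entries of $B^{-1}-\tfrac12 E$ in row $j$ must be divisible by $y'_j$, by comparing $G_{jk}$ with $G_{kj}$ and applying Hadamard's lemma again. The diagonal entry $(B^{-1})_{jj}-\tfrac12$ vanishes on $y'_j=0$: this follows from the first-order condition $d\mathbf H(\nu_k,\nu_k)=2\nu_k$, which is used here in full, including its tangential components.

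Once this is done, $\Hess(u/2-u_0/2)$ is smooth up to $\overline P$. Integrating twice, $u-u_0$ is smooth on a neighborhood of $\overline P$. Strict convexity on the interiors of faces follows from the positivity of $\mathbf H$ on $\mathfrak t/\mathfrak t_F$, which gives condition (i) of Definition~\ref{def: GuilleminBoundary}. A partition of unity patches the local statements together.
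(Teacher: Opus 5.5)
Your proof is correct, but it does not follow the paper, which gives no proof here and simply imports the statement from \cite{ApostolovCalderbank2004}. In that source the conditions on $\mathbf{H}$ are derived on the toric manifold itself, as the conditions for the torus-invariant metric to extend smoothly over the orbits where the torus degenerates, and are then matched with the Guillemin--Abreu description of the potential. You work entirely on the polytope, using the factorizations $\operatorname{Hess}(u/2)=\Lambda^{-1}A$ and $\mathbf{H}=B\Lambda$, Hadamard's lemma in the corner, and a block computation at the face. This gives a self-contained real-variable proof that uses only simplicity (independence of the normals cutting out each face). It therefore applies verbatim to non-rational labelled polytopes, where no manifold or orbifold is available. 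It also yields explicit by-products, such as the identification of $\mathbf{H}$ on $\mathfrak t/\mathfrak t_F$ with $(\operatorname{Hess}(u|_F/2))^{-1}$. The geometric route instead gives the interpretation as smoothness of the metric, which is what is ultimately needed to obtain a soliton on $X$.

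A few points should be tightened.

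\emph{Hadamard's lemma.} It must be applied by integrating in $y'_j$ alone, since $\mathbf{H}$ is only known to vanish on $\{y'_j=0\}\cap\overline{P}$.

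\emph{The formula for $G$.} The expression $G=\Lambda^{-1}(\tfrac12E+O(y'))$ is false in the last $n-r$ rows: at the face those rows of $B^{-1}$ are $(-\tfrac12B_{22}^{-1}B_{21},\,B_{22}^{-1})$. This is harmless, since $\Lambda^{-1}$ is the identity there.

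\emph{The key step.} It has a one-line proof that handles the whole row at once.
On all of $\{y'_j=0\}\cap\overline{P}$ near the point, not just on $F$, row $j$ of $B$ equals $2e_j^{T}$, for two reasons. First, row $j$ of $\mathbf{H}$ vanishes there, using $\mathbf{H}_{jl}=y'_jy'_l\cdot(\text{smooth})$ for $l\le r$, $l\neq j$. Second, $B_{jj}=\partial_{y'_j}\mathbf{H}_{jj}=2$ by (7) at interior points of the facet, plus continuity.
Hence row $j$ of $B^{-1}$ is $\tfrac12 e_j^{T}$ there, and row $j$ of $B^{-1}-\tfrac12E$ is divisible by $y'_j$.
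Only the normal component of (7) carries information. The tangential components are automatic, because $\mathbf{H}(\nu_k,\nu_k)\equiv 0$ on $F_k$, so your remark about using them is a misdescription.

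\emph{The final step.} Integrating the Hessian gives $u-u_0$ smooth up to $\overline{P}$. To obtain the restriction of a smooth function on a neighbourhood you still need a Whitney/Seeley-type extension.

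\emph{Strict convexity.} In the forward direction, ``strictly convex'' in condition (i) must be read as positive-definite Hessian in order to get $S_{22}>0$.
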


\subsection{Local complexification of the polytope}

Given a labelled polytope $(P,\nu)$, we denote the set of (closed)
faces of $P$ by $\mathcal{F}(P)$. The facets of $P$ are still denoted
by
\[
F_1,\ldots,F_d\in\mathcal{F}(P).
\]
For $F\in\mathcal{F}(P)$, denote by
\[
I_F\subset\{1,\ldots,d\}
\]
the set of indices such that
\[
F=\bigcap_{k\in I_F}F_k.
\]
For example, $P\in\mathcal{F}(P)$ and $I_{\overline{P}}=\varnothing$.
For a vertex $p$, $I_{\{p\}}$ has $n$ elements, and
\[
\Lambda_p
=
\operatorname{span}_{\mathbb{Z}}
\{\nu_k\mid k\in I_{\{p\}}\}
\]
is a lattice in $\mathfrak{t}$. Given a vertex $p$ of $P$, we call $\mathcal{F}_p(P)$ the set of
faces containing $p$. For a face
$F\in\mathcal{F}_p(P)$,
\[
T_F
=
\frac{
  \operatorname{span}_{\mathbb{R}}
  \{\nu_k\mid k\in I_F\}
}{
  \Lambda_p\cap
  \operatorname{span}_{\mathbb{R}}
  \{\nu_k\mid k\in I_F\}
}
\]
is a subtorus of
\[
T_p=\mathfrak{t}/\Lambda_p
\]
if $p\in F$.

For $F\in\mathcal{F}_p(P)$, we denote by
$s_p(F)$ the subset of $F$ obtained by removing all the subfaces
which do not contain $p$; that is,
\[
s_p(F)
=
\left\{
x\mathrel{}\middle|\mathrel{}
x\in\mathring{E},\ p\in E,\ E\subset F
\right\},
\]
where $\mathring{E}$ is the interior of the face $E$ (in $E$). In
particular, the interior of a vertex is the vertex itself. Thus,
\[
\bigcup_{F\in\mathcal{F}_p(P)}s_p(F)
=
\bigcup_{F\in\mathcal{F}_p(P)}\mathring{F}
\]
is an open neighborhood of $p$ in $\overline{P}$. Set
\[
M_p
=
\left.
\bigsqcup_{F\in\mathcal{F}_p(P)}
\left(s_p(F)\times T_p/T_F\right)
\middle/\sim
\right.,
\]
where, for
\[
(x,\theta)\in F\times T_p/T_F
\quad\text{and}\quad
(x',\theta')\in F'\times T_p/T_{F'},
\]
we define $(x,\theta)\sim(x',\theta')$ if

\begin{enumerate}
    \item $x=x'$, and
    \item the equivalence classes of $\theta$ and $\theta'$ in
    $T_p/T_{F\cap F'}$ coincide.
\end{enumerate}

Here, the first condition implies that $F\cap F'\neq\varnothing$, so
$F\cap F'\in\mathcal{F}_p(P)$ and $T_{F\cap F'}$ contains $T_F$ and
$T_{F'}$ as subgroups. The second condition refers to the fact that
$T_p/T_{F\cap F'}$ is the quotient of $T_p/T_F$ by
$T_{F\cap F'}/T_F$ and the quotient of $T_p/T_{F'}$ by
$T_{F\cap F'}/T_{F'}$.

Ordering the normals
\[
\nu_{k_1},\ldots,\nu_{k_n}
\qquad
(k_i\in I_{\{p\}}),
\]
we obtain an identification
\[
T_p\simeq\mathbb{T}^n=\mathbb{R}^n/\mathbb{Z}^n,
\]
via which $T_p$ acts on $\mathbb{C}^n$. For an equivariant
neighborhood $U_p$ of $0\in\mathbb{C}^n$, the map
\[
\phi_p\colon U_p\longrightarrow M_p,
\]
defined by
\begin{equation}
\phi_p(z)
=
\left[
\left(
p+|z_i|^2\nu_{k_i}^{*},
\left(
e^{2\pi\sqrt{-1}\theta_1},
\ldots,
e^{2\pi\sqrt{-1}\theta_n}
\right)
\right)
\right],
\tag{5}
\end{equation}
where
\[
z=
\left(
|z_1|e^{2\pi\sqrt{-1}\theta_1},
\ldots,
|z_n|e^{2\pi\sqrt{-1}\theta_n}
\right),
\]
is a well-defined (i.e., it does not depend on the choice of
$e^{2\pi\sqrt{-1}\theta_i}$ when $|z_i|=0$) equivariant
homeomorphism. The chart $(U_p,\phi_p)$ provides a smooth
differential structure on $M_p$.

In the case that there exists a fixed lattice $\Lambda$ such that $\Lambda_p = \Lambda$ for any vertex $p$ (this is also called Delzant), Duistermaat-Pelayo used the above construction in \cite{DuistermaatPelayo2009} (alternative to the so-called Delzant construction \cite{Delzant1988}) to build a toric manifold $(M,\omega, T)$ from the data $(P,\nu,\Lambda)$, see also \cite{Donaldson2008Toric}. Legendre \cite{Leg} adapted such construction to more general labelled polytope as is stated in this section. Using this construction, she proved the following Theorem by using the equivalence of Guillemin boundary condition and the smoothness in local complexification.   

\begin{thm}[Legendre \cite{Leg}, Theorem 1.6]\label{thm: LegendreKRS}
For each $R \geq 1$, let $\xi_{R} \in N_{X,\bR}$ denote the unique minimizer of ${\rm Vol}_{R}(\eta)= \int_{P_{R}}e^{-\langle \eta, y \rangle}$.  Then there exists a convex function $\vphi_{R}:\mathbb{R}^n\rightarrow \mathbb{R}$, unique up to translation and addition of a constant, such that
\[
\begin{aligned}
\det D^2\vphi_{R} &=e^{\langle \xi_{R}, \nabla \vphi_{R}\rangle -\vphi_{R} +C}\\
\nabla \vphi_{R}(\bR^n)&=P_{R}
\end{aligned}
\]
and $\vphi_{R}^*:= u_{R} \in S(P_{R}, \nu_{R}, b)$.  That is, the symplectic potential $u_{R}$ has Guillemin boundary conditions in the sense of Definition~\ref{def: GuilleminBoundary} and is unique up to the addition of an affine function.
\end{thm}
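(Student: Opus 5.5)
The statement is Legendre's theorem, quoted as Theorem~\ref{thm: LegendreKRS}, and I would prove it by reducing to the compact orbifold setting. By Lemma~\ref{lem: goodTruncation}, $(P_R,\nu_R,b)$ is a simple, labelled, monotone polytope with preferred center $0$, and all the $b_i$ equal $-1$. Using the local complexification $M_p$ described above, the Duistermaat--Pelayo/Legendre construction produces a compact toric symplectic orbifold $(M_R,\omega_R,\bT)$ whose moment image is $\overline{P_R}$ and whose labels are $\nu_R$. Monotonicity means $[\omega_R]$ is proportional to $c_1^{orb}(M_R)$, so after the factor $2\pi$ (and the factor $\tfrac12$ in Definition~\ref{def: GuilleminBoundary}) the orbifold is Fano in the orbifold sense.

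\begin{itemize}
\item \emph{Matching the soliton equation.} A $\bT$-invariant K\"ahler--Ricci soliton with holomorphic field $J\xi_R$ on $M_R$ is the same thing as a symplectic potential $u_R\in S(P_R,\nu_R,b)$ whose Legendre dual $\vphi_R$ on the open orbit $\bR^n$ solves $\det D^2\vphi_R=e^{\langle\xi_R,\nabla\vphi_R\rangle-\vphi_R+C}$. The reason is that $-\log\det D^2\vphi$ is the Ricci potential, $\vphi$ is the K\"ahler potential, and the soliton potential is $f=\langle \xi,\nabla\vphi\rangle$.
\item \emph{Choice of $\xi_R$.} The Tian--Zhu modified Futaki invariant on the torus Lie algebra is proportional to $\int_{P_R}y\,e^{-\langle\eta,y\rangle}dy$. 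So the minimizer $\xi_R$ from the preceding lemma is exactly the soliton vector field.
\item \emph{Existence.} I would run the Wang--Zhu continuity method on $M_R$, or equivalently the Berman--Berndtsson variational approach:
\[
\det D^2\vphi_t=e^{\langle\xi_R,\nabla\vphi_t\rangle-t\vphi_t-(1-t)\vphi_0}.
\]
Openness comes from the implicit function theorem on orbifold H\"older spaces. Closedness needs a $C^0$ bound. The barycenter condition makes the minimum point of $\vphi_t$ stay in a compact set, and this gives $|\vphi_t-\vphi_0|\le C$ via the usual Wang--Zhu argument using convexity and $\nabla\vphi_t(\bR^n)=P_R$. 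Higher-order bounds then follow from Yau/Calabi $C^2$, $C^3$ estimates with the drift term, carried out on the compact orbifold in orbifold charts $U_p/\Gamma_p$, where all estimates are local and equivariant.
\item \emph{Boundary conditions.} Since the limiting metric is a smooth orbifold K\"ahler metric in the class of $\omega_R$, Proposition~\ref{prop H gui}, via the charts $\phi_p$, shows that its symplectic potential satisfies the Guillemin conditions.
\item \emph{Uniqueness.} I would use Tian--Zhu uniqueness modulo $\operatorname{Aut}^0$ restricted to $\bT^{\C}$-invariant metrics, equivalently the strict convexity of the Ding-type functional along Legendre geodesics, which are affine interpolations of $u$. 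This pins $\vphi_R$ down up to translation by the torus and additive constants, and hence $u_R$ up to affine functions.
\end{itemize}

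I expect the main obstacle to be the $C^0$ estimate along the continuity path in the orbifold setting, in particular confirming that the Wang--Zhu minimum-point argument survives with only simple (non-Delzant) vertices. Since everything is computed on the polytope through $\vphi_t$ on $\bR^n$, I would try the convex-analytic argument directly on $\bR^n$ first, and use the orbifold structure only for the higher-order regularity.
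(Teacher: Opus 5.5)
Your reduction fails at its first step. You assert that the Duistermaat--Pelayo/Legendre construction yields a compact toric symplectic orbifold $(M_R,\omega_R,\bT)$ with moment image $\overline{P_R}$ and labels $\nu_R$. That requires all labels to lie in a single lattice, and here they generally do not. The new label is $-\xi/R$, where $\xi$ is the volume minimizer, and $\xi$ is typically irrational. Already for $Bl_0\C^2$ (normals $(1,0),(0,1),(1,1)$) the barycenter condition gives $\xi=\sqrt2\,(1,1)$. So for every $R\notin\sqrt2\,\mathbb{Q}$, in particular every rational $R$, the labels generate a non-discrete subgroup of $\R^2$; and whenever the direction of $\xi$ is irrational, no choice of $R$ helps.

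The lattices $\Lambda_p$ of the local complexification then genuinely depend on the vertex $p$. Consequently there is no global torus, no compact complex orbifold, no orbifold H\"older spaces, and no $c_1^{orb}$, Futaki invariant or Tian--Zhu uniqueness theorem to invoke. This is exactly why the paper quotes Legendre rather than Wang--Zhu. Her theorem covers arbitrary compact simple monotone \emph{labelled} polytopes. It replaces the global space by the local complexifications $M_p$, one per vertex with its own torus $T_p=\mathfrak{t}/\Lambda_p$. It then uses that $u\in S(P,\nu,b)$ precisely when the associated potential is smooth in every $M_p$. One direction of this is Proposition~\ref{prop comp ext 2}; the converse is the Whitney-lemma argument at the end of Section~\ref{sec: EKest}.

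The outline is repairable along those lines, and the worry you single out is not the real issue. The $C^0$ estimate is convex analysis on $\R^n$ and does not see the lattice. The same goes for the identification of the equation and of $\xi_R$: the equation forces $\mu_{\vphi_R}=e^{-\langle\xi_R,y\rangle-C}\chi_{P_R}dy$, and $\int_{\R^n}\nabla\vphi_R\,e^{-\vphi_R}dx=0$ then gives the barycenter condition with no Futaki invariant. What must change is everything that used the global space:
\begin{enumerate}
\item Define the spaces for the openness argument by smoothness in every chart $M_p$, and get invertibility of the linearized drift operator by integrating by parts on $P_R$.
\item Run the Yau/Calabi maximum-principle arguments on $\bT$-invariant quantities, viewed as functions on the compact set $\overline{P_R}$. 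They attain their maximum there, and you apply the maximum principle in the chart $M_p$ containing that point.
\item Take uniqueness from your second option, not from Tian--Zhu. That means convexity of the Ding-type functional along affine interpolations of $u$ (i.e.\ Pr\'ekopa), or the Cordero-Erausquin--Klartag uniqueness of potentials with prescribed moment measure.
\end{enumerate}
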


\subsection{Local complexification of the polytope, II}

For any $p \in \partial P$, take a coordinate 
\[
  y=(y',z),\qquad
  y'=(y_1,\ldots,y_k),\qquad
  z=(y_{k+1},\ldots,y_n)\in\R^m
\]
centered at $p$ such that locally near $p$, $\partial P$ is given by 
\begin{equation*}
\cup_{i=1}^k \{(y',z): y_i=0\}.
\end{equation*}

Let
$\Omega'\Subset\Omega\subset\R^m$ and let $\delta>0$.  Consider
\[
  \mathcal Q=(0,\delta)^k\times\Omega.
\]
The original polytope coordinates are
\begin{equation}
  \mathbf x(y)
  =(y_1-1,\ldots,y_k-1,y_{k+1},\ldots,y_n).
  \label{eq:x-of-y}
\end{equation}

Let $U\in C^\infty(\mathcal Q)$ be strictly convex and satisfy the Guillemin boundary condition.  Write
\begin{equation}
  U(y)=\sum_{i=1}^k  y_i\log y_i+V(y).
  \label{eq:guillemin-decomposition}
\end{equation}

Let
\begin{equation}
  p=DU(y),
  \qquad
  \Phi(p)=y\cdot p-U(y)
  \label{eq:legendre-transform}
\end{equation}
be the full Legendre transform.  In the interior,
\begin{equation}
  D\Phi=y,
  \qquad
  \operatorname{Hess}\Phi=(\operatorname{Hess} U)^{-1}.
  \label{eq:legendre-identities}
\end{equation}
For $i\leq k$ and $a>k$,
\begin{equation}
  p_i= (\log y_i+1)+V_i,
  \qquad
  p_a=V_a.
  \label{eq:p-components}
\end{equation}
Define
\begin{equation}
  \rho_i=e^{p_i}\quad(i\leq k),
  \qquad
  \eta_a=p_a\quad(a>k).
  \label{eq:rho-eta}
\end{equation}

Define
\begin{equation}
  F(\rho,\eta)
  =\Phi(\log\rho_1,\ldots,\log\rho_k,\eta).
  \label{eq:F-definition}
\end{equation}
Introduce complex coordinates $\zeta=(\zeta_1,\ldots,\zeta_n)\in\C^n$ by
\begin{equation}
  \rho_i=|\zeta_i|^2\quad(i\leq k),
  \qquad
  \eta_a=\zeta_a+\bar\zeta_a\quad(a>k),
  \label{eq:complex-coordinates}
\end{equation}
and set
\begin{equation}
  \Psi(\zeta)
  =F\bigl(
       |\zeta_1|^2,\ldots,|\zeta_k|^2,
       \zeta_{k+1}+\bar\zeta_{k+1},\ldots,
       \zeta_n+\bar\zeta_n
     \bigr).
  \label{eq:Psi-definition}
\end{equation}

Then, we have the following Proposition:
\begin{prop}\label{prop comp ext 2}
    We use the notations introduced above. Suppose that $U$ is strictly convex and satisfies the Guillemin boundary condition on $P$. Then, $\Psi$ can be extended smoothly as a plurisubharmonic function on a neighbourhood of $(0,\ldots,0, \frac{1}{2}\nabla_{k+1} U(p), \ldots, \frac{1}{2}\nabla_{n} U(p))$.
\end{prop}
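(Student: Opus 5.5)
Since $U$ is only defined up to the boundary (it satisfies Guillemin's condition there), I will show that the Legendre dual, written in the variables $\rho_i=e^{p_i}$ ($i\le k$) and $\eta_a=p_a$ ($a>k$), is a smooth function of $(\rho,\eta)$ up to $\rho_i=0$. Plurisubharmonicity of $\Psi$ then follows from the Hessian identities \eqref{eq:legendre-identities}.

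\textbf{Step 1: invert the gradient map near the boundary.} By \eqref{eq:p-components},
\[
\rho_i=e^{p_i}=e\,y_i\,e^{V_i(y)},\qquad \eta_a=V_a(y),
\]
and $V$ is smooth on a neighbourhood of $\overline{\mathcal Q}$. So the map $G:y\mapsto(\rho,\eta)$ extends smoothly across $y_i=0$, with $G(\{y_i=0\})\subset\{\rho_i=0\}$. Its Jacobian at a boundary point $y^0=(0,z^0)$ is block triangular. The $\rho$-block is $\operatorname{diag}(e^{1+V_i(y^0)})$. The $\eta$-block is $\partial_{z}V_a=\partial_z U_a$, which is the Hessian of the restriction of $U$ to the face $\{y'=0\}$. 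This block is positive definite because $U$ is strictly convex on the interior of every face (Definition~\ref{def: GuilleminBoundary}(i)). Hence $DG(y^0)$ is invertible. The inverse function theorem then gives a smooth local inverse $y=y(\rho,\eta)$ on a full neighbourhood of $G(y^0)=(0,\nabla_zU(y^0))$, including points with $\rho_i\le 0$.

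\textbf{Step 2: smoothness of $F$ across $\rho_i=0$.} In the interior,
\[
F(\rho,\eta)=\sum_{i\le k}y_i\log\rho_i+\sum_a y_a\eta_a-U(y).
\]
Substituting $\log\rho_i=1+\log y_i+V_i$, the $y_i\log y_i$ terms cancel against those in $U$. This leaves
\[
F=\sum_i y_i(1+V_i)+\sum_a y_aV_a-V(y),
\]
evaluated at $y=y(\rho,\eta)$, which is smooth by Step 1. The same computation, or $\partial F/\partial\rho_i=y_i/\rho_i$, shows this is the correct extension. Therefore $\Psi(\zeta)=F(|\zeta'|^2,\zeta''+\bar\zeta'')$ is smooth near $\zeta'=0$, $\zeta''=\tfrac12\nabla_zU$. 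The factor $\tfrac12$ appears because $\eta_a=2\operatorname{Re}\zeta_a$.

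\textbf{Step 3: plurisubharmonicity, the main obstacle.} Away from $\{\zeta_i=0\}$, write $\zeta_i=e^{w_i/2}$. Then $\Psi=\Phi(\operatorname{Re}w',2\operatorname{Re}\zeta'')$, whose complex Hessian is, up to positive constants, $\operatorname{Hess}\Phi=(\operatorname{Hess}U)^{-1}>0$. So $\Psi$ is strictly psh there, and by continuity of $\partial\bar\partial\Psi$ it is psh on the whole neighbourhood.

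For strict positivity at $\zeta'=0$, which is the delicate point, compute
\[
\partial_{\zeta_i}\partial_{\bar\zeta_i}\Psi\big|_{\zeta_i=0}=F_{\rho_i}=\lim y_i/\rho_i=e^{-1-V_i}>0.
\]
The cross terms $\Psi_{i\bar j}$ ($i\ne j\le k$) and $\Psi_{i\bar a}$ vanish at $\zeta'=0$ because they carry factors $\bar\zeta_i\zeta_j$ or $\bar\zeta_i$. The $\eta$-block equals $4F_{\eta\eta}=4(\partial_z U_a)^{-1}$ there, which is positive definite by Step 1.

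Hence $\partial\bar\partial\Psi>0$ on a neighbourhood of the given point, which completes the proof.
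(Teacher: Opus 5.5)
Your proof is correct and follows the paper's argument essentially step for step. You invert $y\mapsto(\rho,\eta)=(y_ie^{1+V_i},V_a)$ by the inverse function theorem at the boundary point, rewrite $F=\sum_a y_aV_a+\sum_{i\le k}y_i(1+V_i)-V$ as a smooth function, and read off strict positivity of $(\Psi_{\alpha\bar\beta})$ at $\zeta'=0$ from $F_{\rho_i}=e^{-1-V_i}>0$, the vanishing cross terms, and the positive $\eta$-block. Your justifications, via strict convexity of $U$ on the face $\{y'=0\}$ and $F_{\eta\eta}=(V_{zz})^{-1}$ at $\rho=0$, are actually more careful than the paper's appeal to interior strict convexity. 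The only slip is harmless: $\Psi_{\zeta_a\bar\zeta_b}=F_{\eta_a\eta_b}$ with no factor $4$, since $\partial_{\zeta_a}(\zeta_a+\bar\zeta_a)=1$.
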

\begin{proof}
    We have that:
    \begin{equation*}
        \rho_i = e^{p_i}= y_i e^{1+V_i(x)}
    \end{equation*}
    for $i \le k$. 
    Consider the map $A(y)= (y_1 e^{1+V_1(y)},\ldots, y_k e^{1+V_k(y)}, \frac{1}{2}V_{k+1}(y),\ldots, \frac{1}{2} V_n (y))$. Since $V$ is smooth up to the boundary, we have that $A$ can be extended to be a smooth function defined in a neighbourhood of $p$. Since $U$ is strictly convex, we have that $(V_{ij})_{i,j \ge k+1}$ is positive definite.
    Then, we have that $DA(0)$ is invertible. The smooth inverse function theorem then gives a smooth local inverse $y=Y(\rho_1,\ldots, \rho_k, \eta_{k+1},\ldots, \eta_n)$. Then, we compute that:
    \begin{equation*}
        F(\rho.\eta) = \sum_{i=1}^k y_i \big((\log y_i +1)+ V_i \big) + \sum_{i=k+1}^n y_i V_i -\sum_{i=1}^k y_i \log y_i - V(y)= \sum_{i=1}^n y_i V_i +\sum_{i=1}^k y_i -V
    \end{equation*}
    Since we have shown that $y$ is a smooth function of $(\rho,\eta)$, which is a smooth function of $(\zeta)$, the above formula shows that $\Psi$ can be extended smoothly to a neighbourhood of  $(0,\ldots,0, \frac{1}{2}\nabla_{k+1} U(p), \ldots, \frac{1}{2}\nabla_{n} U(p))$.

    Next, we show that $\Psi$ is a PSH function. We can compute that:
    \begin{equation*}
        \frac{\partial F}{\partial \rho_i} = \frac{\partial \Phi}{\partial p_i} \frac{\partial p_i}{\partial \rho_i}=\frac{\partial \Phi}{\partial p_i} \frac{1}{ \rho_i}= \frac{1}{e^{1+V_i}}>0
    \end{equation*}
    for $i \le k$. Since $\Phi=U^*$ whose Hessian is positive definite, we have that $(F_{a\bar{b}})_{a,b \ge k+1}$ is positive definite. Note that:
    \begin{equation*}
        \Psi_{\zeta_{i}\bar{\zeta}_j}= \delta_{ij} F_{\rho_i}+ \bar{\zeta}_i \zeta_j F_{\rho_i \rho_j}
    \end{equation*}
    for $i,j \le k$, and
    \begin{equation*}
        \Psi_{\zeta_i \bar{\zeta}_a}= \bar{\zeta}_i F_{\rho_i \eta_a},
    \end{equation*}
    for $i \le k <a$, and
    \begin{equation*}
        \Psi_{\zeta_a \bar{\zeta}_b} = F_{\eta_a \eta_b},
    \end{equation*}
    for $a,b >k$. As a result, we have that $(\Psi_{i\bar{j}})$ is strictly positive at $(0,\ldots,0, \frac{1}{2}\nabla_{k+1} U(p), \ldots, \frac{1}{2} \nabla_{n} U(p))$. As a result, by the smoothness of $\Psi$, it is positive in a neighbourhood of $(0,\ldots,0, \frac{1}{2}\nabla_{k+1} U(p), \ldots, \frac{1}{2}\nabla_{n} U(p))$.
\end{proof}

\subsection{Reduction to a Real Monge-Amp\`ere Equation}
Let $(\pi:X\rightarrow Y, \mathbb{T}, \xi)$ be a toric polarized Fano fibration.  By Definition~\ref{defn: toricFanoPoly}, and following a standard local calculation (see, for example, \cite{WangZhu2004}), Theorem~\ref{thm main 2} is reduced to proving the following existence theorem for a real Monge-Amp\`ere equation

\begin{thm}\label{main thm 1}
    For any polytope $P\subset \mathbb{R}^n$ satisfying Assumption \ref{ass 1}, there exists a smooth function $u$, unique up to the addition of linear functions, satisfying the Guillemin boundary condition on $P$ and solving the real Monge-Amp\`ere equation
    \begin{equation}\label{e main equ1}
        \det (D^2u) = e^{\langle y, \nabla u (y) \rangle -u(y) -\langle \xi, y \rangle }, \quad \text{ and } \quad \nabla u(P)= \R^n,
    \end{equation}
    where $\xi \in \R^n$ is the unique vector satisfying $\int_P y e^{-\langle \xi, y \rangle }dy=0$.
\end{thm}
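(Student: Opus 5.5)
We prove Theorem~\ref{main thm 1} by exhaustion, following the outline in the introduction. For each large $R$, Lemma~\ref{lem: goodTruncation} shows that $(P_R,\nu_R,b)$ is a simple labelled monotone polytope with preferred center $0$. Theorem~\ref{thm: LegendreKRS} then gives a convex $\vphi_R$ with $\det D^2\vphi_R=e^{\langle\xi_R,\nabla\vphi_R\rangle-\vphi_R+C_R}$ and $\nabla\vphi_R(\R^n)=P_R$, whose Legendre dual $u_R\in S(P_R,\nu_R,b)$. Since $\int_{P_R}ye^{-\langle\eta,y\rangle}dy\to\int_P ye^{-\langle\eta,y\rangle}dy$ locally uniformly on the Reeb cone, and $\xi$ is a nondegenerate zero of the limit by strict convexity of ${\rm Vol}$, we get $\xi_R\to\xi$. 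We normalize $\vphi_R$ by a translation and an additive constant so that $\vphi_R$ attains its minimum at $0$ and $C_R=0$ (equivalently, $\nabla u_R(0)=0$).

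\textbf{Step 1: zeroth and first order control.} Each $\vphi_R$ is the Cordero-Erausquin--Klartag potential of the moment measure $e^{-\langle\xi_R,y\rangle}1_{P_R}dy$, which is centered at $0$. CEK stability, i.e.\ the variational characterization with coercivity coming from the barycenter condition and $0\in\operatorname{int}P$, gives uniform two-sided linear growth $c|x|-C\le\vphi_R(x)\le C|x|+C$. It also gives local uniform convergence of $\vphi_R$ to the weak solution $\vphi$ on $P$. Dually, $u_R$ is locally bounded on $\overline P$, uniformly in $R$, and converges locally uniformly on $P$.

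\textbf{Step 2: interior estimates for $\vphi_R$.} We prove a locally uniform $C^2$ bound for $\vphi_R$ on $\R^n$ via a Pogorelov/Calabi-type maximum principle for $\log\lambda_{\max}(D^2\vphi_R)+A|\nabla\vphi_R|^2$, cut off by $\vphi_R$ itself (its sublevel sets are compact by Step 1). The right-hand side is uniformly controlled because $\nabla\vphi_R\in P_R$ and $\langle\xi_R,\nabla\vphi_R\rangle\ge -C$. We also need a lower bound on $D^2\vphi_R$; it follows from the equation once $\det$ is bounded below. Evans--Krylov and Schauder then give $C^\infty_{loc}$ convergence.

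\textbf{Step 3: Guillemin conditions, uniformly in $R$ (the main obstacle).} Near each face $F$ of $P$ away from the truncating facet, write $u_R=\sum_{i\in I_F}L_i\log L_i+V_R$ in the coordinates of the previous subsection. We aim to bound $V_R$ in $C^{k}$ uniformly on compact subsets of $\overline P$. The plan is to work with $\mathbf H_R=(D^2u_R)^{-1}$ and Proposition~\ref{prop H gui}, and to prove weighted $C^{1,1}$ bounds $D^2u_R\le C\,\diag(L_i^{-1},1)$ together with the matching lower bound. We argue by downward induction on the codimension of faces. Interior faces are covered by Step 2. Near a face of codimension $k$, the bounds on adjacent higher-dimensional faces give control on the part of the boundary of the local corner domain $(0,\delta)^k\times\Omega$ away from the corner, and a maximum principle for the linearized operator, with barriers built from $\sum L_i\log L_i$, propagates it to the corner.

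Next, we pass to the local complexification of Proposition~\ref{prop comp ext 2}. There $\Psi_R$ solves a complex Monge--Amp\`ere equation with smooth, uniformly controlled right-hand side and uniformly equivalent metrics. A complex Evans--Krylov estimate and bootstrapping give uniform $C^\infty$ bounds on $\Psi_R$, hence uniform control of $V_R$ up to the boundary. I expect the corner induction for the weighted $C^{1,1}$ bound to be the hardest point.

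\textbf{Step 4: passage to the limit and uniqueness.} Along a subsequence $u_R\to u$ with $V_R\to V$ smoothly on compact subsets of $\overline P$. The limit $u$ satisfies the Guillemin conditions and equation~\eqref{e main equ1}, and $\nabla u(P)=\R^n$ because $\vphi_R\to\vphi$ is finite and convex on all of $\R^n$. Uniqueness up to linear functions follows from uniqueness of CEK potentials up to translation, i.e.\ from the displacement convexity of the associated functional, which translates to $u$ modulo linear terms.
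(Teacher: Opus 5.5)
\textbf{Gap: your corner maximum principle lacks uniform tangential convexity up to the face.}

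Your Step~2 runs Pogorelov on sublevel sets $\{\varphi_R<T\}$, which are sections centred at the minimum of $\varphi_R$. That only controls $D^2\varphi_R$ on compact subsets of $\R^n$, i.e.\ it gives $D^2u_R\geq c$ on compact subsets of $\operatorname{int}P$.

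The corner maximum principle in Step~3 needs uniform tangential ellipticity right up to the face. When you localize to $(0,\delta)^k\times\Omega$, the lateral boundary $[0,\delta)^k\times\partial\Omega$ accumulates on $\{0\}^k\times\partial\Omega\subset\operatorname{relint}F$. That is the same face, not an adjacent higher-dimensional one, so your induction hypothesis says nothing there. You are therefore forced to cut off in the tangential variables, either by a function of $z$ or by a section of $u_R$:
\begin{itemize}
\item With a $z$-cutoff $\eta$, the linearized operator produces $u_R^{ab}\partial_a\partial_b\log\eta$. This is controlled only if $u_R^{aa}\leq\Lambda$ for tangential $a$.
\item With a section cutoff, you need uniform control of sections near $F$, which is the same missing information.
\end{itemize}
The same bound is also what yields the tangential lower bound $V_{aa}\geq K^{-1}$ required before the complexified Evans--Krylov step applies. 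Barriers built from $\sum L_i\log L_i$ plus $C^0$ bounds cannot substitute for it. Pogorelov's non-strictly-convex solutions of Monge--Amp\`ere show that interior $C^{1,1}$ bounds do not follow from local $C^0$ information in dimension $\geq3$. Some uniform strict-convexity input near $\partial P$ is indispensable.

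\textbf{How the paper supplies it.} This input is Propositions~\ref{prop gra loc} and~\ref{prop rn c2}. For each $M$ there are $\epsilon_M$ and $R_M$ with the following property. Every section of $\varphi_R$ of height $\epsilon_M$, centred at any $\rho_0$ with $\nabla\varphi_R(\rho_0)\in P_M$, has gradient image inside $P_{R_M}$. This includes $\rho_0$ far out in $\R^n$, whose gradients lie near $\partial P$.

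The proof is by contradiction. Otherwise the $C^\alpha$ limit $u$ would be affine along a ray, forcing $\nabla_v u\leq0$ on $P$, which is incompatible with $\nabla u(P)=\R^n$. Pogorelov on these sections then gives $D^2u_R\geq C(M)I$ on all of $P_M$, up to the boundary. This is exactly the hypothesis $u_R^{jj}\leq\Lambda$ in Proposition~\ref{prop hig dim c2}, and the source of the tangential bounds in Lemma~\ref{lem upp low c2}. You need this, or an equivalent uniform strict-convexity statement near each face, before Step~3 can close.

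\textbf{Minor point.} The upper bound $\varphi_R(x)\leq C|x|+C$ in Step~1 is false, because $\nabla\varphi_R(\R^n)=P_R$ is not uniformly bounded. For $P=[-1,\infty)$ one has $\xi=1$ and $\varphi(x)=e^x-x-1$. Only the lower bound is needed, so this does not affect the argument.
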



\section{Moment Measures}\label{sec: momentMeasures}

\subsection{Construction of Moment Measures}
Let $\vphi:\mathbb{R}^n \rightarrow \mathbb{R}$ be a convex function such that $0< \int_{\mathbb{R}^n} e^{-\vphi(x)}dx <+\infty$
\begin{defn}
The moment measure of $\vphi$, denoted by $\mu_{\vphi}$, is the Borel measure on $\mathbb{R}^n$ obtained by pushing forward $e^{-\vphi(x)}dx$ by the map $\nabla \vphi$.  That is
\[
\int_{\mathbb{R}^n} b(y)d\mu_{\vphi}(y)= \int_{\mathbb{R}^n}b(\nabla \vphi(x)) e^{-\vphi(x)}dx
\]
for every Borel function $b \in L^{1}(\mu_{\vphi})$ or such that $b \geq 0$.
\end{defn}

We have the following theorem.

\begin{thm}[Cordero-Erausquin-Klartag \cite{CEK}, Theorem 2]\label{thm: CEKMoment}
Let $\nu$ be a Borel measure on $\mathbb{R}^n$ such that
\begin{itemize}
    \item[(i)] $0<\nu(\mathbb{R}^n) < +\infty$,
    \item[(ii)] The measure $\nu$ is not supported in a lower dimensional subspace of $\mathbb{R}^n$, and
    \item[(iii)] The barycenter of $\nu$ lies at the origin, in the sense that
    \[
    \int_{\mathbb{R}^n} y d\nu(y)=0.
    \]
\end{itemize}
Then, there exists an essentially continuous convex function $\vphi:\mathbb{R}^n \rightarrow \mathbb{R}\cup \{+\infty\}$ such that $\nu$ is the moment measure of $\vphi$.  That is $\mu_{\vphi}=\nu$. Moreover, $\vphi$ is unique up to translation.
\end{thm}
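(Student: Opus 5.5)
The plan is to treat the statement as the Cordero-Erausquin--Klartag theorem, which the paper cites rather than reproves. The approach is variational, in the spirit of Berman--Berndtsson. For a convex function $\vphi$ with $0<\int e^{-\vphi}<\infty$, consider the functional
\[
\mathcal{J}(\vphi)=\log\int_{\mathbb{R}^n}e^{-\vphi(x)}\,dx-\int_{\mathbb{R}^n}\vphi^*(y)\,d\nu(y),
\]
or equivalently maximize $-\log\int e^{-\psi^*}-\int\psi\,d\nu$ over convex $\psi$ in the dual variable. The barycenter condition (iii) makes $\mathcal{J}$ invariant under adding linear functions to $\vphi^*$, and hence under translations of $\vphi$. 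The log is what makes the functional invariant under adding constants.

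The existence argument runs in four steps.

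\textbf{Step 1 (first variation).} For $\vphi_t^*=\vphi^*+tb$ with $b$ bounded continuous, compute $\frac{d}{dt}\big|_{t=0}$ of the functional at a maximizer. The derivative of $\log\int e^{-(\vphi^*+tb)^*}$ equals $\int b(\nabla\vphi)e^{-\vphi}/\int e^{-\vphi}$, using a.e. differentiability of convex functions and the envelope-type identity $\frac{d}{dt}(\psi+tb)^*(x)=-b(\nabla\psi^*(x))$. Setting the variation to zero yields $\mu_\vphi=\nu$ after normalizing total mass by adding a constant.

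\textbf{Step 2 (compactness).} Normalize $\vphi^*(0)$ to be minimal with a suitable translation. Use (ii) and (iii) to prove coercivity: a Prékopa--Leindler/Brunn--Minkowski-type estimate shows $\log\int e^{-\vphi}$ is controlled by the behavior of $\vphi^*$. Nondegeneracy (ii) then gives the bound $\int\vphi^*\,d\nu\ge c\,\sup_{|y|\le r}\vphi^*-C$ on the relevant class. This yields uniform local bounds on maximizing sequences, and Blaschke-type selection gives a limit.

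\textbf{Step 3 (upper semicontinuity).} Verify upper semicontinuity along the selected sequence via Fatou and the local uniform convergence of convex functions. Together with Step 2 this produces a maximizer.

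\textbf{Step 4 (essential continuity).} Check that $\vphi$ is essentially continuous, i.e. $\vphi=+\infty$ on only a $\mathcal{H}^{n-1}$-null portion of $\partial\{\vphi<\infty\}$. This is needed for the variation formula in Step 1 to be valid, since boundary mass of $e^{-\vphi}$ would otherwise break the pushforward identity.

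I expect the coercivity in Step 2 to be the main obstacle, specifically the quantitative use of the barycenter condition to rule out maximizing sequences escaping along a direction. First I would try proving $\int\vphi^*\,d\nu\ge\varepsilon\int|y|\,d\nu$-type bounds using convexity plus zero barycenter, in the manner of Berman--Berndtsson's properness argument.

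For uniqueness up to translation, take two solutions $\vphi_0,\vphi_1$ and interpolate along the path $\vphi_t^*=(1-t)\vphi_0^*+t\vphi_1^*$. Prékopa's theorem shows $t\mapsto-\log\int e^{-\vphi_t}$ is convex, and the linear term is affine, so the functional is concave along the path. Both endpoints are critical points, so it is affine. The equality case of Prékopa--Leindler then forces $\vphi_1(x)=\vphi_0(x-x_0)$, up to the additive constant that is fixed by the total mass.
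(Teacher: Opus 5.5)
Your variational route is the one the paper sketches from Cordero-Erausquin--Klartag: for probability $\nu$, maximize $I_\nu(\psi)=\log\int e^{-\psi^*}-\int\psi\,d\nu$ and set $\varphi=\psi^*$. Two smaller points first. Your ``equivalent'' dual form has a sign slip: the logarithm must enter with a plus sign, otherwise the functional is neither invariant under constants nor bounded above. Also, the operative estimate in Step 2 is an upper bound for $\log\int e^{-\psi^*}$ in terms of $\int\psi\,d\nu$; the paper, following CEK, gets it from the Rogers--Shephard and functional Santal\'o inequalities rather than from Pr\'ekopa--Leindler.

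\textbf{Essential continuity in existence.} This is the genuine gap, and it is the non-routine part of the statement. Your definition is reversed. The correct notion is that $\varphi$ is lower semicontinuous and the set of points of $\partial\{\varphi<\infty\}$ where $\varphi$ is \emph{finite} is $\mathcal H^{n-1}$-null. Your reason for needing it is also wrong. For bounded continuous $b$ one has $|(\psi+tb)^*-\psi^*|\le |t|\sup|b|$, so the domain of the conjugate does not move. Hence Step 1 gives $\mu_\varphi=\nu$ for any maximizer, with no boundary condition at all. Steps 1--3 therefore produce \emph{some} convex $\varphi$ with $\mu_\varphi=\nu$, but nothing in them yields essential continuity. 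It has to come from maximality against variations that enlarge $K=\{\psi^*<\infty\}$. For instance, suppose $\varphi<\infty$ on a set $\Gamma\subset\partial K$ of positive $\mathcal H^{n-1}$-measure on which the outer normal satisfies $\langle n,\theta\rangle\ge\delta>0$, and put $\psi_t=\psi+t(\langle y,\theta\rangle-A)_+$. Then $\psi_t^*\le\psi^*$ everywhere, and $\psi_t^*(x_0+s\theta)\le\varphi(x_0)+sA$ for $x_0\in\Gamma$ and $0<s\le t$; these points lie outside $K$. Hence $\int e^{-\psi_t^*}\ge\int e^{-\varphi}+c\,t\,e^{-tA}$ with $c=\delta\int_\Gamma e^{-\varphi}\,d\mathcal H^{n-1}>0$ independent of $A$. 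Meanwhile $\int\psi_t\,d\nu$ grows only by $t\int(\langle y,\theta\rangle-A)_+\,d\nu$, which is $t$ times a quantity tending to $0$ as $A\to\infty$. Taking $A$ large and then $t$ small contradicts maximality.

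\textbf{Uniqueness.} The same issue breaks this step. Concavity of $t\mapsto I_\nu((1-t)\psi_0+t\psi_1)$ is fine, but ``both endpoints are critical points'' is exactly what must be proved. Step 1 only gives criticality under bounded perturbations, whereas $\psi_1-\psi_0$ is unbounded and can change the domain of the conjugate. Bounded criticality is genuinely insufficient, as the following example shows. Let $\sigma$ be the normalized uniform measure on $S^{n-1}$, which satisfies (i)--(iii). Take $\varphi_1=|x|$ on $\overline{B_1}$ and $+\infty$ outside, and $\varphi_0=|x|+c$ on $\R^n$. For a suitable constant $c$ both have moment measure $m\sigma$, where $m=\int_{B_1}e^{-|x|}dx$. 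The function $\varphi_1^*=\max(0,|y|-1)$ is critical for $I_\sigma$ under every bounded variation. Yet $I_\sigma(\varphi_0^*)=\log\int_{\R^n}e^{-|x|}dx>\log\int_{B_1}e^{-|x|}dx=I_\sigma(\varphi_1^*)$, and $\varphi_1$ is not a translate of $\varphi_0$. Both functions satisfy your reversed definition, so under it the uniqueness claim is false. To close the argument you must show the following: whenever $\varphi$ is essentially continuous with $\mu_\varphi=\nu$, the function $\varphi^*$ maximizes $I_\nu$. Equivalently, along any segment $\psi_t$ starting at $\psi_0=\varphi^*$, the right derivative at $t=0$ of $\log\int e^{-\psi_t^*}$ is at most $\int(\psi_1-\psi_0)\,d\nu$. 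This is precisely where essential continuity is used. Only then do concavity and the equality case of Pr\'ekopa's theorem give $\varphi_1=\varphi_0(\cdot-x_0)$.
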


Let us recall briefly some of the ideas of the proof of Theorem~\ref{thm: CEKMoment}.  Given a probability measure $\nu$ on $\mathbb{R}^n$ satisfying the assumptions of Theorem~\ref{thm: CEKMoment}. For a $\nu$-integrable function $f:\mathbb{R}^n \rightarrow \mathbb{R}\cup\{+\infty\}$ we set
\begin{equation}\label{eq: Ifunctional}
I_{\nu}(f):= \log \int_{\mathbb{R}^{n}} e^{-f^*} - \int_{\mathbb{R}^n}f d\nu.
\end{equation}
Cordero-Erausquin-Klartag \cite[Proposition 12]{CEK} prove that there exists a $\nu$-integrable, convex function $u:\mathbb{R}^n \rightarrow \mathbb{R}\cup \{+\infty\}$ such that
\[
I_{\nu}(u) = \sup_{f \in L^1_{+}(\nu)}I_{\nu}(f) \quad \text{ and } \quad \int_{\mathbb{R}^n}e^{-u^*}=1
\]
where $L^1_{+}(\nu)= \{ f: \mathbb{R}^n \rightarrow \mathbb{R}\cup \{+\infty\} : f \text{ is } \nu-\text{integrable}\}$.  \cite{CEK} then shows that by setting $\vphi=u^*$ we obtain the desired convex function $\vphi$ in Theorem~\ref{thm: CEKMoment}.  The uniqueness follows from Prekopa's theorem which implies that $t \mapsto I_{\nu}((1-t)f_0+tf_1)$ is concave, and characterizes precisely the linear case.

As a corollary, we obtain

\begin{cor}\label{cor: momentMeasureKRS}
    Let $\xi$ be the unique Reeb vector field on $P_{X}$ minimizing the volume function.  Then there exists a convex function $\vphi$ on $\mathbb{R}^n$, unique up to translation, such that
    \[
    \mu_{\vphi} = e^{-\langle \xi, y\rangle+C} \chi_{P_{X}}(y) dy
    \]
    where $C=-\log {\rm Vol}(\xi)$.
\end{cor}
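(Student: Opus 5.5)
The plan is to apply Theorem~\ref{thm: CEKMoment} to the measure
\[
\nu := e^{-\langle \xi, y\rangle + C}\chi_{P_X}(y)\,dy, \qquad C = -\log {\rm Vol}(\xi),
\]
after checking its three hypotheses. Uniqueness up to translation is then part of the conclusion of Theorem~\ref{thm: CEKMoment}.

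First, finiteness and normalization. Since $\xi$ is a Reeb vector field, $\langle u,\xi\rangle>0$ on $C\setminus\{0\}$. By compactness of the unit sphere intersected with the recession cone $C$, there is a constant $c>0$ with $\langle u,\xi\rangle \ge c|u|$ for all $u\in C$. Using the Minkowski decomposition $P_X = Q + C$ with $Q$ compact, every $y\in P_X$ satisfies
\[
\langle \xi, y\rangle \ge c|y| - c',
\]
so $e^{-\langle\xi,y\rangle}$ decays exponentially on $P_X$. Hence ${\rm Vol}(\xi)<\infty$, together with all first moments. Positivity, ${\rm Vol}(\xi)>0$, holds because $P_X$ is full-dimensional. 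With $C=-\log{\rm Vol}(\xi)$ the measure $\nu$ is a probability measure, which gives (i).

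Second, nondegeneracy. The measure $\nu$ has a strictly positive density on the open set ${\rm int}(P_X)\neq\emptyset$, so it cannot be supported in a lower dimensional subspace (a hyperplane has Lebesgue measure zero). This gives (ii).

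Third, the barycenter. By the preceding Lemma, the minimizer $\xi_*=\xi$ of ${\rm Vol}$ satisfies $\int_{P_X} y\,e^{-\langle\xi,y\rangle}dy=0$. This is just the first-order condition $\nabla{\rm Vol}(\xi)=0$, where differentiation under the integral sign is justified by the exponential decay from the first step. Multiplying by $e^{C}$ gives (iii).

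Theorem~\ref{thm: CEKMoment} now yields a convex $\varphi$, unique up to translation, with $\mu_\varphi=\nu$. The only point needing care is the uniform linear lower bound on $\langle\xi,y\rangle$ over $P_X$; it is what guarantees integrability and the validity of the barycenter identity, and it follows from compactness as above. Everything else is a direct verification.
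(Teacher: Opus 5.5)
Your proposal is correct and follows the paper's route: the corollary is a direct application of the Cordero-Erausquin--Klartag theorem (Theorem~\ref{thm: CEKMoment}), with the zero-barycenter hypothesis supplied by the minimizing property of $\xi$. Your explicit checks of finite positive mass, nondegeneracy, and the barycenter identity, via the bound $\langle \xi, y\rangle \ge c|y| - c'$ on $P_X$, fill in exactly the hypotheses the paper leaves implicit.
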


\begin{lem}\label{lem varphi bdd}
Let $\varphi:\mathbb R^n\to\mathbb R\cup\{+\infty\}$ be an
essentially continuous convex function such that
$0<\int_{\mathbb R^n}e^{-\varphi}<\infty$.
Let $\nu=(D\varphi)_{\#}(e^{-\varphi}\,ds)$ be its moment measure.
If, for some $a>0$,
\[
\int_{\mathbb R^n}e^{a|x|}\,d\nu(x)<\infty,
\]
then $\varphi$ is finite on all of $\mathbb R^n$.
\end{lem}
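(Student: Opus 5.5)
The plan is to argue by contradiction using a one-dimensional slicing argument in a direction where the domain of $\varphi$ is bounded. Set $K=\{\varphi<+\infty\}$, which is a convex set with nonempty interior since $\int e^{-\varphi}>0$. Suppose $K\neq\mathbb R^n$. Then $\overline K$ lies in a closed half-space, and after a rotation and translation we may assume $K\subset\{x_1<0\}$. By the definition of the moment measure and the inequality $|D\varphi|\geq \partial_1\varphi$,
\[
\int_{\mathbb R^n}e^{a|x|}\,d\nu(x)=\int_{K}e^{a|D\varphi(x)|-\varphi(x)}\,dx\;\geq\;\int_{K}e^{a\,\partial_1\varphi(x)-\varphi(x)}\,dx .
\]
By Fubini it therefore suffices to find a set of $x'\in\mathbb R^{n-1}$ of positive measure on which the one-dimensional integral $\int e^{a\,\partial_1\varphi(t,x')-\varphi(t,x')}\,dt$ is infinite.

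For $x'$ in the projection $\Pi$ of $\operatorname{int}K$ onto the last $n-1$ coordinates, the slice $\{t:(t,x')\in \operatorname{int}K\}$ is an interval $(\alpha(x'),\beta(x'))$ with $\beta(x')\leq 0$ finite. The set $\Pi$ has positive $(n-1)$-dimensional measure. Essential continuity means that $\varphi\to+\infty$ at $\mathcal H^{n-1}$-almost every point of $\partial K$. I will use this to conclude that for almost every $x'\in\Pi$ the convex function $g(t)=\varphi(t,x')$ satisfies $g(t)\to+\infty$ as $t\uparrow\beta:=\beta(x')$. The set of exceptional boundary points has $\mathcal H^{n-1}$-measure zero. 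Its projection therefore has measure zero, because projection is Lipschitz. Moreover, each $x'\in\Pi$ corresponds to exactly one endpoint $(\beta(x'),x')$.

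The heart of the proof is the one-dimensional estimate. Fix such an $x'$ and choose $t_1<\beta$ with $\delta:=\beta-t_1<a$ and $g$ nondecreasing on $(t_1,\beta)$, which is possible since $g\to\infty$. Convexity gives $g(t_1)\geq g(t)+g'(t)(t_1-t)$, hence $g(t)\leq C+\delta g'(t)$ for $t\in(t_1,\beta)$, where $C=g(t_1)$. Therefore
\[
a g'(t)-g(t)\;\geq\;(a-\delta)g'(t)-C .
\]
Since $g\to\infty$ on a bounded interval, we have $g'(t)\to+\infty$ and $\int_{t_1}^{\beta}g'\,dt=+\infty$. Eventually $e^{(a-\delta)g'}\geq g'$, so $\int_{t_1}^{\beta}e^{a g'-g}\,dt\geq e^{-C}\int g'\,dt=+\infty$. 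Integrating over the positive-measure set of good $x'$ contradicts the finiteness of the exponential moment, so $K=\mathbb R^n$.

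I expect the main obstacle to be the measure-theoretic step: making precise that essential continuity forces $\varphi\to+\infty$ along almost every slice, in the form needed here. If the definition in \cite{CEK} is phrased differently, for instance via lower semicontinuity together with an $\mathcal H^{n-1}$-null set of boundary points where $\varphi$ is finite, I would handle that case directly. A slice ending at a point where $\varphi$ stays bounded contributes no $e^{-\varphi}$ mass beyond $\beta$, so it cannot be compatible with $\nu$ being the pushforward unless such endpoints form a null set. A secondary technical point is differentiability of $g$ almost everywhere and the use of one-sided derivatives, which convexity handles routinely.
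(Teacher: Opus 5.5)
Your argument is correct, and its outer structure matches the paper's. You separate $\{\varphi<+\infty\}$ by a half-space, slice in the normal direction, use essential continuity to get blow-up at the right endpoint of almost every slice, and contradict finiteness of $\int e^{a|D\varphi|-\varphi}$ with a one-dimensional estimate. The differences lie in that one-dimensional step and in how the slice property is obtained.

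The paper applies Fubini the other way round. It fixes a single line on which $E=\int_I e^{a|h'|-h}\,dt<\infty$, with $h$ the restriction of $\varphi$ and $I=(\alpha,b)$. It takes $h$ to be continuous there by citing the linewise continuity statement of \cite[Lemma~3]{CEK}. It then combines the fundamental theorem of calculus, H\"older's inequality and $q^p\le (p/(a\mathrm e))^p e^{aq}$, and lets $p\to\infty$ to get $b-t\ge a\mathrm e$ for all $t\in I$. That estimate uses no convexity along the line, only local absolute continuity and $h\to+\infty$ at $b$. It also gives a quantitative statement: finite $E$ forces the slice to extend at least $a\mathrm e$ beyond every point.

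You instead use convexity through the tangent-line bound $g\le C+\delta g'$ with $\delta=\beta-t_1<a$. This gives $e^{ag'-g}\ge e^{-C}g'$ near $\beta$, and divergence on almost every slice follows from $\int^{\beta} g'=+\infty$. This is shorter and more elementary. Your Lipschitz-projection argument also derives the almost-every-slice blow-up directly from the definition (lower semicontinuity plus an $\mathcal H^{n-1}$-null set of finite boundary values) rather than quoting it.

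Two small remarks. First, the case $n=1$ deserves a separate sentence: there is a single slice, and $\mathcal H^{0}$-null means empty. Second, drop the fallback reasoning in your last paragraph, because the pushforward structure cannot by itself exclude finite boundary values. If $\varphi$ equals $0$ on a convex body and $+\infty$ outside, its moment measure is a multiple of $\delta_0$ and has every exponential moment finite. So the null-set property must come from the essential-continuity hypothesis, exactly as your main argument already uses it.
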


\begin{proof}
Put $\Omega=\operatorname{int}\{\varphi<+\infty\}$.
The positive mass implies that $\Omega$ is nonempty, and the
moment-measure identity gives
\[
\int_\Omega e^{a|D\varphi|-\varphi}\,ds<\infty.
\]
Suppose that $\Omega\ne\mathbb R^n$.
A separating hyperplane gives a unit vector $\theta$ and a finite
constant $B$ such that $s\cdot\theta\leq B$ on $\Omega$.
By Fubini's theorem and the linewise continuity statement in
\cite[Lemma~3]{CEK}, we may choose a line $z+\mathbb R\theta$
meeting $\Omega$ on which the restriction
$h(t)=\phi(z+t\theta)$ is continuous as an extended-real-valued
function and
\[
E:=\int_I e^{a|h'|-h}\,dt<\infty.
\]
Here $I=(\alpha,b)$ is its interval of interior finiteness and
$b<\infty$. Continuity implies $h(t)\to+\infty$ as $t\uparrow b$.
In dimension one, use the same argument on the whole line;
essential continuity is then ordinary extended-real continuity.

Fix $t\in I$ and $p>1$. Applying the fundamental theorem of
calculus first on compact subintervals, and then taking the
right endpoint to $b$, gives
\[
e^{-h(t)/p}
\leq\frac1p\int_t^b |h'|e^{-h/p}\,d\tau
\leq\frac{(b-t)^{1-1/p}}p
     \left(\int_t^b |h'|^p e^{-h}\,d\tau\right)^{1/p}.
\]
Since $q^p\leq (p/(a\mathrm e))^p e^{aq}$ for $q\geq0$,
\[
e^{-h(t)/p}
\leq\frac{(b-t)^{1-1/p}}{a\mathrm e}\,E^{1/p}.
\]
Letting $p\to\infty$ yields $1\leq(b-t)/(a\mathrm e)$.
This is impossible for $t$ sufficiently close to $b$.
Thus $\Omega=\mathbb R^n$, as required.
\end{proof}

\begin{cor}\label{cor fin varphi}
    The function $\varphi$ obtained by the Corollary \ref{cor: momentMeasureKRS} is finite on $\mathbb{R}^n$
\end{cor}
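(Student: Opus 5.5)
The plan is to apply Lemma~\ref{lem varphi bdd} to the function $\varphi$ given by Corollary~\ref{cor: momentMeasureKRS}. That corollary comes from Theorem~\ref{thm: CEKMoment}, so $\varphi$ is an essentially continuous convex function with $0<\int_{\mathbb{R}^n}e^{-\varphi}<\infty$. (The finiteness of $\int e^{-\varphi}$ is part of the setup of moment measures: the total mass of $\mu_\varphi$ equals $\int e^{-\varphi}$, and this total mass is $1$ by the choice $C=-\log{\rm Vol}(\xi)$.) Its moment measure is $\nu=e^{-\langle\xi,y\rangle+C}\chi_{P_X}(y)\,dy$. Hence the only hypothesis left to check is the exponential moment condition $\int e^{a|y|}\,d\nu(y)<\infty$ for some $a>0$.

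The key step is a linear lower bound for $\langle \xi,y\rangle$ on $P_X$. First, since $\xi$ is a Reeb vector field, we have $\langle \xi,u\rangle>0$ for all $u\in C\setminus\{0\}$, where $C=\operatorname{rec}(P_X)$. By compactness of $C\cap\{|u|=1\}$, there is $c_0>0$ with $\langle\xi,u\rangle\ge c_0|u|$ for all $u\in C$. Second, we use the Minkowski decomposition $P_X=Q+C$ with $Q$ compact, as recorded in Section~\ref{sec back}. Writing $y=q+u$ with $q\in Q$ and $u\in C$, and setting $D=\max_Q|q|$, we get $|u|\ge|y|-D$. Therefore
\[
\langle \xi,y\rangle\ge -|\xi|D+c_0(|y|-D)\ge c_0|y|-C_1 .
\]

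Now take $a=c_0/2$. Then
\[
\int e^{a|y|}\,d\nu\le e^{C+C_1}\int_{\mathbb{R}^n}e^{-c_0|y|/2}\,dy<\infty .
\]
Lemma~\ref{lem varphi bdd} then gives that $\varphi$ is finite everywhere.

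The only mildly delicate point is the decomposition $P_X=Q+C$. If one prefers not to rely on it, it follows from the standard Minkowski--Weyl theorem for polyhedra: strong convexity of $C$ guarantees that $P_X$ has vertices, and one can take $Q$ to be their convex hull. With that in hand, the rest of the argument is routine.
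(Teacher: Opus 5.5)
Your proposal is correct and follows the same route as the paper: you verify the exponential-moment hypothesis of Lemma~\ref{lem varphi bdd} from a linear lower bound $\langle\xi,y\rangle\ge c_0|y|-C_1$ on $P_X$, take $a=c_0/2$, and apply the lemma. The paper just asserts this lower bound ``by the definition of $\xi$,'' whereas you justify it through $P_X=Q+C$ and compactness of $C\cap\{|u|=1\}$, the same argument the paper itself uses later in Lemma~\ref{lem:polyhedral-volume-asymptotics}.
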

\begin{proof}
    By the definition of $\xi$, there exist constants $C_1, C_2>0$ such that $\langle \xi,y\rangle \ge C_1 |y| -C_2 $ for $y \in P_X$. Then, we can take $a_1 = \frac{C_1}{2}$ to get that:
    \begin{equation*}
        \int_{\R^n} e^{a_1|y|} e^{-\langle \xi, y \rangle + C} \chi_{P_X}(y)dy \le \int_{P_X} e^{-\frac{C_1}{2}|y| +C_2} dy < +\infty.
    \end{equation*}
    Then, we can use the Lemma \ref{lem varphi bdd} to get that $\varphi$ is finite on $\mathbb{R}^n$.
\end{proof}

Note that formally, the convex function $\vphi$ produced by Corollary~\ref{cor: momentMeasureKRS} satisfies the equation
\[
\begin{aligned}
\det D^2\vphi &=e^{\langle \xi, \nabla \vphi\rangle -\vphi -C}\\
\nabla \vphi(\bR^n)&=P
\end{aligned}
\]
and hence $\vphi$ defines a K\"ahler-Ricci soliton on $\mathbb{R}^n$.  The question is whether this K\"ahler -Ricci soliton extends to the toric manifold $X$ smoothly.  If $X$ is compact, this follows from the regularity theory of \cite{WangZhu2004} \cite{BermanBerndtsson13}.  These results depend in an essential way on the maximum principle on compact K\"ahler manifolds, and hence do not carry over to the non-compact setting.  

Our strategy is to obtain the regularity of $\vphi$ through approximation by the functions $\vphi_{R}= u_{R}^*$ produced by Legendre's result, Theorem~\ref{thm: LegendreKRS}.

\subsection{Approximation by K\"ahler-Ricci solitons}

For $R \geq 1$, we define measures on $\mathbb{R}^n$ by
\[
d\mu_{R} = e^{-\langle \xi_{R}, y\rangle+C_{R}}\, \chi_{P_{R}}(y)dy, \quad \text{ and }  \quad d\mu_{\infty} = e^{-\langle \xi, y\rangle+C_{\infty} } \chi_{P} dy
\]
where:
\begin{itemize}
\item[$(i)$] $\xi_{R}$ is the unique minimizer of ${\rm Vol}_{R}: N_{X,\bR}\rightarrow \mathbb{R}_{>0}$, and $\xi \in \mathcal{R}(P_X)$ is the unique minimizer of ${\rm Vol}: \mathcal{R}(P_X)\rightarrow \mathbb{R}_{>0}$.
\item [$(ii)$] $C_{R}=-\log{\rm Vol}_{R}(\xi_{R})$ and $C_{\infty} = -\log{\rm Vol}(\xi)$ are chosen so that
\[
\int_{\mathbb{R}^n} d\mu_{R} =1= \int_{\mathbb{R}^n} d\mu_{\infty}
\]
\end{itemize}

Our goal is to show that the function $\vphi$ produced by Corollary~\ref{cor: momentMeasureKRS} is the limit as $R\rightarrow +\infty$ of the K\"ahler-Ricci solitons produced by applying Theorem~\ref{thm: LegendreKRS} to the compact polytopes $P_{R}$. This is quite natural, since the K\"ahler-Ricci solitons $\vphi_{R}$ produced by Theorem~\ref{thm: LegendreKRS} are precisely the functions whose moment measure $\mu_{\vphi_{R}} = \mu_{R}$.  The first step is to prove convergence of the minimizing vector fields $\xi_{R}$ to $\xi$, which implies strong convergence of $\mu_{R}$ to $\mu_{\infty}$.

\begin{lem}\label{lem: convergenceOfReebFields}
    As $R\rightarrow +\infty$ we have $\xi_{R}\rightarrow \xi$, where $\xi$ is the unique minimizer of ${\rm Vol}: \mathcal{R}(P_X) \rightarrow \mathbb{R}_{>0}$.  Furthermore, we have the quantitative rate estimate
    \[
   \sup_{P_{R}}\big|e^{-\langle \xi_{R}-\xi, y \rangle}-1|  \le C R^k e^{-R}
    \]
    for some constants $k$ and $C$.
\end{lem}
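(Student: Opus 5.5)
We compare the barycenter conditions defining $\xi_R$ and $\xi$. Write $G_R(\eta)=\nabla \mathrm{Vol}_R(\eta)=-\int_{P_R} y\,e^{-\langle \eta,y\rangle}dy$ and $G(\eta)=-\int_{P} y\,e^{-\langle\eta,y\rangle}dy$. By definition $G_R(\xi_R)=0=G(\xi)$. The plan is to show that $G_R(\xi)$ is exponentially small, and then to invert $G_R$ near $\xi$ using a uniform lower bound on $\Hess\,\mathrm{Vol}_R$. This will give $|\xi_R-\xi|\le CR^ke^{-R}$, and the sup estimate follows from it.

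\emph{Step 1: tail estimate.} Since $\xi\in\mathcal R(P_X)$, there are constants $c_1,c_2>0$ with $\langle\xi,y\rangle\ge c_1|y|-c_2$ on $P$. Hence $|y|\le C(1+\langle \xi,y\rangle)$ on $P$, and by the coarea formula the slices $\{\langle\xi,y\rangle=t\}\cap P$ have $(n-1)$-volume $O(t^{n-1})$. Using $G(\xi)=0$,
\[
|G_R(\xi)|=\Big|\int_{P\setminus P_R} y\,e^{-\langle\xi,y\rangle}dy\Big|\le C\int_R^\infty t^{n}e^{-t}\,dt\le CR^{n}e^{-R}.
\]
Because the truncation is by the level sets of $\xi$ itself, the decay is exactly $e^{-R}$.

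\emph{Step 2: uniform convexity near $\xi$.} We have $\Hess\,\mathrm{Vol}_R(\eta)=\int_{P_R} y\otimes y\,e^{-\langle\eta,y\rangle}dy$. For $R\ge R_0$ and all $\eta$, this is bounded below by the same integral over the fixed compact set $P_{R_0}$. On a compact ball $B_\delta(\xi)$ that integral is $\ge \lambda I$ with $\lambda>0$, since $P_{R_0}$ has nonempty interior. Therefore $\mathrm{Vol}_R$ is $\lambda$-strongly convex on $B_\delta(\xi)$, uniformly in $R\ge R_0$.

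\emph{Step 3: localization, the main point.} We need $\xi_R\in B_\delta(\xi)$ before strong convexity can be used. By strong convexity on the ball and Step 1, for $|\eta-\xi|=\delta$,
\[
\langle G_R(\eta),\eta-\xi\rangle\ge \lambda\delta^2-\delta|G_R(\xi)|>0
\]
once $R$ is large. Convexity of $\mathrm{Vol}_R$ on all of $N_{X,\R}$ makes $t\mapsto\langle G_R(\xi+t\omega),\omega\rangle$ nondecreasing along every ray from $\xi$. So $G_R$ has no zero outside $B_\delta(\xi)$, and the unique minimizer satisfies $\xi_R\in B_\delta(\xi)$. Strong monotonicity then gives
\[
\lambda|\xi_R-\xi|^2\le\langle G_R(\xi_R)-G_R(\xi),\xi_R-\xi\rangle=-\langle G_R(\xi),\xi_R-\xi\rangle .
\]
Hence $|\xi_R-\xi|\le \lambda^{-1}CR^ne^{-R}$. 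The delicate point is this monotonicity-along-rays argument, which is needed because $\mathrm{Vol}_R$ is finite for all $\eta$ while $\mathrm{Vol}$ is only finite on the Reeb cone. Uniformity in $R$ of $\lambda$ is what Step 2 supplies.

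\emph{Step 4: conclusion.} On $P_R$ we have $|y|\le C(1+R)$. Therefore
\[
|\langle\xi_R-\xi,y\rangle|\le CR^{n+1}e^{-R}\le 1
\]
for large $R$, and $|e^{s}-1|\le e|s|$ for $|s|\le1$. This yields the estimate with $k=n+1$; small $R$ is absorbed into $C$.
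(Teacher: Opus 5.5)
Your argument is correct. Its core coincides with the paper's: the tail bound $|\nabla{\rm Vol}_R(\xi)|=|\int_{P\setminus P_R}ye^{-\langle\xi,y\rangle}dy|\le CR^ne^{-R}$, a uniform lower bound on $\Hess\,{\rm Vol}_R$, the inversion $|\xi_R-\xi|\le\lambda^{-1}|\nabla{\rm Vol}_R(\xi)|$, and $|y|\le C(1+R)$ on $P_R$.

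Where you differ is in how $\xi_R$ is placed inside the region where strong convexity is used. The paper first proves $\xi_R\to\xi$ by a soft argument. It bounds $|\xi_R|$ by comparing ${\rm Vol}_R(\xi_R)\le{\rm Vol}(\xi)$ with the growth of $e^{-\langle\xi_R,y\rangle}$ on a cone around $-\xi_R$ near $0$. It then extracts a limit and identifies it with $\xi$ via Fatou and uniqueness of the minimizer of ${\rm Vol}$ on $\mathcal R(P_X)$. Finally it asserts $D^2{\rm Vol}_R(\eta)\ge\lambda I$ for \emph{all} $\eta$.

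You use the Hessian bound only on $B_\delta(\xi)$, and you localize $\xi_R$ by monotonicity of the radial derivative of the convex function ${\rm Vol}_R$. This gives convergence and rate at once. It needs only the barycenter identity $\int_Pye^{-\langle\xi,y\rangle}dy=0$, not uniqueness of the minimizer, and it avoids compactness.

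The paper's global claim is in fact true, and it makes both your Step 3 and the paper's compactness step unnecessary. On the half ball $\{y\in B_\epsilon(0):\langle\eta,y\rangle\le0\}\subset P_{R_0}$ the weight is at least $1$. Hence $D^2{\rm Vol}_R(\eta)\ge\frac1{2n}\int_{B_\epsilon(0)}|y|^2dy\,I$ for every $\eta$ and every $R\ge R_0$.

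This observation, or the paper's bound on $|\xi_R|$, is also what justifies your remark that small $R$ is absorbed into $C$. Your Step 3 only applies once $|\nabla{\rm Vol}_R(\xi)|<\lambda\delta$, and you have not otherwise bounded $\xi_R$ for intermediate $R$. Since the lemma is asymptotic, this is cosmetic.
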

\begin{proof}
Since $\mathcal{C}\subset N_{X,\bR}$, the variational characterization of $\xi_{R}$ yields
\[
{\rm Vol}_{R}(\xi_{R}) \leq {\rm Vol}_{R}(\xi) = \int_{P_{R}}e^{-\langle \xi, y \rangle}dy \leq  \int_{P_{X}}e^{-\langle \xi, y \rangle}dy = {\rm Vol}(\xi) < +\infty
\]
 Thus ${\rm Vol}_{R}(\xi_{R})$ is uniformly bounded.  Suppose that $|\xi_{R}| = M_{R} \gg 1$.  Consider the cone in the direction of $-\xi_{R}$ defined by 
 \[
 \Upsilon = \bigg\{ y\in M_{X,\bR} : \langle y, \xi_{R}\rangle  \leq -\frac{|y||\xi_{R}|}{2}\bigg\}.
 \]
 Since $0\in P_{R}$ there are $\epsilon,\delta >0$, independent of $R$, such that $ |\Upsilon\cap P_{R}\cap B_{\epsilon}^{c}| > \delta >0$.  Now we have
 \[
 {\rm Vol}(\xi)\geq {\rm Vol}_{R}(\xi_{R}) \geq \int_{\Upsilon \cap P_{R}}e^{-\langle \xi_{R},y \rangle} dy  \geq \int_{\Upsilon \cap P_{R}\cap B_{\epsilon}^c}e^{\frac{\epsilon|\xi_{R}|}{2}} dy \geq \delta e^{\frac{\epsilon|\xi_{R}|}{2}}
 \]
 which proves that $|\xi_{R}|$ is bounded.  Thus we can take a limit $\xi_{R} \rightarrow \xi_{\infty}$. By Fatou's lemma
 \[
 \int_{P_{X}} e^{-\langle \xi_{\infty}, y\rangle} dy \leq \liminf_{R\rightarrow +\infty} {\rm Vol}_{R}(\xi_{R})\leq {\rm Vol}(\xi)<+\infty.
 \]
 This immediately implies that $\xi_{\infty} \in \mathcal{R}(P_X)$. But since $\xi$ is the unique minimizer of ${\rm Vol}:\mathcal{R}(P_X)\rightarrow \bR_{>0}$, this implies $\xi_{\infty}=\xi$.

 We now establish the rate estimate, which will be needed later in the argument.  Consider the functions ${\rm Vol}_{R}: N_{X,\bR}\rightarrow \mathbb{R}_{>0}$, and ${\rm Vol}: \mathcal{R}(P_X)\rightarrow \mathbb{R}_{>0}$.
 Recall that, by definition,  $\xi, \xi_{R}$ are the unique minimizers of ${\rm Vol}$ and ${\rm Vol}_{R}$ respectively.  In particular, we have
 \[
 \int_{P}y e^{-\langle \xi, y \rangle}dy=0, \quad \text{ and } \quad  \int_{P_{R}}y e^{-\langle \xi_{R}, y \rangle}dy=0.
 \]
 Note that
 \[
 \int_{P_{R}}y e^{-\langle \xi, y \rangle}dy= - \int_{P\setminus P_{R}}y e^{-\langle \xi, y \rangle}dy
 \]
 which yields
 \[
 \big| \int_{P_{R}}y e^{-\langle \xi, y \rangle}dy\big| \leq CR^{N}e^{-R}
 \]
 And so
 \[
 |D {\rm Vol}_{R}(\xi)| \leq CR^{N}e^{-R}
 \]
 On the other hand, for any $\eta \in N_{X,\bR}$ we have
 \[
 D^2{\rm Vol}_{R}(\eta) \geq \lambda I
 \]
 for a uniform constant $\lambda$ independent of $R$. By the fundamental theorem of calculus we obtain
 \[
 |\xi_{R}-\xi| \leq \lambda^{-1}|D {\rm Vol}_{R}(\xi)| \leq C\lambda^{-1}R^{N}e^{-R}
 \]
 Now we have
 \[
 \sup_{P_{R}}\big|e^{-\langle \xi_{R}-\xi, y \rangle}-1| \le C R^{N+1} e^{-R}\rightarrow 0
 \]
 and the result is proved.
\end{proof}

By Lemma~\ref{lem: convergenceOfReebFields} we have $\mu_{R} \rightarrow \mu_{\infty}$ in the sense of total variation.  The next several lemmas establish the convergence of $\vphi_{R}$ to $\vphi$.  These results are due to Corder-Erausquin-Klartag \cite{CEK}, but since we need to establish suitable uniformity in the parameter $R$, we recall the proofs for the reader's convenience.

The first lemma we prove is based on \cite[Lemma 14]{CEK}.

\begin{lem}\label{lem: prodIntegralLowBound}
    There exists a constant $C_0>0$ such that, for any $R\geq 1$ we have
    \[
    \left(\int_{\mathbb{R}^n}e^{-v}dy\right)^{1/n}\left(\int_{\mathbb{R}^n}(v -\inf v)\, d\mu_{R} +1\right) \geq C_0
    \]
    where $v:\mathbb{R}^n \rightarrow \mathbb{R}\cup\{+\infty\}$ is any  convex function   with the following properties:
    \begin{itemize}
        \item[(i)] $v$ is $\mu_{R}$ integrable.
        \item[(ii)] $e^{-v}$ is Lebesgue integrable on $\mathbb{R}^n$ and $v(0)=0$.
    \end{itemize}
\end{lem}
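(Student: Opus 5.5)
The approach follows \cite[Lemma 14]{CEK}. We convert the $\mu_R$-integral bound into a large sublevel set of $v$ inside a fixed ball. Convexity, together with the normalization $v(0)=0$, then rescales that set into a region where $v\le 1$, whose volume is only polynomially small. The one point requiring attention is uniformity in $R$, which comes from a lower bound on the density of $\mu_R$ near the origin.

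\emph{Step 1 (uniform density bound).} Since $0\in\operatorname{int}(P)$ and $P_R\supset P_{1}$ for $R\ge 1$, there is a ball $B_r=B_r(0)\subset P_R$ with $r>0$ independent of $R$. By Lemma~\ref{lem: convergenceOfReebFields}, together with the boundedness of $\xi_R$ established in its proof, both $|\xi_R|$ and $C_R=-\log{\rm Vol}_R(\xi_R)$ are bounded uniformly in $R$. Hence $d\mu_R\ge c_0\,dy$ on $B_r$ for some $c_0>0$ independent of $R$.

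\emph{Step 2 (large sublevel set).} Let $m=\inf v$; then $m\le v(0)=0$, and $m>-\infty$ because $e^{-v}$ is integrable. Set $A=\int(v-m)\,d\mu_R+1$ and $t=2A/(c_0|B_r|)$. Applying Markov's inequality to the nonnegative function $v-m$ gives
\[
c_0\,\big|\{y\in B_r: v-m>t\}\big| \le \mu_R(\{v-m>t\})\le A/t = c_0|B_r|/2 .
\]
Consequently the convex set $K=\{v\le m+t\}$ satisfies $|K|\ge |K\cap B_r|\ge |B_r|/2$. Moreover $v\le m+t\le t$ on $K$.

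\emph{Step 3 (rescaling by convexity).} Put $s=\min(1,1/t)$. For $y\in K$, convexity and $v(0)=0$ give
\[
v(sy)\le (1-s)v(0)+s\,v(y)\le st\le 1 .
\]
So $v\le 1$ on $sK$, which has volume $s^n|K|\ge s^n|B_r|/2$. It follows that
\[
\int e^{-v}\ge e^{-1}\tfrac{|B_r|}{2}\min(1,t^{-n}),
\quad\text{hence}\quad
\Big(\int e^{-v}\Big)^{1/n}\ge c\,\min(1,t^{-1}).
\]
Since $A\ge 1$ and $t$ is a fixed multiple of $A$, in either case $\min(1,t^{-1})\ge c'/A$. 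Multiplying through by $A$ yields the claimed bound with $C_0$ depending only on $r$, $c_0$ and $n$.

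The only genuinely nontrivial point is Step 1: the uniform-in-$R$ lower bound for the density of $\mu_R$ near the origin, which is exactly where Lemma~\ref{lem: convergenceOfReebFields} is needed. Steps 2 and 3 are elementary.
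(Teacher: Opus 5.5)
Your proof is correct, but it takes a different route from the paper's.

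\textbf{The paper's route.} The paper follows \cite[Lemma 14]{CEK} closely. It sets $K=\{v\le 1\}$ and applies the Rogers--Shephard inequality to the difference body $K-K$. This yields a direction $\theta_0$ in which $K$ has width at most $L$, where $\omega_n L^n=4^n|K|$. Convexity and $v(0)=0$ then give the linear lower bound $v(y)\ge |\langle y,\theta_0\rangle|/L+\inf v-1$. Integrating this against $\mu_R$ uses the uniform non-degeneracy $\inf_{\theta}\int|\langle y,\theta\rangle|\,d\mu_R\ge C_1$, taken from Lemma~\ref{lem: convergenceOfReebFields}. Combined with $\int e^{-v}\ge e^{-1}|K|$, this gives the product bound.

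\textbf{Your route.} You replace the width/first-moment mechanism with three ingredients:
\begin{itemize}
\item a uniform lower bound on the density of $\mu_R$ on a fixed ball;
\item Markov's inequality, giving a sublevel set $\{v\le \inf v+t\}$ of volume bounded below with $t\sim A$;
\item a dilation toward the origin, landing in $\{v\le 1\}$.
\end{itemize}

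\textbf{Comparison.} Your argument is more elementary. It needs no Rogers--Shephard inequality and no inradius/support-function argument for $K-K$, and its constants are explicit. The cost is that it requires $\mu_R$ to have density bounded below on an open set, uniformly in $R$. That holds here because $0\in\operatorname{int}P$, $|\xi_R|$ is bounded, and $\operatorname{Vol}_R(\xi_R)\le\operatorname{Vol}(\xi)$, so your Step 1 is fine. The CEK argument only needs $\mu_R$ not to concentrate near hyperplanes through the origin. It therefore applies to arbitrary, possibly singular, measures, which is why it is the natural argument in the general moment-measure setting.

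\textbf{A small justification gap.} The claim $\inf v>-\infty$ does not follow from integrability of $e^{-v}$ alone. If $\{v<+\infty\}$ lay in a hyperplane, then $e^{-v}=0$ almost everywhere even if $\inf v=-\infty$. You need hypothesis (i): since $\mu_R$ has positive density on $P_R$, $\mu_R$-integrability forces $\{v<+\infty\}$ to have nonempty interior. Then pick a ball $B$ in that interior on which $v\le M$. If $v(y_k)\to-\infty$, convexity gives $v\le \tfrac12(M+v(y_k))$ on $\tfrac12(B+y_k)$. These are balls of fixed radius, so $\int e^{-v}=+\infty$, a contradiction.
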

\begin{proof}
By the Lemma \ref{lem: convergenceOfReebFields}, we can find $C_1>0$ independent of $R\geq 1$ such that, for any $\theta \in S^{n-1}$ we have
\[
\int_{\mathbb{R}^n} |\langle y, \theta\rangle |\,d\mu_{R}(y) \geq C_1.
\]
We claim that the result holds with
\[
C_0:= \frac{\omega_n^{1/n}}{4e^{1/n}} C_1
\]
where $\omega_n= |B_1|$.

Let $K=\{v \leq 1\}$, and let
\[
\widehat{K}: =K-K = \{ y_0-y_1 : y_0,y_1 \in K\}.
\]
The set $\widehat{K}$ is convex and centrally symmetric in the sense that $y \in \widehat{K}$ implies that $-y \in \widehat{K}$.  Let
\[
\ell:= \sup\{ \rho >0 : B_{\rho}(0) \subset \widehat{K}\}
\]
denote the inner radius of $K$. By The Rogers-Shephard inequality we have
\[
|\widehat{K}| \leq \binom{2n}{n} |K| \leq 4^{n}|K|
\]
and so, if we choose $L>0$ so that
\[
\omega_nL^n=  4^{n}|K|
\]
then $L \geq \ell$.  Let $\theta_0\in S^{n-1}$ be a direction such that $y=\ell\theta_0 \in \del \widehat{K}$, then we have
\[
\sup_{y \in \widehat{K}} \langle y, \theta_0\rangle \leq \ell \leq L
\]
On the other hand, since $\widehat{K}$ is centrally symmetric we conclude that
\[
\sup_{y \in \widehat{K}} |\langle y, \theta_0\rangle| \leq L.
\]
Since $0\in K$ and have that $K\subset \widehat{K}$, and so if $y\in \mathbb{R}^n$ has $|\langle y, \theta_0\rangle|\geq L$ then $y\in \mathbb{R}^n\setminus K$ and so $v(y) \geq 1$. 
For any $y\in \mathbb{R}^n$ with $|y\cdot \theta_0| \geq  L$ we have $y \frac{L}{|y\cdot \theta_0|} \notin K$. Using that $v$ is convex and $v(0)=0$ we deduce that 
\begin{equation}\label{eq: psiLowBoundCone}
1 \leq v\left(\frac{L}{|\langle y, \theta_0\rangle|}y\right) \leq \frac{L}{|\langle y, \theta_0 \rangle|}v(y) \quad \text{ for any $y$ with $|y\cdot \theta_0| \geq  L$}
\end{equation}
Now we claim that
\[
v \geq \frac{|\langle y, \theta_0 \rangle|}{L} + \inf v -1 \quad \text{ for all } y \in \mathbb{R}^n.
\]
To see this, note that since $\inf v \leq 0$ the lower bound holds for any $y$ with $|y\cdot \theta_0| \geq  L$ by~\eqref{eq: psiLowBoundCone}.  On the other hand, if $|y\cdot \theta_0| \leq  L$, then 
\[
\frac{|\langle y, \theta_0 \rangle|}{L} + \inf v -1 \leq \inf v
\]
and so the bound holds trivially.  Thus we conclude that
\[
\int_{\mathbb{R}^n} (v -\inf v +1) \,d\mu_{R} \geq \frac{1}{L}\int_{\mathbb{R}^n}|\langle y, \theta_0 \rangle|\, d\mu_{R}(y) \geq \frac{C_0}{L}.
\]
On the other hand
\[
\int_{\mathbb{R}^{n}}e^{-v} \geq \int_{K}e^{-v} \geq e^{-1}|K| =\frac{ \omega_nL^n}{4^{n}e}
\]
Then 
\[
\left(\int_{\mathbb{R}^{n}}e^{-v} \right)^{1/n}\left(\int_{\mathbb{R}^n} (v -\inf v +1) \,d\mu_{R}\right) \geq \left(\frac{ \omega_n^{1/n}}{4e^{1/n}}\right)C_0
\]
which is the desired bound.
\end{proof}

The next result is based on \cite[Lemma 15]{CEK}

\begin{lem}\label{lem: meanLowerBoundLegendre}
    Fix $R \geq 1$.  Suppose that $v:\mathbb{R}^n\rightarrow \mathbb{R}\cup\{+\infty\}$ convex, $\mu_{R}$-integrable with $v(0)=0$ and $0 \in {\rm int}(\{v<+\infty\})$.  Then we have
    \[
    \int_{\mathbb{R}^n} v\,d\mu_{R} \geq \frac{C_0}{2\pi}\left(\int_{\mathbb{R}^n} e^{-v^*(x)}dx\right)^{1/n} - (n+1)
    \]
    where $C_0$ is the constant in Lemma~\ref{lem: prodIntegralLowBound}.
\end{lem}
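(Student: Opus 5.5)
The plan is to reduce to Lemma~\ref{lem: prodIntegralLowBound} using the functional Santal\'o inequality. The centering needed for Santal\'o is paid for with a linear change of $v$, which costs nothing because $\mu_R$ has barycenter $0$.

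\textbf{Integrability.} Since $0\in{\rm int}(\{v<+\infty\})$, $v$ is bounded above by some $M$ on a ball $B_r(0)$. Hence $v^*(x)\geq r|x|-M$, so $e^{-v^*}$ is integrable. Also
\[
\inf v^* = -v^{**}(0) = -v(0) = 0,
\]
using lower semicontinuity at the interior point $0$ (if needed, replace $v$ by its lower semicontinuous hull, which changes neither $v^*$ nor, a.e., the integral against $\mu_R$). If $\int e^{-v^*}=0$ there is nothing to prove, so assume it is positive.

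\textbf{Centering.} Let $z=\int x e^{-v^*(x)}dx\big/\int e^{-v^*}$ be the barycenter of $e^{-v^*}$. Set $\tilde v(y)=v(y)-\langle y,z\rangle$. Then $\tilde v^*(x)=v^*(x+z)$, so $e^{-\tilde v^*}$ has barycenter $0$ and the same total integral as $e^{-v^*}$. Moreover $\tilde v(0)=0$, $\tilde v$ is convex, and $\tilde v$ is $\mu_R$-integrable because $\mu_R$ has exponential moments. Since $\int y\,d\mu_R=0$ (the defining property of $\xi_R$), we get
\[
\int\tilde v\,d\mu_R=\int v\,d\mu_R .
\]

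\textbf{Santal\'o.} The functional Santal\'o inequality (Artstein--Klartag--Milman), applied with the barycenter of $e^{-\tilde v^*}$ at the origin, gives
\[
\int e^{-\tilde v}\int e^{-\tilde v^*}\leq (2\pi)^n .
\]
Hence $\left(\int e^{-\tilde v}\right)^{1/n}\leq 2\pi\left(\int e^{-v^*}\right)^{-1/n}$, and in particular $e^{-\tilde v}$ is integrable. Lemma~\ref{lem: prodIntegralLowBound} applied to $\tilde v$ then yields
\[
\int v\,d\mu_R=\int \tilde v\,d\mu_R\;\geq\; \frac{C_0}{(\int e^{-\tilde v})^{1/n}}+\inf\tilde v-1\;\geq\;\frac{C_0}{2\pi}\Big(\int e^{-v^*}\Big)^{1/n}+\inf\tilde v-1 .
\]

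\textbf{Bounding $\inf\tilde v$.} We have $\inf\tilde v=-\tilde v^*(0)=-v^*(z)$. We need the standard fact (Fradelizi) that for a convex $\psi$ with $0<\int e^{-\psi}<\infty$, the barycenter $z$ of $e^{-\psi}$ satisfies $\psi(z)\leq \inf\psi+n$. Applied to $\psi=v^*$ with $\inf v^*=0$, this gives $v^*(z)\leq n$, so $\inf\tilde v\geq -n$. Substituting gives the claimed bound with $-(n+1)$.

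\textbf{Main obstacle.} The key step is this barycenter inequality. If it cannot simply be cited, I would prove it along the rays from the barycenter. In polar coordinates centered at $z$, the function $e^{-\psi}$ restricted to each ray is log-concave. The one-dimensional estimate $\psi(z)-\inf\psi\leq n$ then follows from the centering condition, via the standard argument comparing with the extremal exponential profile $e^{-t}t^{n-1}$.
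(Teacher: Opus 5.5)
Your proposal is correct and follows the paper's proof step for step. You center $e^{-v^*}$ by subtracting a linear function, which leaves $\int v\,d\mu_R$ unchanged since $\mu_R$ has barycenter $0$; you combine Lemma~\ref{lem: prodIntegralLowBound} with the functional Santal\'o inequality; and you bound $\inf\tilde v=-v^*(z)\geq -n$ by Fradelizi's inequality. The only difference is in how that last fact (the paper's Lemma~\ref{lem: entropyEst}) is proved: instead of a ray-wise comparison, the paper bounds $-\psi^*$ by the $e^{-\psi}$-average of $\psi$ and shows this average is at most $n$ using the monotonicity of $t\mapsto t^n\int e^{-t\psi}$ on $(0,1]$, which is the cleaner route if you need to prove it rather than cite it.
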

\begin{proof}
    Let $\psi= v^*$ for simplicity.  First note that since $v(0)=0$ and $0 \in {\rm int}(\{v<+\infty\})$ we have that $\psi \geq 0$ and $e^{-\psi}$ is Lebesgue integrable on $\mathbb{R}^n.$ we claim that, without loss of generality, we may assume that
    \begin{equation}\label{eq: e-psiBarycenter0}
    \int_{\mathbb{R}^n}xe^{-\psi(x)}dx =0
    \end{equation}
    Indeed if $\widetilde{v}(y) = v(y)-\langle x_0,y\rangle$, then $\tilde{v}(0)=0$ and  $\widetilde{\psi}(x)= \widetilde{v}^*(x)= \psi(x+x_0)$.  Thus
    \[
    \int_{\mathbb{R}^n}e^{-\widetilde{\psi}(x)}dx =\int_{\mathbb{R}^n}e^{-\psi(x)}dx.
    \]
    On the other hand, since $d\mu_{R}$ has barycenter at the origin we have
    \[
    \int_{\mathbb{R}^n}\widetilde{v}(y)\, d\mu_{R}(y) = \int_{\mathbb{R}^n} v\,d\mu_{R}.
    \]
    Thus, by choosing $x_0$ appropriately it suffices to prove the desired inequality assuming~\eqref{eq: e-psiBarycenter0}.  Apply Lemma~\ref{lem: prodIntegralLowBound} and rearranging we have
    \begin{equation}\label{eq: intLowboundPreSantalo}
    \int_{\mathbb{R}^n}v\, d\mu_{R} \geq C_0\left(\int_{\mathbb{R}^n} e^{-v(y)}dy\right)^{-1/n}+\inf v-1
    \end{equation}
    Since the barycenter of $e^{-\psi(x)}dx$ is at the origin we can apply the functional Santal\'o inequality \cite[Theorem 1.3]{AAKM} which yields
    \[
    \left(\int_{\mathbb{R}^n} e^{-v(y)} dy\right)\left(\int_{\mathbb{R}^n} e^{-\psi(x)}dx\right)\leq (2\pi)^n,
    \]
or in other words
\[
 \left(\int_{\mathbb{R}^n} e^{-v(y)} dy\right)^{-1/n} \geq \frac{1}{2\pi}\left(\int_{\mathbb{R}^n} e^{-\psi(x)}dx\right)^{1/n}.
\]
   Substituting this into~\eqref{eq: intLowboundPreSantalo} yields
   \[
    \int_{\mathbb{R}^n}v\, d\mu_{R} \geq \frac{C_0}{2\pi}\left(\int_{\mathbb{R}^n} e^{-\psi(x)}dx\right)^{1/n}+\inf v-1.
   \]
   By Lemma~\ref{lem: entropyEst} we have $\inf v \geq -n$, and the result follows.
\end{proof}

In the proof of Lemma~\ref{lem: meanLowerBoundLegendre} we used the following lemma due to Fradelizi \cite{Frad}; we recall the proof for the readers' convenience.

\begin{lem}\label{lem: entropyEst}
Suppose $\psi:\mathbb{R}^n\rightarrow \mathbb{R}_{\geq 0}\cup\{+\infty\}$ is proper, convex, $\inf \psi =0$ and the measure $e^{-\psi(x)}dx$ has barycenter at the origin.  Then $\inf_{\mathbb{R}^n} \psi^* \geq -n$.
\end{lem}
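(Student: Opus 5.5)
Write $\psi^*(y)=\sup_x(\langle x,y\rangle-\psi(x))$. Since $\inf\psi^*=-\sup_y\big(\psi(\cdot)\text{-related}\big)$, it is equivalent to show that for every $y\in\mathbb{R}^n$ there is some $x$ with $\langle x,y\rangle-\psi(x)\ge -n$. Because $\psi^*(y)\ge \langle x,y\rangle-\psi(x)$ for all $x$, it suffices to bound $\psi^*(y)$ below by an average of $\langle x,y\rangle-\psi(x)$ over a suitable probability measure. The natural choice is the density $\rho_y(x)=e^{-\psi(x)+\langle x,y\rangle}/Z(y)$ wherever it is integrable. A cruder but simpler route uses the barycenter directly, and that is the one I would carry out first.

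\textbf{Step 1 (reduction to $\psi(0)\le n$).} Taking $x=0$ gives $\psi^*(y)\ge -\psi(0)$ for every $y$. Hence it suffices to show $\psi(0)\le n$, where $0$ is the barycenter of $e^{-\psi}dx$ and $\inf\psi=0$. This is exactly Fradelizi's inequality $e^{-\psi(\text{bar})}\ge e^{-n}\sup e^{-\psi}$ for log-concave functions.

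\textbf{Step 2 (proof of Fradelizi's inequality).} Since $0$ is the barycenter and $\psi$ is convex, Jensen gives $\psi(0)\le \int\psi\,e^{-\psi}/\int e^{-\psi}$, the average of $\psi$ under the probability measure $e^{-\psi}/Z$. So I will show that this mean is at most $n+\inf\psi=n$. Consider
\[
F(t)=\int e^{-t\psi}\,dx,\qquad t>0.
\]
Fix $x_0$ with $\psi(x_0)$ close to $0$ (approximating the infimum). By convexity, $t\psi(x)\ge \psi(x_0+t(x-x_0))+(t-1)\psi(x_0)$ for $t\ge1$. Changing variables $z=x_0+t(x-x_0)$ then gives
\[
F(t)\le t^{-n}e^{-(t-1)\psi(x_0)}F(1)\quad (t\ge 1).
\]
Since equality holds at $t=1$, differentiating at $t=1^+$ yields $F'(1)\le(-n-\psi(x_0))F(1)$, that is,
\[
\int\psi e^{-\psi}\le (n+\psi(x_0))\int e^{-\psi}.
\]
Letting $\psi(x_0)\to0$ gives a mean of $\psi$ at most $n$, hence $\psi(0)\le n$.

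\textbf{Main obstacle.} The delicate part is the justification in Step 2. One must check that $F$ is finite and differentiable near $t=1$. Finiteness follows because $e^{-\psi}$ integrable with $\psi$ convex forces $\psi(x)\ge a|x|-b$. Differentiability then follows by dominated convergence, and $\int\psi e^{-\psi}<\infty$ holds for the same reason. One must also handle the case where $\psi=+\infty$ on part of space, which is harmless because the change of variables respects the domain (convexity maps it into itself for $t\ge1$ via $x_0$). Finally, Jensen's inequality at the barycenter requires $0\in\overline{\operatorname{dom}\psi}$. In fact $0$ lies in the interior of the domain, since the barycenter of a measure with positive density on a convex set lies in its interior, so $\psi(0)<\infty$.
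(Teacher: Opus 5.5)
Your overall strategy works, but Step 2 as written rests on a false inequality. For $t\ge 1$ the point $x$ lies between $x_0$ and $z=x_0+t(x-x_0)$; indeed $x=(1-\tfrac1t)x_0+\tfrac1t z$. Convexity therefore gives
\[
t\psi(x)\le \psi\bigl(x_0+t(x-x_0)\bigr)+(t-1)\psi(x_0),
\]
which is the reverse of what you wrote. Consequently your bound $F(t)\le t^{-n}e^{-(t-1)\psi(x_0)}F(1)$ is false in general. For $n=1$, $\psi(x)=x^2$, $x_0=0$ one has $F(t)=\sqrt{\pi/t}>\sqrt{\pi}/t$ for $t>1$.

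Your conclusion survives only because of a second, compensating sign slip. Since $F'(1)=-\int\psi e^{-\psi}$, the inequality $F'(1)\le -(n+\psi(x_0))F(1)$ would mean $\int\psi e^{-\psi}\ge (n+\psi(x_0))\int e^{-\psi}$, not $\le$.

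The repair is immediate. With the correct orientation you get $F(t)\ge t^{-n}e^{-(t-1)\psi(x_0)}F(1)$ for $t\ge 1$, with equality at $t=1$. Comparing right derivatives at $t=1$ then gives $F'(1)\ge -(n+\psi(x_0))F(1)$, which is exactly $\int\psi e^{-\psi}\le (n+\psi(x_0))\int e^{-\psi}$.

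Once corrected, your argument is close to the paper's. The paper takes $x_0$ to be an exact minimizer and uses $t\in(0,1]$ with $\psi(x_0+t(x-x_0))\le t\psi(x)$. This shows that $t^n\int e^{-t\psi}$ is maximized at $t=1$ on $(0,1]$, so its left derivative at $1$ is nonnegative. Your outward dilation with an approximate minimizer is the same monotonicity seen from the other side, and it has the minor advantage of not assuming the infimum is attained.

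For the first step, the paper bounds $-\psi^*(y)=\inf_x(\psi(x)-\langle x,y\rangle)$ by the $e^{-\psi}$-average of $\psi(x)-\langle x,y\rangle$. The barycenter condition then kills the linear term, and this needs no information about $\operatorname{dom}\psi$. Your route via $x=0$ and Jensen needs the observation that $0\in\operatorname{int}\operatorname{dom}\psi$, which you justified correctly. In exchange it proves the stronger pointwise Fradelizi bound $\psi(0)\le \inf\psi+n$.
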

\begin{proof}
    We write
    \[
    \begin{aligned}
    -\psi^*(y) = \inf_{x\in \mathbb{R}^n}(\psi(x)-\langle x,y\rangle ) &\leq \frac{1}{\int_{\mathbb{R}^n}e^{-\psi(x)}dx} \int_{\mathbb{R}^n}(\psi(x)-\langle x,y \rangle) e^{-\psi(x) dx}\\
    &=  \frac{1}{\int_{\mathbb{R}^n}e^{-\psi(x)}dx} \int_{\mathbb{R}^n}\psi(x)e^{-\psi(x)} dx
    \end{aligned}
    \]
    where we used the barycenter condition in the final equality.  Thus it suffices to prove
    \[
   \int_{\mathbb{R}^n}\psi(x)e^{-\psi(x)} dx \leq n\int_{\mathbb{R}^n}e^{-\psi(x)}dx
    \]
    Choose $x_0$ such that $\psi(x_0)=0$.  Then we have
    \[
    \psi((1-t)x_0+tx) \leq t\psi(x)
    \]
    and so if $z= x_0+t(x-x_0)$ we have, for all $t\in [0,1]$
    \[
    \begin{aligned}
    \int_{\mathbb{R}^n}e^{-\psi(z)}dz &= \int_{\mathbb{R}^n}e^{-\psi(x_0+t(x-x_0))}t^ndx\\
    & \geq t^{n}\int_{\mathbb{R}^n}e^{-t\psi(x)}dx.
    \end{aligned}
    \]
    We conclude from this inequality that function $f(t)=t^{n}\int_{\mathbb{R}^n}e^{-t\psi(x)}dx$ is clearly increasing as $t\rightarrow 1^{-}$, and so
    \[
    0 \leq n -\frac{1}{\int_{\mathbb{R}^n}e^{-\psi(x)}dx} \int_{\mathbb{R}^n}\psi(x)e^{-\psi(x)}
    \]
    which is the desired inequality.
\end{proof}

The next result is based on \cite[Lemma 16]{CEK}.

\begin{lem}\label{lem: pointwiseToLocalIntegral}
Let $y_0 \in {\rm int}(P)$.  Then, there exists constants $R_0, C_{y_0}$ depending on $y_0$ with the following effect: for all $R \geq R_0$ and for any non-negative convex function $v:\mathbb{R}^n\rightarrow \mathbb{R}_{\geq 0}\cup\{+\infty\}$ that is $\mu_{R}$-integrable we have
\[
v(y_0)\leq C_{y_0}\int_{P_{R}}v \, d\mu_{R}
\]
\end{lem}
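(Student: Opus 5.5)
The plan is to mimic \cite[Lemma 16]{CEK}: bound $v(y_0)$ by the average of $v$ over a small simplex around $y_0$, and use that the density of $\mu_R$ there is bounded below uniformly in $R$. Since $y_0\in{\rm int}(P)$, I fix $r>0$ with $\overline{B_{2r}(y_0)}\subset {\rm int}(P)$. I then choose $R_0$ so that $B_{2r}(y_0)\subset P_R$ for all $R\geq R_0$; this is possible because $\xi$ is a Reeb field, so $\langle \xi,y\rangle$ is bounded on the compact ball and $P_R$ exhausts $P$.

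On $B_{2r}(y_0)$ the density of $\mu_R$ is $e^{-\langle \xi_R,y\rangle + C_R}$. By Lemma~\ref{lem: convergenceOfReebFields}, $\xi_R\to\xi$, so $|\xi_R|$ is uniformly bounded. Also $C_R=-\log {\rm Vol}_R(\xi_R)\geq -\log {\rm Vol}(\xi)$, by the inequality ${\rm Vol}_R(\xi_R)\leq {\rm Vol}(\xi)$ shown in the proof of that lemma. Hence there is $c_{y_0}>0$, independent of $R\geq R_0$, with $d\mu_R\geq c_{y_0}\,dy$ on $B_{2r}(y_0)$.

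For the pointwise-to-average step, I take a nondegenerate simplex $\Delta\subset B_{2r}(y_0)$ with barycenter $y_0$, for instance a regular simplex inscribed in $B_r(y_0)$. For any convex $v$, Jensen's inequality applied to the uniform probability measure on $\Delta$, whose barycenter is $y_0$, gives
\[
v(y_0)\leq \frac{1}{|\Delta|}\int_\Delta v\,dy .
\]
This holds even if $v$ takes the value $+\infty$, where it is trivial, since convexity makes $v$ lie below its affine interpolation. Because $v\geq 0$ we may enlarge the domain of integration, and obtain
\[
v(y_0)\leq \frac{1}{|\Delta|}\int_\Delta v\,dy\leq \frac{1}{c_{y_0}|\Delta|}\int_{\Delta}v\,d\mu_R\leq \frac{1}{c_{y_0}|\Delta|}\int_{P_R}v\,d\mu_R .
\]
This gives the lemma with $C_{y_0}=(c_{y_0}|\Delta|)^{-1}$. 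A ball would serve equally well as the domain, by the mean-value (Jensen) inequality for convex functions.

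The only point needing care is the uniformity in $R$ of the density lower bound. This reduces to the uniform bounds on $\xi_R$ and $C_R$, which were already established in Lemma~\ref{lem: convergenceOfReebFields}. I do not expect any serious obstacle.
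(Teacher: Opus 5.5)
Your proof is correct, but it takes a different route from the paper's.

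\textbf{The paper's route.} The paper picks a subgradient $x_0$ of $v$ at $y_0$, so that $v\geq v(y_0)$ on the half-space $\{(y-y_0)\cdot x_0\geq 0\}$. It then combines $v\geq 0$ with a uniform lower bound on the $\mu_R$-mass of half-spaces through $y_0$,
$m(y_0)=\inf_{R\geq R_0}\inf_{\theta\in S^{n-1}}\mu_R(\{(y-y_0)\cdot\theta\geq 0\})>0$, and obtains $C_{y_0}=1/m(y_0)$.

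\textbf{Your route.} You instead average $v$ over a small simplex (or ball) with barycenter $y_0$ via Jensen. You then use a pointwise lower bound on the density of $\mu_R$ near $y_0$.

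\textbf{Comparison.} The half-space argument is the one from \cite{CEK}. It is natural there because it uses only the depth of the measure at $y_0$ and needs no density. Here, however, the paper justifies $m(y_0)>0$ uniformly in $R$ only by remarking that the $\mu_R$ form a bounded family of positive measures. Making that rigorous is exactly your estimate: every half-space through $y_0$ contains a half-ball of $B_{2r}(y_0)\subset P_R$, on which $d\mu_R\geq c_{y_0}\,dy$ because $|\xi_R|$ is bounded and $C_R\geq -\log{\rm Vol}(\xi)$. So your version makes the uniformity in $R$ explicit. The paper's version has a shorter pointwise step and a slightly better constant.

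\textbf{One imprecise point.} Your treatment of infinite values does not work as written. The phrase ``$v$ lies below its affine interpolation'' bounds $v$ from above by its vertex values, and does not give the integral Jensen inequality. The issue is in fact moot. Since $v$ is $\mu_R$-integrable and $\mu_R$ has positive density on $P_R$, $v<\infty$ almost everywhere on $P_R$. Hence the convex set $\operatorname{dom} v$ is dense in $P_R$ and has positive measure, so it contains $\operatorname{int}P_R\supset\Delta$. Thus $v$ is finite on $\Delta$ and has a subgradient at $y_0$. Integrating the supporting affine function over $\Delta$ then gives Jensen directly. For a ball you can also average $v(y_0)\leq\frac12\bigl(v(y_0+h)+v(y_0-h)\bigr)$ over $h$, which handles $+\infty$ automatically. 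The same observation is what tacitly guarantees that the subgradient $x_0$ exists in the paper's proof.
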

\begin{proof}
    Let $R_0=R_0(y_0)$ be chosen so that $y_0\in P_{R_0-1}\setminus P_{R_0-2}$.
    \[
    m(y_0) = \inf_{R \geq R_0} \inf_{\theta \in S^{n-1}}\mu_{R}(\{y\in \mathbb{R}^n : (y-y_0)\cdot \theta\geq 0\})
    \]
    Since the measures $\mu_{R}, \mu$ lie in a bounded family of strictly positive measures on $P$, we know that $m(y_0)>0$.  So if $x_0$ is a sub-differential of $v$ at $y_0$ we have
    \[
    v(y) \geq v(y_0) + \langle x_0, (y-y_0)\rangle.
    \]
    If $x_0=0$ then the result is clear so we may assume $x_0\ne 0$. Since $v$ is non-negative we have
    \[
    \begin{aligned}
    \int_{\mathbb{R}^n}v(y)\, d\mu_{R} &\geq \int_{\left\{y\in \mathbb{R}^n : (y-y_0)\cdot \frac{x_0}{|x_0|}\geq 0\right\}}v(y)\, d\mu_{R}\\
    & \geq v(y_0)m(y_0)
    \end{aligned}
    \]
    and the result follows by taking $C_{y_0} = \frac{1}{m(y_0)}$.
\end{proof}

Let $\vphi_{R}:\mathbb{R}^n\rightarrow \bR$ be the convex functions satisfying $\mu_{\vphi_{R}} = \mu_{R}$.  By translating we may assume that $\vphi_{R}(0)= \inf_{\mathbb{R}^n} \vphi_{R}$. Under this normalization we have
\[
\inf u_{R} = -\varphi_{R}(0) = -\inf_{\mathbb{R}^n}\varphi_{R} = u_{R}(0)
\]

\begin{lem}\label{lem: infupperandlowerbds}
    Suppose $\vphi_{R}:\mathbb{R}^n\rightarrow \bR$ is convex, satisfies $\inf_{\mathbb{R}^n}\vphi_{R}=\vphi_{R}(0)$, $\int_{\bR^n}e^{-\vphi_{R}} =1$ and $\vphi_{R}$ has moment measure $\mu_{\vphi_{R}} = \mu_{R}$.  Then there exists a constant $C$, independent of $R$ such that
    \[
    -C\leq \vphi_{R}(0) \leq C
    \]
\end{lem}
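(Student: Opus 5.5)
The plan is to get both bounds from the variational characterization of $\vphi_R$ recalled after Theorem~\ref{thm: CEKMoment}, working with $u_R=\vphi_R^*$. Note first that $u_R(0)=\sup_x(-\vphi_R(x))=-\vphi_R(0)$, and that $u_R(0)=\inf u_R$. So an upper bound on $\vphi_R(0)$ is a lower bound on $\inf u_R$, and conversely.

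\emph{Step 1: a uniform upper bound on $\int u_R\,d\mu_R$.} By \cite[Proposition 12]{CEK} and the uniqueness in Theorem~\ref{thm: CEKMoment}, $u_R$ maximizes $I_{\mu_R}$ among $\mu_R$-integrable functions $f$ with $\int e^{-f^*}=1$. The uniqueness is only up to translating $\vphi_R$, i.e. adding a linear function to $u_R$. This does not change $I_{\mu_R}$, because $\mu_R$ has barycenter $0$, and it does not change $\int e^{-f^*}$. Since $\int e^{-\vphi_R}=1$, we have $I_{\mu_R}(u_R)=-\int u_R\,d\mu_R$.

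Compare with the fixed competitor $f(y)=\tfrac12|y|^2+c_n$, where $c_n$ is chosen so that $\int e^{-f^*}=1$. This gives
\[
\int u_R\,d\mu_R\le \int f\,d\mu_R\le C .
\]
The last bound is uniform in $R$: by Lemma~\ref{lem: convergenceOfReebFields}, $\xi_R\to\xi$ and $C_R\to C_\infty$, and $\langle\xi,y\rangle\ge c|y|-C$ on $P$. Hence $\mu_R\le C e^{-c|y|/2}\chi_P\,dy$ uniformly for $R$ large, so $\mu_R$ has uniformly bounded second moments.

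\emph{Step 2: lower bound $\vphi_R(0)\ge -C$.} Since $\mu_R$ is a probability measure,
\[
-\vphi_R(0)=\inf u_R\le\int u_R\,d\mu_R\le C .
\]

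\emph{Step 3: upper bound $\vphi_R(0)\le C$.} Apply Lemma~\ref{lem: meanLowerBoundLegendre} to $v=u_R-u_R(0)$. Then $v(0)=0$, and $0\in\operatorname{int}\{v<\infty\}$ because $0\in\operatorname{int}P_R$ and $u_R$ is finite on $\overline{P_R}$. Also $v^*=\vphi_R+u_R(0)=\vphi_R-\vphi_R(0)$, so
\[
\int e^{-v^*}=e^{\vphi_R(0)}\int e^{-\vphi_R}=e^{\vphi_R(0)} .
\]
The lemma then yields
\[
\int u_R\,d\mu_R+\vphi_R(0)\ \ge\ \frac{C_0}{2\pi}e^{\vphi_R(0)/n}-(n+1).
\]
Combining this with Step 1 gives $\frac{C_0}{2\pi}e^{t/n}-t\le C'$ for $t=\vphi_R(0)$. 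Since the left side tends to $+\infty$ as $t\to\infty$, this forces $t\le C''$.

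\emph{Main obstacle.} The delicate point is Step 1, specifically justifying that $u_R$ is really the maximizer of $I_{\mu_R}$ (rather than merely some function whose moment measure is $\mu_R$) and that this holds uniformly in $R$. I would handle it exactly as above: the CEK maximizer has moment measure $\mu_R$, uniqueness up to translation identifies it with $u_R$ modulo linear functions, and linear functions leave $I_{\mu_R}$ unchanged by the barycenter condition. The uniformity in $R$ of the competitor bound $\int f\,d\mu_R$ is what Lemma~\ref{lem: convergenceOfReebFields} is needed for.
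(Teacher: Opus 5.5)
Your proof is correct. The upper bound (Steps 1 and 3) is the paper's argument. The maximizing property of $u_R$ for $I_{\mu_R}$, tested against a fixed competitor, gives $\int u_R\,d\mu_R\le C$. Lemma~\ref{lem: meanLowerBoundLegendre} applied to $u_R-u_R(0)$ then forces $\frac{C_0}{2\pi}e^{\vphi_R(0)/n}-\vphi_R(0)\le C'$. You are more explicit than the paper on two points: why $u_R$ itself, and not merely a linear perturbation of it, is a maximizer, and why the competitor bound is uniform in $R$. Your restriction to $R$ large there is harmless, since for $R$ in a bounded range $P_R$ lies in a fixed compact set and the moments of $\mu_R$ are trivially bounded.

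Where you genuinely differ is the lower bound. The paper proves it without the variational characterization. Since $\nabla\vphi_R(\mathbb{R}^n)\subset P_R$, one has $\vphi_R\le\vphi_R(0)+\phi_{P_R}$, where $\phi_{P_R}$ is the support function. Hence $1=\int e^{-\vphi_R}\ge c(n)e^{-\vphi_R(0)}|P_R^{\circ}|$, and $|P_R^{\circ}|$ is bounded below because $P_X$ contains no line. You instead note that $-\vphi_R(0)=\inf u_R\le\int u_R\,d\mu_R$, because $\mu_R$ is a probability measure, so the lower bound comes free from Step 1.

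Your route is more economical: the whole lemma rests on the single estimate $\int u_R\,d\mu_R\le C$. That estimate uses the maximizing property and the uniform exponential decay of $\mu_R$, which comes from $\xi$ being a Reeb field. The paper's lower bound uses only the gradient-image constraint and the pointedness of $\operatorname{rec}(P_X)$. So that half would survive without the variational characterization or any moment bounds on $\mu_R$.
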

\begin{proof}
We first prove the lower bound. Since $\vphi_{R}$ has moment measure $\mu_{R}$, which is supported on $P_{R}$, we have
\[
\vphi_{R} \leq \vphi_{R}(0) +\phi_{P_{R}}
\]
where $\phi_{P_{R}}(x) = \sup_{y\in P_{R}}\langle x,y\rangle$ is the convex support function of $P_{R}$.  Now since $\int_{\mathbb{R}^n}e^{-\vphi_{R}} =1$ we have
\[
1=\int_{\mathbb{R}^n}e^{-\vphi_{R}}  \geq e^{-\varphi_{R}(0)}\int_{\mathbb{R}^n}e^{-\phi_{P_{R}}} = c(n)e^{-\varphi_{R}(0)}|P_{R}^{\circ}|
\]
where $P_{R}^{\circ}$ denotes the polar dual of $P_{R}$. Since $P_{R}$ Hausdorff converges to $P_{X}$ on compact sets, and $P_{X}$ does not contain a line, we have $|P_{R}^{\circ}| \geq c' >0 $ for a constant $c'>0$ independent of $R$.  This proves the lower bound. 

For the upper bound, fix a $\mu$-integrable function $f_0: \mathbb{R}^n \rightarrow \mathbb{R} \cup\{+\infty\}$.  Then $f_0$ is also $\mu_{R}$ integrable for all $R$.  Since $u_{R}= \vphi_{R}^*$ maximizes the functional $I_{\mu_{R}}$ defined in~\eqref{eq: Ifunctional}, we have
\begin{equation}\label{eq: integraluRbd}
-\int_{\mathbb{R}^n} u_{R}d\mu_{R} =I_{\mu_{R}}(u_{R}) \geq I_{\mu_{R}}(f_0) = \log \int_{\mathbb{R}^n} e^{-f_0^*(x)}dx - \int_{\mathbb{R}^n}f_0d\mu_{R} \geq -C
\end{equation}
In particular, we have $\int_{\mathbb{R}^n} u_{R} d\mu_{R} \leq C$ independent of $R$.  We now apply Lemma~\ref{lem: meanLowerBoundLegendre} to $u_{R}-\inf u_{R}$.  Indeed we have
\[
u_{R}(0)= -\inf \vphi_{R}= -\vphi_{R}(0)= \inf u_{R}
\]
by our normalization.  So $u_{R}-u_{R}(0) \geq 0$ and we have
\[
\begin{aligned}
\int_{\mathbb{R}^n}u_{R}d\mu_{R} -u_{R}(0)&\geq \frac{C_0}{2\pi}\left(\int_{\mathbb{R}^n} e^{-(\vphi_{R}(x)-\vphi_{R}(0))}\right)^{1/n}-(n+1) \\
&=  \frac{C_0}{2\pi}e^{\frac{\vphi_{R}(0)}{n}}-(n+1).
\end{aligned}
\]
Since $u_{R}(0)= -\vphi_{R}(0)$, we obtain
\[
C \geq \int_{\mathbb{R}^n}u_{R}d\mu_{R} \geq -\vphi_{R}(0)+\frac{C_0}{2\pi}e^{\frac{\vphi_{R}(0)}{n}}-(n+1)
\]
which easily yields the upper bound for $\phi_{R}(0)$.
\end{proof}

\begin{lem}\label{lem: uRholderbounds}
For any $R_0 \geq 1$ and any $\alpha \in (0,1)$, there exists a constant $C= C(R_0,\alpha, P_{X})$ such that, for all $R$ sufficiently large depending on $R_0$ we have
\[
\|u_{R}\|_{C^{\alpha}(P_{R_0})} \leq C
\]
\end{lem}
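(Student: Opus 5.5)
Since $u_R$ is convex on $P_{R}$ and normalized by $\inf u_R = u_R(0) = -\vphi_R(0)$, the plan is to get (a) a uniform $L^\infty$ bound for $u_R$ on a slightly larger compact piece $P_{R_0+1}$, and then (b) a uniform Lipschitz bound on $P_{R_0}$ up to the boundary. The $C^\alpha$ bound then follows by interpolation. The difficulty is that $P_{R_0}$ reaches $\partial P$, where convex functions can have unbounded gradient. Here we only need $C^\alpha$ with $\alpha<1$, so a logarithmic-type gradient blowup (as in $y\log y$) is acceptable.

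\textbf{Step 1 (uniform $L^\infty$ bound).} By Lemma~\ref{lem: infupperandlowerbds}, $u_R(0) = -\vphi_R(0)$ is bounded, so $u_R - u_R(0)\ge 0$ and $u_R\ge -C$. By \eqref{eq: integraluRbd}, $\int u_R\,d\mu_R\le C$. Applying Lemma~\ref{lem: pointwiseToLocalIntegral} to $v=u_R-u_R(0)$ gives $u_R(y_0)\le C_{y_0}$ at every interior point $y_0$. By convexity this yields a uniform upper bound on the convex hull of finitely many such points, in particular on compact subsets of ${\rm int}(P)$. To reach $\partial P\cap \overline{P_{R_0}}$ we use the explicit form of the measure instead: $\mu_R = e^{-\langle\xi_R,y\rangle+C_R}dy$ on $P_R$ has density bounded below by some $c(R_0)>0$ on $P_{R_0+2}$, uniformly in $R$ by Lemma~\ref{lem: convergenceOfReebFields}. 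Hence $\int_{P_{R_0+2}} (u_R-u_R(0))\,dy\le C$. A nonnegative convex function on a convex body with bounded integral is bounded at boundary points as well. To see this, take a boundary point $y_1$ and the cone with apex $y_1$ over a small ball $B$ in the interior. Write each point of the cone as $y_1+t(b-y_1)$ with $b\in B$. Convexity in reverse, $v(\text{inner point})\le (1-t)v(y_1)+t\,v(b)$, bounds large values of $v(y_1)$ by the integral over the portion of the cone near $y_1$. Thus $\sup_{\overline{P_{R_0+1}}}|u_R|\le C$.

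\textbf{Step 2 (Hölder modulus).} For $y,y'\in P_{R_0}$, convexity and the sup bound give the directional slope estimate $|u_R(y)-u_R(y')|\le 2C|y-y'|/d$, where $d$ is the distance from the segment's extension to $\partial P_{R_0+1}$. This controls the interior but degenerates near $\partial P$. The faces coming from $P$ are fixed, independent of $R$, and the cut facet $\{\langle\xi,y\rangle=R\}$ is far away since $R\gg R_0$. Near $\partial P$ we therefore need a one-sided estimate. Convexity gives the upper bound on the slope in the inward direction for free. The lower bound, i.e.\ how negative the slope can be as we approach the boundary, requires more. One route is to use $\nabla u_R(P_R)=\R^n$ together with the equation $\det D^2u_R = e^{\langle y,\nabla u_R\rangle-u_R-\langle\xi_R,y\rangle}$. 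In the Legendre picture, the set of $y$ where $|\nabla u_R|\ge M$ has $\mu_R$-measure equal to $\int_{|x|\ge M}e^{-\vphi_R}dx$. This decays exponentially in $M$ once we know $\vphi_R(x)\ge c|x|-C$ uniformly. That linear growth follows from $\vphi_R(0)$ bounded, $\int e^{-\vphi_R}=1$, and convexity with a uniform lower bound on $|\{\vphi_R\le \vphi_R(0)+1\}|$ in all directions, which should come from Step 1 and the barycenter condition.

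\textbf{Step 3 (conclusion).} Step 2 gives $\int_{P_{R_0+1}}e^{a|\nabla u_R|}dy\le C$ uniformly in $R$, hence $\nabla u_R\in L^p(P_{R_0+1})$ for all $p<\infty$. Morrey's inequality on the Lipschitz domain $P_{R_0+1}$, with $p>n/(1-\alpha)$, yields $\|u_R\|_{C^\alpha(P_{R_0})}\le C$.

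The main obstacle is Step 2: obtaining the uniform linear growth of $\vphi_R$, equivalently the exponential tail of $|\nabla u_R|$, independent of $R$. I would first try deducing it from the $L^\infty$ bound of Step 1 on a neighborhood of $0$, since $\vphi_R=u_R^*$ then satisfies $\vphi_R(x)\ge r|x|-\sup_{B_r}u_R$.
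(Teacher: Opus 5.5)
Your eventual route is the paper's proof. You get a uniform upper bound for $u_R$ on a small ball $B_r(0)$ from Lemma~\ref{lem: pointwiseToLocalIntegral}, Lemma~\ref{lem: infupperandlowerbds} and \eqref{eq: integraluRbd}. This gives $\varphi_R(x)\ge r|x|-C$ by testing $y=rx/|x|$ in the Legendre transform, which is the paper's \eqref{eq: uniformCoercivityphi}. The moment-measure identity $\int b(\nabla u_R)\,d\mu_R=\int b(x)e^{-\varphi_R(x)}\,dx$ then makes $|\nabla u_R|$ uniformly integrable against $\mu_R$. The lower bound on the density of $\mu_R$ on $P_{R_0+1}$ transfers this to Lebesgue measure, and Morrey's inequality finishes. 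Using exponential moments rather than the paper's $L^p$ moments makes no difference. Of your two suggestions for linear growth, the first (via the sublevel set $\{\varphi_R\le\varphi_R(0)+1\}$) is left too vague to evaluate. The second is the one that works, and it is exactly what the paper does.

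One claim in Step 1 is false and should be dropped. You assert that a nonnegative convex function with bounded integral is uniformly bounded at boundary points of the convex body. The inequality you invoke, $v(y_1+t(b-y_1))\le(1-t)v(y_1)+t\,v(b)$, bounds interior values in terms of $v(y_1)$. It therefore gives a \emph{lower} bound on $v(y_1)$, never an upper one. A counterexample: $v_\delta(y)=(y+\delta)^{-1/2}$ on $[0,1]$, $\delta\in(0,1]$, is convex, nonnegative and continuous, with $\int_0^1 v_\delta\le 2\sqrt2$, yet $v_\delta(0)=\delta^{-1/2}\to\infty$.

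Nothing downstream needs this bound. On the convex domain, Morrey's inequality bounds the H\"older seminorm of $u_R$ uniformly in terms of $\|\nabla u_R\|_{L^p}$ alone. Combined with $|u_R(0)|\le C$ and the bounded diameter of $P_{R_0}$, that already yields the sup bound, which is how the paper closes. Likewise, the exploratory one-sided slope estimates near $\partial P$ in Step 2 are unnecessary: the moment-measure argument controls $\nabla u_R$ in $L^p$ up to $\partial P$ directly.
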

\begin{proof}
  We apply Lemma~\ref{lem: pointwiseToLocalIntegral} to $u_{R}-u_{R}(0)$. Take a small positive constant $\epsilon$.  We see that for all $R \geq R_0$ sufficiently large depending on $R_0$ there is a constant $C_{0}$ so that for any $y \in B_{\epsilon}(0)$,
  \[
  u_{R}(y)-u_{R}(0) \leq C_{0}\int_{P_{R}}(u_{R}-u_{R}(0)) d\mu_{R}
  \]
  By Lemma~\ref{lem: infupperandlowerbds} we have $-C\leq u_{R}(0)= \inf u_{R} \leq C$ for a constant $C$ independent of $R$. Thanks to ~\eqref{eq: integraluRbd} we have $\int_{P_{R}}u_{R} d\mu_{R} \leq C$, and so
  \[
  \|u_{R}\|_{L^{\infty}(B_{\epsilon}(0))} \leq C_{1},
  \]
 for a constant $C_1$ independent of $R$.  Fix $\epsilon >0$ so that $B_{\epsilon}(0) \subset P_{R}.$ Then
  \begin{equation}\label{eq: uniformCoercivityphi}
  \vphi_{R}(x) = \sup_{y\in \bR^{n}} \langle x,y \rangle - u_{R}(y) \geq  \sup_{y\in B_{\epsilon}(0)} \langle x,y \rangle - u_{R}(y)  \geq \epsilon|x|-C_1.
  \end{equation}
   Now using the moment measure equation we have
  \[
  \begin{aligned}
  \int_{P_{R}}|\nabla u_{R}(y)|^{p} \frac{e^{-\langle \xi_{R},y\rangle}}{{\rm Vol}_{R}(\xi_{R})}dy &= \int_{\mathbb{R}^n}|x|^p e^{-\varphi_{R}(x)}dx\\
  &\leq C \int_{\mathbb{R}^n}|x|^p e^{-\epsilon|x|} dx \leq C(R_0, p, \epsilon)
  \end{aligned}
  \]
  On the other hand, by Lemma~\ref{lem: convergenceOfReebFields} we have $\frac{e^{-\langle \xi_{R},y\rangle}}{{\rm Vol}_{R}(\xi_{R})} \geq C^{-1}$ for $y \in P_{R_0}$, where $C$ is a constant depending on $R_0$, so we conclude that for all $R \geq R_0$ sufficiently large we have
  \[
\int_{P_{R_0}}|\nabla u_{R}(y)|^{p}dy \leq C(R_0, P_{X},p).
  \]
  By the Sobolev embedding theorem we conclude that, for any $\alpha \in (0,1)$ and all $R \geq R_0$ sufficiently large we have
  \[
  \|u_{R}\|_{C^{\alpha}(P_{R_0})} \leq C(R_0, P_{X}, \alpha)
  \]
\end{proof}

\begin{cor}\label{cor hol con}
    Fix $\alpha \in (0,1)$ and let $R_{k}\rightarrow +\infty$.  There exists a subsequence $R_{k_{\ell}}$ such that $u_{R_{k_{\ell}}}$ converges in $C^{\alpha}_{loc}(P)$ to $u_{\infty}$, which is a maximizer of $I_{\mu}$.
\end{cor}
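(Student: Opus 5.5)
The plan is to combine the uniform local H\"older bounds of Lemma~\ref{lem: uRholderbounds} with a diagonal Arzel\`a--Ascoli argument, and then pass to the limit in the variational inequality satisfied by each $u_{R}$.

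\textbf{Step 1: compactness.} Fix an exhaustion of $P$ by the compact sets $P_{R_0}$ with $R_0=1,2,3,\dots$. For each fixed $R_0$, Lemma~\ref{lem: uRholderbounds} with an exponent $\alpha'\in(\alpha,1)$ gives $\|u_{R_k}\|_{C^{\alpha'}(P_{R_0})}\le C(R_0,\alpha')$ for all $k$ large. The compact embedding $C^{\alpha'}\hookrightarrow C^{\alpha}$ on $P_{R_0}$ then yields a subsequence converging in $C^{\alpha}(P_{R_0})$. A diagonal extraction over $R_0\in\mathbb N$ gives a single subsequence $u_{R_{k_\ell}}$ converging in $C^\alpha_{loc}$ to a function $u_\infty$ on $P$. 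The limit $u_\infty$ is convex and finite on $P$, and we extend it by $+\infty$ off $\overline P$.

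\textbf{Step 2: upper semicontinuity of the functional.} We want $I_{\mu}(u_\infty)\ge\limsup I_{\mu_R}(u_R)$, where $I_\mu$ denotes $I_{\mu_\infty}$.

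\emph{The entropy term.} Local uniform convergence of $u_R$ on $\operatorname{int}P$ implies that $\varphi_R=u_R^*$ converges to $u_\infty^*$ pointwise, at least along the subsequence and in the interior of the domain. The coercivity bound~\eqref{eq: uniformCoercivityphi}, $\varphi_R(x)\ge\epsilon|x|-C_1$, is uniform in $R$. Dominated convergence therefore gives $\int e^{-\varphi_R}\to\int e^{-u_\infty^*}$, and in particular $\int e^{-u_\infty^*}=1$.

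\emph{The linear term.} For $\int u_R\,d\mu_R$ we use Fatou's lemma. The functions $u_R-\inf u_R$ are nonnegative, $\inf u_R$ is uniformly bounded by Lemma~\ref{lem: infupperandlowerbds}, and $\mu_R\to\mu$ in total variation with densities converging locally uniformly (Lemma~\ref{lem: convergenceOfReebFields}). Fatou then gives $\int u_\infty\,d\mu\le\liminf\int u_R\,d\mu_R$. This also shows that $u_\infty$ is $\mu$-integrable, since the right side is bounded by~\eqref{eq: integraluRbd}.

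Combining the two terms, $I_\mu(u_\infty)\ge\limsup_\ell I_{\mu_{R_\ell}}(u_{R_\ell})$.

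\textbf{Step 3: comparison with an arbitrary competitor.} Take any $f\in L^1_+(\mu)$. We may assume $f$ is bounded below on $P$, for instance by replacing $f$ with its convex envelope truncated from below; this does not decrease $I_\mu(f)$. If $I_\mu(f)=-\infty$ there is nothing to prove, so assume otherwise. By maximality of $u_R$ for $I_{\mu_R}$ we have $I_{\mu_R}(u_R)\ge I_{\mu_R}(f)$. Moreover $\int f\,d\mu_R\to\int f\,d\mu$, because the densities of $\mu_R$ are uniformly dominated by $Ce^{-c|y|}$ on $P$ and converge pointwise. Hence
\[
I_\mu(u_\infty)\ \ge\ \limsup_\ell I_{\mu_{R_\ell}}(f)\ =\ I_\mu(f),
\]
so $u_\infty$ maximizes $I_\mu$.

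\textbf{Main obstacle.} The delicate point is the semicontinuity of the linear term, because $u_R$ is only controlled locally and $P$ is unbounded. We handle it with Fatou's lemma applied to the nonnegative functions $u_R-\inf u_R$, whose infima are uniformly bounded. The domination needed for the entropy term comes from the uniform linear lower bound on $\varphi_R$.
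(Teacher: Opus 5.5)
Your outline is the paper's argument: compactness from Lemma~\ref{lem: uRholderbounds} with a diagonal extraction, Fatou for the linear term, the uniform coercivity \eqref{eq: uniformCoercivityphi} for the entropy term, and maximality of $u_R$ for $I_{\mu_R}$. The one structural difference is in Step 3. The paper tests only against the known CEK maximizer $u=\varphi^*$ of Corollary~\ref{cor: momentMeasureKRS}, whereas you test against an arbitrary competitor $f$. Your version shows directly that $u_\infty$ is a maximizer without presupposing that one exists. The paper's version is shorter, because only one fixed function has to be passed through the limit. Two of your justifications need repair.

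\textbf{Step 2.} You assert that $\varphi_R\to u_\infty^*$ pointwise. This is true (via epi-convergence and Wijsman's theorem), but it does not follow from local uniform convergence on $\operatorname{int}P$ alone when $P$ is unbounded. You also do not need it. For every $y_0$ you have $\varphi_R(x)\ge\langle x,y_0\rangle-u_R(y_0)$, which gives $\liminf\varphi_R\ge u_\infty^*$. Combined with \eqref{eq: uniformCoercivityphi} and reverse Fatou, this yields $\int e^{-u_\infty^*}\ge1$. That inequality is all that $I_\mu(u_\infty)\ge\limsup I_{\mu_R}(u_R)$ requires, and it is what the paper does.

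\textbf{Step 3.} Bounding the densities by $Ce^{-c|y|}$ does not make $|f|\,Ce^{-c|y|}$ integrable for a general $f\in L^1(\mu)$. The reason is that $\langle\xi,y\rangle$ can grow much faster than $c|y|$ along some directions of $P$. Reducing to $f$ bounded below does not fix this, for two reasons:
\begin{itemize}
\item Fatou then gives $\int f\,d\mu\le\liminf\int f\,d\mu_R$, which is the wrong direction.
\item Truncating from below is not obviously $I_\mu$-nondecreasing: it lowers $f^*$ but raises $\int f\,d\mu$.
\end{itemize}
The right domination is relative to $\mu$ itself. By the rate estimate in Lemma~\ref{lem: convergenceOfReebFields} and $C_R\to C_\infty$, the density $e^{-\langle\xi_R,y\rangle+C_R}\chi_{P_R}$ of $\mu_R$ is bounded by a fixed constant times $e^{-\langle\xi,y\rangle+C_\infty}\chi_P$ for $R$ large. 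This gives $f\in L^1(\mu_R)$, so maximality of $u_R$ applies. Dominated convergence then yields $\int f\,d\mu_R\to\int f\,d\mu$ for every $f\in L^1(\mu)$, with no reduction needed. This is exactly how the paper justifies \eqref{e limuk u}.
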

\begin{proof}
By Lemma~\ref{lem: uRholderbounds} and a standard diagonal argument we can find a subsequence $R_{k_{\ell}}$ such that $u_{R_{k_{\ell}}}$ converges in $C^{\alpha}_{loc}(P)$ to $u_{\infty}$.  Let us relabel $R_{k_{\ell}}\to R_{k}$ to simplify notation.  We need to show that $u_{\infty}$ maximizes $I_{\mu}$.  
By Fatou's Lemma we get
\[
\int_{\bR^n}u_{\infty} d\mu \leq \liminf_{k\rightarrow \infty}\int_{\bR^n}u_{R_k} d\mu_{R_{k}} \leq C
\]
and hence $u_{\infty}$ is $\mu$-integrable.  Furthermore, since  $d\mu_{R}, d\mu$ are probability measures we get
\[
\int_{\bR^n}u_{\infty}d\mu \leq \liminf_{k\rightarrow \infty}\int_{\bR^n}u_{R_k}d\mu_{R_{k}}
\]
Let $\vphi_{\infty}= u_{\infty}^*$.  We claim that $\int e^{-\vphi_{\infty}}\geq 1$.  To see this we write
\[
\varphi_{R_k}(x) =\sup_{y\in \bR^n}\, \langle x,y \rangle -u_{R_k}(y) \geq \langle x,y_0 \rangle -u_{R_k}(y_0)
\]
for any $y_0\in \bR^n$.  Taking the $\liminf$ over $k$ and using that $u_{R_k}(y_0)$ converges pointwise to $u_{\infty}(y_0)$ yields
\[
\liminf_{k\rightarrow \infty} \varphi_{R_{k}}(x) \geq \langle x,y_0 \rangle -u_{\infty}(y_0)
\]
for any $y_0\in \bR^n$.  This implies $\liminf_{k\rightarrow \infty} \varphi_{R_{k}}(x) \geq \varphi_{\infty}(x)$.  Thanks to~\eqref{eq: uniformCoercivityphi} we can apply Fatou's lemma to obtain
\[
1= \limsup_{k\rightarrow \infty}\int_{\bR^n} e^{-\vphi_{R_k}} \leq \int_{\bR^n} e^{-\varphi_{\infty}}.
\]
We conclude that
\begin{equation}\label{e i mu u inf}
I_{\mu}(u_{\infty}) = \log\int_{\bR^n} e^{-\varphi_{\infty}}-\int_{\bR^n}u_{\infty} d\mu \geq \limsup_{k\rightarrow \infty} I_{\mu_{R_k}}(u_{R_{k}})
\end{equation}
To show that $u_{\infty}$ is a maximum point of $I_{\mu}$, we need to show that $I_{\mu}(u_{\infty}) \ge I_{\mu}(u)$. Here $u=\varphi^*$, where $\varphi$ is given in the Corollary \ref{cor: momentMeasureKRS}.
Since $u_{R_k}$ maximizes $I_{\mu_{R_k}}$ we have
\begin{equation}\label{e urk low}
\limsup_{k\rightarrow \infty} I_{\mu_{R_k}}(u_{R_{k}}) \geq \limsup_{k\rightarrow \infty} I_{\mu_{R_k}}(u) =\limsup_{k \rightarrow \infty} \log \int_{\R^n} e^{- u^*} dy - \int_{P_{R_k}} u d\mu_{R_k}.
\end{equation}
Using the Lemma \ref{lem: convergenceOfReebFields} and the integrability of $u$ with respect to $d\mu$, we have that:
\begin{equation}\label{e limuk u}
   \lim_{k \rightarrow \infty} \int_{P_{R_k}} u d\mu_{R_k} = \int_P u d\mu.
\end{equation}
Combining (\ref{e i mu u inf}), (\ref{e urk low}) and (\ref{e limuk u}), we can get:
\begin{equation*}
    I_{\mu}(u_{\infty})\ge I_\mu(u)
\end{equation*}
Since $u$ maximizes $I_{\mu}$, it follows that $u_{\infty}$ maximizes $I_{\mu}$ and so, by \cite{CEK}, $\varphi_{\infty}= u_{\infty}^*$ has moment measure $\mu$.
\end{proof}

We now establish higher order regularity and convergence.

\begin{lem}\label{lem ur con}
    We have convergence in $C^{k,\alpha}_{loc}(P)$ for all $k$ and similarly on $\bR^n$
\end{lem}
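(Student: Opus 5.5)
We want to upgrade the $C^\alpha_{loc}$ convergence of Corollary~\ref{cor hol con} to $C^{k,\alpha}_{loc}$ convergence, both for $u_{R}$ on compact subsets of the open set $P$ and for $\vphi_{R}$ on compact subsets of $\bR^n$. The approach is interior regularity for the real Monge--Amp\`ere equation, via Caffarelli's theory, applied on the dual side.

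\textbf{Step 1: locally uniform $C^0$ and $C^{0,1}$ bounds for $\vphi_R$.} The lower bound~\eqref{eq: uniformCoercivityphi}, $\vphi_{R}(x)\geq \epsilon|x|-C_1$, gives coercivity uniformly in $R$. Since $\nabla\vphi_R(\bR^n)=P_R$ and $\vphi_R(0)$ is bounded by Lemma~\ref{lem: infupperandlowerbds}, we get $\vphi_R\leq C+\phi_{P_R}$. So $\vphi_R$ is locally bounded uniformly in $R$. Local convexity bounds then give uniform local Lipschitz bounds, so $|\nabla\vphi_R|$ is bounded on compact sets. As a consequence, on each ball $B_r\subset\bR^n$ the right-hand side $e^{\langle\xi_R,\nabla\vphi_R\rangle-\vphi_R+C_R}$ is pinched between two positive constants independent of $R$, using Lemma~\ref{lem: convergenceOfReebFields} for $\xi_R\to\xi$ and for the bound on $C_R$.

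\textbf{Step 2: strict convexity and $C^{1,\alpha}$.} By Caffarelli's interior theory, a solution with Monge--Amp\`ere measure pinched between constants is strictly convex once it has no affine segment extending to the boundary of the domain. Here the domain is all of $\bR^n$, and the coercivity and finiteness of $\vphi_R$ rule out such segments. To make this quantitative, I would argue by compactness. By Step~1 and Corollary~\ref{cor hol con}, $\vphi_{R_k}\to\vphi_\infty$ locally uniformly, where $\vphi_\infty$ is finite (Corollary~\ref{cor fin varphi}) and solves the equation in the Alexandrov sense with measure locally pinched. Caffarelli's theorem applied to $\vphi_\infty$ gives strict convexity: if $\vphi_\infty$ coincided with a supporting affine function along a line, that line would have to be entire, contradicting coercivity. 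Strict convexity of the limit then gives uniform sections of $\vphi_{R_k}$ for large $k$, by stability of sections under uniform convergence. From this I obtain uniform interior $C^{1,\beta}$ bounds on compact sets.

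\textbf{Step 3: $C^{2,\alpha}$ and bootstrap.} With $\nabla\vphi_R$ uniformly $C^{\beta}$, the right-hand side is uniformly $C^{\beta}$ and pinched. Caffarelli's interior $C^{2,\beta}$ estimate then yields uniform $C^{2,\beta}_{loc}$ bounds, hence uniform ellipticity. Differentiating the equation and applying Schauder theory gives uniform $C^{k,\alpha}_{loc}(\bR^n)$ bounds for every $k$. By Arzel\`a--Ascoli and the uniqueness of the limit (it is the CEK solution up to translation, fixed by our normalization), the whole sequence converges in $C^{k,\alpha}_{loc}(\bR^n)$.

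\textbf{Step 4: transfer to $u_R$ on $P$.} Fix a compact set $K\Subset P$. Since $\nabla\vphi_\infty$ is a diffeomorphism of $\bR^n$ onto $P$ with $D^2\vphi_\infty>0$, the preimage $(\nabla\vphi_\infty)^{-1}(K')$ of a slightly larger compact set $K'$ is compact. By the $C^1$ convergence, $\nabla\vphi_{R}$ maps a fixed compact set $L$ onto a neighborhood of $K$ for $R$ large. The Legendre identities~\eqref{eq:legendre-identities}, $u_R(\nabla\vphi_R(x))=\langle x,\nabla\vphi_R(x)\rangle-\vphi_R(x)$ and $D^2u_R=(D^2\vphi_R)^{-1}\circ(\nabla\vphi_R)^{-1}$, combined with the uniform lower bound on $D^2\vphi_R$ over $L$, then give uniform $C^{k,\alpha}(K)$ bounds on $u_R$ by the inverse function theorem, and hence convergence.

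The main obstacle is Step~2: obtaining strict convexity uniformly in $R$. I would handle it through the compactness argument above, which reduces the problem to Caffarelli's strict convexity result for the single limit $\vphi_\infty$.
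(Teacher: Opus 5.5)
Your plan, running Caffarelli's interior theory on the dual side and then transferring to $P$, is a legitimate mirror image of the paper's proof. However, Step~1 has a genuine gap, and Steps~2--4 all depend on it. The inequality $\varphi_R\le\varphi_R(0)+\phi_{P_R}$ is correct, but it gives no bound on compact sets that is uniform in $R$. Since $\operatorname{rec}(P)\neq\{0\}$, take $y_0\in P$ and $c\in\operatorname{rec}(P)\setminus\{0\}$. The points $y_0+tc$ lie in $P_R$ for $t\le (R-\langle\xi,y_0\rangle)/\langle\xi,c\rangle$. Hence $\phi_{P_R}(x)\to+\infty$ linearly in $R$ on the nonempty open set of $x$ with $\langle x,c\rangle>0$; for example, if $P=[-1,\infty)$ then $\phi_{P_R}(x)=Rx$ for $x>0$. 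The paper only uses this inequality in the harmless direction, to bound $\int e^{-\varphi_R}$ from below.

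Consequently two claims in Step~1 are unproved: the uniform local Lipschitz bound, and, more importantly, the uniform two-sided bound for $e^{\langle\xi_R,\nabla\varphi_R\rangle-\varphi_R+C_R}$ on a ball $B_r$. You know $\langle\xi_R,\nabla\varphi_R\rangle$ is bounded below. Nothing you have said prevents $\nabla\varphi_R$ from sending a fixed ball to points of $P_R$ near the truncating facet $\{\langle\xi,y\rangle=R\}$, which would make the density blow up. The locally uniform convergence $\varphi_{R_k}\to\varphi_\infty$ used in Step~2 rests on the same missing bound. Corollary~\ref{cor hol con} only gives convergence of $u_R$ on compact subsets of $P$, together with $\liminf_k\varphi_{R_k}\ge\varphi_\infty$. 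So the real obstacle on the dual side is Step~1, not Step~2.

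What is missing is a uniform non-escape statement: $\nabla\varphi_R$ maps compact subsets of $\mathbb{R}^n$ into a fixed bounded part of $P$, or equivalently $u_R$ grows superlinearly, uniformly in $R$. One way to supply it is as follows.
\begin{itemize}
\item Fix $r$. Since $\varphi_\infty$ is finite (Corollary~\ref{cor fin varphi}), $u_\infty(y)\ge (r+1)|y|-\sup_{B_{r+1}}\varphi_\infty$.
\item By Lemma~\ref{lem: uRholderbounds} and Corollary~\ref{cor hol con}, $u_R\to u_\infty$ uniformly on $\overline{P_{R_0}}$.
\item Hence, for $R_0=R_0(r)$ large and then $R$ large, $u_R(y')-u_R(0)\ge r|y'|$ on the slice $\{\langle\xi,y'\rangle=R_0\}\cap\overline P$.
\item For $y\in P_R\setminus P_{R_0}$, the segment from $0$ to $y$ meets this slice at $y'=sy$ with $s=R_0/\langle\xi,y\rangle$. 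Convexity along it gives $u_R(y)\ge u_R(0)+r|y|$.
\item Together with the uniform bound on $P_{R_0}$, this yields $\sup_{B_r}\varphi_R\le C(r)$.
\end{itemize}
With this inserted, your Steps~2--4 go through.

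The paper avoids the issue by running the same Caffarelli argument on compact sets $K_0\Subset K_1\Subset K_2\Subset\operatorname{int}P$. There the uniform $C^\alpha$ bound on $u_R$ bounds $\nabla u_R$ on $K_1$ by convexity, which pinches $\det D^2u_R$ directly because $y$ ranges over a compact set. Sections are trapped using the smooth, strictly convex limit $u_\infty$ supplied by optimal transport regularity. Only at the end is the convergence transferred to $\mathbb{R}^n$.
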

\begin{proof}
From the moment measure construction, $\nabla \varphi_{R}$ (resp. $\nabla \varphi_{\infty}$) defines an optimal transport map, in the sense of Brenier-McCann \cite{Brenier1991,McCann1995} from $(\bR^n, e^{-\varphi_{R}}dx)$ to $(P_{R}, d\mu_{R})$ (resp. from $(\bR^n, e^{-\varphi_{\infty}}dx)$ to $(P_{X}, d\mu)$).   Caffarelli's interior regularity theory for optimal transport maps \cite{Caffarelli1992Mappings} has been adapted to this setting by Alesker-Dar-Milman \cite{ADM} and Cordero-Erausquin-Figalli \cite{CEF}.  It follows from a simple bootstrap that:
\begin{itemize}
    \item $\vphi_{R}, \vphi_{\infty} \in C^{k,\alpha}_{loc}(\mathbb{R}^n)$ for all $k$, 
    \item $\nabla \varphi_{R} : \bR^n \rightarrow P_{R}$ and $\nabla \varphi_{\infty}: \bR^n \rightarrow P_{X}$ are smooth diffeomorphisms,
    \item  $u_{R} \in C^{k,\alpha}_{loc}(P_{R})$ and $u_{R}$ are strictly convex.
    \item $\vphi_{\infty}^*= u_{\infty} \in C^{k,\alpha}_{loc}(P)$ for all $k$.
\end{itemize}

The only thing to prove is that we have convergence in similarly strong norms.  Recall that if $u$ is convex and differentiable at $p$, then the section of height $h$ at $p$ is given by
\[
S_{h}(u,p) = \{ y \in \bR^n: u(y) \leq u(p) + (y-p)\cdot \nabla u(p) +h \}
\]
Fix compact, convex sets
\[
K_0 \Subset K_1 \Subset K_2 \Subset {\rm int}P_{X}
\]
Choose $R_0$ large so that $K_{2}\Subset P_{R_0}$.  We consider $R \geq R_0$.  Since $u_{R}\rightarrow u_{\infty}$ in $C^{\alpha}(K_2)$, we have
\[
{\rm osc}_{K_{2}} u_{R} \leq C_{K_2}
\]
It follows from convexity that
\[
\sup_{K_1}|\nabla u_{R}| \leq C(K_2, K_1)
\]
Since $u_{R}$ is smooth and strictly convex the moment measure equation gives
\begin{equation}\label{eq: bootstrapEquation}
\det D^2u_{R}= e^{\langle y, \nabla u_R(y)\rangle-u_{R}(y)-\langle \xi_{R}, y\rangle + C_R}
\end{equation}
By the gradient bound and Lemma~\ref{lem: convergenceOfReebFields} the right hand side is uniformly bounded above and below on $K_1$. Fix $y\in K_0$.  Since $u_{\infty}$ is smooth and strictly convex we can find $\rho_1, \rho_2$ so that
\[
B_{\rho_1}(y) \subset S_{\frac{h}{2}}(u_{\infty}, y) \subset S_{2h}(u_{\infty},y) \subset B_{\rho_2}(y) \subset K_1
\]
Since $u_{R}, u_{\infty}$ are smooth, and convex and $u_{R}$ converges uniformly to $u_{\infty}$ on $K_2$, it is straightforward to show that $\nabla u_{R}(y)$ converges to $\nabla u_{\infty}(y)$.  Thus for $R$ large we have
\[
B_{\rho_1}(y)\subset S_{\frac{3}{4}h}(u_{R}, y) \subset S_{h}(u_{R}, y) \subset B_{\rho_2}(y) \subset K_1
\]
Now the classical $C^{1,\alpha}$-regularity of Caffarelli \cite{Caffarelli1991Regularity} implies that
\[
\|u_{R}\|_{C^{1,\alpha}(B_{\rho_1}(y))} \leq C(K_2,K_1,K_0).
\]
With this estimate~\eqref{eq: bootstrapEquation} has $C^{\alpha}$ right-hand side, and so Caffarelli's interior $C^{2,\alpha}$ regularity \cite{Caffarelli1990W2p} yields
\[
\|u_{R}\|_{C^{2,\alpha}(B_{\frac{\rho_1}{2}}(y))} \leq C(K_2,K_1,K_0).
\]
All higher order estimates follow from the Schauder theory.  Thus we obtain the $C^{k,\alpha}_{loc}(P)$ convergence of $u_{R}$ to $u_{\infty}$ for all $k$.  The $C^{k,\alpha}_{loc}(\bR^n)$ convergence of $\varphi_{R}$ to $\varphi_{\infty}= u_{\infty}^*$ follows easily from this and the strict convexity of $u_{\infty}$. 
\end{proof}

\section{Completeness}\label{sec: completeness}

For any vector $v\in \R^n$, we denote $u^{vv}= v^{\top} \cdot (u^{ij}) \cdot v$.
\begin{prop}\label{prop: linGrowthImplesComplete}
    Suppose that there exists a constant $C$ such that \begin{equation}\label{e uxx}
    u^{\xi \xi}\le C(\langle \xi, y\rangle+C).    
    \end{equation}
    Then the corresponding K\"ahler-Ricci soliton is complete.
\end{prop}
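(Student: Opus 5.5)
We argue through the soliton potential. On the open dense orbit, written in symplectic coordinates $(y,\theta)\in P\times\bT^n$, the metric is $g=u_{ij}\,dy^i dy^j+u^{ij}\,d\theta_i d\theta_j$. The soliton vector field is $\nabla f=J\xi$ (up to normalization). On the polytope side the soliton potential is, up to an additive constant, the linear function $f(y)=\langle \xi,y\rangle$. Its gradient norm is
\[
|\nabla f|_g^2=u^{ij}\xi_i\xi_j=u^{\xi\xi}.
\]
Hypothesis \eqref{e uxx} therefore reads $|\nabla f|^2\le C(f+C)$. So $\sqrt{f+C}$ is a function with bounded gradient:
\[
|\nabla \sqrt{f+C}|\le \tfrac{\sqrt C}{2}.
\]
The same identity holds on the whole manifold $X$, since $f$ and $u^{ij}$ extend smoothly across the lower-dimensional orbits by the Guillemin boundary conditions and Proposition~\ref{prop H gui}.

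Next we note that $f$ is proper on $X$. Its sublevel sets $\{f\le R\}$ are the preimages under the moment map of the truncations $P_{\xi,R}$. These are compact because $\xi$ is a Reeb field, and the moment map from $X$ to $\overline P$ is proper, with compact torus fibers. Hence $\{f\le R\}$ is compact in $X$ for every $R$.

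Completeness now follows from a standard argument. Let $\gamma:[0,L)\to X$ be a unit-speed curve that leaves every compact set, and set $h=\sqrt{f+C}$, which is $\frac{\sqrt C}{2}$-Lipschitz with respect to $g$. Then
\[
h(\gamma(t))\le h(\gamma(0))+\tfrac{\sqrt C}{2}\,t .
\]
If $L<\infty$, then $f$ stays bounded along $\gamma$, so $\gamma$ remains in a compact sublevel set of $f$, a contradiction. Thus every divergent curve has infinite length, and by Hopf--Rinow the metric is complete.

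The main obstacle is justifying the identity $|\nabla f|^2=u^{\xi\xi}$ together with the properness on $X$ itself rather than only on the interior orbit. This requires that the metric extends smoothly across the boundary strata, which is exactly the Guillemin condition. There one checks that $u^{ij}$ is smooth up to $\partial P$ via Proposition~\ref{prop H gui}, so the Lipschitz bound on $h$ persists on all of $X$.
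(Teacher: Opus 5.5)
Your proposal is correct and follows the paper's proof. The paper also takes $\rho=\sqrt{\langle\xi,y\rangle+C}$, computes $|\nabla\rho|_g^2=u^{\xi\xi}/\bigl(4(\langle\xi,y\rangle+C)\bigr)\le C/4$, and concludes completeness from the existence of a proper function with bounded gradient. Beyond the paper's terse version, you spell out why $\langle\xi,\mu\rangle$ is proper (compactness of $P_{\xi,R}$) and why the gradient bound persists across the lower-dimensional orbits, both of which the paper leaves implicit.
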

\begin{proof}
   In order to prove $(M,g)$ is complete, it suffices to construct a proper function $\rho$ such that $|\nabla \rho|_g \le C$ for some constant $C$. In this case, we take $\rho=\sqrt{\langle\xi,y\rangle+C}$. Then we can compute that:
   \begin{equation*}
       |\nabla \rho|_g^2 = (\rho_\xi)^2 u^{\xi \xi}= \frac{u^{\xi \xi}}{4(\langle\xi,y\rangle+C)}\le C/4
   \end{equation*}
   by our assumption.
\end{proof}

Here we need a Theorem by Minkowski-Weyl:

\begin{lem}
    For any closed nonempty polytope $P$, there exists a compact closed polytope $Q$ such that:
    \begin{equation*}
        P=Q+ rec(P).
    \end{equation*}
\end{lem}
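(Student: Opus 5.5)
We prove the Minkowski--Weyl decomposition by homogenization, which turns the polyhedron $P$ into a polyhedral cone and lets us use the finite generation of polyhedral cones. Write $P=\{x\in\mathbb{R}^n:\langle x,\nu_i\rangle\geq b_i,\ i=1,\dots,d\}$, nonempty and closed, and consider the cone
\[
\widehat{P}=\{(x,t)\in\mathbb{R}^n\times\mathbb{R}_{\geq 0}:\langle x,\nu_i\rangle\geq t\,b_i,\ i=1,\dots,d\}.
\]
This is a polyhedral cone: a finite intersection of closed half-spaces through the origin. By the Minkowski--Weyl theorem for cones (Farkas--Minkowski--Weyl), $\widehat{P}$ is finitely generated. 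Concretely, $\widehat{P}$ is the dual of the cone spanned by the finitely many vectors $(\nu_i,-b_i)$ and $(0,1)$. The dual of a finitely generated cone is finitely generated; this follows from Fourier--Motzkin elimination, or from the fact that the extreme rays of a pointed polyhedral cone are determined by choosing $n$ linearly independent active constraints, of which there are finitely many.

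So $\widehat{P}=\operatorname{cone}\{w_1,\dots,w_m\}$. We rescale each generator: those with positive last coordinate become $(q_j,1)$, and the rest have the form $(r_l,0)$.

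Next we read off the decomposition on the slice $t=1$. A point $x$ lies in $P$ if and only if $(x,1)\in\widehat{P}$, which holds if and only if
\[
x=\sum_j\lambda_jq_j+\sum_l\mu_lr_l,\qquad \lambda_j,\mu_l\geq 0,\quad \sum_j\lambda_j=1.
\]
Setting $Q=\operatorname{conv}\{q_j\}$, a compact polytope, and $D=\operatorname{cone}\{r_l\}$, we get $P=Q+D$. The set of $q_j$ is nonempty because $P\neq\varnothing$.

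It remains to show $D=\operatorname{rec}(P)$. The generators $(r_l,0)$ lie in $\widehat{P}$, so $\langle r_l,\nu_i\rangle\geq 0$ for all $i$, and therefore $D\subset\operatorname{rec}(P)=\{u:\langle u,\nu_i\rangle\geq 0\}$. Conversely, $(u,0)\in\widehat{P}$ for every $u\in\operatorname{rec}(P)$. Writing $(u,0)$ as a nonnegative combination of generators, the last coordinate forces the coefficients of all the $(q_j,1)$ to vanish, so $u\in D$. Hence $P=Q+\operatorname{rec}(P)$.

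The main point requiring care is the finite generation of $\widehat{P}$. If the cone is not pointed, its lineality space must be included among the generators as $\pm$ basis vectors, and the argument above is unchanged. In our setting $\operatorname{rec}(P)$ is strongly convex by Assumption~\ref{ass 1}, and $\widehat{P}\subset\{t\geq 0\}$, so $\widehat{P}$ is in fact pointed and generated by its extreme rays. The other steps are formal bookkeeping.
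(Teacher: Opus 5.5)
Your proof is correct. The paper does not prove this lemma at all. It introduces it as ``a Theorem by Minkowski--Weyl'' and uses it as a black box, for instance in Lemma~\ref{lem:polyhedral-volume-asymptotics} and Lemma~\ref{lem:weighted-barycenter-stability}.

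You instead derive the polyhedral statement from the conic one by homogenization, which is the standard textbook reduction. The only input you still quote is Weyl's theorem that a polyhedral cone is finitely generated. So your argument is a reduction rather than a proof from scratch, which is the appropriate level for a classical fact.

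What your route buys over the bare citation is transparency about the two pieces of the decomposition. The slices $\widehat P\cap\{t=1\}=P\times\{1\}$ and $\widehat P\cap\{t=0\}=\operatorname{rec}(P)\times\{0\}$ make it immediate that the conic summand is exactly $\operatorname{rec}(P)$. This is the form the paper relies on, e.g.\ via $C\subset W$ to get $K=\operatorname{pr}_{W^\perp}(Q)$ in Lemma~\ref{lem:polyhedral-volume-asymptotics}. They also show that $Q$ can be chosen inside $P$. In the pointed setting of Assumption~\ref{ass 1}, if you take the generators to be the extreme rays of $\widehat P$, the points $q_j$ are exactly the vertices of $P$, so $Q$ can be taken to be their convex hull.

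The citation buys brevity, which is reasonable since the paper only uses the decomposition qualitatively.
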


\begin{lem}\label{lem uni c}
    There exists a constant $C_1$ such that for any $R$, $P_R \subset \{z: \langle\xi_R,z\rangle\ge C_1\}$.
\end{lem}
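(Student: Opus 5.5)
The plan is to split $\langle \xi_R, z\rangle = \langle \xi, z\rangle + \langle \xi_R - \xi, z\rangle$ and control the two pieces separately on $P_R$. The first piece is bounded below on all of $P_X$, uniformly in $R$. The second is small because $\xi_R\to\xi$ exponentially fast, while points of $P_R$ are only polynomially large in $R$.

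\emph{Lower bound for $\langle\xi,z\rangle$.} By the Minkowski--Weyl lemma, $P_X = Q + C$ with $Q$ compact and $C=\operatorname{rec}(P_X)$. Since $\xi\in\mathcal R(P_X)$, we have $\langle \xi, c\rangle>0$ on $C\setminus\{0\}$. Compactness of $C\cap S^{n-1}$ then gives $c_0>0$ with $\langle\xi,c\rangle\ge c_0|c|$ for all $c\in C$. Writing $z=q+c$ gives
\[
\langle \xi,z\rangle\ge -\max_Q|\xi|\,|q| + c_0|c| \ge c_0|z| - C_2 ,
\qquad z\in P_X ,
\]
with $C_2$ depending only on $Q,\xi$. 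This is also the estimate used in Corollary~\ref{cor fin varphi}.

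\emph{Size of $P_R$.} For $z\in P_R$ we have $\langle\xi,z\rangle\le R$, and the bound above then gives $|z|\le (R+C_2)/c_0\le C_3 R$ for $R\ge1$.

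\emph{Perturbation and combination.} By the proof of Lemma~\ref{lem: convergenceOfReebFields}, $|\xi_R-\xi|\le C R^{N}e^{-R}$. Hence on $P_R$,
\[
|\langle \xi_R-\xi, z\rangle|\le C R^{N}e^{-R}\cdot C_3R\le C_4,
\]
uniformly in $R\ge1$, since $R^{N+1}e^{-R}$ is bounded. Therefore
\[
\langle\xi_R,z\rangle\ge \langle \xi, z\rangle - C_4 \ge -C_2-C_4=:C_1
\]
for all $z\in P_R$ and all $R$ (large enough that $P_R$ is defined as in Lemma~\ref{lem: goodTruncation}). Finitely many small $R$, if needed, are handled trivially by compactness of each $P_R$.

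The only delicate point is that the perturbation error $\langle\xi_R-\xi,z\rangle$ involves $|z|$, which grows with $R$. It is essential to use the quantitative exponential rate from Lemma~\ref{lem: convergenceOfReebFields} rather than mere convergence $\xi_R\to\xi$. A softer alternative would use convergence together with the fact that $\xi_R$ eventually lies in a compact subset of $\mathcal R(P_X)$, giving $\langle\xi_R,c\rangle\ge (c_0/2)|c|$ on $C$ for large $R$; this yields the bound on all of $P_X$ directly and works equally well.
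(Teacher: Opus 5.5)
Your proof is correct, but it takes a different route from the paper. The paper argues qualitatively. It takes the supporting hyperplane $\{\langle z,\xi\rangle=C_0\}$ of $P$, and shows by an infinite-cylinder argument (using $\int_P e^{-\langle\xi,y\rangle}dy<\infty$) that the face it cuts out of $P$ is compact. It then argues that for $\eta$ near $\xi$ the supporting hyperplane in direction $\eta$ moves continuously, so $\xi_R\to\xi$ yields a uniform lower bound on all of $P$, not just on $P_R$, once $R$ is large.

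You instead split off $\langle\xi_R-\xi,z\rangle$ and absorb it by playing the exponential rate $|\xi_R-\xi|\le CR^Ne^{-R}$ against the linear size $|z|\lesssim R$ of $P_R$. This is short and fully quantitative. It is really just the conclusion $\sup_{P_R}|e^{-\langle\xi_R-\xi,y\rangle}-1|\le CR^ke^{-R}$ of Lemma~\ref{lem: convergenceOfReebFields} restated, so you could cite that statement directly rather than its proof. The cost is that it only controls $\langle\xi_R,\cdot\rangle$ on $P_R$, and it relies on the uniform strong convexity of ${\rm Vol}_R$ that underlies the rate.

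Your ``softer alternative'' is essentially the paper's argument made precise. Writing $P=Q+C$, every $\eta\in\mathcal R(P_X)$ satisfies $\inf_P\langle\eta,\cdot\rangle=\min_Q\langle\eta,\cdot\rangle$, which is visibly continuous in $\eta$. This one line replaces both the cylinder argument and the informal ``the plane moves continuously'' step.

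One minor point: your main estimate already holds for every $R\ge 1$, since $R^{N+1}e^{-R}$ is bounded on $[1,\infty)$, so the closing remark about small $R$ is unnecessary. It is also not quite right as stated: compactness of each individual $P_R$ would not give uniformity over a continuum of $R$, whereas the uniform bound on $|\xi_R|$ would.
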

\begin{proof}
    Let $C_0$ be a constant such that $\{z: \langle z,\xi\rangle=C_0\}$ is a supporting plane of $\partial P$. We claim that $\{z: \langle z,\xi\rangle=C_0\}\cap P$ is a compact set. In fact, if $\{z: \langle z,\xi \rangle=C_0\}\cap P$ is not a compact set, then it must contain a ray $\{y_0+tv: t \ge 0\}$ with $\langle v,\xi \rangle=0$. Then, we can take a point $y_1 \in \inte P$ and a $(n-1)$-dimensional disc $D_{\epsilon}$ centered around $y_1$ such that for any $y\in D_{\epsilon}$, $\langle y-y_1, v \rangle =0$. Using the convexity of $P$, we know that for any $y\in D_{\epsilon}$ and $s\ge 0$, 
    \begin{equation*}
        y+ sv = \lim_{\lambda \rightarrow 0} (1-\lambda) y + \lambda (y_0 + \frac{s}{\lambda}v) \in \overline{P}.
    \end{equation*}
    $\bar{P}$ contains an infinite cylinder $\{y+sv: y \in D_{\epsilon}, s\ge 0 \}$. The integral of $e^{-\langle\xi,y\rangle}$ over this cylinder is infinite. This contradicts the fact that  $\int_P e^{-\langle\xi,y\rangle}dy$ is bounded. This concludes the proof of the claim. 
    
    Using the claim, we can see that when we perturb $\xi$, the corresponding supporting plane does not intersect with the vertices of $P$ which are not in $\{z: \langle z,\xi\rangle=C_0\}\cap P$. As a result, the supporting plane would move continuously when we perturb $\xi$. Then, we can use the fact that $\lim_{R \rightarrow \infty}\xi_R =\xi$ to find the constant $C_1$ as is required in this lemma.
\end{proof}

We want to first establish (\ref{e uxx}) for the approximation sequence we defined before:
\begin{prop}\label{prop: linGrowthApprox}
    There exists a constant $C$ independent of $R$ such that for any $R \ge 1$, we have that:
    \begin{equation}
    u_R^{\xi_R \xi_R}\le C(\langle\xi_R, y\rangle+C).    
    \end{equation}
\end{prop}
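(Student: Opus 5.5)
The plan is to read the inequality as the standard shrinker identity $|\nabla f|^2+\mathrm{Scal}=f+\text{const}$, combined with nonnegativity of scalar curvature. Both are proved on the compact (possibly non-rational, in Legendre's sense) toric space attached to the labelled polytope $P_R$, with constants controlled uniformly in $R$.

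\emph{Step 1: dictionary.} Let $\omega_R$ be the Kähler metric defined by $\vphi_R$ via Theorem~\ref{thm: LegendreKRS}. Set
\[
f_R:=\langle \xi_R,\nabla\vphi_R\rangle=\langle\xi_R,y\rangle,
\]
up to an additive normalising constant. Then $f_R$ is the Hamiltonian of the Killing field $J\xi_R$, and the soliton equation $\det D^2\vphi_R=e^{\langle\xi_R,\nabla\vphi_R\rangle-\vphi_R+C}$ says exactly that $Ric(\omega_R)+\sqrt{-1}\partial\bar\partial f_R=\omega_R$, after matching normalisations. In symplectic coordinates the metric on the torus directions is $(D^2u_R)^{-1}=(u_R^{ij})$, so
\[
|\nabla f_R|^2_{g_R}=c_n\,u_R^{\xi_R\xi_R}
\]
for a harmless dimensional factor $c_n$. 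Hence it suffices to prove $|\nabla f_R|^2\le f_R+C$ with $f_R=\langle\xi_R,y\rangle+c_R$ and $|c_R|\le C$ uniformly in $R$. We then use Lemma~\ref{lem uni c}: since $\langle\xi_R,y\rangle\ge C_1$ on $P_R$, any additive constant can be absorbed into the form $C(\langle\xi_R,y\rangle+C)$.

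\emph{Step 2: Hamilton's identity with a uniform constant.} Tracing the soliton equation and differentiating gives the usual identities
\[
\mathrm{Scal}+\Delta f=n,\qquad \mathrm{Scal}+|\nabla f|^2-f=\kappa_R,
\]
for a constant $\kappa_R$. The second uses the contracted Bianchi identity. Equivalently, it can be checked directly in polytope coordinates as an identity among $u_R^{ij}$, its derivatives, and $\langle\xi_R,y\rangle$, which is smooth on $\overline{P_R}$ by Proposition~\ref{prop H gui}.

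To pin down $\kappa_R$, integrate against $e^{-f}\omega^n$, which pushes forward to $\mu_R$ up to a constant. The key facts are:
\begin{itemize}
\item $\int\Delta f\,e^{-f}=\int|\nabla f|^2e^{-f}$, which holds because the boundary terms vanish by the Guillemin conditions, that is by the compactness of the complexified space.
\item The average of $\langle \xi_R,y\rangle$ against $\mu_R$ is zero by the barycenter condition defining $\xi_R$.
\end{itemize}
Together these give $\kappa_R=n-c_R$. So if we choose $c_R=0$, we get $|\nabla f_R|^2=\langle\xi_R,y\rangle+n-\mathrm{Scal}$. No further uniformity issue then remains.

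\emph{Step 3: $\mathrm{Scal}\ge 0$.} This is the main obstacle. I would use the evolution-type identity
\[
\Delta_f\mathrm{Scal}=\mathrm{Scal}-|Ric|^2\le \mathrm{Scal}-\tfrac1n\mathrm{Scal}^2,\qquad \Delta_f:=\Delta-\nabla f\cdot\nabla,
\]
and apply the minimum principle at a minimum point of $\mathrm{Scal}$, which is a $\bT$-invariant function. Such a minimum exists on $\overline{P_R}$, because by Proposition~\ref{prop H gui} the function $\mathrm{Scal}$, written in terms of $u_R^{ij}$, is smooth up to the boundary. There are two cases.
\begin{itemize}
\item If the minimum lies in the interior, the standard computation gives $\min\mathrm{Scal}\ge0$.
\item If it lies on a face, I would use the local complexification of Proposition~\ref{prop comp ext 2}. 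This exhibits $\mathrm{Scal}$ as a smooth function on a genuine neighbourhood in $\C^n$, on which the Kähler metric $\sqrt{-1}\partial\bar\partial\Psi$ is smooth and positive. The same minimum-principle computation then applies there.
\end{itemize}
The delicate point is checking that the local chart really carries the smooth soliton structure even when $\nu_k=-\xi/R$ is irrational. Proposition~\ref{prop comp ext 2} is designed for exactly this, since it only uses the Guillemin decomposition near a face.

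Combining the three steps gives
\[
c_n\,u_R^{\xi_R\xi_R}\le\langle\xi_R,y\rangle+n\le C(\langle\xi_R,y\rangle+C)
\]
uniformly in $R$, which is the claim.
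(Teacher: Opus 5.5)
Your argument is correct, but it takes a genuinely different route from the paper's.

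\textbf{What the paper does.} It never mentions curvature. It differentiates $\log\det D^2\varphi_R=\langle\xi_R,\nabla\varphi_R\rangle-\varphi_R+C$ twice along $\xi_R$. Using Cauchy--Schwarz, it proves $L\log(\varphi_R)_{\xi_R\xi_R}\ge -1$ for the drift operator $L=\varphi_R^{ij}\partial_{ij}-\langle\xi_R,\nabla\,\cdot\,\rangle$. It then computes $L\log(\langle\xi_R,y\rangle+C_2)$, using Lemma~\ref{lem uni c} to keep $\langle\xi_R,y\rangle+C_2\ge 1$. Finally it applies the maximum principle on $\R^n$ to $\log\bigl((\varphi_R)_{\xi_R\xi_R}/(\langle\xi_R,y\rangle+C_2)\bigr)-\epsilon\varphi_R$; the barrier $-\epsilon\varphi_R$ forces an interior maximum, and then $\epsilon\to 0$.

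\textbf{What you do.} You identify the quantity as $|\nabla f|^2$ and reduce everything to $\mathrm{Scal}\ge 0$.

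\textbf{Step 2 is simpler than you make it.} In the toric setting your identity holds pointwise, with no integration needed. Differentiating the log of the equation once along $\xi_R$ gives
\[
\varphi_R^{ij}(\varphi_R)_{ij\xi_R}=(\varphi_R)_{\xi_R\xi_R}-(\varphi_R)_{\xi_R}.
\]
Since $\mathrm{Scal}=n-\varphi_R^{ij}(\varphi_R)_{ij\xi_R}$, this yields $u_R^{\xi_R\xi_R}=\langle\xi_R,y\rangle+n-\mathrm{Scal}$ exactly, i.e.\ $\kappa_R=n$ with $c_R=0$. So you can drop the integration argument. If you keep it, note that for irrational $\xi_R$ there is no global compact complex space. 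The vanishing of boundary terms should instead be justified by $\mathbf H(\nu_k,\cdot)=0$ on facets, or by the decay of $e^{-\varphi_R}$ on $\R^n$.

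\textbf{Relation between the two maximum principles.} The paper's $L$ is exactly $\Delta_f$ on torus-invariant functions, and one checks $L\,\mathrm{Scal}=\mathrm{Scal}-|Ric|^2$ directly. So your Step 3 is a lower-order version of the paper's maximum principle. Your handling of a boundary minimum is valid: Abreu's formula plus the smoothness of $\mathbf H$ from Proposition~\ref{prop H gui} give a minimum on $\overline{P_R}$, and the chart of Proposition~\ref{prop comp ext 2} lets you run the minimum principle there. Alternatively, the paper's barrier trick avoids charts: minimize $\mathrm{Scal}+\epsilon\varphi_R$ on $\R^n$, using that $L\varphi_R=n-\langle\xi_R,y\rangle$ is bounded on $P_R$.

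\textbf{What each route buys.} Yours gives the sharp, $R$-free bound $u_R^{\xi_R\xi_R}\le\langle\xi_R,y\rangle+n$, with a clear geometric meaning; Lemma~\ref{lem uni c} is needed only to rewrite it in the stated form. The paper's gives a self-contained real-variable argument that needs no curvature identities and no boundary analysis.
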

\begin{proof}
    Note that $\varphi_R$ satisfies the equation:
    \begin{equation*}
        det(\varphi_R)_{ij}= e^{-\varphi_R + \langle\xi_R, \nabla \varphi_R\rangle-C_R}.
    \end{equation*}
    Take logarithm of the above equation and take derivative with respect to $\xi_R$ direction. We can get that:
    \begin{equation}\label{e 1deri}
        (\varphi_R)^{ij} (\varphi_R)_{ij \xi_R}= - (\varphi_R)_{\xi_R} +\langle\xi_R, \nabla (\varphi_R)_{\xi_R}\rangle.
    \end{equation}
    We define the drifted linearized Monge-Amp\`ere operator as
    \begin{equation*}
        L u = \varphi_R^{ij} u_{ij} - \langle\xi_R, \nabla u\rangle.
    \end{equation*}
    Then (\ref{e 1deri}) becomes 
    \begin{equation*}
        L (\varphi_R)_{\xi_R} = -(\varphi_R)_{\xi_R}.
    \end{equation*}
    Take one more derivative of (\ref{e 1deri}) with respect to $x$. We can get that:
    \begin{equation*}
        (\varphi_R)^{ij}(\varphi_R)_{ij \xi_R \xi_R}-(\varphi_R)^{ia}(\varphi_R)^{jb}(\varphi_R)_{ij \xi_R}(\varphi_R)_{ab \xi_R} =-  (\varphi_R)_{\xi_R \xi_R} + \langle\xi_R, \nabla (\varphi_R)_{\xi_R \xi_R}\rangle,
    \end{equation*}
    which can be written as 
    \begin{equation*}
        L (\varphi_R)_{\xi_R \xi_R}= (\varphi_R)^{ia}(\varphi_R)^{jb}(\varphi_R)_{ij \xi_R} (\varphi_R)_{ab \xi_R} - (\varphi_R)_{\xi_R \xi_R}.
    \end{equation*}
    Then we can compute that
    \begin{equation}\label{e Lphi11}
    \begin{split}
           L \log (\varphi_R)_{\xi_R \xi_R}&= \frac{L (\varphi_R)_{\xi_R \xi_R}}{(\varphi_R)_{\xi_R \xi_R}} - \frac{(\varphi_R)^{ij}(\varphi_R)_{\xi_R \xi_R i}(\varphi_R)_{\xi_R \xi_R j}}{(\varphi_R)_{\xi_R \xi_R}^2}  \\
           &= \frac{ (\varphi_R)^{ia}(\varphi_R)^{jb}(\varphi_R)_{ij\xi_R} (\varphi_R)_{ab\xi_R} - (\varphi_R)_{\xi_R \xi_R}}{(\varphi_R)_{\xi_R \xi_R}} - \frac{(\varphi_R)^{ij}(\varphi_R)_{\xi_R \xi_Ri}(\varphi_R)_{\xi_R \xi_Rj}}{(\varphi_R)_{\xi_R \xi_R}^2} \\
           & \ge \frac{(\varphi_R)^{ij}(\varphi_R)_{\xi_R \xi_Ri}(\varphi_R)_{\xi_R \xi_Rj}}{(\varphi_R)^2_{\xi_R \xi_R}} -1 - \frac{(\varphi_R)^{ij}(\varphi_R)_{\xi_R \xi_Ri}(\varphi_R)_{\xi_R \xi_Rj}}{(\varphi_R)_{\xi_R \xi_R}^2}
           = -1.
    \end{split}
    \end{equation}
    In the last line above, we use $(\varphi_R)^{ia}(\varphi_R)^{jb} (\varphi_R)_{ij\xi_R} (\varphi_R)_{ab \xi_R} \ge \frac{(\varphi_R)^{ij}(\varphi_R)_{\xi_R \xi_R i} (\varphi_R)_{\xi_R \xi_R  i}}{H_{\xi_R \xi_R}}$ which comes from the Cauchy-Schwarz inequality.

    Let $C_1$ be the constant as in the Lemma \ref{lem uni c}. We additionally require that $C_2$ is big enough such that $C_2 \ge 1-C_1$. This ensures that for any $z\in P_R$, we have that:
\begin{equation*}
    \langle \xi_R,y \rangle + C_2  \ge C_1 + C_2  \ge 1. 
\end{equation*}
    Then, we compute that
    \begin{equation}\label{e Lx}
    \begin{split}
         L \log (\langle\xi_R,y\rangle+C_2) &= \frac{L (\langle\xi_R,y\rangle+C_2)}{(\langle\xi_R,y\rangle+C_2)} -\frac{\varphi_R^{ij}(\langle\xi_R,y\rangle+C_2)_i (\langle\xi_R,y\rangle+C_2)_j}{(\langle\xi_R,y\rangle+C_2)^2} \\
         &= \frac{L ((\varphi_R)_{\xi_R})}{(\langle\xi_R,y\rangle+C_2)} -\frac{(\varphi_R)^{ij}(\varphi_R)_{i \xi_R} (\varphi_R)_{j \xi_R }}{(\langle\xi_R,y\rangle+C_2)^2} \\
         & =-\frac{(\varphi_R)_{\xi_R}}{C_2+(\varphi_R)_{\xi_R}} -\frac{(\varphi_R)_{\xi_R \xi_R}}{(C_2+(\varphi_R)_{\xi_R})^2}.
    \end{split}
    \end{equation}
Then, we combine (\ref{e Lphi11}) and (\ref{e Lx}) to get that:
\begin{equation}\label{e Lphi11x}
 \begin{split}
      L \Big( \log (\frac{(\varphi_R)_{\xi_R \xi_R }}{(\langle\xi_R,y\rangle+C_2)}) -\epsilon \varphi_R \Big) &\ge -1 +\frac{(\varphi_R)_{\xi_R}}{C_2+(\varphi_R)_{\xi_R}} +\frac{(\varphi_R)_{\xi_R \xi_R}}{(C_2+(\varphi_R)_{\xi_R})^2} -n\epsilon +\epsilon (\varphi_R)_{\xi_R}\\
     & =-\frac{C_2}{C_2+(\varphi_R)_{\xi_R}} +\frac{(\varphi_R)_{\xi_R \xi_R}}{(C_2+(\varphi_R)_{\xi_R})^2} +\epsilon ( (\varphi_R)_{\xi_R}-n).
 \end{split}   
\end{equation}
Using the Lemma \ref{lem uni c}, we have that $P_R \subset \{z: \langle\xi_R,z\rangle\ge C_1\}$. Since $\nabla \varphi_R \subset P_R$, we have that:
\begin{equation}\label{e phir low}
(\varphi_R)_{\xi_R}=\langle\nabla \varphi_R, \xi_R\rangle\ge C_1.
\end{equation}
Plugging (\ref{e phir low}) into (\ref{e Lphi11x}), we can get that
\begin{equation}\label{e phi11 up1}
\begin{split}
    (\varphi_R)_{\xi_R \xi_R}\le& C_2 ((\varphi_R)_{\xi_R} +C_2) + \epsilon(n-(\varphi_R)_{\xi_R})_+ ((\varphi_R)_{\xi_R} +C_2)^2\\
    &+((\varphi_R)_{\xi_R}+C_2)^2L \Big( \log (\frac{(\varphi_R)_{\xi_R \xi_R}}{(\langle\xi_R,y\rangle+C_2)}) -\epsilon \varphi_R \Big).
    \end{split}
\end{equation}
From now on, we fix the constant $C$ by taking $C=C_2$. By the Proposition \ref{prop H gui}, we have that $\log (\frac{u_R^{\xi_R \xi_R}}{(C_2 + \langle\xi_R,y\rangle)})$ is bounded from above. Since $-\epsilon \varphi_R \circ \nabla u_R \rightarrow-\infty$ near $\partial P_R$, we have that 
\begin{equation*}
    \log (\frac{u_R^{\xi_R \xi_R}}{(C_2+ \langle\xi_R,y\rangle)})- \epsilon \varphi_R \circ \nabla u_R \rightarrow -\infty
\end{equation*}
near $\partial P_R$. As a result, the maximum point of $ \log (\frac{u_R^{\xi_R \xi_R}}{(C_2+ \langle\xi_R,y\rangle)})- \epsilon \varphi_R \circ \nabla u_R $ is taken at some point $p\in Int (P_R)$. At $\nabla u_R (p)$, (\ref{e phi11 up1}) implies that:
\begin{equation}\label{e phi11 uper p}
\begin{split}
    (\varphi_R)_{\xi_R \xi_R}\le &C_2 ((\varphi_R)_{\xi_R} +C_2) + \epsilon(n-(\varphi_R)_{\xi_R})_+ ((\varphi_R)_{\xi_R} +C_2)^2\\
    \le & C_2 ((\varphi_R)_{\xi_R} +C_2) + \epsilon(n-C_1) (n +C_2)^2\\
    \le & C_3 ((\varphi_R)_{\xi_R} +C_2).
    \end{split}
\end{equation}
Then we have that for any $ p_1 \in P_R$:
\begin{equation*}
\begin{split}
&\log (\frac{u_R^{\xi_R \xi_R}}{(C_2 + \langle\xi_R,y\rangle)}) (p_1)=  \log (\frac{u_R^{\xi_R \xi_R}}{(C_2 + \langle\xi_R,y\rangle)}) (p_1) - \epsilon \varphi_R (p_1) + \epsilon \varphi_R (p_1) \\
& \le \log (\frac{u_R^{\xi_R \xi_R}}{(C_2 + \langle\xi_R,y\rangle)}) (p)- \epsilon \varphi_R (p) + \epsilon \varphi_R (p_1) \le \log C_3 - \epsilon C_4 + \epsilon \varphi_R (p_1)
\end{split}
\end{equation*}
$C_4$ is a lower bound on $\varphi_R$ which exists since $\varphi_R$ is proper.  In the end, we let $\epsilon$ go to zero  to get that 
\begin{equation*}
    \log (\frac{u_R^{\xi_R \xi_R}}{(C_2 + \langle\xi_R,y\rangle)}) (p_1) \le \log C_3
\end{equation*}
for any $p_1\in P$. In the end, we take $C= \max\{C_2,C_3\}$ such that:
\begin{equation*}
 u_R^{\xi_R \xi_R} \le C(C + \langle\xi_R,y\rangle)
\end{equation*}
\end{proof}

\section{$C^2$ estimate on the $\R^n$ side}\label{sec: C2estimateRn}

\begin{prop}\label{prop gra loc}
    For any $M>0$, there exist constants $R_M >M$ and $\epsilon_M>0$ such that for any $R \ge R_M$,  if $\rho_0$ satisfies $\nabla \varphi_{R} (\rho_0) \in P_M$, then we have that:
    \begin{equation*}
        \nabla \varphi_R (\{\rho : \varphi_R (\rho)-\varphi_R(\rho_0) - \langle \nabla \varphi_R (\rho_0), \rho- \rho_0 \rangle <\epsilon_M \}) \subset P_{R_M}.
    \end{equation*}
\end{prop}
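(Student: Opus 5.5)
The plan is to pass to the Legendre dual side, where the section becomes a sublevel set of a Bregman divergence of $u_R$, and then to argue by compactness using the uniform H\"older bounds of Lemma~\ref{lem: uRholderbounds} and the convergence of Corollary~\ref{cor hol con}. Write $y_0=\nabla\varphi_R(\rho_0)\in P_M$ and, for a point $\rho$ of the section, $y=\nabla\varphi_R(\rho)$. The standard identity for a Legendre pair gives
\[
\varphi_R(\rho)-\varphi_R(\rho_0)-\langle y_0,\rho-\rho_0\rangle
= u_R(y_0)-u_R(y)-\langle\nabla u_R(y),y_0-y\rangle=:D_R(y_0,y).
\]
It therefore suffices to find $\epsilon_M>0$, $R_M>M$ such that for $R\ge R_M$, $y_0\in P_M$ and $y\in P_R\setminus P_{R_M}$ we have $D_R(y_0,y)\ge\epsilon_M$. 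I would take $R_M=M+2$ and then enlarge the threshold on $R$ as needed; the statement allows this, since it only asks for $R\ge R_M$ after adjusting.

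The first step is a monotonicity reduction. Set $g(t)=u_R(y_0+t(y-y_0))$ and $h(t)=g(0)-g(t)+tg'(t)$. Then $h(1)=D_R(y_0,y)$ and $h'(t)=tg''(t)\ge0$. Hence $D_R(y_0,y)\ge h(s)=D_R(y_0,y_s)$ for any intermediate point $y_s$ on the segment. Since $\langle\xi_R,\cdot\rangle$ is affine and $\xi_R\to\xi$ by Lemma~\ref{lem: convergenceOfReebFields}, I choose $y_s$ on the segment with $\langle\xi,y_s\rangle=M+1$. This reduces the problem to pairs $(y_0,z)$ in the compact set $K=P_{M+1}$ (closure in $\overline P$) satisfying $\langle\xi,y_0\rangle\le M$ and $\langle\xi,z\rangle=M+1$. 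In particular $|z-y_0|\ge\delta_M>0$ uniformly. Moreover, $t g'(t)\ge 2\big(g(t)-g(t/2)\big)$ by convexity, which gives
\[
D_R(y_0,z)\ \ge\ u_R(y_0)+u_R(z)-2u_R\big(\tfrac{y_0+z}{2}\big).
\]
The same bound holds for every sub-segment starting at $y_0$.

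The second step is a contradiction argument. Suppose there are $R_k\to\infty$ and pairs $(y_0^k,z^k)$ as above with $D_{R_k}(y_0^k,z^k)\to0$. The uniform $C^\alpha(P_{M+2})$ bounds (which hold up to $\partial P$, since they come from Sobolev embedding on the Lipschitz domain $P_{M+2}$), together with the normalization of Lemma~\ref{lem: infupperandlowerbds}, let me pass to a subsequence. Along it, $u_{R_k}\to u_\infty$ uniformly on $P_{M+2}$, $y_0^k\to y_0$ and $z^k\to z$ with $|z-y_0|\ge\delta_M$. The midpoint-defect inequality applied to all sub-segments then shows that $u_\infty$ is affine on $[y_0,z]$.

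The final step, and the main obstacle, is to contradict this. If the open segment meets $\operatorname{int}P$, this contradicts the smoothness and strict convexity of $u_\infty=\varphi_\infty^*$ in the interior (Lemma~\ref{lem ur con}). The delicate case is when $[y_0,z]$ lies in a boundary face of $P$, where regularity of $u_\infty$ is not yet available. Here I would try to use that $\varphi_\infty$ is finite and differentiable on all of $\mathbb{R}^n$ (Corollary~\ref{cor fin varphi}) with $\nabla\varphi_\infty(\mathbb{R}^n)=\operatorname{int}P$. If $u_\infty$ were affine with slope $\ell$ along a boundary segment, then approximating by interior points and using the uniform bound on $D_{R_k}$ should produce a sequence of $\rho$'s whose $\varphi_\infty$-sections of bounded height have gradients spreading along $[y_0,z]$. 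Unbounded such sets would contradict the coercivity $\varphi_\infty\ge\epsilon|x|-C$ in \eqref{eq: uniformCoercivityphi}, and bounded ones would contradict differentiability of $\varphi_\infty$. If this fails, the fallback is to run the argument directly on the face using the restriction of the boundary equation, or to perturb $y_0$ slightly into the interior using the continuity of $D_R$ in $y_0$.
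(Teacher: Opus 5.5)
The gap is in your final step, and it comes from choosing $R_M=M+2$. Your monotonicity step ($t\mapsto D_R(y_0,y_t)$ is nondecreasing along the segment) and your compactness step are correct, and they are essentially what the paper uses.

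By truncating at $\langle\xi,z\rangle=M+1$ you are proving a stronger statement, and your contradiction only yields a \emph{compact} segment $[y_0,z]$ on which $u_\infty$ is affine. When that segment lies in a face of $P$, excluding it is exactly strict convexity of $u_\infty$ on faces, i.e.\ part (i) of the Guillemin condition. That is not available at this stage; the paper proves it later, and that proof relies on this very proposition through Proposition~\ref{prop rn c2}. Your proposed remedies do not close the gap:
\begin{itemize}
\item Finiteness and differentiability of $\varphi_\infty$ only make $u_\infty=\varphi_\infty^*$ strictly convex on $\operatorname{dom}\partial u_\infty=\operatorname{int}P$, not on boundary faces.
\item The dichotomy ``bounded sections contradict differentiability'' fails, because bounded sections can drift to infinity. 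The local model $u=y_2\log y_2+y_1^2y_2$ shows this. It is smooth and strictly convex for $y_2>0$, $|y_1|<1/\sqrt2$, and it is $\equiv0$ on the face $\{y_2=0\}$. Moreover $D_u((a,s),(b,s))=s(a-b)^2\to0$, while the dual points $\nabla u(a,s)$ and $\nabla u(b,s)$ stay at bounded distance from each other but escape to $x_2\to-\infty$.
\item Perturbing $y_0$ inward fails, since $\partial_{y_0}D_R=\nabla u_R(y_0)-\nabla u_R(y)$ is not uniformly bounded near $\partial P$.
\item Working directly on the face would need uniform regularity of $u_R$ there, which is again the later result.
\end{itemize}

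The missing idea is to keep the far endpoint. Negate the statement as the paper does, with bad points $\bar y_k\notin P_{R_k}$ and $R_k\to\infty$. This is legitimate because $R_M$ only has to exist.
\begin{itemize}
\item Your monotonicity argument then makes the limit $u$ affine on an entire ray $\{y_0+tv:t\geq0\}\subset\overline P$ with $v\in\operatorname{rec}(P)$.
\item Normalize $u=0$ on the ray. For $p\in\operatorname{int}P$, write $p=\frac{1}{1+\epsilon_t}p_t+\frac{\epsilon_t}{1+\epsilon_t}y_t$, where $p_t=p+\epsilon_t(p-y_t)\to p-\epsilon v$ and $\epsilon_t\to0$.
\item Convexity gives $u(p)\leq u(p-\epsilon v)$, so $\partial_v u\leq0$ on $P$. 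This contradicts $\nabla u(P)=\mathbb{R}^n$ (Corollary~\ref{cor fin varphi}).
\end{itemize}
This argument uses only convexity on $\overline P$. It therefore does not care whether the ray lies in $\partial P$, which is precisely the case your compact truncation cannot handle.
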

\begin{proof}
    We prove this Proposition by contradiction argument. Suppose that there exist $M$ and sequences $R_k \rightarrow \infty$, $ \frac{1}{k} \rightarrow 0$, $\widetilde{R}_k \ge R_k$ and $\rho_k, \bar{\rho}_k \in \R^n$, such that:
    $\nabla \varphi_{\widetilde{R}_k} (\rho_k) \subset P_M$ and
    \begin{equation}\label{e yk big}
        \nabla \varphi_{\widetilde{R}_k} (\bar{\rho}_k) \subset P \setminus P_{R_k}
    \end{equation}
    and
    \begin{equation}\label{e sec con}
        \varphi_{\widetilde{R}_k}(\bar{\rho}_k)-\varphi_{\widetilde{R}_k}(\rho_k) -\langle \nabla \varphi_{\widetilde{R}_k}(\rho_k),\bar{\rho}_k-\rho_k \rangle < \frac{1}{k}.
    \end{equation}
    Denote $y_k = \nabla \varphi_{\widetilde{R}_k}(\rho_k)$ and $\bar{y}_k =\nabla \varphi_{\widetilde{R}_k} (\bar{\rho}_k)$. (\ref{e sec con}) implies that:
    \begin{equation*}
        \bar{y}_k \cdot \nabla u_{\widetilde{R}_k} (\bar{y}_k) - u_{\widetilde{R}_k}(\bar{y}_k) - (y_k \cdot \nabla u_{\widetilde{R}_k}(y_k) - u_{\widetilde{R}_k}(y_k))-y_k \cdot (\nabla u_{\widetilde{R}_k} (\bar{y}_k) - \nabla u_{\widetilde{R}_k} (y_k))<\frac 1k.
    \end{equation*}
    This implies that:
    \begin{equation}\label{e ur conv}
        u_{\widetilde{R}_k}(y_k) - u_{\widetilde{R}_k}(\bar{y}_k) - \nabla u_{\widetilde{R}_k} (\bar{y}_k)\cdot (y_k - \bar{y}_k) <\frac{1}{k}.
    \end{equation}
    Let $L_k$ be the segment between $y_k$ and $\bar{y}_k$.
    Since $y_k \in P_M$, we can take a subsequence of $y_k$, converging to a point $y_0$. We can also assume that $\frac{\bar{y}_k- y_k}{|\bar{y}_k - y_k|}$, up to a subsequence, converge to a vector $v$. Using the Lemma \ref{lem ur con}, we can take a subsequence of $u_{\widetilde{R}_k}$, still denoted as $u_{\widetilde{R}_k}$, converging to a function $u$. We claim that $u$ is linear along the ray $L \triangleq \{y_0 + t v : t \ge 0\}$. Take any $\hat{y}, y^*, y' \in L$, where $y'$ lies on the ray shooting from $\hat{y}$ to $y^*$. Let $\lambda \in (0,1)$ be the constant such that $y^* = \lambda \hat{y} + (1-\lambda) y'$. There exist sequences $\hat{y}_k, y_k^*, y_k' \in L_k$ such that $\lim_{k \rightarrow \infty} \hat{y}_k=\hat{y}$, $ \lim_{k \rightarrow \infty} y_k^* = y^*$ and $\lim_{k \rightarrow \infty} y'_k = y'$. Since by (\ref{e yk big}) $\bar{y}_k$ goes to $\infty$ as $k$ goes to $\infty$, the relative position between $y_k, \hat{y}_k, y_k^*, y_k', \bar{y}_k$ can be expressed by the following picture:
    \begin{center}
\begin{tikzpicture}
    \draw[thick] (0,0) -- (10,0);
    \foreach \x/\label in {
        1/ y_k,
        3/{\hat{y}_k},
        5/{y_k^*},
        7/{y_k'},
        9/{\bar{y}_k}
    } {
        \draw[thick] (\x,-0.1) -- (\x,0.1);
        \node[below=6pt] at (\x,0) {$\label$};
    }
\end{tikzpicture}
\end{center}
    Using the fact that $u_{\widetilde{R}_k}$ is convex and (\ref{e ur conv}), we can get that:
    \begin{equation*}
    \begin{split}
          0 & \le u_{\widetilde{R}_k}(y_k^*)- u_{\widetilde{R}_k}(y'_k)- \nabla u_{\widetilde{R}_k}(y'_k) \cdot (y_k^* - y'_k)  
\le u_{\widetilde{R}_k}(\hat{y}_k) - u_{\widetilde{R}_k}(y'_k) - \nabla u_{\widetilde{R}_k}(y'_k) \cdot (\hat{y}_k - y'_k)\\
          &\le  u_{\widetilde{R}_k}(y_k) - u_{\widetilde{R}_k}(\bar{y}_k) - \nabla u_{\widetilde{R}_k} (\bar{y}_k)\cdot (y_k - \bar{y}_k) \le \frac{1}{k}.
    \end{split}
    \end{equation*}
    Then, we can compute that:
    \begin{equation*}
    \begin{split}
        & u_{\widetilde{R}_k}(y^*_k) - \lambda u_{\widetilde{R}_k} (\hat{y}_k)- (1-\lambda) u_{\widetilde{R}_k} (y'_k) \\
        & = u_{\widetilde{R}_k}(y_k^*)- u_{\widetilde{R}_k}(y'_k)- \nabla u_{\widetilde{R}_k} (y'_k)\cdot (y_k^* - y'_k) - \lambda \Big( u_{\widetilde{R}_k}(\hat{y}_k) - u_{\widetilde{R}_k}(y'_k) - \nabla u_{\widetilde{R}_k}(y'_k) \cdot (\hat{y}_k - y'_k) \Big) \le \frac{1}{k}
    \end{split}
    \end{equation*}
    and
    \begin{equation*}
    \begin{split}
    & u_{\widetilde{R}_k}(y^*_k) - \lambda u_{\widetilde{R}_k} (\hat{y}_k)- (1-\lambda) u_{\widetilde{R}_k} (y'_k) \\
        & = u_{\widetilde{R}_k}(y_k^*)- u_{\widetilde{R}_k}(y'_k)- \nabla u_{\widetilde{R}_k} (y'_k)\cdot (y_k^* - y'_k) - \lambda \Big( u_{\widetilde{R}_k}(\hat{y}_k) - u_{\widetilde{R}_k}(y'_k) - \nabla u_{\widetilde{R}_k}(y'_k) \cdot (\hat{y}_k - y'_k) \Big) \ge -\lambda \frac{1}{k}.
    \end{split}
    \end{equation*}
    Let $k$ goes to infinity, we can use the Corollary \ref{cor hol con} to get that:
    \begin{equation*}
        u(y^*)-\lambda u(\hat{y}) -(1-\lambda)u(y')=0.
    \end{equation*}
    As a result, $u$ is linear along the ray $L$. By adding an affine function to $u$, we can assume that $u=0$ along $L$. Take an arbitrary point $p \in \inte P$ and take a small constant $\epsilon>0$ such that $p-\epsilon v \in \inte P$. Denote $y_t = y_0+ tv$. We can choose constants $\epsilon_t$ such that $p_t \triangleq p+ \epsilon_t (p-y_t)$ satisfies:
    \begin{equation*}
        \lim_{t \rightarrow \infty} p_t = p-\epsilon v.
    \end{equation*}
    Using the convexity of $u$, we can get that:
    \begin{equation}\label{e con u}
        u(p)\le \frac{1}{1+\epsilon_t} u(p_t)+ \frac{\epsilon_t}{1+\epsilon_t} u(y_t)= \frac{1}{1+\epsilon_t} u(p_t).
    \end{equation}
    It is clear from the definition of $\epsilon_t$ that $\lim_{t \rightarrow \infty}\epsilon_t=0$. As a result, we can let $t\rightarrow \infty$ in (\ref{e con u}) to get:
    \begin{equation*}
        u(p)\le u(p-\epsilon v).
    \end{equation*}
    Since we have the freedom to choose $p$ and $\epsilon$, the above formula implies that:
    \begin{equation*}
        \nabla_v u \le 0
    \end{equation*}
    on $P$. This contradicts with the fact that $\nabla u (P)=\R^n$.
\end{proof}

The above proposition means that when we do the Pogorelov estimate for $\varphi_R$ in a section, we can assume that $\nabla \varphi_R$ is uniformly bounded in this section.
\begin{prop}\label{prop rn c2}
For any $M$, there exist constants $R_M$ and $C(M)>0$ such that for any $R \ge R_M$ and $y\in P_M$,
\[
  (D^2 u_R)(y) \ge C(M)I.
\]
\end{prop}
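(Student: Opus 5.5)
Since $D^2u_R(y)=(D^2\varphi_R(x))^{-1}$ at $x=\nabla u_R(y)$, the claimed lower bound $D^2u_R\ge C(M)I$ on $P_M$ is the same as the upper bound $D^2\varphi_R\le C(M)^{-1}I$ at every $x$ with $\nabla\varphi_R(x)\in P_M$. I will prove this by a Pogorelov-type estimate on sections of $\varphi_R$. Proposition~\ref{prop gra loc} is exactly the input that makes such an estimate uniform in $R$.

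\textbf{Setup.} Fix $x_0$ with $y_0=\nabla\varphi_R(x_0)\in P_M$. Let
\[
S=\{\varphi_R-\ell_{x_0}<\epsilon_M\},\qquad w=\varphi_R-\ell_{x_0}-\epsilon_M,
\]
where $\ell_{x_0}$ is the supporting affine function of $\varphi_R$ at $x_0$. By Proposition~\ref{prop gra loc}, $\nabla\varphi_R(S)\subset P_{R_M}$, so $|\nabla\varphi_R|\le C(M)$ on $S$.

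\textbf{Geometry of the section.} The section $S$ is bounded uniformly in $R$. The coercivity~\eqref{eq: uniformCoercivityphi} together with Lemma~\ref{lem: infupperandlowerbds} controls $\varphi_R$, and since $\ell_{x_0}$ has slope in $P_M$, $S$ lies in a ball $B_{C(M)}(0)$. The same bounds control $|x_0|$. On $S$ the right-hand side $e^{\langle\xi_R,\nabla\varphi_R\rangle-\varphi_R-C_R}$ is bounded above and below uniformly, using the gradient bound and the $C^0$ bounds on $\varphi_R$ over $B_{C(M)}$. The gradient bound on $S$ and the height $\epsilon_M$ also give $\operatorname{dist}(x_0,\partial S)\ge\epsilon_M/C(M)$. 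Consequently $|w(x_0)|=\epsilon_M$ is bounded below.

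\textbf{Pogorelov estimate.} Consider
\[
G=\log\varphi_{R,\eta\eta}+\beta\log(-w)+\tfrac{A}{2}|\nabla\varphi_R|^2
\]
over $x\in S$ and unit vectors $\eta$. Use the drifted operator $L=\varphi_R^{ij}\partial_{ij}-\langle\xi_R,\nabla\cdot\rangle$ from Proposition~\ref{prop: linGrowthApprox}. Differentiating $\log\det D^2\varphi_R=\langle\xi_R,\nabla\varphi_R\rangle-\varphi_R-C_R$ twice gives
\[
L\varphi_{R,\eta\eta}\ge \varphi_R^{ia}\varphi_R^{jb}\varphi_{R,ij\eta}\varphi_{R,ab\eta}-\varphi_{R,\eta\eta}.
\]
The extra linear terms are harmless because $\xi_R$ is bounded by Lemma~\ref{lem: convergenceOfReebFields}. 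Also $L\,\tfrac12|\nabla\varphi_R|^2\ge\Delta\varphi_R-C$, since $\varphi_{R,k}\,L\varphi_{R,k}=-\varphi_{R,k}^2$ is bounded, and $Lw=n-\langle\xi_R,\nabla\varphi_R-y_0\rangle\le C$.

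\textbf{Maximum point argument.} $G$ tends to $-\infty$ on $\partial S$, so it attains an interior maximum. At that point, take $\eta=e_1$ and diagonalize $D^2\varphi_R$. The standard Pogorelov computation absorbs the third-order terms using $\beta$ (any $\beta\ge 1$, e.g. $\beta=1$), with the $A|\nabla\varphi_R|^2$ term supplying $A\sum\lambda_i$. This yields $(-w)^\beta\varphi_{R,11}\le C(M)$ at the maximum, hence at $x_0$ as well. Dividing by $|w(x_0)|^\beta=\epsilon_M^\beta$ gives $D^2\varphi_R(x_0)\le C(M)$, which is the claim.

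\textbf{Main obstacle.} The delicate step is controlling the third-order terms, i.e.\ the $\varphi_R^{11}\varphi_{R,11}^{-2}$-type contributions, at the maximum point. The drift adds first-order terms $\langle\xi_R,\nabla G\rangle$; these vanish at the maximum since $\nabla G=0$ there. It must also be checked that the zeroth-order contribution $-\varphi_{R,\eta\eta}$ has a favorable sign. If the full Pogorelov scheme runs into trouble, my fallback is Caffarelli's interior $C^{2,\alpha}$ theory on the uniformly normalized section $S$. This is available because the right-hand side $e^{\langle\xi_R,\nabla\varphi_R\rangle-\varphi_R-C_R}$ is pinched and $C^\alpha$ on $S$: it is Lipschitz in $x$, since $\nabla\varphi_R$ is bounded there, as in Lemma~\ref{lem ur con}.
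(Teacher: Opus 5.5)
Your main line is the paper's proof. It is a Pogorelov estimate on the section $S=\{\widetilde\varphi_R<\epsilon_M\}$, with $\widetilde\varphi_R=\varphi_R-\ell_{x_0}$, made uniform in $R$ by the gradient bound of Proposition~\ref{prop gra loc}. Your $G$ with $\beta=1$ is the logarithm of the paper's $(\epsilon_M-\widetilde\varphi_R)\,e^{\alpha|\nabla\widetilde\varphi_R|^2/2}\,\widetilde\varphi_{R,\zeta\zeta}$, and the differentiated identities you list are exactly the ones the paper uses. Two remarks on the computation:
\begin{enumerate}
\item The term $-\varphi_{R,\eta\eta}$ contributes only $-1$ to $L\log\varphi_{R,11}$, which is harmless.
\item With $\beta=1$, the terms $\varphi_R^{ii}\varphi_{R,11i}^2/\varphi_{R,11}^2$ for $i\ge2$ are not absorbed with room to spare. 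They cancel exactly once you substitute the critical-point equation twice, as in \eqref{eq:hash1}. For $\beta>1$ you can instead absorb them by Cauchy--Schwarz, provided $A$ is small compared with $(\beta-1)/\sup_S|\nabla\varphi_R|^2$.
\end{enumerate}

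However, your paragraph on the geometry of the section is false and should be removed. The set $P_M$ meets $\partial P$, and $\nabla\varphi_R$ is a diffeomorphism of $\mathbb{R}^n$ onto $\operatorname{int}P_R$. Hence $\nabla\varphi_R^{-1}(P_M)$ is unbounded and $|x_0|$ is not controlled.

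Nor is $S$ uniformly bounded. Suppose $y_0=\nabla\varphi_R(x_0)$ is close to a facet $\{\langle\nu,y\rangle=-1\}$. Then $\langle\nabla\widetilde\varphi_R,-\nu\rangle\le 1+\langle y_0,\nu\rangle$ everywhere. So $S$ contains the segment $\{x_0-s\nu:0\le s<\epsilon_M/(1+\langle y_0,\nu\rangle)\}$, whose length tends to infinity as $y_0$ approaches the facet. For the same reason, $e^{\langle\xi_R,\nabla\varphi_R\rangle-\varphi_R-C_R}$ has no uniform positive lower bound on these sections, because $\varphi_R\ge\epsilon|x|-C$.

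Fortunately, the Pogorelov argument never uses any of this. It needs only:
\begin{itemize}
\item $\sup_S|\nabla\varphi_R|\le C(M)$ and $|\xi_R|\le C$;
\item $w(x_0)=-\epsilon_M$;
\item compactness of $S$ for each fixed $R$, which holds because $y_0\in\operatorname{int}P_R$ forces linear growth of $\widetilde\varphi_R$.
\end{itemize}
Your fallback, by contrast, does rely on the false claims. Caffarelli's interior $C^{2,\alpha}$ theory on a uniformly normalized section with pinched data is not available near $\partial P$. That is precisely where the proposition has content, so the Pogorelov estimate is the route that must actually be carried out.
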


\begin{proof}
Fix $\rho_0 \in \nabla \varphi_R^{-1} (P_M)$ and normalize $\varphi_R$ by setting
\begin{equation}
  \widetilde{\varphi}_R(\rho)
  =\varphi_R(\rho)-\varphi_R(\rho_0)
   -\nabla\varphi_R(\rho_0)\cdot(\rho-\rho_0).
\end{equation}
Since $\varphi_R$ is strictly convex, the section
\[
  S_{\epsilon_M}=\{\rho\in\R^n:\widetilde{\varphi}_R(\rho)<\epsilon_M\}
\]
is compact, and $\nabla\widetilde{\varphi}_R$ is uniformly bounded on $S_{\epsilon_M}$. Moreover,
\begin{equation}\label{eq:tilde-varphi-R-logdet}
\begin{split}
   \log\det D^2\widetilde{\varphi}_R
  &=\log\det D^2\varphi_R \\
  &=-\widetilde{\varphi}_R(\rho)-\varphi_R(\rho_0)
    -\nabla\varphi_R(\rho_0)\cdot(\rho-\rho_0)
    +\Bigl\langle\xi_R, \nabla\widetilde{\varphi}_R(\rho)+\nabla\varphi_R(\rho_0)
     \Bigr\rangle+C\\
&=:F(\rho,\widetilde{\varphi}_R,\nabla\widetilde{\varphi}_R).  
\end{split}
\end{equation}

Let
\[
  t=\frac12|\nabla\widetilde{\varphi}_R|^2,
  \qquad
  H(\rho,\zeta)=(\epsilon_M-\widetilde{\varphi}_R)\eta(t)
  \widetilde{\varphi}_{R,\zeta\zeta},
\]
where $\eta>0$ will be chosen later. Suppose that
\[
  H(\bar\rho, \bar \zeta)=\max_{(\rho, \zeta) \in S_{\epsilon_M} \times \mathbb S^{n-1}}H(\rho, \zeta).
\]
After rotating coordinates, we may also assume that
$(\widetilde{\varphi}_{R,ij}(\bar\rho))$ is diagonal and $\bar \zeta = e_1$.
Denote 
\begin{equation}
    G(\rho)= (\epsilon_M-\widetilde{\varphi}_R)\eta(t)
  \widetilde{\varphi}_{R,11}.
\end{equation}
At $\bar\rho$,
\begin{align}
  0=(\log G)_i
  &=\frac{\widetilde{\varphi}_{R,i}}{\widetilde{\varphi}_R-\epsilon_M}
    +\frac{\eta_i}{\eta}
    +\frac{\widetilde{\varphi}_{R,11i}}{\widetilde{\varphi}_{R,11}},
  \label{eq:first-G}\\
  0\geq(\log G)_{ii}
  &=\frac{\widetilde{\varphi}_{R,ii}}{\widetilde{\varphi}_R-\epsilon_M}
    -\frac{\widetilde{\varphi}_{R,i}^2}{(\widetilde{\varphi}_R-\epsilon_M)^2}
    +\frac{\eta_{ii}}{\eta}-\frac{\eta_i^2}{\eta^2}
    +\frac{\widetilde{\varphi}_{R,11ii}}{\widetilde{\varphi}_{R,11}}
    -\frac{\widetilde{\varphi}_{R,11i}^2}{\widetilde{\varphi}_{R,11}^2}.
  \label{eq:second-G}
\end{align}

Differentiating \eqref{eq:tilde-varphi-R-logdet} gives
\begin{equation}
  \widetilde{\varphi}_R^{ij}\widetilde{\varphi}_{R,ijk}
  =-\widetilde{\varphi}_{R,k}-\varphi_{R,k}(\rho_0)
   +\xi_{R,i}\widetilde{\varphi}_{R,ik},
  \label{eq:logdet-first}
\end{equation}
and differentiating once more yields
\begin{equation}
\widetilde{\varphi}_R^{ij}\widetilde{\varphi}_{R,ijkk}
  =\widetilde{\varphi}_R^{ia}\widetilde{\varphi}_R^{bj}
    \widetilde{\varphi}_{R,ijk}\widetilde{\varphi}_{R,abk}
   -\widetilde{\varphi}_{R,kk}
   +\xi_{R,i}\widetilde{\varphi}_{R,ikk}.
  \label{eq:logdet-second}
\end{equation}
At the diagonal point $\bar\rho$, taking $k=1$ in
\eqref{eq:logdet-second} gives
\begin{equation}  \label{eq:hash2}
\frac{\widetilde{\varphi}_R^{ii}\widetilde{\varphi}_{R,ii11}}
       {\widetilde{\varphi}_{R,11}}
  =\widetilde{\varphi}_R^{ii}\widetilde{\varphi}_R^{jj}\widetilde{\varphi}_R^{11}
    \widetilde{\varphi}_{R,ij1}^{\,2}
   -1+\frac{\xi_{R,i}\widetilde{\varphi}_{R,i11}}
            {\widetilde{\varphi}_{R,11}}.
\end{equation}

Contracting \eqref{eq:second-G} with $\widetilde{\varphi}_R^{ii}$, we obtain
\begin{align}
  0\geq\widetilde{\varphi}_R^{ii}(\log G)_{ii}
  ={}&\frac{n}{\widetilde{\varphi}_R-\epsilon_M}
    -\frac{\widetilde{\varphi}_R^{ii}\widetilde{\varphi}_{R,i}^2}
          {(\widetilde{\varphi}_R-\epsilon_M)^2}
    +\frac{\eta_{ii}\widetilde{\varphi}_R^{ii}}{\eta}
    -\frac{\eta_i^2\widetilde{\varphi}_R^{ii}}{\eta^2}+\frac{\widetilde{\varphi}_R^{ii}\widetilde{\varphi}_{R,ii11}}
          {\widetilde{\varphi}_{R,11}}
    -\frac{\widetilde{\varphi}_R^{ii}\widetilde{\varphi}_{R,i11}^{\,2}}
          {\widetilde{\varphi}_{R,11}^2}.
  \label{eq:contracted-G}
\end{align}
Assume that
\[
  \widetilde{\varphi}_{R,11}(\epsilon_M-\widetilde{\varphi}_R)\geq1;
\]
otherwise the desired bound is immediate. For $i\geq2$, equation
\eqref{eq:first-G} gives
\begin{equation}\label{eq:hash1}
    \begin{split}
-\sum_{i=2}^n
\frac{\widetilde{\varphi}_R^{ii}\widetilde{\varphi}_{R,i}^2}
       {(\widetilde{\varphi}_R-\epsilon_M)^2}
 &=-\sum_{i=2}^n\widetilde{\varphi}_R^{ii}
   \left(\frac{\eta_i}{\eta}
        +\frac{\widetilde{\varphi}_{R,i11}}{\widetilde{\varphi}_{R,11}}\right)^2\\
 &= -\sum_{i=2}^n\widetilde{\varphi}_R^{ii} (\frac{\eta_i^2}{\eta^2}+ 2 \frac{\eta_i \widetilde{\varphi}_{R,i11}}{\eta \widetilde{\varphi}_{R,11}} + \frac{\widetilde{\varphi}_{R,i11}^2}{\widetilde{\varphi}_{R,11}^2})\\ 
& =  -\sum_{i=2}^n\widetilde{\varphi}_R^{ii} (\frac{\eta_i^2}{\eta^2}- 2 \frac{\eta_i^2}{\eta^2} -\frac{2\eta_i \widetilde{\varphi}_{R,i}}{\eta (\widetilde{\varphi_R}-\epsilon_M)} + \frac{\widetilde{\varphi}_{R,i11}^2}{\widetilde{\varphi}_{R,11}^2})\\
&=\sum_{i=2}^n\frac{\widetilde{\varphi}_R^{ii}\eta_i^2}{\eta^2}
   +2\sum_{i=2}^n
      \frac{\widetilde{\varphi}_R^{ii}\eta_i}{\eta}
      \frac{\widetilde{\varphi}_{R,i}}{\widetilde{\varphi}_R-\epsilon_M}
   -\sum_{i=2}^n \frac{\widetilde{\varphi}_R^{ii}\widetilde{\varphi}_{R,i11}^{\,2}}
           {\widetilde{\varphi}_{R,11}^2}.
    \end{split}
\end{equation}

Combining \eqref{eq:hash1} and \eqref{eq:hash2}, and using the positive
terms with $i\neq j$ in \eqref{eq:hash2}, gives
\begin{align}
  & \sum_{i=1}^n \frac{\widetilde{\varphi}_R^{ii}\widetilde{\varphi}_{R,ii11}}
       {\widetilde{\varphi}_{R,11}} -\sum_{i=2}^n
\frac{\widetilde{\varphi}_R^{ii}\widetilde{\varphi}_{R,i}^2}
       {(\widetilde{\varphi}_R-\epsilon_M)^2}
  -\sum_{i=1}^n \frac{\widetilde{\varphi}_R^{ii}\widetilde{\varphi}_{R,i11}^{\,2}}
        {\widetilde{\varphi}_{R,11}^2}
  \notag\\
  &\qquad\geq
  \sum_{i=2}^n\frac{\widetilde{\varphi}_R^{ii}\eta_i^2}{\eta^2}
  +2\sum_{i=2}^n
       \frac{\widetilde{\varphi}_R^{ii}\eta_i}{\eta}
       \frac{\widetilde{\varphi}_{R,i}}{\widetilde{\varphi}_R-\epsilon_M}
  -1+\sum_{i=1}^n\frac{\xi_{R,i}\widetilde{\varphi}_{R,i11}}
                         {\widetilde{\varphi}_{R,11}}.
  \label{eq:hash-combined}
\end{align}

At the diagonal point,
\begin{align}
  \eta_i&=\eta'\widetilde{\varphi}_{R,k}\widetilde{\varphi}_{R,ki}
         =\eta'\widetilde{\varphi}_{R,i}\widetilde{\varphi}_{R,ii},
  \label{eq:eta-first}\\
  \eta_{ii}
  &=\eta''\widetilde{\varphi}_{R,i}^2\widetilde{\varphi}_{R,ii}^2
    +\eta'\widetilde{\varphi}_{R,ii}^2
    +\eta'\widetilde{\varphi}_{R,k}\widetilde{\varphi}_{R,kii}.
  \label{eq:eta-second}
\end{align}
Substitution into \eqref{eq:contracted-G} gives
\begin{align}
  0\geq{}&\frac{n}{\widetilde{\varphi}_R-\epsilon_M}
   +\frac{\widetilde{\varphi}_{R,1}^2}{\widetilde{\varphi}_R-\epsilon_M}
   +\sum_{i=1}^n\left\{
       \frac{\widetilde{\varphi}_R^{ii}}{\eta}
       \left(
          \eta''\widetilde{\varphi}_{R,i}^2\widetilde{\varphi}_{R,ii}^2
         +\eta'\widetilde{\varphi}_{R,ii}^2
         +\eta'\widetilde{\varphi}_{R,k}\widetilde{\varphi}_{R,kii}
       \right)
       +\frac{\xi_{R,i}\widetilde{\varphi}_{R,i11}}
              {\widetilde{\varphi}_{R,11}}
     \right\}
  \notag\\
  &+2\sum_{i=2}^n
       \frac{\widetilde{\varphi}_R^{ii}\eta'
             \widetilde{\varphi}_{R,i}\widetilde{\varphi}_{R,ii}}{\eta}
       \frac{\widetilde{\varphi}_{R,i}}{\widetilde{\varphi}_R-\epsilon_M}
   -1-\frac{(\eta')^2}{\eta^2}
       \widetilde{\varphi}_{R,1}^2\widetilde{\varphi}_{R,11}^2
       \widetilde{\varphi}_R^{11}.
  \label{eq:before-eta-choice}
\end{align}
Using \eqref{eq:logdet-first} and \eqref{eq:first-G}, the terms containing
third derivatives satisfy
\begin{align}
 &\sum_{i=1}^n\left( \sum_{k=1}^n
    \frac{\widetilde{\varphi}_R^{ii}}{\eta}\eta'
      \widetilde{\varphi}_{R,k}\widetilde{\varphi}_{R,kii}
    +\frac{\xi_{R,i}\widetilde{\varphi}_{R,i11}}
           {\widetilde{\varphi}_{R,11}}
  \right)
 \notag\\
 &=  \sum_{k=1}^n \frac{\eta'}{\eta}\widetilde{\varphi}_{R,k} (-\widetilde{\varphi}_{R,k} - \varphi_{R,k} (\rho_0)+ \xi_{R,k} \widetilde{\varphi}_{R,kk})+ \sum_{i=1}^n \frac{\xi_{R,i} \widetilde{\varphi}_{R,i11}}{\widetilde{\varphi}_{R,11}} \\
 & = \sum_{k=1}^n - \frac{\eta'}{\eta} \widetilde{\varphi}_{R,k}^2 -\frac{\eta'}{\eta} \widetilde{\varphi}_{R,k} \varphi_{R,k}(\rho_0) + \frac{\xi_{R,k} \eta_k}{\eta} + \frac{\xi_{R,k} \widetilde{\varphi}_{R,k11}}{\widetilde{\varphi}_{R,11}}\\
 & =
 -\frac{\eta'}{\eta}|\nabla\widetilde{\varphi}_R|^2
 -\sum_{k=1}^n\frac{\eta'}{\eta}\widetilde{\varphi}_{R,k} \varphi_{R,k}(\rho_0)
 -\sum_{k=1}^n \frac{\xi_{R,k}\widetilde{\varphi}_{R,k}}
        {\widetilde{\varphi}_R-\epsilon_M}.
 \label{eq:third-order-cancel}
\end{align}

Choose
\[
  \eta(t)=e^{\alpha t},
  \qquad \frac{\eta'}{\eta}=\alpha.
\]
Then \eqref{eq:before-eta-choice} becomes
\begin{align}
  0\geq{}&\frac{n}{\widetilde{\varphi}_R-\epsilon_M}
   +\frac{\widetilde{\varphi}_{R,1}^2}{\widetilde{\varphi}_R-\epsilon_M}
   +\alpha^2\sum_i\widetilde{\varphi}_{R,ii}\widetilde{\varphi}_{R,i}^2
   +\alpha\sum_i\widetilde{\varphi}_{R,ii}
   -\alpha\sum_i\widetilde{\varphi}_{R,i}^2
   -\alpha\sum_i\widetilde{\varphi}_{R,i} \varphi_{R,i}(\rho_0)
  \notag\\
  &-\sum_{k=1}^n \frac{\xi_{R,k}\widetilde{\varphi}_{R,k}}
           {\widetilde{\varphi}_R-\epsilon_M}
   +2\alpha\sum_{i=2}^n
       \frac{\widetilde{\varphi}_{R,i}^2}{\widetilde{\varphi}_R-\epsilon_M}
   -1-\alpha^2\widetilde{\varphi}_{R,1}^2\widetilde{\varphi}_{R,11}.
  \label{eq:alpha-inequality}
\end{align}
For $\alpha>0$ sufficiently small such that $\alpha \widetilde{\varphi}_{R,1}^2 \le \frac{1}{2}$, this yields
\begin{equation}
  \frac{\alpha}{2}\widetilde{\varphi}_{R,11}
  \leq \frac{C\bigl(1+\|\nabla\widetilde{\varphi}_R\|_{L^\infty(S_{\epsilon_M})} + \|\nabla\widetilde{\varphi}_R\|_{L^\infty(S_{\epsilon_M})}^2 \bigr)}
              {\epsilon_M-\widetilde{\varphi}_R}.
\end{equation}
According to the Proposition \ref{prop gra loc}, there exists a constant $C'(M)$ such that $$\|\nabla\widetilde{\varphi}_R\|_{L^\infty(S_{\epsilon_M})} \le C'(M).$$ Evaluating at $\rho=\rho_0$ proves $|D^2\varphi_R(\rho_0)|\leq C''(M)$. Since $((u_R)^{ij})= ((\varphi_R)_{ij})$, this concludes the proof of this Lemma.
\end{proof}

\section{The Evans-Krylov Estimate}\label{sec: EKest}

For any $p \in \partial P$, take a coordinate 
\[
  y=(y',z),\qquad
  y'=(y_1,\ldots,y_k),\qquad
  z=(y_{k+1},\ldots,y_n)\in\R^m.
\]
such that locally near $p$, $\partial P$ is given by 
\begin{equation*}
\cup_{i=1}^k \{(y',z): y_i=0\}.
\end{equation*}

Let
$\Omega'\Subset\Omega\subset\R^m$ and let $\delta>0$.  Consider
\[
  \mathcal Q=(0,\delta)^k\times\Omega.
\]
The original polytope coordinates are
\begin{equation}
  \mathbf x(y)
  =(y_1-1,\ldots,y_k-1,y_{k+1},\ldots,y_n).
\end{equation}

Let $U\in C^\infty(\mathcal Q)$ be strictly convex and solve the soliton
Monge-Amp\`ere equation
\begin{equation}
  \det \operatorname{Hess} U
  =\exp\bigl(
      \mathbf x(y)\cdot\nabla U-U+\ell(\mathbf x(y))
    \bigr),
  \qquad \ell(x)=b\cdot x+c,
  \label{eq:soliton-MA1}
\end{equation}
for fixed $b\in\R^n$ and $c\in\R$.  Write
\begin{equation}
  U(y)=\sum_{i=1}^k y_i\log y_i+V(y).
  \label{eq:guillemin-decomposition1}
\end{equation}
Throughout this section, we assume that $U$ satisfies Guillemin boundary condition.
The main Proposition that we will prove is the following Evans-Krylov type estimate:
\begin{prop}\label{prop evans}
  Let $V$, $\mathcal Q$, $y$, $\Omega$ and $\Omega'$ be given as above.  Suppose that there exists $K$ such that 
    \begin{equation}
 |V|\le K,\,\,\, |DV|\leq K,
  \label{eq:gradient-bound}
\end{equation}
\begin{equation}
  K^{-1}\leq 1+y_iV_{ii}\leq K,
  \qquad i=1,\ldots,k,
  \label{eq:singular-diagonal-bounds}
\end{equation}
and
\begin{equation}
  K^{-1}\leq V_{aa}\leq K,
  \qquad a=k+1,\ldots,n,
  \label{eq:tangential-diagonal-bounds}
\end{equation}
in $\mathcal Q$.
Then for any $l\ge 1$, there exists a constant $C(l)$ depending on $K$, $l$, $\Omega$, $\Omega'$, $\delta$,  $|b|$ and $c$ such that:
\begin{equation*}
    ||D^{\alpha} V||_{L^{\infty}\big((0,\frac{\delta}{2} )^k \times \Omega'\big)}\le C(l)
\end{equation*}
for any $\alpha \in (\mathbb{Z}_{\ge 0})^n$ with $|\alpha| \triangleq \sum_{i=1}^n \alpha_i=l$.
\end{prop}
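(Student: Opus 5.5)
The strategy is to pass to the local complexification of Proposition~\ref{prop comp ext 2}. There the degenerate real equation \eqref{eq:soliton-MA1} becomes a uniformly elliptic complex Monge--Amp\`ere equation, to which interior Evans--Krylov and Schauder theory apply.

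\textbf{Step 1: the complex equation.} Set $\rho_i=|\zeta_i|^2$ for $i\le k$ and $\eta_a=\zeta_a+\bar\zeta_a$ for $a>k$. A direct computation from the formulas for $\Psi_{i\bar j}$ in the proof of Proposition~\ref{prop comp ext 2} gives, in the interior,
\[
\det(\Psi_{i\bar j})=c_n\,\frac{\det(\operatorname{Hess}_p\Phi)}{\prod_{i\le k}\rho_i}=c_n\prod_{i\le k}y_i^{-1}e^{-p_i}\,(\det\operatorname{Hess}U)^{-1}.
\]
We have $p_i=\log y_i+1+V_i$, and the right side of \eqref{eq:soliton-MA1} contains the factors $y_i^{\,y_i\cdot(\cdots)}$. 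Collecting terms, the $y_i^{-1}$ factors cancel against $e^{-\log y_i}$. The result is
\[
\log\det(\Psi_{i\bar j})=G(\zeta,\Psi,D\Psi),
\]
where $G$ is smooth in its arguments. It is built from $y=D\Phi$, $U$, and $V_i$ expressed via $\Phi$: since $\Phi=y\cdot p-U$, the quantity $\mathbf x\cdot\nabla U-U$ equals $\Phi-\sum_{i\le k}p_i$ plus linear terms. So $G$ is an explicit smooth function of $\Psi$, of $\rho_i\partial_{\rho_i}F=y_i$ (which equals $\zeta_i\Psi_{\zeta_i}$), and of $\log\rho_i$. The $\log\rho_i$ terms must cancel, and I will check this carefully; this check is exactly what makes the Guillemin structure compatible with the equation.

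\textbf{Step 2: $C^0$, $C^1$ and $C^{1,1}$ bounds on $\Psi$.} The hypotheses \eqref{eq:gradient-bound}--\eqref{eq:tangential-diagonal-bounds} give several uniform bounds in $\zeta$-coordinates on a polydisc times a tube over $\Omega$:
\begin{itemize}
\item bounds on $\Psi$ and $D\Psi$, using $|V|,|DV|\le K$ and the formula $F=\sum y_iV_i+\sum_{i\le k}y_i-V$;
\item two-sided bounds $F_{\rho_i}=e^{-1-V_i}\in[K',K'^{-1}]$.
\end{itemize}
The remaining task is a uniform bound $C^{-1}\le(\Psi_{i\bar j})\le C$. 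The map $y\mapsto(\rho,\eta)$ has Jacobian controlled by $1+y_iV_{ii}$ and $V_{aa}$. I will use the diagonal bounds together with convexity to control the off-diagonal Hessian entries via Cauchy--Schwarz, where the singular directions carry the weights $y_i$. This yields two-sided control of $\operatorname{Hess}_p\Phi$ with the appropriate weights $\rho_i$, and hence a uniform bound on $\Psi_{i\bar j}$. Ellipticity from below then follows from the determinant equation once the upper bound holds, since the right side $e^{G}$ is bounded.

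\textbf{Step 3: higher regularity.} With $\Psi_{i\bar j}$ uniformly bounded and $G$ Lipschitz in its arguments, the complex Evans--Krylov theorem (interior, valid because the Hessian is bounded) gives $\Psi\in C^{2,\alpha}$ with uniform bounds. Differentiating the equation and bootstrapping with Schauder estimates then gives uniform $C^l$ bounds on $\Psi$ on a smaller domain.

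\textbf{Step 4: back to $V$.} Finally, transfer back to $V$ through the smooth inverse map $Y(\rho,\eta)$ of Proposition~\ref{prop comp ext 2}. Its Jacobian at each point is uniformly invertible by the bounds above, so derivatives of $V$ of order $l$ are controlled by derivatives of $\Psi$ up to order $l+1$. Covering $(0,\delta/2)^k\times\Omega'$ by finitely many such polydiscs finishes the proof.

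The main obstacle is Step 2. Turning diagonal bounds alone into a genuine uniform $C^{1,1}$ bound in complex coordinates near the corner, uniformly up to $y_i=0$, requires the off-diagonal terms $\bar\zeta_i\zeta_jF_{\rho_i\rho_j}$ and $\bar\zeta_iF_{\rho_i\eta_a}$ to stay bounded. I would first try to get this directly from the positivity of the real Hessian, expressed in the variables $(\log\rho,\eta)$, with $\rho_iF_{\rho_i\rho_i}$ controlled by $y_iV_{ii}$.
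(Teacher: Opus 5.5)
Your overall route (complexify, obtain uniform ellipticity of $(\Psi_{\alpha\bar\beta})$, apply the complex Evans--Krylov estimate, transfer back) is the paper's route. However, Step~4 and the Jacobian/domain claims do not go through as written.

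\textbf{Step 4.} Uniform bounds on $D^\alpha_\zeta\Psi$ do not directly bound $\rho$-derivatives of $F(\rho,\eta)$ at $\rho_i=0$, because $\rho_i=|\zeta_i|^2$ is not a smooth coordinate there. For example, $\Psi=|\zeta_1|^3$ is $C^{2,1}$ in $\zeta$, while $F=\rho_1^{3/2}$ has unbounded $F_{\rho_1\rho_1}$.

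The missing idea is the symmetry of $\Psi$. It is invariant under independent rotations of $\zeta_1,\dots,\zeta_k$ and under translations of $\operatorname{Im}\zeta_a$. Whitney's even-function theorem (Lemma~\ref{lem whi}, applied successively and in quantitative form) then makes $F$ smooth up to $\rho=0$ with bounds. This costs about $2m$ derivatives of $\Psi$ for $m$ derivatives of $F$, so your count ``order $l+1$'' is off; this is harmless, since Step~3 gives all orders.

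You also cannot invert via the map $Y(\rho,\eta)$ of Proposition~\ref{prop comp ext 2}. Its smoothness there is derived from that of $V$, which is circular for uniform estimates. Instead, invert
$(\rho,\eta)\mapsto(y',z)=(\rho_1F_{\rho_1},\dots,\rho_kF_{\rho_k},D_\eta F)$.
Its Jacobian at $\rho=0$ is block lower-triangular with diagonal blocks $\diag(F_{\rho_1},\dots,F_{\rho_k})$ and $D^2_{\eta\eta}F$. These blocks are uniformly invertible, since $F_{\rho_i}=e^{-1-V_i}\in[e^{-1-K},e^{-1+K}]$ and $D^2_{\eta\eta}F\geq\lambda_1 I$. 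Then read off $V=-\sum_{i\le k}y_i\log F_{\rho_i}+z\cdot\eta-F$.

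\textbf{Jacobian and domain.} Your assertion that the Jacobian of $y\mapsto(\rho,\eta)$ is uniformly invertible ``by the bounds above'' is not justified. At that stage you only know $\lambda I\le M\le\Lambda I$ for $M=S(\Hess U)S$, so only $|\sqrt{y_iy_j}V_{ij}|,\ |\sqrt{y_i}V_{ia}|\le C$ are available. Entries such as $\partial\eta_a/\partial y_i=V_{ai}$ are therefore only $O(y_i^{-1/2})$, and neither this map nor its inverse is uniformly Lipschitz near $y_i=0$. For the same reason, nothing in your proposal guarantees that $\Psi$ is defined on a $\zeta$-polydisc of uniform size. Both the interior Evans--Krylov estimate and your final covering need this.

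The paper handles this in square-root coordinates $r_i=2\sqrt{y_i}$, $R_i=2\sqrt{\rho_i}$, where $D\mathcal T=QM$ is uniformly bi-Lipschitz. Injectivity comes from strict convexity on each stratum, and an ODE path argument shows the image contains a uniform product neighborhood (Lemma~\ref{lem:corner-domain}).

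\textbf{Step 2.} You have the two sources of ellipticity reversed. The diagonal bounds plus Cauchy--Schwarz give only $M\le\Lambda I$. Via $H_\Psi=\mathcal U_\theta QM^{-1}Q\mathcal U_\theta^*$, this is the \emph{lower} bound $H_\Psi\geq cI$. The upper bound needs $\det M=e^G\geq c_0$ from the equation.

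\textbf{Step 1.} There is no factor $\prod y_i^{-1}$. The identities $\mathbf x\cdot\nabla U-U=\Phi-\sum_{i\le k}p_i$ and $\prod\rho_i=e^{\sum p_i}$ give exactly $\log\det H_\Psi=-\Psi-\ell(\mathbf x(\zeta,D\Psi))$.
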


We postpone the proof of Proposition~\ref{prop evans} to the end of this section.

Set
\begin{equation}
S=\operatorname{diag}(\sqrt{y_1},\ldots,\sqrt{y_k},1,\ldots,1),
  \qquad
  M=S(\operatorname{Hess} U)S.
  \label{eq:S-and-M}
\end{equation}
Because $U$ is strictly convex, $M$ is positive definite.  Its diagonal
entries are
\begin{equation}
  M_{ii}=1+y_iV_{ii}\quad(i\leq k),
  \qquad
  M_{aa}=V_{aa}\quad(a>k).
  \label{eq:M-diagonal}
\end{equation}

\begin{lem}\label{lem upp imp low}
    There exists positive constants $\lambda$, $\Lambda$ and $C$ such that:
    \begin{equation}
  \lambda I\leq M\leq\Lambda I
  \label{eq:M-uniform-ellipticity}
\end{equation}
and
\begin{align}
  |\sqrt{y_i y_j}\,V_{ij}|&\leq C,
    &&1\leq i,j\leq k,\ i\neq j,
  \label{eq:mixed-singular}\\
  |\sqrt{y_i}\,V_{ia}|&\leq C,
    &&1\leq i\leq k<a\leq n,
  \label{eq:mixed-singular-tangential}\\
  |V_{ab}|&\leq C,
    &&k<a,b\leq n.
  \label{eq:mixed-tangential}
\end{align}
in $\mathcal Q$.
\end{lem}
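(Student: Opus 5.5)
The plan is to use the two-sided diagonal bounds on $M$ together with the equation \eqref{eq:soliton-MA1} to control $\det M$. Positive definiteness plus bounded diagonal then gives the full two-sided matrix bound.

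\textbf{Upper bound and off-diagonal entries.} Since $M$ is positive definite, every $2\times 2$ principal minor is positive, so $M_{ij}^2< M_{ii}M_{jj}\le K^2$ by \eqref{eq:singular-diagonal-bounds} and \eqref{eq:tangential-diagonal-bounds}. We now compute the off-diagonal entries of $M$ from \eqref{eq:guillemin-decomposition1}. The Hessian of $\sum y_i\log y_i$ is $\operatorname{diag}(1/y_i)$ on the first $k$ coordinates, so:
\begin{itemize}
\item for $i\neq j\le k$, $M_{ij}=\sqrt{y_iy_j}\,V_{ij}$;
\item for $i\le k<a$, $M_{ia}=\sqrt{y_i}\,V_{ia}$;
\item for $a,b>k$, $M_{ab}=V_{ab}$.
\end{itemize}
This gives \eqref{eq:mixed-singular}, \eqref{eq:mixed-singular-tangential} and \eqref{eq:mixed-tangential} with $C=K$. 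The upper bound $M\le \Lambda I$ follows from $\operatorname{tr}M\le nK$, taking $\Lambda=nK$.

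\textbf{Lower bound.} Here we use the equation. Since $\det S^2=\prod_{i\le k}y_i$, we have
\[
\det M=\Bigl(\prod_{i\le k} y_i\Bigr)\det\operatorname{Hess}U=\exp\Bigl(\sum_{i\le k}\log y_i+\mathbf x\cdot\nabla U-U+\ell(\mathbf x)\Bigr).
\]
By \eqref{eq:p-components}, $\partial_iU=\log y_i+1+V_i$ for $i\le k$ and $\partial_aU=V_a$ for $a>k$, with $\mathbf x_i=y_i-1$ for $i\le k$. Hence
\[
\mathbf x\cdot\nabla U=\sum_{i\le k}(y_i-1)(\log y_i+1+V_i)+\sum_{a>k}y_aV_a ,
\qquad
-U=-\sum_{i\le k} y_i\log y_i-V .
\]
In the exponent the terms $y_i\log y_i$ cancel against $-U$, and the terms $-\log y_i$ from $\mathbf x\cdot\nabla U$ cancel against $\sum_{i\le k}\log y_i$.

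This is the key (and only delicate) step: checking that every logarithmic singularity cancels exactly. After the cancellation, the exponent is a combination of $y_i$, $V_i$, $V$, $y_a$ and $\ell(\mathbf x)$. On $\mathcal Q$ the coordinates are bounded (since $\delta$ and $\Omega$ are bounded), and $|V|,|DV|\le K$ by \eqref{eq:gradient-bound}. So the exponent is bounded, and $c_0\le\det M\le C_0$ with $c_0,C_0$ depending on $K,\delta,\Omega,|b|,c$.

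Finally, let $\lambda_1\le\cdots\le\lambda_n$ be the eigenvalues of $M$. Then
\[
\lambda_1\ge \frac{\det M}{\lambda_n^{\,n-1}}\ge \frac{c_0}{\Lambda^{n-1}}=:\lambda ,
\]
which gives \eqref{eq:M-uniform-ellipticity}.
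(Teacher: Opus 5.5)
Your proof is correct and follows the paper's route. You cancel the logarithmic terms to get two-sided bounds on $\det M$, then combine these with $\operatorname{tr}M\le nK$ and positive definiteness to get uniform ellipticity. You read the mixed bounds off the entries of $M$, getting them directly from the $2\times 2$ principal minors rather than from the full ellipticity as the paper does; this is equally valid.
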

\begin{proof}
Multiplying \eqref{eq:soliton-MA1} by $\prod_{i=1}^k y_i$ and using
\eqref{eq:guillemin-decomposition1}, all logarithmic terms cancel.  Thus
\begin{equation}
  \det M
  =\left(\prod_{i=1}^k y_i\right)\det \operatorname{Hess} U
  =e^G,
  \label{eq:det-M}
\end{equation}
where
\begin{equation}
  G=
  \sum_{i=1}^k (y_i-1)(1+V_i)
  +\sum_{a=k+1}^n y_aV_a
  -V+\ell(\mathbf x(y)).
  \label{eq:G}
\end{equation}
Inside $\mathcal Q$, the bound on $DV$, together
with the value of $V$ at one point, bounds $G$.  Consequently,
\begin{equation}
  0<c_0\leq\det M\leq C_0<\infty.
  \label{eq:det-M-bounds}
\end{equation}
On the other hand, \eqref{eq:singular-diagonal-bounds} and
\eqref{eq:tangential-diagonal-bounds} give $\tr M\leq nK$.  Since $M>0$, we can get (\ref{eq:M-uniform-ellipticity})
for some constants $\lambda$ and $\Lambda$. Then, this implies (\ref{eq:mixed-singular}), (\ref{eq:mixed-singular-tangential}) and (\ref{eq:mixed-tangential}).
\end{proof}

Let
\begin{equation}
  p=DU(y),
  \qquad
  \Phi(p)=y\cdot p-U(y)
\end{equation}
be the full Legendre transform.  In the interior,
\begin{equation}
  D\Phi=y,
  \qquad
  \operatorname{Hess}\Phi=(\operatorname{Hess} U)^{-1}.
\end{equation}
For $i\leq k$ and $a>k$,
\begin{equation}
  p_i=\log y_i+1+V_i,
  \qquad
  p_a=V_a.
\end{equation}
Define
\begin{equation}
  \rho_i=e^{p_i}\quad(i\leq k),
  \qquad
  \eta_a=p_a\quad(a>k).
\end{equation}
The gradient bound gives
\begin{equation}
  \frac{y_i}{\rho_i}=e^{-1-V_i},
  \qquad
  c\,y_i\leq\rho_i\leq C\,y_i.
  \label{eq:rho-comparison}
\end{equation}

We need a relative product neighborhood in the dual variables.  Introduce
\begin{equation}
  r_i=2\sqrt{y_i},
  \qquad
  R_i=2\sqrt{\rho_i},
  \label{eq:square-root-coordinates}
\end{equation}
and consider
\begin{equation}
  \mathcal T:(r,z)\longmapsto(R,\eta).
  \label{eq:T-map}
\end{equation}

\begin{lem}
\label{lem:corner-domain}
Suppose that (\ref{eq:gradient-bound}) and (\ref{eq:M-uniform-ellipticity}) hold for $V$ in $\mathcal Q$. Then, for any $z_0 \in \Omega'$ there exists constants $\epsilon_1$ and $\epsilon_2$ depending on $n,k$, $K$, $\lambda$, $\Lambda$, $\delta, \Omega, \Omega'$ such that  $\mathcal T$ is a injective map from $B_{\epsilon_1}(0,z_0)\cap (\cap_{i=1}^k \{r_i \ge 0\})$ to its image and 
\begin{equation}\label{e ima con bal}
    B_{\epsilon_2}(0,\eta_0)\cap ( \cap_{i=1}^k \{R_i \ge 0\} ) \subset  \mathcal T \Big( B_{\epsilon_1}(0,z_0)\cap (\cap_{i=1}^k \{r_i \ge 0\}) \Big).
\end{equation}
Here $(0,\eta_0)= \mathcal T (0,z_0).$
\end{lem}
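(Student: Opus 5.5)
The plan is to apply a quantitative inverse function theorem to $\mathcal T$ in the variables $(r,z)$, after computing its Jacobian in those variables and checking that it is uniformly invertible and uniformly continuous.

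\textbf{Explicit form of the map.} By \eqref{eq:rho-comparison}, $\rho_i=y_ie^{1+V_i}$, so
\[
R_i=2\sqrt{\rho_i}=r_i\,e^{(1+V_i)/2},\qquad \eta_a=V_a(y).
\]
Here $y_i=r_i^2/4$, and $y_a=z_a$ for the tangential components. Since $V$ is only defined for $y_i\ge 0$, $\mathcal T$ is a map from the corner region $\{r_i\ge0\}$ into the corner $\{R_i\ge0\}$. Moreover $R_i=0$ exactly when $r_i=0$, so faces are mapped to faces.

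\textbf{Jacobian and uniform invertibility.} First compute $D\mathcal T$ in the $(r,z)$ variables:
\[
\frac{\partial R_i}{\partial r_j}=e^{(1+V_i)/2}\Bigl(\delta_{ij}+\tfrac14 r_ir_jV_{ij}\Bigr),\qquad
\frac{\partial R_i}{\partial z_a}=\tfrac12 r_ie^{(1+V_i)/2}V_{ia},
\]
\[
\frac{\partial \eta_a}{\partial r_j}=\tfrac12 r_jV_{aj},\qquad
\frac{\partial\eta_a}{\partial z_b}=V_{ab}.
\]
Set $E=\operatorname{diag}(e^{(1+V_i)/2},1)$. In terms of $y$ one has $\tfrac14 r_ir_jV_{ij}=\sqrt{y_iy_j}V_{ij}$ and $\tfrac12 r_iV_{ia}=\sqrt{y_i}V_{ia}$, so the entries above are exactly the entries of $M$ from \eqref{eq:S-and-M}. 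Thus
\[
D\mathcal T=E\,M'
\]
where $M'$ agrees with $M$ up to the diagonal shift $M_{ii}=1+y_iV_{ii}$ versus $\delta_{ii}+y_iV_{ii}$, which is the same quantity. Hence $D\mathcal T=E\,M$ exactly. By \eqref{eq:gradient-bound}, $E$ is uniformly bounded above and below, and by \eqref{eq:M-uniform-ellipticity}, $\lambda I\le M\le\Lambda I$. Therefore $|D\mathcal T|\le C$ and $|(D\mathcal T)^{-1}|\le C$ uniformly on the corner region.

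\textbf{From invertibility to injectivity and a uniform image ball.} Next extend $\mathcal T$ to a full neighbourhood by even reflection $r_i\mapsto|r_i|$, setting $R_i(r,z)=\operatorname{sgn}(r_i)\,|R_i(|r|,z)|$. This is an odd reflection in each $R_i$ and is compatible because $R_i$ vanishes on $r_i=0$. One then wants $D\mathcal T$ close to a fixed invertible matrix $A=D\mathcal T(0,z_0)$ on a ball of radius $\epsilon_1$, i.e.\ $|D\mathcal T-A|\le \tfrac12|A^{-1}|^{-1}$. Given that, the standard argument gives injectivity on the ball: $|\mathcal T(p)-\mathcal T(q)-A(p-q)|\le\tfrac12|A^{-1}|^{-1}|p-q|$. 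The contraction-mapping argument then shows that the image contains $B_{\epsilon_2}(\mathcal T(0,z_0))$ with $\epsilon_2\sim\epsilon_1/|A^{-1}|$. Intersecting with $\{R_i\ge0\}$, and using that the reflected map preserves the sign of each $R_i$, yields \eqref{e ima con bal}.

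\textbf{Main obstacle: uniform modulus of continuity of $D\mathcal T$.} The hypotheses bound only $M$, not its modulus of continuity, so the constants $\epsilon_1,\epsilon_2$ cannot come directly from continuity of $D\mathcal T$. I would try to avoid needing continuity by using monotonicity instead. The map $(R,\eta)$ is, up to the bounded diagonal factor $E$, the gradient in the $(r,z)$-type variables of a convex function, namely of $U$ composed with $y_i=r_i^2/4$. Suppose instead one writes $\mathcal T=E\cdot\nabla_{(r,z)}\widehat\Phi$-like with $M\ge\lambda$. Then a degree/monotonicity argument gives the injectivity and surjectivity estimate
\[
\langle \mathcal T p-\mathcal Tq,\,p-q\rangle\ge c|p-q|^2.
\]
This holds if $E$ varies slowly, i.e.\ if $|\nabla E|\,|p-q|$ is small. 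That follows since $|\nabla_{(r,z)}V_i|$ is controlled: $\partial_{r_j}V_i=\tfrac12 r_jV_{ij}$, bounded via \eqref{eq:mixed-singular}/\eqref{eq:M-uniform-ellipticity} after dividing by $\sqrt{y_i}$ only on the diagonal term, where $r_iV_{ii}\sim(M_{ii}-1)/\sqrt{y_i}$. This diagonal term is the delicate one, and I would handle it by writing $R_i=2\sqrt{y_i}e^{(1+V_i)/2}$ and estimating differences of $R_i$ along segments directly via $M$. Uniform monotonicity on the reflected ball, together with Lipschitz bounds, then yields injectivity and the Browder–Minty-type surjectivity onto a ball of radius $\epsilon_2=c\epsilon_1$, with constants depending only on the listed data.
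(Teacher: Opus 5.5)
Your formula for $\mathcal T$, the identity $D\mathcal T=EM$, and the resulting uniform bounds on $D\mathcal T$ and $(D\mathcal T)^{-1}$ are correct and match Step 1 of the paper. The gap is at the step you flag yourself.

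\textbf{The quantitative inverse function theorem is not available.} The contraction-mapping version needs $|D\mathcal T-A|\le\tfrac12|A^{-1}|^{-1}$ on $B_{\epsilon_1}$. That is a uniform modulus of continuity of $EM$, and \eqref{eq:gradient-bound} and \eqref{eq:M-uniform-ellipticity} give none. Controlling the oscillation of $M$ would require bounds on third derivatives of $V$, which is what Proposition~\ref{prop evans} is ultimately after. The $(r,z)$-derivatives of $E_{ii}=e^{(1+V_i)/2}$ are $\tfrac12E_{ii}$ times $M_{ij}/\sqrt{y_i}$, $(M_{ii}-1)/\sqrt{y_i}$ and $M_{ia}/\sqrt{y_i}$, so they are only $O(1/r_i)$. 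For example, $V_i=\tfrac12\sin\log(y_i+\epsilon)$ keeps $|y_iV_{ii}|\le\tfrac12$ for every $\epsilon>0$, while $E$ oscillates by a fixed amount on scales $r_i\sim\sqrt{\epsilon}\to0$.

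\textbf{The monotonicity workaround also fails.}
\begin{itemize}
\item $\mathcal T$ is not $E$ times the gradient of $U(r^2/4,z)$: the $r_i$-derivative of that function is $\tfrac{r_i}{2}(\log y_i+1+V_i)$.
\item $\mathcal T$ itself need not be monotone, since the symmetric part of $EM$ can be indefinite when $E$ is far from scalar.
\item Freezing $E$ at the base point requires exactly the slow variation of $E$ that is unavailable.
\end{itemize}
The closing sentence about estimating differences of $R_i$ along segments via $M$ does not supply an argument.

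\textbf{How the paper avoids continuity of $D\mathcal T$.} Two ideas are missing.
\begin{itemize}
\item \emph{Injectivity is qualitative and comes from strict convexity of $U$, which you never use correctly.} Since $R_i=0$ if and only if $r_i=0$, $\mathcal T$ preserves each stratum $\{y_J=0,\ y_I>0\}$. On that stratum, $\mathcal T$ is the gradient of the restriction of $U$ to the face, followed by the coordinatewise injective map $p_i\mapsto 2e^{p_i/2}$. That restriction is strictly convex, which is part of the Guillemin condition. Hence two distinct points cannot have the same image.
\item \emph{The uniform ball in the image follows by path lifting, which uses only $|(D\mathcal T)^{-1}|\le C_1$.} For $b$ in the ball, solve $\gamma'(t)=(D\mathcal T(\gamma(t)))^{-1}(b-b_0)$ with $\gamma(0)=(0,z_0)$. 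Then $\mathcal T(\gamma(t))=b_0+t(b-b_0)$ and $|\gamma(t)-(0,z_0)|\le C_1|b-b_0|$. Choosing $\epsilon_2<\epsilon_1/C_1$ keeps the path in $B_{\epsilon_1}(0,z_0)$, and $\gamma(1)$ is a preimage of $b$.
\end{itemize}

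Your odd reflection is a useful complement to this argument. The extended map $R_i=r_ie^{(1+V_i(r^2/4,z))/2}$ is smooth. Its Jacobian is still $E$ times a matrix conjugate to $M$ by a diagonal sign matrix, so the same bounds hold and the ODE runs in an open set. Because the extended $R_i$ has the sign of $r_i$, the lifted path stays in $\{r_i\ge0\}$ whenever $b\in\{R_i\ge0\}$.
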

\begin{proof}
\textbf{Step 1: Estimate for $\mathcal T$.}
    By the definition of $R$, we have that 
    \begin{equation*}
        \mathcal T (r,z)= (r_1 e^{\frac{1}{2}(1+V_{y_1})},\ldots, r_k e^{\frac{1}{2}(1+ V_{y_k})}, D_z V).
    \end{equation*}
    We can compute in the coordinates $(r,z)$, $(R,\eta)$ that:
    \begin{equation*}
    \begin{split}
         D_j \mathcal T^i &= (\delta_{ij}+ \sqrt{y_i y_j} V_{y_i y_j})e^{\frac{1}{2}(1+ V_{y_i})} \\
         D_b \mathcal T^i &= \sqrt{y_i} V_{y_i z_b} e^{\frac{1}{2}(1+ V_{y_i})} \\
         D_j \mathcal T^a &= V_{z_{a} y_j} \sqrt{y_j} \\
         D_b \mathcal T^a &= V_{z_a z_b},
    \end{split}
    \end{equation*}
    for $i,j=1,\ldots,k$ and $a,b = k+1,\ldots, n$. As a result we have that 
    \begin{equation*}
        D \mathcal T= Q M,
    \end{equation*}
    where $Q= \diag(e^{\frac{1}{2}(1+ V_{y_1})},\ldots, e^{\frac{1}{2}(1+ V_{y_k})},1, \ldots, 1)$ and $M$ is defined in (\ref{eq:S-and-M}). Using (\ref{eq:gradient-bound}) and (\ref{eq:M-uniform-ellipticity}), we have that there exists a uniform constant $C_1$ such that:
    \begin{equation}\label{e est Dt Dt1}
        |D \mathcal T |+ |(D \mathcal T)^{-1}|\le C_1
    \end{equation}
    on $\mathcal Q$.

\textbf{Step 2: Injectivity of $\mathcal T$}. Suppose that there are two points $(r^*,z^*),(\widetilde{r},\widetilde{z}) \in \mathcal Q$ such that $\mathcal T(r^*,z^*) = \mathcal T (\widetilde{r},\widetilde{z})$. Define $(R^*, \eta^*)=\mathcal T (r^*,z^*)=\mathcal T (\widetilde{r},\widetilde{z})$. Let $J \subset \{1,\ldots, k\}$ be the set of the indices of zero coordinates of $r^*$, and denote $J=(J_1,\ldots, J_l)$ and $I=(I_1,\ldots, I_{k-l})$ which is the complement of $J$ in $(1,\ldots, k)$. By (\ref{eq:rho-comparison}), $J$ is the same as the set of the indices of zero coordinates of $R^*$, which is the same as the set of the indices of zero coordinates of $\widetilde{r}$. As a result, $(r^*,z^*)$ and $(\widetilde{r},\widetilde{z})$ are two different points in the interior of the face $\cap_{i=\in J}\{r_i=0\}$. By the strict convexity of $U$ in this statum, we have that 
$$(\nabla_{y_{I_1}}U,\ldots, \nabla_{y_{I_{k-l}}}U, \nabla_z U)(r^*,z^*) \neq (\nabla_{y_{I_1}}U,\ldots, \nabla_{y_{I_{k-l}}}U, \nabla_z U)(\widetilde{r},\widetilde{z}).$$
This implies that $\mathcal T(r^*,z^*) \neq \mathcal T (\widetilde{r},\widetilde{z})$. This is a contradiction.

\textbf{Step 3: Proof of (\ref{e ima con bal})}.  Denote $b_0=(0,\eta_0)$. Take $b=(R^*,\eta^*)\in B_{\epsilon_2}(b_0)\cap (\cap_{i=1}^k \{R_i \ge 0\})$, where $\epsilon_2>0$ is to be determined. Next, we solve the ODE:
\begin{equation*}
    \gamma'(t)=(D \mathcal T (\gamma(t)))^{-1}(b-b_0), \,\,\, \gamma(0)=(0,z_0).
\end{equation*}
This implies that $\frac{d}{dt} \mathcal T (\gamma(t))= b-b_0$, which implies that $\mathcal T (\gamma(t))=b_0 + t (b-b_0)\in B_{\epsilon_2}(b_0)\cap (\cap_{i=1}^k \{R_i \ge 0\})$. This implies that 
\begin{equation}\label{e gammat rip}
    \gamma(t)\in  (\cap_{i=1}^k \{r_i \ge 0\}).
\end{equation}

Using (\ref{e est Dt Dt1}), we can get that:
\begin{equation*}
    |\gamma(t)-(0,z_0)|\le t C_1 |b-b_0|\le C_1 \epsilon_2,
\end{equation*}
for $t\in [0,1]$. Then, we can let $\epsilon_2$ small such that $C_1 \epsilon_2 \le \epsilon_1$ so that 
\begin{equation}\label{e gammat epsi1}
    \gamma(t)\in B_{\epsilon_1} (0,z_0).
\end{equation}
Combining (\ref{e gammat rip}) and (\ref{e gammat epsi1}), we can get that:
\begin{equation*}
    \gamma(t)\in B_{\epsilon_1} (0,z_0) \cap (\cap_{i=1}^k \{r_i \ge 0\}).
\end{equation*}
Note that $\gamma(1)= b$. This concludes the proof of (\ref{e ima con bal}). 
\end{proof}

Because
\[
  \mathbf x(y)\cdot p-U
  =\Phi-\sum_{i=1}^k p_i,
\]
equation \eqref{eq:soliton-MA} and Legendre duality imply
\begin{equation}
  \det\operatorname{Hess}\Phi
  =\exp\left(
      -\Phi+\sum_{i=1}^k p_i-\ell(\mathbf x(y))
    \right).
  \label{eq:dual-real-MA}
\end{equation}

Next, we want to do complexification as mentioned in the Section 2.4 above.
Define
\begin{equation}
  F(\rho,\eta)
  =\Phi(\log\rho_1,\ldots,\log\rho_k,\eta).
\end{equation}
Introduce complex coordinates $\zeta=(\zeta_1,\ldots,\zeta_n)\in\C^n$ by
\begin{equation}
  \rho_i=|\zeta_i|^2\quad(i\leq k),
  \qquad
  \eta_a=\zeta_a+\bar\zeta_a\quad(a>k),
\end{equation}
and set
\begin{equation}
  \Psi(\zeta)
  =F\bigl(
       |\zeta_1|^2,\ldots,|\zeta_k|^2,
       \zeta_{k+1}+\bar\zeta_{k+1},\ldots,
       \zeta_n+\bar\zeta_n
     \bigr).
\end{equation}
By Lemma~\ref{lem:corner-domain}, $\Psi$ is initially defined on a complex
product neighborhood away from
\begin{equation}
  \mathcal D=\bigcup_{i=1}^k\{\zeta_i=0\}.
  \label{eq:coordinate-divisor}
\end{equation}

Set
\begin{equation}
  D_\rho=\diag(\sqrt{\rho_1},\ldots,\sqrt{\rho_k},1,\ldots,1),
  \qquad
  Q=D_\rho^{-1}S.
  \label{eq:D-rho-Q}
\end{equation}
We write $\zeta_i = \sqrt{\rho_i}e^{\sqrt{-1}\theta_i}$. Define:
\begin{equation*}
    \mathcal{U}_{\theta}= \diag (e^{-\sqrt{-1}\theta_1},\ldots, e^{-\sqrt{-1}\theta_k},1, \ldots, 1)
\end{equation*}
A direct calculation gives
\begin{equation}
  H_\Psi:=(\Psi_{\alpha\bar\beta})
  =\mathcal{U}_{\theta} D_\rho^{-1}(\Hess\Phi)D_\rho^{-1} \mathcal{U}_{\theta}^*.
  \label{eq:complex-Hessian-Phi} 
\end{equation}
Since
\[
  \Hess\Phi=(\Hess U)^{-1}=SM^{-1}S,
\]
we obtain
\begin{equation}
  H_\Psi=\mathcal{U}_{\theta}QM^{-1}Q \mathcal{U}_{\theta}^*.
  \label{eq:complex-Hessian-M}
\end{equation}
Equations \eqref{eq:M-uniform-ellipticity} and
\eqref{eq:rho-comparison} therefore give
\begin{equation}
  \lambda_1I\leq H_\Psi\leq\Lambda_1I.
  \label{eq:complex-uniform-ellipticity}
\end{equation}

Furthermore,
\begin{equation}
  \zeta_i\Psi_{\zeta_i}=y_i\quad(i\leq k),
  \qquad
  \Psi_{\zeta_a}=y_a\quad(a>k).
  \label{eq:Psi-first-derivative-identities}
\end{equation}
Dividing \eqref{eq:dual-real-MA} by
$\rho_1\cdots\rho_k=e^{p_1+\cdots+p_k}$ yields
\begin{equation}
  \log\det H_\Psi
  =-\Psi-\ell\bigl(\mathbf x(\zeta,D\Psi)\bigr),
  \label{eq:complex-MA}
\end{equation}
where
\begin{equation}
  \mathbf x_i(\zeta,D\Psi)=\zeta_i\Psi_{\zeta_i}-1
  \quad(i\leq k),
  \qquad
  \mathbf x_a(\zeta,D\Psi)=\Psi_{\zeta_a}
  \quad(a>k).
  \label{eq:x-complex}
\end{equation}
For $\ell(x)=b\cdot x+c$, equation \eqref{eq:complex-MA} is explicitly
\begin{equation}
  \log\det H_\Psi
  =-\Psi
   -\sum_{i=1}^k b_i(\zeta_i\Psi_{\zeta_i}-1)
   -\sum_{a=k+1}^n b_a\Psi_{\zeta_a}-c.
  \label{eq:complex-MA-affine}
\end{equation}

Denote $U_{\epsilon} \triangleq \{(\zeta_1,\ldots \zeta_n): |\zeta_i|< \frac{\epsilon}{2} \text{ for } i \le k,  (Re \zeta_{k+1}, \ldots, Re \zeta_n) \in B_{\epsilon}(\eta_0)\}$. We choose $\epsilon_3$ small enough such that: $$  U_{\epsilon_3} \subset B_{\epsilon_2}(0,\eta_0)\cap ( \cap_{i=1}^k \{R_i \ge 0\} ) $$
\begin{lem}\label{lem c11 psi}
Let $\Psi$, $\epsilon_3$, $\eta_0$ be given above. Then there exists a constant $C$ such that:
\begin{equation*}
    ||\Psi||_{C^{1,1}(U_{\epsilon_3})} \le C.
\end{equation*}
\end{lem}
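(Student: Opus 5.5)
The plan is to bound $\Psi$ itself, then its gradient, then its full real Hessian, all in terms of the real variables $(y,V,DV)$ controlled by \eqref{eq:gradient-bound}. On $U_{\epsilon_3}$ (minus $\mathcal D$), by Lemma~\ref{lem:corner-domain} every point corresponds to a unique $y\in\mathcal Q$ near $(0,z_0)$. The explicit formula from the proof of Proposition~\ref{prop comp ext 2},
\[
\Psi=\sum_{i=1}^n y_iV_i+\sum_{i=1}^k y_i-V,
\]
together with $|V|,|DV|\le K$ and the boundedness of $y$, gives $|\Psi|\le C$.

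For the first derivatives I would use \eqref{eq:Psi-first-derivative-identities}. First, $\Psi_{\zeta_a}=y_a$ is bounded for $a>k$. Second, for $i\le k$, we have $\Psi_{\zeta_i}=y_i/\zeta_i$. By \eqref{eq:rho-comparison},
\[
|\Psi_{\zeta_i}|=\frac{y_i}{\sqrt{\rho_i}}\le C\sqrt{y_i}\le C.
\]
So $|D\Psi|\le C$ on $U_{\epsilon_3}\setminus\mathcal D$. The quantities $\sqrt{y_i}$ tend to $0$ at $\mathcal D$, so $\Psi$ extends as a $C^1$ (indeed Lipschitz) function across $\mathcal D$. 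The set $\mathcal D$ has zero capacity, so the bounds pass to all of $U_{\epsilon_3}$.

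The main step is bounding the full real Hessian $D^2\Psi$, not only the complex Hessian. The mixed part $H_\Psi$ is already bounded by \eqref{eq:complex-uniform-ellipticity}. For the remaining $\partial\partial$ components I would exploit the symmetries of $\Psi$.

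First, $\Psi$ is invariant under $\zeta_a\mapsto\zeta_a+\sqrt{-1}t$ for $a>k$. Hence $\Psi_{\zeta_a\zeta_b}=\Psi_{\zeta_a\bar\zeta_b}$ and $\Psi_{\zeta_i\zeta_a}=\Psi_{\zeta_i\bar\zeta_a}$, and these are controlled by $H_\Psi$.

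Second, $\Psi$ is invariant under $\zeta_i\mapsto e^{\sqrt{-1}\theta}\zeta_i$ for $i\le k$. This gives $\zeta_i\Psi_{\zeta_i}=\bar\zeta_i\Psi_{\bar\zeta_i}$. Differentiating in $\zeta_j$ then expresses $\zeta_i\Psi_{\zeta_i\zeta_j}$ through $\bar\zeta_i\Psi_{\bar\zeta_i\zeta_j}$ and $\delta_{ij}$ terms, which yields
\[
\Psi_{\zeta_i\zeta_j}=\frac{\bar\zeta_i}{\zeta_i}\Psi_{\bar\zeta_i\zeta_j}\quad (i\ne j).
\]
For the diagonal $\Psi_{\zeta_i\zeta_i}$ I would use the chain rule instead. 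Writing $\Psi=F(|\zeta|^2,\dots)$ gives $\Psi_{\zeta_i\zeta_i}=\bar\zeta_i^2F_{\rho_i\rho_i}$ and $\Psi_{\zeta_i\bar\zeta_i}=F_{\rho_i}+\rho_iF_{\rho_i\rho_i}$. Since $F_{\rho_i}=e^{-1-V_i}$ is bounded above and below, the bound on $\Psi_{\zeta_i\bar\zeta_i}$ gives $|\rho_iF_{\rho_i\rho_i}|\le C$, and therefore $|\Psi_{\zeta_i\zeta_i}|\le C$. All second derivatives are thus bounded by $\Lambda_1$-type constants off $\mathcal D$.

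Finally, $D\Psi$ is Lipschitz on each segment avoiding $\mathcal D$, and $\mathcal D$ is a finite union of real codimension-$2$ sets. Almost every segment misses $\mathcal D$, so continuity of $D\Psi$ gives $\|\Psi\|_{C^{1,1}(U_{\epsilon_3})}\le C$. I expect the main obstacle to be exactly this pure second derivative control near $\mathcal D$. The key point is that the rotational invariance and the bounded $F_{\rho_i}$ reduce it to the uniform ellipticity \eqref{eq:complex-uniform-ellipticity}, which in turn came from Lemma~\ref{lem upp imp low}.
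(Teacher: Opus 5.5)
Your proof is correct and is essentially the paper's argument: the paper also bounds $\Psi_{\zeta_i}=y_i/\zeta_i$ via \eqref{eq:rho-comparison}, and it controls the pure second derivatives through the explicit formulas $(\Phi_{p_ip_i}-y_i)/\zeta_i^2$, $\Phi_{p_ip_j}/(\zeta_i\zeta_j)$, $\Phi_{p_i\eta_a}/\zeta_i$ and $\Phi_{\eta_a\eta_b}$, which are exactly your $\bar\zeta_i^2F_{\rho_i\rho_i}$ and symmetry identities written in $\Phi$-notation. The only deviations are that you also bound $|\Psi|$ itself, which the paper omits, and that you cross $\mathcal D$ with a codimension-two segment argument, whereas the paper invokes the qualitative smooth extension of Proposition~\ref{prop comp ext 2} plus continuity.
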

\begin{proof}
From
\eqref{eq:Psi-first-derivative-identities} and
\eqref{eq:rho-comparison},
\begin{equation}
  |\Psi_{\zeta_i}|
  =\frac{y_i}{|\zeta_i|}
  \leq C|\zeta_i|,
  \qquad i\leq k.
  \label{eq:Psi-first-bound}
\end{equation}
The pure holomorphic second derivatives satisfy, for $i\neq j\leq k$ and
$a,b>k$,
\begin{align}
  \Psi_{\zeta_i\zeta_i}
    &=\frac{\Phi_{p_ip_i}-y_i}{\zeta_i^2},
  \label{eq:pure-ii}\\
  \Psi_{\zeta_i\zeta_j}
    &=\frac{\Phi_{p_ip_j}}{\zeta_i\zeta_j},
  \label{eq:pure-ij}\\
  \Psi_{\zeta_i\zeta_a}
    &=\frac{\Phi_{p_i\eta_a}}{\zeta_i},
  \label{eq:pure-ia}\\
  \Psi_{\zeta_a\zeta_b}
    &=\Phi_{\eta_a\eta_b}.
  \label{eq:pure-ab}
\end{align}
The absolute values of the right-hand sides are bounded by
\eqref{eq:complex-uniform-ellipticity} and
\eqref{eq:rho-comparison}.  Together with the mixed complex derivatives
controlled by \eqref{eq:complex-uniform-ellipticity}, this proves
\begin{equation}
  \|D_{\R}^{2}\Psi\|_{L^\infty}\leq C
  \qquad\text{on the complement of }\mathcal D.
  \label{eq:real-Hessian-bound}
\end{equation}

By the Proposition \ref{prop comp ext 2}, we can extend $\Psi$ smoothly across $\mathcal D$. As a result, (\ref{eq:real-Hessian-bound}) holds on $\mathcal D$ as well.
\end{proof}

\begin{prop}\label{prop psi est}
Let $\Psi$, $\epsilon_2$, $\eta_0$ be given above. Then, for any $k \ge 1$, there exists a constant $C(k)$ such that for any $\alpha \in (\mathbb{Z}_{\ge 0})^n$ with $|\alpha|=k$, we have that:
\begin{equation*}
    ||D^{\alpha} \Psi||_{L^{\infty}(U_{\frac{\epsilon_3}{2}})}\le C(k).
\end{equation*}
\end{prop}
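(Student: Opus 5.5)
We will run the standard complex Evans--Krylov and Schauder bootstrap on the complex Monge--Amp\`ere equation \eqref{eq:complex-MA-affine}, which $\Psi$ satisfies on $U_{\epsilon_3}$ after the smooth extension across $\mathcal D$ given by Proposition~\ref{prop comp ext 2}. The right-hand side is
\[
  f(\zeta,\Psi,D\Psi)=-\Psi-\sum_{i=1}^k b_i(\zeta_i\Psi_{\zeta_i}-1)-\sum_{a>k}b_a\Psi_{\zeta_a}-c .
\]
It is a smooth (indeed affine) function of $(\zeta,\Psi,D\Psi)$, with coefficients controlled by $|b|$, $c$ and the size of $U_{\epsilon_3}$. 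The three inputs we need are already in place. First, the two-sided bound \eqref{eq:complex-uniform-ellipticity} $\lambda_1 I\le H_\Psi\le \Lambda_1 I$ holds off $\mathcal D$, and hence everywhere by continuity of the smooth extension. So the equation is uniformly elliptic, with constants depending only on $K$, $\lambda$, $\Lambda$. Second, Lemma~\ref{lem c11 psi} gives $\|\Psi\|_{C^{1,1}(U_{\epsilon_3})}\le C$. Third, $|\Psi|$ is bounded because $\Phi=y\cdot p-U$, with $y$, $p_a$ and $V$ bounded and the terms $y_i p_i=y_i(\log y_i+1+V_i)$ bounded as well.

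\textbf{Step 1 (from $C^{1,1}$ to $C^{2,\alpha}$).} The $C^{1,1}$ bound makes $f$ Lipschitz in $\zeta$ with a uniform constant. We then apply the interior complex Evans--Krylov estimate for $\log\det(\Psi_{\alpha\bar\beta})=f$ with $f\in C^{0,1}$ and uniformly elliptic Hessian. One convenient form is the version for concave uniformly elliptic equations with H\"older right-hand side (Caffarelli; or Wang--Tosatti--Weinkove--Yang for complex Hessians). Since the Hessian is only assumed bounded, we verify the hypotheses by differentiating the equation: the second derivatives $\Psi_{\gamma\bar\gamma}$ in any direction satisfy $\Psi^{\alpha\bar\beta}\partial_{\alpha\bar\beta}(\Psi_{\gamma\bar\gamma})\ge \partial_{\gamma\bar\gamma} f$ by concavity of $\log\det$. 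The term $\partial_{\gamma\bar\gamma}f$ involves third derivatives of $\Psi$ only linearly, through $D\Psi$. These can be absorbed in the weak Harnack step, or one can use the version of Evans--Krylov that allows $f$ to depend on $D\Psi$. The output is $\|\Psi\|_{C^{2,\alpha}(U_{3\epsilon_3/4})}\le C$.

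\textbf{Step 2 (bootstrap).} We differentiate the equation in a real direction $e$:
\[
  \Psi^{\alpha\bar\beta}(\partial_e\Psi)_{\alpha\bar\beta}=\partial_e f .
\]
This is a linear uniformly elliptic equation whose coefficients $\Psi^{\alpha\bar\beta}$ lie in $C^{\alpha}$. Its right-hand side equals $f_\Psi\,\partial_e\Psi+f_{D\Psi}\cdot D\partial_e\Psi+f_\zeta$, a lower-order term with $C^\alpha$ coefficients. Interior Schauder estimates give $\partial_e\Psi\in C^{2,\alpha}$, so $\Psi\in C^{3,\alpha}$ on a slightly smaller set. Induction on $k$, shrinking the domain geometrically from $U_{3\epsilon_3/4}$ down to $U_{\epsilon_3/2}$, yields the bound on $D^\alpha\Psi$ for all $|\alpha|=k$. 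The constants depend only on $K$, $l$, $\delta$, $\Omega$, $\Omega'$, $|b|$, $c$, and in particular not on the particular solution.

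\textbf{Main obstacle.} The main obstacle is Step 1, and specifically ensuring that the constants are uniform across $\mathcal D$. The smooth extension supplied by Proposition~\ref{prop comp ext 2} is qualitative only: it guarantees that $\Psi$ is a genuine smooth solution on all of $U_{\epsilon_3}$, but the constants must come from the $C^{1,1}$ bound and \eqref{eq:complex-uniform-ellipticity} alone. This works because the Evans--Krylov argument is purely local and uses only those quantities. We also need the domain $U_{\epsilon_3}$ to be an honest complex neighborhood on which $\Psi$ is defined. This is exactly what Lemma~\ref{lem:corner-domain} provides through the choice of $\epsilon_3$, since rotating the $\theta_i$ angles fills out the discs $|\zeta_i|<\epsilon_3/2$.
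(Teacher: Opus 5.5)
Your proposal is correct and follows the paper's route. The paper combines the $C^{1,1}$ bound of Lemma~\ref{lem c11 psi} and uniform ellipticity with the complex Evans--Krylov estimate of Tosatti--Wang--Weinkove--Yang to get $C^{2,\alpha}$, then bootstraps; you also spell out details the paper leaves as standard, namely how the gradient-dependent right-hand side is handled and why the constants stay uniform across $\mathcal D$.
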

\begin{proof}
    The $C^{2,\alpha}$ estimate for $\Psi$ follows from the Lemma \ref{lem c11 psi} and the Evans-Krylov estimate adapted to complex equations, see \cite{TosattiWangWeinkoveYang2015}. The higher estimates follow by standard bootstrapping arguments.
\end{proof}

We need to use the following Lemma by Whitney \cite{Whitney}:

\begin{lem}\label{lem whi}
For any even, infinitely differentiable function \(f(x)\) on all real \(x\), there exists an infinitely differentiable function \(g(y)\) for \(y \ge 0\) such that \(f(x) = g(x^2)\).
\end{lem}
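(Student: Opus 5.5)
This is Whitney's classical theorem, and I would prove it in two stages: first formally at the origin via Borel's lemma, then by a flat remainder argument. Since $f$ is even and smooth, all its odd-order derivatives vanish at $0$. The Taylor series of $f$ at $0$ is therefore formally $\sum_j \frac{f^{(2j)}(0)}{(2j)!}x^{2j}$.

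\textbf{Step 1: reduce to a flat function.} By Borel's lemma, choose a smooth function $h$ on $\mathbb{R}$ whose Taylor series at $0$ is $\sum_j \frac{f^{(2j)}(0)}{(2j)!}y^{j}$. Then $h(x^2)$ is smooth and even, and it has the same Taylor series at $0$ as $f$. Hence $f_1(x):=f(x)-h(x^2)$ is even, smooth, and flat at $0$, meaning all of its derivatives vanish there. It now suffices to find a smooth $g_1$ on $[0,\infty)$ with $f_1(x)=g_1(x^2)$, and then take $g=h+g_1$.

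\textbf{Step 2: the flat case.} Define $g_1(y):=f_1(\sqrt y)$ for $y\ge 0$. This is well defined and smooth for $y>0$. By induction I would show that for $y>0$,
\[
g_1^{(m)}(y)=\sum_{j=1}^{m} c_{m,j}\, y^{\frac{j}{2}-m} f_1^{(j)}(\sqrt y)
\]
for constants $c_{m,j}$. This follows from the chain rule applied to $x=\sqrt{y}$, since $\frac{d}{dy}=\frac{1}{2\sqrt y}\frac{d}{dx}$. Because $f_1$ is flat, Taylor's theorem gives $|f_1^{(j)}(x)|\le C_{j,N}|x|^N$ for every $N$ near $0$. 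Consequently $g_1^{(m)}(y)\to 0$ as $y\to 0^+$ for every $m$. An inductive application of the mean value theorem then shows that $g_1$ is $C^\infty$ on $[0,\infty)$, with all derivatives vanishing at $0$. Since $f_1$ is even, $f_1(x)=f_1(|x|)=g_1(x^2)$ for all real $x$.

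\textbf{Main obstacle.} The only delicate point is the one-sided smoothness at $y=0$ in Step 2, namely controlling the negative powers $y^{j/2-m}$. The flatness obtained in Step 1 is exactly what absorbs them, which is why the Borel reduction is done first. If one also needs a smooth extension of $g$ to a neighbourhood of $0$, this follows from Borel's lemma or Seeley extension.
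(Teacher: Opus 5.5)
Your proof is correct: the Borel reduction to a flat $f_1$, the formula $g_1^{(m)}(y)=\sum_{j=1}^m c_{m,j}\,y^{j/2-m}f_1^{(j)}(\sqrt y)$, the absorption of the negative powers by flatness, and the mean value theorem step all check out. The paper gives no argument for this lemma and simply cites Whitney, so your proposal supplies a self-contained proof where the paper has none.

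For comparison, there is a shorter route that avoids Borel's lemma. For $y>0$ one has $g'(y)=\tilde f(\sqrt y)$, where $\tilde f(x)=\frac{f'(x)}{2x}=\frac12\int_0^1 f''(tx)\,dt$ is again even and smooth. Since $\tilde f(\sqrt y)$ is continuous on $[0,\infty)$, the mean value theorem gives $g\in C^1([0,\infty))$ with $g'(y)=\tilde f(\sqrt y)$, and iterating gives $g\in C^\infty$. This route buys three things:
\begin{itemize}
\item it removes the bookkeeping with the constants $c_{m,j}$;
\item it yields the sharper finite-regularity statement ($f\in C^{2s}\Rightarrow g\in C^{s}$);
\item it carries over with only notational changes to smooth dependence on parameters.
\end{itemize}
The last point matters because that is how the paper actually uses the lemma: it applies it successively in $\zeta_1,\dots,\zeta_k$, with the remaining variables as parameters. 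Your route would there require a parametric Borel lemma and locally uniform flatness estimates. Your closing remark about extending $g$ across $y=0$ is also relevant to that application, since the paper then invokes the inverse function theorem up to $\{y_1=\cdots=y_k=0\}$.
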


Next, we can prove the Proposition \ref{prop evans}. 
\begin{proof}
of the Proposition \ref{prop evans}. We will use the estimate for $\Psi$ which is obtained in the Proposition \ref{prop psi est} to get the estimate for $V$.

The function $\Psi$ is invariant under independent rotations of
$\zeta_1,\ldots,\zeta_k$ and under translations of
$\operatorname{Im}\zeta_{k+1},\ldots,\operatorname{Im}\zeta_n$.  These
symmetries hold away from $\mathcal D$ by construction and hence everywhere by
continuity.  The Lemma \ref{lem whi}, applied successively to
$\zeta_1,\ldots,\zeta_k$, therefore shows that the function $F$ in
\eqref{eq:F-definition} extends smoothly to
\begin{equation}
  F\in C^\infty\bigl([0,\frac{\eps_3}{2})^k\times B_\eps(\eta_0)\bigr).
  \label{eq:F-smooth}
\end{equation}

The Legendre identities become
\begin{equation}
  y_i=\rho_iF_{\rho_i}\quad(i\leq k),
  \qquad
  y_a=F_{\eta_a}\quad(a>k).
  \label{eq:F-legendre-identities}
\end{equation}
Before extension,
\begin{equation}
  F_{\rho_i}=\frac{y_i}{\rho_i}=e^{-1-V_i}.
  \label{eq:F-rho-positive}
\end{equation}
Thus the smooth extension satisfies
\begin{equation}
  0<c\leq F_{\rho_i}\leq C
  \qquad\text{at }\rho=0.
  \label{eq:F-rho-bounds}
\end{equation}
Moreover, the lower-right principal block of
\eqref{eq:complex-uniform-ellipticity} gives
\begin{equation}
  D^2_{\eta\eta}F\geq\lambda_1I.
  \label{eq:F-eta-positive}
\end{equation}

At $\rho=0$, the Jacobian of the map
\[
  (\rho,\eta)\longmapsto(y',z)
  =\bigl(
      \rho_1F_{\rho_1},\ldots,\rho_kF_{\rho_k},D_\eta F
    \bigr)
\]
has the block form
\begin{equation}
  \begin{pmatrix}
    \diag(F_{\rho_1},\ldots,F_{\rho_k})&0\\
    D^2_{\eta\rho}F&D^2_{\eta\eta}F
  \end{pmatrix}.
  \label{eq:inverse-Jacobian}
\end{equation}
It is invertible by \eqref{eq:F-rho-bounds} and
\eqref{eq:F-eta-positive}.  The smooth inverse function theorem therefore
gives smooth functions
\begin{equation}
  \rho=\rho(y',z),
  \qquad
  \eta=\eta(y',z)
  \label{eq:smooth-inverse-variables}
\end{equation}
up to $y_1=\cdots=y_k=0$.

The inverse Legendre formula is
\begin{equation}
  U=\sum_{i=1}^k y_i\log\rho_i+z\cdot\eta-F.
  \label{eq:inverse-Legendre}
\end{equation}
Subtracting the Guillemin terms and using
$y_i=\rho_iF_{\rho_i}$ gives
\begin{align}
  V
  &=\sum_{i=1}^k y_i\log\frac{\rho_i}{y_i}
    +z\cdot\eta-F
  \notag\\
  &=-\sum_{i=1}^k y_i\log F_{\rho_i}
    +z\cdot\eta-F.
  \label{eq:V-smooth-formula}
\end{align}
Every term on the right-hand side is smooth by
\eqref{eq:F-smooth}, \eqref{eq:F-rho-bounds}, and
\eqref{eq:smooth-inverse-variables}.  Moreover the estimate for any derivatives of $V$ can be obtained from the Proposition \ref{prop psi est}.
\end{proof}

\section{Uniform Guillemin Boundary Conditions and the Main Existence Result}\label{sec: Guillemin}

For any $p \in \partial P$, we can take an affine transformation of the original coordinate to get a new coordinate system: 
\[
  y=(y',z),\qquad
  y'=(y_1,\ldots,y_k),\qquad
  z=(y_{k+1},\ldots,y_n)\in\R^m.
\]
such that locally near $p$, $\partial P$ is given by 
\begin{equation*}
\cup_{i=1}^k \{(y',z): y_i=0\}.
\end{equation*}
Without loss of generality, we can assume that $p$ is at origin.
Let $r>0$.  Consider
\[
  \mathcal Q=(0,r)^k\times (-r,r)^{n-k}.
\]
The original polytope coordinates are
\begin{equation}
  \mathbf x(y)
  =(y_1-1,\ldots,y_k-1,y_{k+1},\ldots,y_n).
\end{equation}

Let $u_R\in C^\infty(\mathcal Q)$ be strictly convex and solve the soliton
Monge--Amp\`ere equation
\begin{equation}
  \det \operatorname{Hess} u_R
  =\exp\bigl(
      \mathbf x(y)\cdot\nabla u_R-u_R+\ell(\mathbf x(y))
    \bigr),
  \qquad \ell(x)= - \langle \xi_R, y \rangle +C_R,
  \label{eq:soliton-MA}
\end{equation}
for fixed $b\in\R^n$ and $c\in\R$.  Write
\begin{equation}
  u_R(y)=\sum_{i=1}^k y_i\log y_i+v_R(y).
\end{equation}

\begin{defn}
    We say that $\{u_R\}$ satisfies uniform Guillemin boundary condition around a point $x_0 \in \partial P$, if the function $v_R$ given by (\ref{eq:guillemin-decomposition}) satisfies: For any $k \ge 0$, there exist constants $C(k)$ and $r>0$ depending on $x_0$ and $k$ such that for any $\alpha \in \mathbb{Z}_{\ge 0}^n$ with $|\alpha|=k$,
    \begin{equation*}
        ||D^{\alpha} v_R||_{L^{\infty}(\mathcal Q)} \le C(k).
    \end{equation*}
\end{defn}

In this section, we want to prove that the uniform Guillemin boundary condition holds around any point in $\partial P$. This allows us to take a convergence subsequence of $u_R$ such that its limit $u$ also satisfies the Guillemin boundary condition on $P$.

The strategy is to do induction on the codimension of the face of $P$. We start with the estimate on codimension 1 faces. After we prove the uniform Guillemin boundary condition around the interior of such faces, we prove the uniform $C^{1,1}$ estimates around the interior of codimension two faces. Using Evans-Krylov type estimate, we prove the uniform Guillemin boundary condition around the interior of codimension two faces. Keeping doing such induction, we can prove the uniform Guillemin boundary condition around any face of $P$.

The following Proposition is the key ingredient for doing induction on the codimension.
\begin{prop}\label{prop hig dim c2}
    Use the same notation as the beginning of this section. Suppose that $u_R \ge 0$ and $u_R$ satisfies Guillemin boundary condition. Suppose that there is a uniform constant $C_1$ such that:
    \begin{equation}\label{e ur bd}
        ||u_R||_{L^{\infty}(2\mathcal Q)}\le C_1.
    \end{equation}
    Suppose that $u_R$ satisfies the uniform Guillemin boundary condition on $\cup_{i=1}^k \{y_i \neq 0\}$. Suppose that there exists a constant $\Lambda$ such that:
    \begin{equation}\label{e tan c2 bdd}
        u_R^{jj}\le \Lambda
    \end{equation}
    for $j\ge k+1$. Then, there exists a uniform constant $C$ such that:
    \begin{equation*}
        \sum_{i=1}^n e^{u_{R,i}} u_{R,ii} \le C
    \end{equation*}
    in $\mathcal Q$ for some $r>0$.
\end{prop}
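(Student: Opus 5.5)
The plan is a Pogorelov/Calabi-type maximum principle argument, carried out where the corner $\{y_1=\cdots=y_k=0\}$ is no longer a boundary.

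\textbf{Meaning of the quantity.} Near $y_i=0$ we have $e^{u_{R,i}}=y_ie^{1+v_{R,i}}$, so
\[
e^{u_{R,i}}u_{R,ii}=e^{1+v_{R,i}}(1+y_iv_{R,ii})=e^{1+v_{R,i}}M_{ii}
\]
for $i\le k$, in the notation of \eqref{eq:S-and-M}. For $a>k$, $e^{u_{R,a}}u_{R,aa}$ is an ordinary tangential second derivative. Thus
\[
W:=\sum_{i=1}^n e^{u_{R,i}}u_{R,ii}
\]
is, up to the bounded factors $e^{1+v_{R,i}}$, the trace of the rescaled Hessian $M$. It is also a divergence, $W=\sum_i\partial_i(e^{u_{R,i}})$.

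\textbf{Working on the local complexification.} Because each $u_R$ satisfies the Guillemin boundary condition, Proposition~\ref{prop comp ext 2} shows that $u_R$ comes from a smooth $\mathbb T$-invariant K\"ahler potential on the local complexification $M_p$ near the corner. There $W$ is a smooth invariant function; in the coordinates $\rho_i=e^{u_{R,i}}=|\zeta_i|^2$ it is essentially the trace of the metric against the flat model. The points with $y_i=0$ are then interior points, so the maximum principle can be applied without boundary terms on $\{y_i=0\}$. Concretely, I would compute in the real interior with the drifted linearized operator
\[
L f=u_R^{ij}f_{ij}-\bigl(y_j u_{R,jl}-\xi_{R,l}\bigr)u_R^{lm}f_m,
\]
obtained by differentiating $\log\det D^2u_R=\mathbf x\cdot\nabla u_R-u_R+\ell$. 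Since $u_R^{ij}\partial_{ij}$ and the drift are the real form of the complex Laplacian with a holomorphic drift, these computations are valid up to the corner by continuity.

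\textbf{The maximum principle computation.}
\begin{enumerate}
\item Differentiate the equation twice. Then $L\log W$ is bounded below by a Cauchy--Schwarz term, exactly as in \eqref{e Lphi11} in Proposition~\ref{prop: linGrowthApprox}: the third-order terms $u_R^{ia}u_R^{jb}u_{R,ij\cdot}u_{R,ab\cdot}$ absorb the negative gradient term $|\nabla W|^2_{u_R}/W^2$. What remains is a lower-order error of size at most $C(1+W)$.
\item Build the test function
\[
G=\log W+A\,u_R+\log\eta ,
\]
where $\eta$ is a cutoff in the variables $(y',z)$ vanishing on the outer boundary of $\mathcal Q$ (not on the faces $y_i=0$).
\item Handle the term $Au_R$ using \eqref{e ur bd} and $u_R\ge0$. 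It gives $L(Au_R)\ge A\bigl(n-C\bigr)+A\,u_R^{ij}(\ldots)$, and the positive trace part $A\sum u_R^{ii}\gtrsim A\,W^{1/(n-1)}$ comes from $\det M$ being bounded as in \eqref{eq:det-M-bounds}.
\item Handle the cutoff terms $|\nabla\eta|^2_{u_R}/\eta^2$ and $L\eta/\eta$. In the tangential directions they are controlled by \eqref{e tan c2 bdd}, $u_R^{jj}\le\Lambda$. In the directions $y_i$ the cutoff is only needed near $y_i=r$, which lies in $\{y_i\ne0\}$, where the assumed uniform Guillemin condition already bounds all derivatives.
\end{enumerate}
At an interior maximum (interior in $M_p$) this yields $\eta W\le C$ with $C$ independent of $R$, and hence the claimed bound on a smaller cube.

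\textbf{Main obstacle.} The hardest step is the $y_i$-directional cutoff and drift terms near the corner. The coefficients $u_R^{ii}\sim y_i$ degenerate there, and the drift contains $y_ju_{R,jl}$, which is singular like $\log y$ in the original variables. I would first try to write every such term in the $M$-normalized form, e.g.\ $\sqrt{y_i}\,\partial_{y_i}$, which becomes a bounded derivative in $r_i=2\sqrt{y_i}$ as in Lemma~\ref{lem:corner-domain}. The aim is to show that each term is bounded by $C(1+W)$ with constants depending only on $C_1$, $\Lambda$, $|\xi_R|$ and the lower-stratum Guillemin bounds. If this fails, the fallback is to choose $\eta$ to depend only on $z$ and on $\sum_i y_i$, with $\eta$ constant near the corner. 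The $y'$-cutoff is then supported purely in the region already controlled by the induction hypothesis.
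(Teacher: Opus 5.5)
\textbf{There is a genuine gap in the step that should produce the good term.}

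\emph{Why $Au_R$ fails.} The term $Au_R$ cannot supply a trace: $u_R^{ij}(u_R)_{ij}=n$, so no term $A\sum u_R^{ii}$ appears for any choice of drift. Differentiating the equation gives the linearization $L=u_R^{ij}\partial_{ij}-\mathbf x\cdot\nabla$, which is the drifted complex Laplacian; your drift $-(y_ju_{R,jl}-\xi_{R,l})u_R^{lm}\partial_m$ is not this operator. With the correct $L$ one gets $L(Au_R)=A(n-\mathbf x\cdot\nabla u_R)$. Since $\mathbf x_i=y_i-1<0$ and $u_{R,i}=\log y_i+1+v_{R,i}\to-\infty$, this behaves like $A\sum_{i\le k}\log y_i\to-\infty$ at the faces. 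Equivalently, on the complexification $u_R$ contains $|\zeta_i|^2\log|\zeta_i|^2$ and is not $C^2$ across the divisor. So the ``interior maximum in $M_p$'' argument does not apply to your $G$.

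\emph{Why the determinant route fails.} Appealing to \eqref{eq:det-M-bounds} is circular. That bound needs $|Dv_R|\le K$, which the paper derives only afterwards, in Lemma~\ref{lem upp low c2}, from this very proposition. Even granting it, $u_R^{ii}\sim y_i(M^{-1})_{ii}$ degenerates at the corner. Moreover, a term $AW^{1/(n-1)}$ could not absorb the error $C(1+W)$ you allow in step~1 when $n\ge 3$.

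\emph{The missing idea.} On the complexification, $W$ is, up to bounded factors, $\operatorname{tr}_\omega\omega_{\mathrm{flat}}$, i.e.\ a lower bound for the metric. The natural good term is therefore of Chern--Lu type and comes from the potentials $e^{u_{R,p}}$, which equal $|\zeta_p|^2$ for $p\le k$. These satisfy $L e^{u_{R,p}}=e^{u_{R,p}}(u_{R,pp}-\xi_p)$, which is linear in $W$. They are uniformly bounded because convexity, $u_R\ge 0$ and \eqref{e ur bd} give $u_{R,i}\le C$ for $i\le k$ and $|u_{R,a}|\le C$ for $a>k$. This gradient bound is where those hypotheses actually enter; in your plan they are spent on controlling $Au_R$ instead.

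\emph{What the paper does.} The paper works one direction $p$ at a time with $Y=e^{u_p}u_{pp}$. It uses the exact identities $Lu_p=-\xi_p$ and $L\log u_{pp}\ge 1$, the auxiliary function $E=e^{u_p+x_p}$ with $LE\ge C_1Y-C_2E$, and tangential cutoffs controlled by \eqref{e tan c2 bdd}. It keeps the maximum off the faces with the barrier $\epsilon\sum_{i\le k}u_{R,i}$, which has bounded $L$ and tends to $-\infty$ at $y_i=0$, and then lets $\epsilon\to 0$. If you replace $Au_R$ by $A\sum_p e^{u_{R,p}}$, your complexification route for the faces becomes a workable alternative to that barrier.
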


\begin{proof}
For simplicity, we just write $u_R$ as $u$ and write $\xi_R$ as $\xi$ and write $v_R$ as $v$. 

\textbf{Step 1: Estimate on normal derivatives.}  Fix a small constant $r>0$. For any $i \le k$, and $y_i \in (0,r)$, we can use the convexity to get that:
    \begin{equation*}
    \begin{split}
u_{i}(y',z) &\le \frac{u(y_1,\ldots, y_{i-1}, 2r, y_{i+1},\ldots y_k, z)- u(y_1,\ldots, y_{i-1}, y_i, y_{i+1},\ldots y_k, z)}{2r-y_i} \\
&\le \frac{u(y_1,\ldots, y_{i-1}, 2r, y_{i+1},\ldots y_k, z)}{r}.
    \end{split}
    \end{equation*}
    Here we use the assumption that $u \ge 0$. By the Lemma \ref{lem: uRholderbounds}, there exists a uniform constant $C$ such that $u(y_1,\ldots, y_{i-1}, 2r, y_{i+1},\ldots y_k, z) \le C$. As a result, we have that:
    \begin{equation}\label{e upper ui}
        u_{i} \le C
    \end{equation}
    for any $i \le k$ on $\mathcal Q$. In particular, we have that:
    \begin{equation*}
        \sum_{i=1}^k e^{u_i}\le C.
    \end{equation*}
     Using the convexity of $u$ and (\ref{e ur bd}), we can get that for any $i \ge k+1$, 
    \begin{equation}\label{e ui tan}
        ||u_i||_{L^{\infty}(\mathcal Q)} \le C,
    \end{equation}
    for some uniform constant $C$.\\
    
    \textbf{Step 2: Linearized Inequalities.}
 Define
\[
L\psi=u^{ij}\psi_{ij}-x_k\psi_k.
\]
Taking logarithms in (\ref{eq:soliton-MA}) and differentiate it once in the $x_p$-direction gives
\begin{equation}\label{e lup}
L(u_p)=-\xi_p.
\end{equation}
Differentiating again gives
\begin{equation}\label{e lupp}
L(u_{pp})
=
u^{ia}u^{jb}u_{ijp}u_{abp}+u_{pp}.
\end{equation}
Equation (\ref{e lupp}) yields
\[
L(\log u_{pp})
=
1+
\frac{u^{ia}u^{jb}u_{ijp}u_{abp}}{u_{pp}}
-
\frac{u^{ij}u_{ppi}u_{ppj}}{u_{pp}^2}.
\]
The tensor Cauchy inequality gives
\begin{equation}
u^{ij}u_{ppi}u_{ppj}
\leq
u_{pp}\,u^{ia}u^{jb}u_{ijp}u_{abp}.
\end{equation}
Indeed, with the metric $D^2u$, the left-hand side is the squared
norm of the one-form obtained by inserting $e_p$ into the symmetric
two-tensor $(u_{ijp})$, while the right-hand side is
$|e_p|_{D^2u}^2$ times the squared norm of that tensor. This proves
\begin{equation}\label{e l log upp}
L(\log u_{pp})\geq 1.
\end{equation}
Combining (\ref{e lup}) and (\ref{e l log upp}), we proves:
\begin{equation}\label{e l log eu upp}
L\bigl(\log(e^{ u_p}u_{pp})\bigr)\geq 1 - \xi_p\ge -C_2.
\end{equation}
    Denote $Y= e^{u_p}u_{pp}$ and $E= \exp(u_p + x_p)$.
    \begin{equation}\label{e le}
    \begin{split}
         L E &= E \Big( L( u_p) +  L x_p + u^{ij}(u_p + x_p)_i (u_p + x_p)_j \Big) \\
         & = E \Big( -\xi_p -x_p +  u^{pp}+  u_{pp} + 2\Big) \\
         & \ge E u_{pp}- CE  \ge C_1 Y- C_2 E.
    \end{split}
    \end{equation}
    For any $i \ge k+1$, we define $\eta_i (z_i)= 1-\frac{z_i^2}{r^2}$. We can compute that:
    \begin{equation*}
        (\log \eta_i)'= -\frac{2z_i}{r^2 -z_i^2}, \,\,\, (\log \eta_i)''= -\frac{2(r^2 + z_i^2)}{(r^2- z_i^2)^2}.
    \end{equation*}
    Using (\ref{e tan c2 bdd}), we can get that:
    \begin{equation}\label{e llog eta}
        L (2\log \eta_i) \ge -\frac{C}{\eta_i^2}.
    \end{equation}
    \textbf{Step 3: Maximum principle.} For $\epsilon \in (0,1)$, we define 
    \begin{equation*}
        \Phi_{\epsilon}= \log Y + E + \epsilon (\sum_{i=1}^k u_i)+ 2\sum_{i=k+1}^n \log \eta_i.
    \end{equation*}
    If $p \le k$, then   $u_p= \log y_p + 1 + v_p$, $u_{pp}=\frac{1}{y_p}+ v_{pp}$. Thus, we have that:
    \begin{equation*}
        Y= e^{1+ v_p}(1+ y_p v_{pp}).
    \end{equation*}
    Since $u$ satisfies the Guillemin boundary condition, we have that $v$ is smooth.  As a result, $\log Y$ is bounded on $\mathcal Q$. If $p \ge k$, then $u_p=v_p$ is smooth, which implies that $\log Y$ is bounded on $\mathcal Q$. This bound is not uniform, because we don't have uniform Guillemin boundary condition on $\cap_{i=1}^k \{y_i=0\}$ yet.
    
    The Guillemin boundary condition of $u$ also implies that $\epsilon u_i$ goes to $-\infty$, as $y_i \rightarrow 0$. By the definition of $\eta_i$, we have that $2\log \eta_i$ goes to $-\infty$, as $y_i \rightarrow \pm r$, for $i \ge k+1$. It follows that either the maximum of $\Phi_{\epsilon}$ is attained in the interior, or is attained at some point in $\partial \mathcal Q$ which has a uniform distance away from $\{ y_1= \ldots, = y_k=0 \}$ where
    its maximum value is already uniformly bounded using the assumption that $u_R$ satisfies the uniform Guillemin boundary condition on $\cup_{i=1}^k \{y_i \neq 0\}$.

    Suppose that the maximum is attained at an interior point $p$. At that point, $L \Phi_{\epsilon} (p)\le 0$. By (\ref{e lup}), (\ref{e l log eu upp}), (\ref{e le}) and (\ref{e llog eta}), we can get that:
    \begin{equation*}
        0 \ge L \Phi_{\epsilon} (p) \ge -C_2 + C_1 Y +\epsilon (-\sum_{i=1}^k \xi_i) -C \sum_{i=k+1}^n \frac{1}{\eta_i^2}.
    \end{equation*}

    This implies that 
    \begin{equation}\label{e eta y upp}
        \Pi_{i=k+1}^n \eta_i^2 Y \le C
    \end{equation}
    for some constant $C$ at the maximum point. By (\ref{e upper ui}), we know that $E$ and the positive part of $\epsilon \sum_{i=1}^k u_i$ are uniformly bounded. Combining this with (\ref{e eta y upp}), we get that:
    \begin{equation}\label{e phi eps upp}
        \max \Phi_{\epsilon}\le C
    \end{equation}
    for a uniform constant $C$.
    Fix a point with $|z|< \frac{r}{2}$. Since $\eta_i \ge 3/4$, (\ref{e phi eps upp}) implies that:
    \begin{equation*}
        \log Y + E + \epsilon \sum_{i=1}^k u_i \le C.
    \end{equation*}
    Dropping $E \ge 0$ and letting $\epsilon \rightarrow 0$, we can get that:
    \begin{equation*}
        e^{u_p}u_{pp}\le C
    \end{equation*}
    for a uniform constant $C$.
\end{proof}

\begin{lem}\label{lem upp low c2}
   Assume the same assumption as in the Proposition \ref{prop hig dim c2}. Then, there exists a constant $K$ depending on $C$ such that:
    \begin{equation*}
        K^{-1}\le 1+ y_i v_{R,ii}\le K,
    \end{equation*}
 for any $i \le k$    and
    \begin{equation*}
        K^{-1}\le v_{R,ii}\le K,
    \end{equation*}
    for any $i \ge k+1$.
\end{lem}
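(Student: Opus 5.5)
The plan is to turn the bound of Proposition~\ref{prop hig dim c2},
\[
\sum_{i=1}^n e^{u_{R,i}}u_{R,ii}\le C \quad\text{in }\mathcal Q,
\]
into upper bounds on the diagonal entries of $M=S(\operatorname{Hess}u_R)S$. Lower bounds will then come from the determinant identity~\eqref{eq:det-M}.

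\textbf{Upper bound, normal directions $i\le k$.} By the Guillemin decomposition, $u_{R,i}=\log y_i+1+v_{R,i}$ and $u_{R,ii}=y_i^{-1}+v_{R,ii}$. Hence
\[
e^{u_{R,i}}u_{R,ii}=e^{1+v_{R,i}}\,(1+y_iv_{R,ii}).
\]
To bound $1+y_iv_{R,ii}$ from above I need $v_{R,i}$ bounded from below. The upper bound $v_{R,i}\le C-\log y_i$ is already available from \eqref{e upper ui}, but a lower bound requires more. I would get $|Dv_R|\le C$ on $\mathcal Q$ from the uniform $C^0$ bound \eqref{e ur bd} together with convexity along segments in the $y_i$ direction. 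At the point $y_i=2r$ inside $2\mathcal Q$ the function is uniformly controlled, since that point lies off the face $\{y_1=\dots=y_k=0\}$, where the uniform Guillemin condition gives control of $v_R$. Along such a segment, monotonicity of $u_{R,i}$ and the explicit $\log y_i$ singularity then bound $v_{R,i}$ from both sides. This gradient step is the one I expect to be the main obstacle: near the corner, several $y_j$ vanish simultaneously, and I would handle this by first fixing the other $y_j$ and using the induction hypothesis on lower-codimension faces. Once $|v_{R,i}|\le C$, the displayed identity gives $1+y_iv_{R,ii}\le K$.

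\textbf{Upper bound, tangential directions $a>k$.} Here $u_{R,a}=v_{R,a}$ is bounded by \eqref{e ui tan}, so $e^{u_{R,a}}\ge c>0$ and the proposition gives $v_{R,aa}=u_{R,aa}\le C$.

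\textbf{Lower bounds.} Every diagonal entry of $M$ is now at most $K$. Since $M$ is positive definite, $\operatorname{tr}M\le nK$ bounds all eigenvalues of $M$ from above. As in the proof of Lemma~\ref{lem upp imp low}, $\det M=e^G$, and $G$ in \eqref{eq:G} is uniformly bounded thanks to $|v_R|,|Dv_R|\le C$ and the boundedness of $\xi_R$ and $C_R$ (Lemma~\ref{lem: convergenceOfReebFields}). So $\det M\ge c_0$. Combined with the upper eigenvalue bound, this gives $\lambda_{\min}(M)\ge c_0/(nK)^{n-1}$. Each diagonal entry is at least $\lambda_{\min}(M)$, which yields $1+y_iv_{R,ii}\ge K^{-1}$ and $v_{R,aa}\ge K^{-1}$ after enlarging $K$.
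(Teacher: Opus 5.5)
\textbf{Gap: the gradient step.} Your argument needs a uniform two-sided bound on $v_{R,i}$ for $i\le k$. You need $v_{R,i}\ge -C$ to turn $e^{1+v_{R,i}}(1+y_iv_{R,ii})\le C$ into $1+y_iv_{R,ii}\le K$, and $v_{R,i}\le C$ to bound $G$ from below. Neither follows from the $C^0$ bound, convexity and control at $y_i=2r$.

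Monotonicity of $u_{R,i}$ in $y_i$ only gives $u_{R,i}(y_i)\le u_{R,i}(2r)$, which is \eqref{e upper ui} again, i.e. the non-uniform bound $v_{R,i}\le C-\log y_i$. Convexity plus the $C^0$ bound only give $u_{R,i}\ge -C/y_i$. The explicit $\log y_i$ singularity improves neither.

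In fact no argument using only these inputs can work. For fixed $A>0$ and $\delta\downarrow 0$, consider $u_\delta(y)=y\log y+Ay\log(y+\delta)$ on $(0,2r)$. It is convex, uniformly bounded, uniformly smooth near $y=2r$, and Guillemin for each $\delta$. It even satisfies $e^{u_\delta'}u_\delta''\le C$ uniformly, since $y u_\delta''\le 1+A$. Yet $v_\delta'(0)=A\log\delta\to-\infty$. This already happens for $k=1$, so the difficulty is not the corner: the Monge--Amp\`ere equation itself must enter the gradient estimate.

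\textbf{How the paper gets the gradient bound.} The upper bound comes from integrating $\partial_i\bigl(e^{u_{R,i}}\bigr)=e^{u_{R,i}}u_{R,ii}\le C$ (Proposition~\ref{prop hig dim c2}) from the facet, where $e^{u_{R,i}}\to 0$. This gives $e^{u_{R,i}}\le Cy_i$, i.e. $v_{R,i}\le C$.

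For the lower bound, write $x_l=y_l-1$. Convexity and the $C^0$ bound give $-y_lu_{R,l}\le C$, hence $u_{R,ll}\le Ce^{-u_{R,l}}\le C'e^{x_lu_{R,l}}$ for each $l\le k$. Next apply Hadamard's inequality $\prod_l u_{R,ll}\ge\det D^2u_R$ to the equation, using that the tangential first and second derivatives are bounded. This gives $\prod_{l\le k}u_{R,ll}\ge c\,\exp\bigl(\sum_{l\le k}x_lu_{R,l}\bigr)$. Dividing by the upper bounds for $l\ne i$ yields $u_{R,ii}\ge c\,e^{x_iu_{R,i}}$.

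Near the facet $u_{R,i}<0$ and $-x_i$ is close to $1$, so this becomes the differential inequality $\partial_i\bigl(e^{-x_iu_{R,i}}\bigr)\ge c$. Integrating from the facet, where $e^{-x_iu_{R,i}}\to 0$, gives $u_{R,i}\ge\log y_i-C$, i.e. $v_{R,i}\ge -C$.

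With this two-sided gradient bound in hand, the rest of your proposal coincides with the paper's argument. That is, the identity for $e^{u_{R,i}}u_{R,ii}$, the tangential upper bound, and the trace/determinant argument for the lower bounds all go through as you wrote them.
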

\begin{proof}
\textbf{Step 1: Gradient estimate}
Using the Proposition \ref{prop hig dim c2},we can get that:
\begin{equation*}
    (e^{u_{R,i}})_i = e^{u_{R,i}} u_{R,ii} \le C.
\end{equation*}
Here we use the uniform upper bound on $u_{R,i}$ given by (\ref{e upper ui}). By the Guillemin boundary condition, we have that $u_{R,i}$ goes to $-\infty$ as $x_i$ goes to $-1$. As a result, we can integrate the above inequality in $x_i$ direction from $-1$ to $x_i$ to get:
\begin{equation*}
    e^{u_{R,i}}\le C (x_i+1).
\end{equation*}
This implies that:
\begin{equation}\label{e uri upp}
    u_{R,i}\le \log C + \log (x_i+1),
\end{equation}
for $i \le k$.

By the Lemma \ref{lem: uRholderbounds}, $u_R$ is uniformly bounded in $B_{\epsilon}(y_0)$. Using the convexity of $u_R$ and shrink $\epsilon$ a bit, we also have that 
\begin{equation*}
    |u_{R,i}|\le C_5
\end{equation*}
in $B_{r}(0)$ for some $C_5$ and for $i\ge k+1$. As a result, there exists a constant $C_6$ such that:
\begin{equation}\label{e bdd tan 1}
  |z\cdot\nabla_{z}u_R-u_R-\langle\xi_R,x\rangle-C| \le C_6.
\end{equation}
is bounded.  
Next, we estimate $ u_{R,n}$ in the other direction. We can use (\ref{e bdd tan 1}) and the Hadamard's inequality to have that:
\begin{equation}\label{e det lower}
    u_{R,11}\cdot \ldots \cdot u_{R,nn}  \ge det (u_{R,ij}) = \exp(-\langle \xi_R, x \rangle + \langle x, \nabla u_R \rangle -u_R) \ge C_7 \exp(x' \cdot \nabla_{x'} u_{R})
\end{equation}
Using (\ref{e bdd tan 1}), (\ref{e det lower}) and the upper bound of the tangential second order derivatives of $u_R$ by the Proposition \ref{prop hig dim c2}, we can get that:
\begin{equation}\label{e urnn low}
    \Pi_{l=1}^k u_{R,ll}\ge C exp(x' \cdot \nabla_{x'} u_{R}).
\end{equation}

For $i\le k $, by the convexity of $u_R$ and the boundedness of $u_R$, we can get:
\begin{equation}\label{e 1x1 ui}
   (-1-x_i) u_{R,i}(x_1,\ldots, x_{n})\le u_R(x_1,\ldots, x_{i-1}, -1, x_{i+1},\ldots, x_n) - u_R (x_1,\ldots, x_{n})\le C.
\end{equation}
Using the Proposition \ref{prop hig dim c2}, we can get that $u_{R,ii}\le C e^{-u_{R,i}}$ for $i \le k$. Combining this with (\ref{e 1x1 ui}), we can get that:
\begin{equation}\label{e uii xiui}
    u_{R,ii}\le C e^{x_i u_{R,i}}.
\end{equation}
For any $i \le k$, we use (\ref{e uii xiui}) to cancel $u_{R,ll}$ in (\ref{e urnn low}) except for $l=i$ to get 
\begin{equation*}
    u_{R,ii} \ge C \exp(x_i u_{R,i}).
\end{equation*}

Since in $\mathcal Q$ we have that $x_i <0$ and $-x_i \ge C$, we can use the above formula to get:
\begin{equation*}
    -x_i u_{R,ii}- u_{R,i}\ge C \exp(x_i u_{R,i}).
\end{equation*}
This implies that:
\begin{equation*}
    \Big( \exp(-x_i u_{R,i}) \Big)_i \ge C.
\end{equation*}
Integrating the above formula and use the fact that $-x_i u_{R,i}$ goes to $-\infty$ as $x_i$ goes to $-1$ which is ensured by the Guillemin boundary condition, we can get:
\begin{equation*}
    \exp(-x_i u_{R,i})\ge C(x_i+1).
\end{equation*}
This implies that 
\begin{equation}
    -x_i u_{R,i} \ge \log (x_i+1)+ \log C,
\end{equation}
which implies that
\begin{equation}\label{e low 1 nor}
   u_{R,i}\ge \frac{\log (x_i+1)}{-x_i}-C =\log (x_i+1) + \frac{(x_i+1)\log (x_i+1)}{-x_i}-C \ge \log(x_i+1)-C'.
\end{equation}

\textbf{Step 2: Hessian estimate} 
Combining (\ref{e low 1 nor}) and the Proposition \ref{prop hig dim c2}, we can get that:
\begin{equation*}
    u_{R,ii}\le C e^{x_i u_{R,i}} = C e^{x_i \big( \log (x_i+1)+ C \big)}= C e^{(x_i+1)\log (x_i+1) + C x_i - \log (x_i+1)} \le \frac{C}{x_i+1}.
\end{equation*}
By (\ref{e uri upp}) and (\ref{e low 1 nor}), we know that 
\begin{equation*}
  \frac{1}{C \Pi_{i=1}^k y_i}  e^{x \cdot \nabla u_R - u_R -\langle \xi_R, x \rangle -C}\le \frac{C}{\Pi_{i=1}^k y_i}
\end{equation*}
Then, we can apply the proof of Lemma \ref{lem upp imp low} to conclude the proof of this proposition.
\end{proof}

\begin{thm}\label{thm uni gui}
   The family $u_R$ satisfies the uniform Guillemin boundary condition around any point $p\in \partial P$.
\end{thm}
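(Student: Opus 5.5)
We argue by induction on the codimension $k$ of the face $F$ of $P$ whose relative interior contains $p$. For fixed $p$ we work in the adapted coordinates of this section, so that near $p$ the boundary is $\cup_{i=1}^k\{y_i=0\}$. We choose $r$ small and $R$ large, so that $2\mathcal Q\Subset P_{R_0}$ for a fixed $R_0$ and the truncating facet $\{\langle\xi_R,y\rangle=R\}$ stays far away. Since $u_R$ is only determined up to affine functions, we first normalize it. We subtract its tangent plane at an interior reference point, which keeps $u_R\ge 0$. By Lemma~\ref{lem: uRholderbounds} and Corollary~\ref{cor hol con}, this gives the uniform bound \eqref{e ur bd} on $2\mathcal Q$. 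The induction hypothesis is that uniform Guillemin conditions hold near every point in the relative interior of faces of codimension $<k$. A compactness argument then gives them uniformly on $2\mathcal Q\cap\cup_{i=1}^k\{y_i\neq0\}$ at a fixed distance from the corner $\{y'=0\}$; this uses compactness of $\overline{F}\cap P_{R_0}$ and a finite cover.

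The base case is interior points, $k=0$. Here Proposition~\ref{prop rn c2} gives the lower bound $D^2u_R\ge C(M)I$ on $P_M$. Lemma~\ref{lem ur con}, via Caffarelli's estimates and Schauder theory, gives the upper and higher-order bounds.

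For the inductive step we must verify the remaining hypothesis \eqref{e tan c2 bdd} of Proposition~\ref{prop hig dim c2}, namely $u_R^{jj}\le\Lambda$ for the tangential directions $j\ge k+1$. This is exactly where Proposition~\ref{prop rn c2} enters. The matrix $(u_R^{ij})=((\varphi_R)_{ij})$ is uniformly bounded above on $P_M$, and $P_M$ contains $\mathcal Q$ for $M$ large. So every entry of the inverse Hessian is bounded, and in particular the tangential diagonal ones. Proposition~\ref{prop hig dim c2} then gives $\sum_i e^{u_{R,i}}u_{R,ii}\le C$. Lemma~\ref{lem upp low c2} converts this into the two-sided bounds
\[K^{-1}\le 1+y_iv_{R,ii}\le K\ (i\le k),\qquad K^{-1}\le v_{R,ii}\le K\ (i>k).\]
Its Step~1, using \eqref{e uri upp} and \eqref{e low 1 nor}, also yields $|v_{R,i}|\le C$, and together with \eqref{e ur bd} this controls $|v_R|$ and $|Dv_R|$. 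These are precisely the hypotheses \eqref{eq:gradient-bound}, \eqref{eq:singular-diagonal-bounds} and \eqref{eq:tangential-diagonal-bounds} of Proposition~\ref{prop evans}, with $b=-\xi_R$ and $c=C_R$ uniformly bounded by Lemma~\ref{lem: convergenceOfReebFields}. Proposition~\ref{prop evans} then bounds all derivatives of $v_R$ on a half-size box. This is the uniform Guillemin condition at $p$ and closes the induction.

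The main obstacle is the gluing in the induction. Proposition~\ref{prop hig dim c2} needs uniform control of $\Phi_\epsilon$ on the parts of $\partial\mathcal Q$ away from the corner. There the relevant boundary points lie on lower-codimension faces, but in coordinates adapted to those faces rather than to $p$. We must check that $\log Y=\log\bigl(e^{u_p}u_{pp}\bigr)$, expressed through the lower-codimension decomposition $u_R=\sum_{i\in I'}y_i\log y_i+\tilde v_R$, is uniformly bounded there. The reason is that the Guillemin terms of the facets not containing that point are smooth and bounded away from their facets, so they can be absorbed into $\tilde v_R$. Carrying this out needs a careful choice of box sizes, with radii shrinking at each codimension level, and a check that the affine normalization is compatible across neighboring charts. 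The normalization only shifts $v_R$ by an affine function with uniformly bounded coefficients, by the gradient bounds of Step~1 of Proposition~\ref{prop hig dim c2}, so compatibility is harmless.
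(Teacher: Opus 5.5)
Your proposal is correct and follows the paper's own argument. The paper also inducts on the codimension of the face, normalizes $u_R\ge 0$ uniformly, uses Proposition~\ref{prop rn c2} to supply the tangential hypothesis \eqref{e tan c2 bdd}, and then applies Proposition~\ref{prop hig dim c2}, Lemma~\ref{lem upp low c2} and Proposition~\ref{prop evans} in turn. You also make explicit two points the paper leaves implicit. The first is the bounds on $|v_R|$ and $|Dv_R|$ that Proposition~\ref{prop evans} requires. The second is the transfer of the lower-codimension Guillemin bounds, after absorbing the smooth terms $y_i\log y_i$ away from their facets, to the off-corner part of $\partial\mathcal Q$ in the maximum-principle step of Proposition~\ref{prop hig dim c2}.
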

\begin{proof}
The Lemma \ref{lem: uRholderbounds} provides the local uniform $C^0$ estimate for $u_R$. Proposition \ref{prop rn c2} allows us to apply the Lemma \ref{lem upp low c2} around any point in the interior of a codimension one facet of $P$. Note that although the Lemma \ref{lem upp low c2} requires locally $u_R \ge 0$. This can be ensured by adding $u_R$ by a uniform constant, using the local uniform $C^0$ estimate for $u_R$.  Then, we can use the Proposition \ref{prop evans} to prove that $u_R$ satisfy the uniform Guillemin boundary condition around any point in the interior of a codimension one facet of $P$. Then, we can use the Lemma \ref{lem upp low c2} and the Proposition \ref{prop evans} to prove that $u_R$ satisfy the uniform Guillemin boundary condition around any point in the interior of a codimension two facet of $P$. Using an induction argument by applying the Lemma \ref{lem upp low c2} and Proposition \ref{prop evans} again and again, we can prove that $u_R$ satisfy the uniform Guillemin boundary condition around any point in $\partial P$.
\end{proof}

\subsection{The Main Existence Result}
We now prove the main existence result, Theorem~\ref{main thm 1}.

\begin{proof}[Proof of the Theorem \ref{main thm 1}]
    Using the Lemma \ref{lem ur con}, we know that we can take a sequence $u_{R_k}$ such that it converges in $C_{loc}^{k,\alpha}$ to $u_{\infty}$ for any $k$ with $u_{\infty}$ to be a maximum point of $I_{\mu}$, solving
    $$ det (u_{ij}) = e^{\langle y, \nabla u (y) \rangle -u(y) -\langle \xi, y \rangle + C_{\infty}}.$$
    By \cite{CEK}, we know that $u_{\infty}^*$ is a translation of $\varphi$ obtained in the Corollary \ref{cor: momentMeasureKRS}, up to adding a constant. According to the Corollary \ref{cor fin varphi}, $\nabla u_{\infty}= \R^n$. As a result, $u_{\infty}$ satisfies (\ref{e main equ}). By the Theorem \ref{thm uni gui}, $u_{R}$ satisfies uniform Guillemin boundary condition around any point on $\partial P$. As a result, $u_{\infty}$ also satisfies Guillemin boundary condition around any point on $\partial P$. This concludes the proof of the Theorem \ref{main thm 1}.
\end{proof}

\section{Geometric Properties of the  toric shrinking K\"ahler-Ricci soliton}\label{sec: volumeGrowth}

\begin{lem}
\label{lem:polyhedral-volume-asymptotics}
Let $P\subset\mathbb R^n$ be a nonempty, full-dimensional,
closed convex polytope. Set
\[
C=\operatorname{rec}(P),\qquad
W=\operatorname{span}C,\qquad
\gamma=\dim W.
\]
Suppose that $C$ is pointed and that
$\xi\in(\mathbb R^n)^*$ satisfies
\[
\langle\xi,c\rangle>0
\qquad
\text{for every }c\in C\setminus\{0\}.
\]
Choose Lebesgue measures compatible with the orthogonal
decomposition
\[
\mathbb R^n=W\oplus W^\perp,
\qquad dx=dw\,dz.
\]
Define
\[
K=\operatorname{pr}_{W^\perp}(P),
\qquad
C_\xi=\{w\in C:\langle\xi,w\rangle\leq1\},
\]
and
\[
A(P,\xi)
=
\operatorname{Vol}_{n-\gamma}(K)
\operatorname{Vol}_{\gamma}(C_\xi).
\]
The volumes on $W$ and $W^\perp$ are intrinsic volumes,
with the convention $\operatorname{Vol}_0(\{0\})=1$.

Then $0<A(P,\xi)<\infty$, and
\begin{equation}
\label{eq:polyhedral-volume-asymptotics}
\operatorname{Vol}_n
\{x\in P:\langle\xi,x\rangle\leq t\}
=
A(P,\xi)t^\gamma+o(t^\gamma)
\qquad\text{as }t\to\infty.
\end{equation}
\end{lem}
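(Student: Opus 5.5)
We would use the Minkowski--Weyl decomposition $P=Q+C$ with $Q$ compact, stated earlier, and compare the truncated set
\[
P_t:=\{x\in P:\langle\xi,x\rangle\leq t\}
\]
with a product. Write $x=w+z$ with $w\in W$, $z\in W^\perp$. Since $C\subset W$, we have $\operatorname{pr}_{W^\perp}(P)=\operatorname{pr}_{W^\perp}(Q)=K$, which is compact and convex. $K$ is also $(n-\gamma)$-dimensional, because $P$ is full-dimensional. Hence $0<\operatorname{Vol}_{n-\gamma}(K)<\infty$. Pointedness of $C$ together with $\langle\xi,c\rangle>0$ on $C\setminus\{0\}$ gives $\langle\xi,c\rangle\geq c_0|c|$ on $C$ by compactness of $C\cap S^{n-1}$. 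So $C_\xi$ is a compact, $\gamma$-dimensional convex set in $W$ (full-dimensional there since $W=\operatorname{span}C$), and $0<\operatorname{Vol}_\gamma(C_\xi)<\infty$. This gives $0<A<\infty$; when $\gamma=0$ the convention makes everything trivial.

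For the asymptotics we use Fubini over $z\in K$. Set $P^z=\{w\in W: w+z\in P\}$, a closed convex subset of $W$, and
\[
\operatorname{Vol}_n(P_t)=\int_K \operatorname{Vol}_\gamma\bigl(\{w\in P^z:\langle\xi,w+z\rangle\leq t\}\bigr)\,dz .
\]
The key two-sided inclusion is as follows. If $z$ is in the relative interior of $K$, pick $q\in Q$ with $\operatorname{pr}(q)=z$; then $P^z\supset q_W+C$, where $q_W$ is the $W$-component of $q$. Conversely, $P^z\subset Q_W+C$, where $Q_W=\operatorname{pr}_W(Q)$ is compact. Let $D=\max_{x\in Q}|x|$. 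For every $z\in K$ the fibre slice is therefore sandwiched between
\[
q_W+\{c\in C:\langle\xi,c\rangle\leq t-CD\}
\quad\text{and}\quad
Q_W+\{c\in C:\langle\xi,c\rangle\leq t+CD\}.
\]
Here $CD$ is a constant absorbing $|\langle\xi,q\rangle|$ and $|\langle \xi, z\rangle|$. The lower set has volume $\operatorname{Vol}_\gamma(C_\xi)(t-CD)^\gamma$, using that $C$ is a cone.

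For the upper set, note that $\{c\in C:\langle\xi,c\rangle\leq s\}=sC_\xi$. The set $Q_W+sC_\xi$ is contained in the $D$-neighborhood of $sC_\xi$, and by the Steiner formula (or simply $sC_\xi+B_D\subset(s+D')C_\xi+B_D$ scaling) its volume is $\operatorname{Vol}_\gamma(C_\xi)s^\gamma+O(s^{\gamma-1})$. Integrating over $K$ (the relative boundary of $K$ has measure zero) gives
\[
\operatorname{Vol}_n(P_t)=\operatorname{Vol}_{n-\gamma}(K)\operatorname{Vol}_\gamma(C_\xi)t^\gamma+O(t^{\gamma-1}),
\]
which is stronger than the claimed $o(t^\gamma)$.

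The main obstacle is the bookkeeping in the upper bound. The fibres $P^z$ are not translates of $C$ but only contained in a bounded neighborhood of $C$, so we need the neighborhood-volume estimate. We would do this with the Steiner polynomial of the convex body $sC_\xi$ in $W$, whose leading term is $s^\gamma\operatorname{Vol}_\gamma(C_\xi)$ and whose remaining terms are $O(s^{\gamma-1})$ with $D$ fixed. Alternatively, dominated convergence on the rescaled fibres $t^{-1}P^z\cap\{\langle\xi,\cdot\rangle\leq 1\}\to C_\xi$ in Hausdorff distance gives the $o(t^\gamma)$ directly.
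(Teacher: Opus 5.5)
Your proof is correct, but it takes a different route from the paper's. Both arguments start from $P=Q+C$ and the linear bound $\langle\xi,c\rangle\geq\alpha|c|$ on $C$.

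The paper then rescales anisotropically by $L_t(w,z)=tw+z$. It proves that $\mathbf 1_{L_t^{-1}(P_t)}\to\mathbf 1_{C_\xi}(w)\mathbf 1_K(z)$ almost everywhere, which takes three separate pieces of bookkeeping:
\begin{itemize}
\item a perturbation argument $w-t^{-1}q_W\in C$ for $w\in\operatorname{int}_W C$;
\item a closedness argument for $w\notin C$;
\item discarding the null set $\partial_W C\cup\{\langle\xi,w\rangle=1\}$.
\end{itemize}
It then applies dominated convergence inside the container $B_W(R_0)\times K$. This yields only $o(t^\gamma)$.

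You instead slice over $z\in K$ and use the explicit sandwich
\[
q_W+(t-B)C_\xi\ \subset\ \{w\in P^z:\langle\xi,w+z\rangle\leq t\}\ \subset\ Q_W+(t+B')C_\xi .
\]
This holds for every $z\in K$, not only relative-interior points, since $K=\operatorname{pr}_{W^\perp}(Q)$ always supplies some $q\in Q$ over $z$. The sandwich removes all pointwise-limit bookkeeping. Combined with the Steiner polynomial of $sC_\xi$ in $W$ (using $Q_W\subset B_D$), it gives the sharper error $O(t^{\gamma-1})$, uniformly in $z$. The paper's approach is softer and would adapt to non-polyhedral $P$ with a suitable recession structure. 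Yours is more elementary and quantitative.

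Two small fixes. First, the parenthetical inclusion $sC_\xi+B_D\subset(s+D')C_\xi+B_D$ is true but does not bound anything. The scaling argument you want is this: if $B_r(w_0)\subset C_\xi$ relative to $W$, then $B_D\subset \frac{D}{r}(C_\xi-w_0)$. By convexity of $C_\xi$,
\[
sC_\xi+B_D\subset \Bigl(s+\frac{D}{r}\Bigr)C_\xi-\frac{D}{r}w_0 ,
\]
so its volume is at most $(s+\frac{D}{r})^\gamma\operatorname{Vol}_\gamma(C_\xi)$, which already gives $O(s^{\gamma-1})$ without Steiner's formula. Second, rename your constant $CD$, since it clashes with the cone $C$.
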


\begin{proof}
By the Minkowski--Weyl theorem, there exists a compact
convex polytope $Q$ such that
\begin{equation}
\label{eq:polytope-minkowski-decomposition}
P=Q+C.
\end{equation}
If $\gamma=0$, then $C=\{0\}$ and $P=Q$ is compact.
In this case $K=P$ and $C_\xi=\{0\}$, so
\eqref{eq:polyhedral-volume-asymptotics} follows because
the sublevel set equals $P$ for all sufficiently large $t$.
We henceforth assume $\gamma\geq1$.

Since $C\subset W$, we have
\[
K=\operatorname{pr}_{W^\perp}(Q).
\]
Thus $K$ is compact. Moreover, projecting an open ball
contained in $P$ shows that $K$ has nonempty interior in
$W^\perp$. In particular, its intrinsic volume is positive.

The positivity assumption on $\xi$ and compactness of
$C\cap\{w\in W:|w|=1\}$ imply that
\[
\alpha
:=
\min_{\substack{c\in C\\ |c|=1}}
\langle\xi,c\rangle
>0.
\]
Consequently,
\begin{equation}
\label{eq:cone-linear-control}
\langle\xi,c\rangle\geq\alpha|c|
\qquad(c\in C).
\end{equation}
It follows that $C_\xi$ is compact. Since $C$ has
nonempty interior relative to $W$, the set $C_\xi$
has positive $\gamma$-dimensional volume. This proves
$0<A(P,\xi)<\infty$.

For $t>0$, write
\[
P_t=\{x\in P:\langle\xi,x\rangle\leq t\},
\]
and introduce the anisotropic dilation
\[
L_t:W\times W^\perp\longrightarrow\mathbb R^n,
\qquad
L_t(w,z)=tw+z.
\]
Its Jacobian is $t^\gamma$. Define
\[
\Omega_t=L_t^{-1}(P_t)
=
\left\{
(w,z):
tw+z\in P,\quad
\langle\xi,w\rangle+t^{-1}\langle\xi,z\rangle\leq1
\right\}.
\]
Then
\begin{equation}
\label{eq:rescaled-polyhedral-volume}
t^{-\gamma}\operatorname{Vol}_n(P_t)
=
\int_{W^\perp}\int_W
\mathbf{1}_{\Omega_t}(w,z)\,dw\,dz.
\end{equation}

We first show that
\begin{equation}
\label{eq:rescaled-indicator-convergence}
\mathbf{1}_{\Omega_t}(w,z)
\longrightarrow
\mathbf{1}_{C_\xi}(w)\mathbf{1}_K(z)
\qquad\text{for almost every }(w,z).
\end{equation}
If $z\notin K$, then $tw+z\notin P$ for every $t$,
so the assertion is immediate.

Fix $z\in K$. Choose $q\in Q$ with
$\operatorname{pr}_{W^\perp}(q)=z$, and write $q=q_W+z$.
Suppose that
\[
w\in\operatorname{int}_W C,
\qquad
\langle\xi,w\rangle<1.
\]
For all sufficiently large $t$,
\[
w-t^{-1}q_W\in C,
\]
and hence $tw-q_W\in C$. Therefore
\[
tw+z=q+(tw-q_W)\in Q+C=P.
\]
Also,
\[
\langle\xi,w\rangle+t^{-1}\langle\xi,z\rangle<1
\]
for all sufficiently large $t$. Thus
$(w,z)\in\Omega_t$ eventually.

If $w\notin C$ and $(w,z)\in\Omega_{t_j}$ along a sequence
$t_j\to\infty$, then
\[
t_jw+z=q_j+c_j,
\qquad q_j\in Q,\quad c_j\in C.
\]
Projecting onto $W$ gives
\[
w=t_j^{-1}(q_j)_W+t_j^{-1}c_j.
\]
Since $Q$ is compact, $t_j^{-1}c_j\to w$.
But $t_j^{-1}c_j\in C$ and $C$ is closed, which contradicts
$w\notin C$. Thus $(w,z)\notin\Omega_t$ eventually.

Finally, if $\langle\xi,w\rangle>1$, the height condition
defining $\Omega_t$ fails for all sufficiently large $t$.
The exceptional points are therefore contained in
\[
\left(
\partial_W C
\cup
\{w\in W:\langle\xi,w\rangle=1\}
\right)\times K.
\]
Both $\partial_W C$ and the displayed hyperplane have
zero $\gamma$-dimensional measure. This proves
\eqref{eq:rescaled-indicator-convergence}.

To justify dominated convergence, we construct a common
bounded container for $\Omega_t$. Let
\[
D=\max_{q\in Q}|q|,
\qquad
B=\max_{q\in Q}|\langle\xi,q\rangle|.
\]
If $(w,z)\in\Omega_t$, write
$tw+z=q+c$ with $q\in Q$ and $c\in C$. Then
\[
\langle\xi,c\rangle
=
\langle\xi,tw+z\rangle-\langle\xi,q\rangle
\leq t+B.
\]
By \eqref{eq:cone-linear-control},
\[
|c|\leq\frac{t+B}{\alpha}.
\]
Consequently,
\[
|w|
\leq
\frac{|q_W|+|c|}{t}
\leq
\frac Dt+\frac{t+B}{\alpha t}.
\]
For $t\geq1$, the right-hand side is bounded independently
of $t$. Since $z\in K$, there is $R_0>0$ such that
\[
\Omega_t\subset B_W(R_0)\times K
\qquad(t\geq1).
\]
The latter set has finite product measure. Applying
dominated convergence to
\eqref{eq:rescaled-polyhedral-volume}, we obtain
\[
\begin{aligned}
\lim_{t\to\infty}t^{-\gamma}\operatorname{Vol}_n(P_t)
&=
\int_{W^\perp}\int_W
\mathbf{1}_{C_\xi}(w)\mathbf{1}_K(z)\,dw\,dz\\
&=
\operatorname{Vol}_{n-\gamma}(K)
\operatorname{Vol}_{\gamma}(C_\xi),
\end{aligned}
\]
as required.
\end{proof}

The next theorem establishes a general volume growth estimate for toric gradient shrinking K\"ahler-Ricci solitons.

\begin{thm}
\label{thm: volumeGrowth}
Let $(X^{2n},g,\omega,f)$ be a connected complete
non-compact toric gradient shrinking Ricci soliton with
proper moment map
\[
\mu:X\longrightarrow P.
\]
Suppose
\[
\operatorname{Ric}(g)+\nabla^2f=\lambda g,
\qquad
f=a\langle\xi,\mu\rangle+b,
\qquad
\lambda>0,\quad a>0.
\]
Assume that $P$ and $\xi$ satisfy the hypotheses of
Lemma~\ref{lem:polyhedral-volume-asymptotics}, with
$\gamma=\dim\operatorname{rec}(P)\geq1$.

Normalize the action-angle coordinates by
\[
\omega=\sum_{j=1}^n dx_j\wedge d\theta_j,
\qquad
\theta_j\in\mathbb R/(2\pi\mathbb Z),
\]
and use lattice-normalized Lebesgue measure $dx$ on $P$,
together with compatible measures on $W$ and $W^\perp$.

Then, for every fixed $p\in X$,
\begin{equation}
\label{eq:toric-geodesic-volume-asymptotics}
\operatorname{Vol}_g B_g(p,r)
=
(2\pi)^n A(P,\xi)
\left(\frac{\lambda}{2a}\right)^\gamma
r^{2\gamma}
+o(r^{2\gamma})
\qquad\text{as }r\to\infty.
\end{equation}
\end{thm}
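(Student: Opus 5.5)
Since the Riemannian volume form is $\omega^n/n!$ and, in action-angle coordinates, this is $dx\,d\theta$, we have $\operatorname{Vol}_g(\mu^{-1}(E))=(2\pi)^n\operatorname{Vol}_n(E)$ for any Borel $E\subset P$. Lemma~\ref{lem:polyhedral-volume-asymptotics} then gives
\[
\operatorname{Vol}_g\{x\in X:\langle\xi,\mu(x)\rangle\le t\}=(2\pi)^nA(P,\xi)t^\gamma+o(t^\gamma).
\]
The proof therefore reduces to comparing geodesic balls with sublevel sets of $f$. Precisely, we will show that for every $\epsilon>0$ and $r$ large,
\[
\{f\le (1-\epsilon)\tfrac{\lambda}{2}r^2\}\subset B_g(p,r)\subset\{f\le(1+\epsilon)\tfrac{\lambda}{2}r^2\}.
\]
Since $f=a\langle\xi,\mu\rangle+b$, the condition $f\le s$ becomes $\langle\xi,\mu\rangle\le (s-b)/a$. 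Substituting $s=\frac{\lambda}{2}r^2(1\pm\epsilon)$, we get $t\approx \frac{\lambda}{2a}r^2$, and $t^\gamma$ becomes $(\lambda/2a)^\gamma r^{2\gamma}$. Letting $\epsilon\to0$ yields the stated asymptotics.

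For the comparison we use the standard growth estimates for a complete gradient shrinker $\operatorname{Ric}+\nabla^2 f=\lambda g$ (Cao--Zhou, Haslhofer--M\"uller). First normalize $f$ by an additive constant so that Hamilton's identity reads $R+|\nabla f|^2=2\lambda f$. This shift only changes $b$, which does not affect the leading order. Using $R\ge0$ (Chen), we have $|\nabla\sqrt{f}|\le\sqrt{\lambda/2}$. Integrating along a minimal geodesic gives $\sqrt{f(x)}\le\sqrt{f(p)}+\sqrt{\lambda/2}\,d(p,x)$, and this proves the first inclusion.

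The reverse bound $f(x)\ge\frac{\lambda}{2}(d(p,x)-C)^2$ is the Cao--Zhou lower estimate. It comes from the second variation argument along a minimal geodesic $\gamma$: one uses the bound $\int_\gamma\operatorname{Ric}(\dot\gamma,\dot\gamma)\le C$ together with $\nabla^2f=\lambda g-\operatorname{Ric}$ integrated along $\gamma$. This gives $\langle\nabla f,\dot\gamma\rangle\ge\lambda d-C$ at the endpoint, and combined with $|\nabla f|\le\sqrt{2\lambda f}$ it yields the lower bound on $f$. That proves the second inclusion.

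The main point needing care is that these estimates are stated for Riemannian shrinkers in the normalization $\operatorname{Ric}+\nabla^2f=\frac12 g$. We will rescale the metric by $2\lambda$ to reach that normalization, apply the estimates, and then scale back; this only affects the constants, and these constants produce exactly the factor $\lambda/2$. We also have to check that the additive normalization of $f$ and the constant $C$ from the geodesic argument disappear in the $o(r^{2\gamma})$ term. This holds because Lemma~\ref{lem:polyhedral-volume-asymptotics} gives $\operatorname{Vol}(P_{t(1\pm\epsilon)})=(1\pm\epsilon)^\gamma A t^\gamma+o(t^\gamma)$, so sandwiching and then sending $\epsilon\to0$ finishes the argument.

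Finally, properness of $\mu$ together with the fact that $\xi$ is positive on $C\setminus\{0\}$ guarantees that the sublevel sets are compact, so all volumes involved are finite.
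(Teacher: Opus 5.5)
Your proposal is correct and follows essentially the same route as the paper: you reduce to sublevel sets of $\langle\xi,\mu\rangle$ via action-angle coordinates and Lemma~\ref{lem:polyhedral-volume-asymptotics}, then sandwich $B_g(p,r)$ between sublevel sets of $f$ using the Cao--Zhou two-sided potential estimate after rescaling to the $\tfrac12$-normalization. The paper uses an additive comparison $|\sqrt{2F/\lambda}-d_g(p,\cdot)|\le L$ where you use a $(1\pm\epsilon)$ sandwich, which makes no difference.

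You have swapped the roles of the two bounds. The upper bound $\sqrt{f}\le\sqrt{f(p)}+\sqrt{\lambda/2}\,d_g(p,\cdot)$ gives $B_g(p,r)\subset\{f\le(1+\epsilon)\tfrac{\lambda}{2}r^2\}$. The Cao--Zhou lower bound gives $\{f\le(1-\epsilon)\tfrac{\lambda}{2}r^2\}\subset B_g(p,r)$. Both bounds are present, so this does not affect the conclusion.

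Your one-line sketch of the lower bound is looser than the actual argument. Second variation does not bound $\int_\gamma\operatorname{Ric}(\dot\gamma,\dot\gamma)$; it only gives a lower bound on $|\nabla f|$ over the last unit segment of $\gamma$. Since you cite the Cao--Zhou theorem itself, the argument is still complete.
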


\begin{proof}
On the dense torus orbit, the Riemannian volume form is
\[
dV_g=\frac{\omega^n}{n!}
=
dx_1\cdots dx_n\,d\theta_1\cdots d\theta_n.
\]
The complement of the dense orbit has zero Riemannian
volume. Integrating over the angular variables gives
\[
\operatorname{Vol}_g
\{q\in X:\langle\xi,\mu(q)\rangle\leq t\}
=
(2\pi)^n
\operatorname{Vol}_n
\{x\in P:\langle\xi,x\rangle\leq t\}.
\]
By Lemma~\ref{lem:polyhedral-volume-asymptotics},
\begin{equation}
\label{eq:moment-sublevel-volume}
\operatorname{Vol}_g
\{q\in X:\langle\xi,\mu(q)\rangle\leq t\}
\sim
(2\pi)^n A(P,\xi)t^\gamma.
\end{equation}

We next compare the soliton potential with distance.
Set $\bar g=2\lambda g$. Since constant rescaling leaves
the Levi-Civita connection and the Ricci tensor as a
$(0,2)$-tensor unchanged,
\[
\operatorname{Ric}(\bar g)+\nabla_{\bar g}^2f
=\frac12\bar g,
\qquad
d_{\bar g}=\sqrt{2\lambda}\,d_g.
\]
The potential-distance estimate of Cao--Zhou
\cite[Theorem~1.1]{CaoZhou2010}, with an additive
normalization of the potential, therefore implies
\[
f(q)=\frac{\lambda}{2}d_g(p,q)^2
+O\bigl(d_g(p,q)+1\bigr).
\]
Choose a constant $c$ such that $F=f+c>0$ everywhere.
The same asymptotic estimate holds for $F$.
Factoring the difference of squares shows that
\[
\left|
\sqrt{\frac{2F(q)}{\lambda}}-d_g(p,q)
\right|
\]
is bounded outside a fixed ball. It is also bounded on
that ball by continuity and compactness.
Thus there exists $L>0$ such that
\begin{equation}
\label{eq:potential-radius-comparison}
\left|
\sqrt{\frac{2F(q)}{\lambda}}-d_g(p,q)
\right|
\leq L
\qquad(q\in X).
\end{equation}

Write
\[
F=a\langle\xi,\mu\rangle+\widehat b,
\qquad
\widehat b=b+c,
\]
and define
\[
V_F(s)=\operatorname{Vol}_g\{q\in X:F(q)<s\}.
\]
It follows from \eqref{eq:moment-sublevel-volume} that
\begin{equation}
\label{eq:potential-sublevel-asymptotics}
V_F(s)
\sim
(2\pi)^n A(P,\xi)
\left(\frac{s-\widehat b}{a}\right)^\gamma
\sim
(2\pi)^n A(P,\xi)a^{-\gamma}s^\gamma.
\end{equation}

For $r>L$, \eqref{eq:potential-radius-comparison} yields
\[
\left\{
F<\frac{\lambda}{2}(r-L)^2
\right\}
\subset
B_g(p,r)
\subset
\left\{
F<\frac{\lambda}{2}(r+L)^2
\right\}.
\]
Consequently,
\[
V_F\!\left(\frac{\lambda}{2}(r-L)^2\right)
\leq
\operatorname{Vol}_g B_g(p,r)
\leq
V_F\!\left(\frac{\lambda}{2}(r+L)^2\right).
\]
Applying \eqref{eq:potential-sublevel-asymptotics},
dividing by $r^{2\gamma}$, and using
\[
\frac{(r\pm L)^{2\gamma}}{r^{2\gamma}}\longrightarrow1,
\]
we obtain \eqref{eq:toric-geodesic-volume-asymptotics}
by the squeeze theorem.
\end{proof}

Finally, we can give the proof of Theorem~\ref{thm main 2}

\begin{proof}[Proof of Theorem~\ref{thm main 2}]
By Theorem~\ref{main thm 1} we have a solution of the real Monge-Amp\`ere equation
\[
        \det (D^2u) = e^{\langle y, \nabla u (y) \rangle -u(y) -\langle \xi, y \rangle }, \quad \text{ and }\quad  \nabla u(P)= \R^n.
\]
which additionally satisfies the Guillemin boundary conditions.  By passing to the limit in Proposition~\ref{prop: linGrowthApprox} we obtain the estimate
\[
u^{\xi\xi}\leq C(\langle \xi, y \rangle +C)
\]
and hence by Proposition~\ref{prop: linGrowthImplesComplete} we deduce that $u$ defines a complete shrinking K\"ahler-Ricci soliton.    By Theorem~\ref{thm: volumeGrowth} we obtain the desired volume asymptotics.
\end{proof}

\section{Zero Barycenter and Stability}\label{sec sta rel}

Let
\[
P=\bigl\{
y\in\mathbb{R}^n:
\langle \nu_j,y\rangle\geq -1,\quad j=1,\ldots,d
\bigr\}
\]
be a full-dimensional rational polytope.  We assume that the $\nu_j\in\mathbb{Z}^n$ are primitive inward normals, and that the set of normals is minimal in the sense that no normal $\nu_j$ is contained in the convex hull of $\nu_i$ for $i\ne j$. In particular, we note that $0\in\operatorname{int}P$. Let $C=\operatorname{rec}(P)$, and suppose that $\xi\in\mathbb{R}^n$ satisfies
\[
\langle \xi,c\rangle>0
\qquad\text{for every }c\in C\setminus\{0\}.
\]
When $P$ is bounded, this condition is vacuous. Define
\begin{equation}\label{eq: vwDefintionforKstab}
v(y)=e^{-\langle\xi,y\rangle},
\qquad
w(y)=2\bigl(n-\langle\xi,y\rangle\bigr)v(y).
\end{equation}
On the facet
$F_j=\{y\in P:\langle\nu_j,y\rangle=-1\}$, set
\[
d\sigma=\frac{dS}{|\nu_j|},
\]
where $dS$ denotes Euclidean hypersurface measure.

For every piecewise-linear, convex function
\[
f(y)=\max_{1\leq a\leq m}
\bigl(\langle p_a,y\rangle+b_a\bigr)
\]
on $P$, define
\[
\mathcal{F}_{v,w}(f)
=
2\int_{\partial P}fv\,d\sigma
-
\int_P fw\,dy.
\]
For Delzant $P$, condition~{\rm (2)} in the Lemma \ref{lem:weighted-barycenter-stability} below is the
$(v,w)$-K-stability condition for the polytope in the sense of Lahdili \cite{Lahdili}, as explained by Cifarelli \cite{Cifarelli2024}.

\begin{defn}[Cifarelli \cite{Cifarelli2024}]
For Delzant $P$, we say that $P$ is $(v,w)$-K-stable if
\[
\mathcal{F}_{v,w}(f)\geq 0
\]
for every finite, convex, piecewise-linear function $f$.
\end{defn}

\begin{lem}
\label{lem:weighted-barycenter-stability}
The following conditions are equivalent:
\begin{enumerate}
\item The weighted barycenter vanishes:
\[
\int_P y\,v(y)\,dy=0.
\]
\item $P$ is $(v,w)$-K-stable for $v,w$ in~\eqref{eq: vwDefintionforKstab}.
\end{enumerate}
\end{lem}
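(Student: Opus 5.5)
The plan is to reduce $\mathcal{F}_{v,w}$ to a single interior integral through a divergence identity adapted to the radial vector field $y$. Both implications then follow from elementary properties of convex functions.

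\textbf{Step 1: the divergence identity.} Let $f$ be convex and piecewise linear, so that $f$ is Lipschitz and $\nabla f$ exists almost everywhere. Since $\nabla v=-\xi v$, we have
\[
\operatorname{div}\bigl(f\,v\,y\bigr)=\langle\nabla f,y\rangle v+f\langle\nabla v,y\rangle+nfv=\langle\nabla f,y\rangle v+\tfrac12 fw .
\]
On the facet $F_j$ the outward unit normal is $-\nu_j/|\nu_j|$. Since $\langle\nu_j,y\rangle=-1$ there, we get $\langle y,-\nu_j/|\nu_j|\rangle\,dS=dS/|\nu_j|=d\sigma$. This is exactly why the monotone normalization and the measure $d\sigma$ appear in the definition of $\mathcal F_{v,w}$. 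The divergence theorem therefore gives
\[
\mathcal{F}_{v,w}(f)=2\int_P\langle\nabla f(y),y\rangle\,v(y)\,dy .
\]

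When $P$ is unbounded, I would prove this identity on the truncations $P_{\xi,R}$ and let $R\to\infty$. The truncation adds one extra boundary face $\{\langle\xi,y\rangle=R\}$. On that face the integrand is bounded by $C(1+|y|)^2e^{-R}$, and the face has area $O(R^{n-1})$, because $\langle\xi,y\rangle\geq c|y|-C$ on $P$ as in Corollary~\ref{cor fin varphi}. So the extra boundary term tends to $0$. The interior integrals converge absolutely for the same reason, since $f$ has linear growth. The divergence theorem for a Lipschitz integrand on a polytope is applied piece by piece on the linearity regions of $f$, or simply by approximating $f$ by smooth convex functions. I expect this justification, and keeping track of signs and normalizations, to be the main technical point; the rest is short.

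\textbf{Step 2: (2) $\Rightarrow$ (1).} Take $f(y)=\langle p,y\rangle$ and also $f(y)=-\langle p,y\rangle$; both are finite convex piecewise-linear functions. Step 1 gives $\mathcal F_{v,w}(\pm\langle p,\cdot\rangle)=\pm2\langle p,\int_P y\,v\,dy\rangle$. Stability forces both signs to be nonnegative, so $\langle p,\int_P yv\,dy\rangle=0$ for every $p$. Hence the weighted barycenter vanishes.

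\textbf{Step 3: (1) $\Rightarrow$ (2).} Fix a subgradient $q\in\partial f(0)$, which exists since $0\in\operatorname{int}P$ and $f$ is finite. Because the barycenter vanishes, $\int_P\langle q,y\rangle v\,dy=0$, and therefore
\[
\mathcal{F}_{v,w}(f)=2\int_P\langle\nabla f(y)-q,\;y-0\rangle\,v(y)\,dy .
\]
Monotonicity of the subdifferential of a convex function gives $\langle\nabla f(y)-q,y\rangle\geq0$ at every point where $f$ is differentiable. So the integrand is pointwise nonnegative, and $\mathcal F_{v,w}(f)\geq0$. Nothing in this argument uses the Delzant hypothesis; that hypothesis only matters for identifying the condition with Lahdili's notion of $(v,w)$-K-stability.
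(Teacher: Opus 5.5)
Your proposal is correct and follows essentially the same route as the paper. Both arguments reduce $\mathcal F_{v,w}(f)$ to $2\int_P\langle y,Df\rangle v\,dy$ via the divergence identity for $f v y$, then test on $\pm$ linear functions for (2)$\Rightarrow$(1) and subtract a supporting affine function at $0$ for (1)$\Rightarrow$(2). The differences are minor: the paper truncates by $P\cap B_R$ rather than $P_{\xi,R}$, and it uses the sharper pointwise inequality $\langle y,Dg\rangle\geq g$ (with $g=f-\ell\geq 0$) instead of monotonicity of the subdifferential. That sharper inequality also gives the rigidity statement that $\mathcal F_{v,w}(f)=0$ forces $f$ to be affine, which the lemma as stated does not require.
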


\begin{proof}
We first justify the integrals and integration by parts.
By the Minkowski--Weyl theorem, we may write $P=Q+C$,
where $Q$ is compact. The assumption on $\xi$ therefore
implies that there exist constants $a>0$ and $A$ such that
\[
\langle\xi,y\rangle\geq a|y|-A
\qquad\text{for all }y\in P.
\]
Consequently,
\[
v(y)\leq e^A e^{-a|y|}.
\]
Every finite convex piecewise-linear function has at most
linear growth and bounded gradient almost everywhere.
Thus all the integrals below converge absolutely.

Since $f$ is locally Lipschitz, the following identity holds
almost everywhere, and hence distributionally:
\[
\operatorname{div}(yfv)
=
v\langle y,Df\rangle
+
\bigl(n-\langle\xi,y\rangle\bigr)fv.
\]
On $F_j$, the outward unit normal is
$n_j=-\nu_j/|\nu_j|$, so
\[
\langle y,n_j\rangle\,dS=d\sigma.
\]
Apply the divergence theorem on $P\cap B_R$. The flux over
the artificial boundary $P\cap\partial B_R$ tends to zero
as $R\to\infty$, since its absolute value is bounded by a
polynomial in $R$ times $e^{-aR}$.
Passing to the limit gives
\[
\int_{\partial P}fv\,d\sigma
=
\int_P
\left[
\langle y,Df\rangle
+
\bigl(n-\langle\xi,y\rangle\bigr)f
\right]v\,dy.
\]
Therefore
\begin{equation}
\label{eq:weighted-futaki-divergence}
\mathcal{F}_{v,w}(f)
=
2\int_P \langle y,Df\rangle v\,dy.
\end{equation}

Assume first that the weighted barycenter vanishes.
For every affine function
$\ell(y)=\langle p,y\rangle+b$, we obtain
\[
\mathcal{F}_{v,w}(\ell)
=
2\left\langle p,\int_P yv\,dy\right\rangle
=0.
\]
Choose a supporting affine function $\ell$ of $f$ at $0$,
and put $g=f-\ell$. Then
\[
g\geq 0,\qquad g(0)=0,
\qquad
\mathcal{F}_{v,w}(g)=\mathcal{F}_{v,w}(f).
\]
At every point where $g$ is differentiable, convexity gives
\[
0=g(0)
\geq g(y)+\langle Dg(y),-y\rangle,
\]
and hence
\[
\langle y,Dg(y)\rangle\geq g(y).
\]
Using \eqref{eq:weighted-futaki-divergence}, we conclude that
\[
\mathcal{F}_{v,w}(f)
=
2\int_P \langle y,Dg\rangle v\,dy
\geq
2\int_P gv\,dy
\geq 0.
\]
If $\mathcal{F}_{v,w}(f)=0$, then $\int_P gv\,dy=0$.
Since $g$ is continuous and nonnegative and $v$ is strictly
positive, it follows that $g\equiv 0$ on $P$.
Thus $f=\ell$ is affine. Conversely, every affine function
has zero functional.

Finally, suppose condition~{\rm (2)} holds.
Applying nonnegativity to the affine functions $y_j$
and $-y_j$ and using
\eqref{eq:weighted-futaki-divergence}, we obtain
\[
\int_P y_jv\,dy=0
\qquad (j=1,\ldots,n).
\]
This proves the weighted barycenter condition.
\end{proof}

Let $(\pi:X\to Y,\xi)$ be a toric polarized Fano fibration,
and let $M_X$ and $N_X$ denote the character and cocharacter
lattices of the full toric torus of $X$. Let
\[
P_X
=
\left\{
y\in M_{X,\mathbb R}:
\langle \nu_j,y\rangle\geq -1,\quad j=1,\ldots,d
\right\}
\]
be its full-dimensional anticanonically normalized
polytope, where the $\nu_j\in N_X$ are primitive inward
facet normals. Write
\[
\mathcal R(P_X)
=
\left\{
\zeta\in N_{X,\mathbb R}:
\langle\zeta,c\rangle>0
\text{ for every }
c\in\operatorname{rec}(P_X)\setminus\{0\}
\right\},
\]
and assume $\xi\in\mathcal R(P_X)$.

\begin{lem}
\label{lem:K-semistability-barycenter}
If $(\pi:X\to Y,\xi)$ is K-semistable in the sense of
Sun--Zhang \cite{SZ}, then
\[
\int_{P_X}y\,e^{-\langle\xi,y\rangle}\,dy=0,
\]
where $dy$ is lattice-normalized Lebesgue measure.
In particular, the same conclusion holds if the
polarized Fano fibration is K-polystable.
\end{lem}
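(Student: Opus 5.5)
The plan is to use the Sun--Zhang characterization of K-semistability through their weighted volume functional. As recalled in Section~\ref{sec back}, for a polarized Fano fibration with torus action the Reeb field enters through the weighted volume
\[
\operatorname{Vol}(\zeta)=\int_{P_X}e^{-\langle\zeta,y\rangle}\,dy,
\]
which is defined on $\mathcal R(P_X)$. In the toric case this integral, taken with lattice-normalized Lebesgue measure, is exactly the Sun--Zhang weighted volume of $(X,\zeta)$. Concretely, the Duistermaat--Heckman measure of the torus action, computed against $e^{-\langle\zeta,\cdot\rangle}$, equals $(2\pi)^{-n}$ times the symplectic volume weighted by $e^{-\theta_\zeta}$, where $\theta_\zeta$ is the normalized Hamiltonian. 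The anticanonical normalization $\langle\nu_j,y\rangle\geq-1$ fixes the additive constant in the moment map, and this is why $\operatorname{Vol}$ is the correct normalized functional rather than one of its translates.

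Step one is to recall the part of \cite{SZ} we need. Sun--Zhang show that K-semistability of $(\pi\colon X\to Y,\xi)$ implies that $\xi$ is a critical point, in fact the minimizer, of the weighted volume among Reeb fields in the Lie algebra of a maximal torus. Their argument is that product test configurations induced by one-parameter subgroups $\eta\in N_{X}$ have Futaki invariant equal to the derivative $\frac{d}{dt}\big|_{t=0}\operatorname{Vol}(\xi+t\eta)$, up to a positive factor. K-semistability forces this invariant to be $\geq0$ for both $\eta$ and $-\eta$, since both generate product configurations. Hence the invariant is zero for every rational $\eta$, and by linearity and continuity for every $\eta\in N_{X,\mathbb R}$.

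Step two is the computation. Since $\xi$ lies in the open cone $\mathcal R(P_X)$ and the integrand decays like $e^{-a|y|}$ (Minkowski--Weyl, as in the proof of Lemma~\ref{lem:weighted-barycenter-stability}), we may differentiate under the integral sign:
\[
0=\frac{d}{dt}\Big|_{t=0}\operatorname{Vol}(\xi+t\eta)=-\int_{P_X}\langle\eta,y\rangle e^{-\langle\xi,y\rangle}\,dy .
\]
This holds for all $\eta\in N_{X,\mathbb R}$, so the barycenter vanishes. The polystable case follows at once, because K-polystability implies K-semistability.

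The main obstacle is identifying the Sun--Zhang Futaki invariant of the product test configuration generated by $\eta$ with the directional derivative of our $\operatorname{Vol}$, including the normalization constant. This requires checking that their weighted volume, defined through the index character or the equivariant asymptotics of $\bigoplus_m H^0(X,-mK_X)$ graded by weights, agrees with the polytope integral with the anticanonical normalization. I would verify this by expanding the index character of the $\mathbb T$-representation on sections: the weights of $H^0(X,-mK_X)$ are the lattice points of $mP_X$, and the leading term of the Riemann-sum asymptotics is $\int_{P_X}e^{-\langle\xi,y\rangle}dy$. If the identification in \cite{SZ} uses a different normalization, the constant shift multiplies $\operatorname{Vol}$ by $e^{\langle\xi,c\rangle}$. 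That factor could add a multiple of $\operatorname{Vol}$ to the derivative, and the anticanonical normalization is exactly what makes this extra term vanish, so this normalization is the point to check carefully.
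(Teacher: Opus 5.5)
Your proposal is correct and follows essentially the same route as the paper: you use product test configurations generated by $\pm\eta$ with $\eta\in N_X$, identify their Sun--Zhang Futaki invariant (via \cite{SZ}, Definition~5.4 and Lemma~5.7) with $\frac{d}{ds}\big|_{s=0}\int_{P_X}e^{-\langle\xi+s\eta,y\rangle}dy$, and justify differentiating under the integral by exponential decay; as in the paper, the anticanonical normalization of $P_X$ (the canonical lift to $-K_X$) is what makes this the correct functional. The only cosmetic difference is that you do not need continuity at the end, because linearity together with the fact that $N_X$ spans $N_{X,\mathbb R}$ already forces the vector integral to vanish.
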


\begin{proof}
Fix $\eta\in N_X$, and let
\[
\lambda_\eta:\mathbb C^*\longrightarrow T_X^{\mathbb C}
\]
be the corresponding one-parameter subgroup. Since $\pi$
is toric, this action descends to $Y$, and it commutes
with $\xi$.

Consider the product fibration
\[
\Pi:X\times\mathbb A^1\longrightarrow Y\times\mathbb A^1,
\qquad
\Pi(x,\tau)=(\pi(x),\tau),
\]
equipped with the action
\[
t\cdot(x,\tau)
=
\bigl(\lambda_\eta(t)x,t\tau\bigr)
\]
and the induced action on $Y\times\mathbb A^1$.
This is a product special test configuration with central
fiber $(\pi:X\to Y,\xi)$.
Replacing $\eta$ by $-\eta$ gives another product special
test configuration; see
\cite[Definitions~5.2--5.3]{SZ}.

With the canonical lift of the torus action to $-K_X$,
the toric weighted-volume function is
\[
\mathbb W(\zeta)
=
\int_{P_X}e^{-\langle\zeta,y\rangle}\,dy,
\qquad
\zeta\in\mathcal R(P_X).
\]
The canonical anticanonical normalization of $P_X$ is
essential in this formula.

Since $\mathcal R(P_X)$ is open, $\xi+s\eta$ remains in
$\mathcal R(P_X)$ for all sufficiently small $|s|$.
The resulting uniform exponential decay justifies
differentiation under the integral sign.
By the definition of the Futaki invariant,
\cite[Definition~5.4 and Lemma~5.7]{SZ}, the product test
configuration above has invariant
\[
\begin{aligned}
\operatorname{Fut}_{\xi}(\eta)
&=
\left.\frac{d}{ds}\right|_{s=0}
\mathbb W(\xi+s\eta)\\
&=
-\int_{P_X}\langle\eta,y\rangle
e^{-\langle\xi,y\rangle}\,dy.
\end{aligned}
\]
In particular,
\[
\operatorname{Fut}_{\xi}(-\eta)
=
-\operatorname{Fut}_{\xi}(\eta).
\]

K-semistability applied to both product test configurations
therefore gives
\[
\operatorname{Fut}_{\xi}(\eta)\geq 0,
\qquad
-\operatorname{Fut}_{\xi}(\eta)\geq 0.
\]
Hence
\[
\left\langle
\eta,\int_{P_X}y\,e^{-\langle\xi,y\rangle}\,dy
\right\rangle
=0
\qquad\text{for every }\eta\in N_X.
\]
Because $N_X$ spans $N_{X,\mathbb R}$ and the pairing
between $N_{X,\mathbb R}$ and $M_{X,\mathbb R}$ is
nondegenerate, the vector integral must vanish.
\end{proof}


\begin{thebibliography}{99}
\bibitem{ADM} S. Alesker, S. Dar and V. Milman, {\em A remarkable measure preserving diffeomorphism between two convex bodies in ${\bf R}^n$}, Geom. Dedicata {\bf 74} (1999), no. 2, 201--212.

\bibitem{ApostolovCalderbank2004} V. Apostolov, D. M. J. Calderbank, P. Gauduchon and C. W. T{\o}nnesen-Friedman, {\em Hamiltonian 2-forms in K\"ahler geometry. II. Global classification}, J. Differential Geom. {\bf 68} (2004), no. 2, 277--345.

\bibitem{AAKM} S. Artstein-Avidan, B. Klartag and V. Milman, {\em The {S}antal\'o{} point of a function, and a functional form of the {S}antal\'o{} inequality}, Mathematika {\bf 51} (2004), no. 1--2, 33--48.

\bibitem{Bam20a} R. Bamler, {\em Entropy and heat kernel bounds on a Ricci flow background}, preprint, arXiv:2008.07093 (2020).

\bibitem{Bam20b} R. Bamler, {\em Compactness theory of the space of super Ricci flows}, preprint, arXiv:2008.09298 (2020).

\bibitem{Bam20c} R. Bamler, {\em Structure theory of non-collapsed limits of Ricci flows}, preprint, arXiv:2009.03243 (2020).

\bibitem{BCCD2024} R. H. Bamler, C. Cifarelli, R. J. Conlon and A. Deruelle, {\em A new complete two-dimensional shrinking gradient K\"ahler--Ricci soliton}, Geom. Funct. Anal. {\bf 34} (2024), no. 2, 377--392.

\bibitem{BermanBerndtsson13} R. J. Berman and B. Berndtsson, {\em Real Monge--Amp\`ere equations and K\"ahler--Ricci solitons on toric log Fano varieties}, Ann. Fac. Sci. Toulouse Math. {\bf 22} (2013), 649--711.

\bibitem{BermanWittNystrom2014} R. J. Berman and D. Witt Nystr\"om, {\em Complex optimal transport and the pluripotential theory of K\"ahler--Ricci solitons}, preprint, arXiv:1401.8264 (2014).



\bibitem{BJ} S. Boucksom and M. Jonsson, {\em On the Yau-Tian-Donaldson conjecture for weighted cscK metrics}, preprint, arXiv:2509.15016

\bibitem{Brenier1991} Y. Brenier, {\em Polar factorization and monotone rearrangement of vector-valued functions}, Comm. Pure Appl. Math. {\bf 44} (1991), no. 4, 375--417.

\bibitem{Caffarelli1990W2p} L. A. Caffarelli, {\em Interior $W^{2,p}$ estimates for solutions of the Monge--Amp\`ere equation}, Ann. of Math. (2) {\bf 131} (1990), no. 1, 135--150.

\bibitem{Caffarelli1991Regularity} L. A. Caffarelli, {\em Some regularity properties of solutions of Monge Amp\`ere equation}, Comm. Pure Appl. Math. {\bf 44} (1991), no. 8--9, 965--969.

\bibitem{Caffarelli1992Mappings} L. A. Caffarelli, {\em The regularity of mappings with a convex potential}, J. Amer. Math. Soc. {\bf 5} (1992), no. 1, 99--104.

\bibitem{Cao1996} H.-D. Cao, {\em Existence of gradient K\"ahler--Ricci solitons}, in {\em Elliptic and Parabolic Methods in Geometry}, A K Peters, 1996, pp. 1--16.

\bibitem{Cao1997} H.-D. Cao, {\em Limits of solutions to the {K}\"ahler-{R}icci flow}, J. Differential Geom. {\bf 45} (1997), no. 2, 257--272.

\bibitem{CaoZhou2010} H.-D. Cao and D. Zhou, {\em On complete gradient shrinking Ricci solitons}, J. Differential Geom. {\bf 85} (2010), no. 2, 175--186.

\bibitem{ChenDonaldsonSun2015I} X. Chen, S. Donaldson and S. Sun, {\em K\"ahler--Einstein metrics on Fano manifolds. I. Approximation of metrics with cone singularities}, J. Amer. Math. Soc. {\bf 28} (2015), no. 1, 183--197.

\bibitem{ChenDonaldsonSun2015II} X. Chen, S. Donaldson and S. Sun, {\em K\"ahler--Einstein metrics on Fano manifolds. II. Limits with cone angle less than $2\pi$}, J. Amer. Math. Soc. {\bf 28} (2015), no. 1, 199--234.

\bibitem{ChenDonaldsonSun2015III} X. Chen, S. Donaldson and S. Sun, {\em K\"ahler--Einstein metrics on Fano manifolds. III. Limits as cone angle approaches $2\pi$ and completion of the main proof}, J. Amer. Math. Soc. {\bf 28} (2015), no. 1, 235--278.



\bibitem{Cifarelli2024} C. Cifarelli, {\em Weighted K-stability for a class of non-compact toric fibrations}, J. Geom. Anal. {\bf 34} (2024), no. 5, Paper No. 120.

\bibitem{CCD2024Aubin} C. Cifarelli, R. J. Conlon and A. Deruelle, {\em An Aubin continuity path for shrinking gradient K\"ahler--Ricci solitons}, J. Reine Angew. Math. {\bf 815} (2024), 229--307.

\bibitem{CCDTypeI2024} C. Cifarelli, R. J. Conlon and A. Deruelle, {\em On finite time Type I singularities of the K\"ahler--Ricci flow on compact K\"ahler surfaces}, J. Eur. Math. Soc. {\bf 28} (2026), no. 2, 463--504.

\bibitem{CifarelliEsparza2025} C. Cifarelli and C. Esparza, {\em K-polystability of asymptotically conical K\"ahler--Ricci shrinkers}, preprint, arXiv:2512.03323 (2025).

\bibitem{CoSz} T. C. Collins and G. Sz\'ekelyhidi, {\em Sasaki-{E}instein metrics and {K}-stability}, Geom. Topol. {\bf 23} (2019), no. 3, 1339--1413.

\bibitem{CoSz1} T. C. Collins and G. Sz\'ekelyhidi,{\em K-semistability for irregular {S}asakian manifolds}, J. Differential Geom. {\bf 109} (2018), no. 1, 81--109.



\bibitem{CDS2024} R. J. Conlon, A. Deruelle and S. Sun, {\em Classification results for expanding and shrinking gradient K\"ahler--Ricci solitons}, Geom. Topol. {\bf 28} (2024), no. 1, 267--351.

\bibitem{CEF} D. Cordero-Erausquin and A. Figalli, {\em Regularity of monotone transport maps between unbounded domains}, Discrete Contin. Dyn. Syst. {\bf 39} (2019), no. 12, 7101--7112.

\bibitem{CEK} D. Cordero-Erausquin and B. Klartag, {\em Moment measures}, J. Funct. Anal. {\bf 268} (2015), no. 12, 3834--3866.

\bibitem{CLS} D. A. Cox, J. B. Little and H. K. Schenck, {\em Toric Varieties}, Graduate Studies in Mathematics, vol. 124, American Mathematical Society, Providence, RI, 2011.

\bibitem{DatarSzekelyhidi2016} V. Datar and G. Sz\'ekelyhidi, {\em K\"ahler--Einstein metrics along the smooth continuity method}, Geom. Funct. Anal. {\bf 26} (2016), 975--1010.

\bibitem{Delzant1988} T. Delzant, {\em Hamiltoniens p\'eriodiques et images convexes de l'application moment}, Bull. Soc. Math. France {\bf 116} (1988), no. 3, 315--339.


\bibitem{Donaldson2008Toric} S. K. Donaldson, {\em K\"ahler geometry on toric manifolds, and some other manifolds with large symmetry}, in {\em Handbook of Geometric Analysis}, No. 1, Adv. Lect. Math. (ALM), vol. 7, Int. Press, Somerville, MA, 2008, pp. 29--75.

\bibitem{DuistermaatPelayo2009} J. J. Duistermaat and A. Pelayo, {\em Reduced phase space and toric variety coordinatizations of Delzant spaces}, Math. Proc. Cambridge Philos. Soc. {\bf 146} (2009), no. 3, 695--718.

\bibitem{EMT2011} J. Enders, R. M\"uller and P. M. Topping, {\em On Type-I singularities in Ricci flow}, Comm. Anal. Geom. {\bf 19} (2011), no. 5, 905--922.

\bibitem{Esparza2025} C. Esparza, {\em Uniqueness of asymptotically conical shrinking gradient K\"ahler--Ricci solitons}, preprint, arXiv:2502.13521 (2025).

\bibitem{FIK2003} M. Feldman, T. Ilmanen and D. Knopf, {\em Rotationally symmetric shrinking and expanding gradient K\"ahler--Ricci solitons}, J. Differential Geom. {\bf 65} (2003), no. 2, 169--209.

\bibitem{Frad} M. Fradelizi, {\em Sections of convex bodies through their centroid}, Arch. Math. (Basel) {\bf 69} (1997), no. 6, 515--522.

\bibitem{Guillemin1994} V. Guillemin, {\em K\"ahler structures on toric varieties}, J. Differential Geom. {\bf 40} (1994), no. 2, 285--309.

\bibitem{HallgrenZhang2026} M. Hallgren and J. Zhang, {\em Singular K\"ahler--Ricci shrinkers and polarized Fano fibrations}, preprint, arXiv:2605.25213v2 (2026).

\bibitem{Hamilton1995} R. S. Hamilton, {\em The formation of singularities in the Ricci flow}, in {\em Surveys in Differential Geometry}, vol. II, International Press, 1995, pp. 7--136.



\bibitem{Huang2023} G. Huang, {\em The Guillemin boundary problem for Monge--Amp\`ere equation in the polygon}, Adv. Math. {\bf 415} (2023), Paper No. 108885, 29 pp.

\bibitem{HuangShen2026} G. Huang and W. Shen, {\em Monge--Amp\`ere equation with Guillemin boundary condition in high dimension}, Comm. Pure Appl. Math. {\bf 79} (2026), no. 7, 1495--1561.

\bibitem{Koiso1990} N. Koiso, {\em On rotationally symmetric Hamilton's equation for K\"ahler--Einstein metrics}, in {\em Recent Topics in Differential and Analytic Geometry}, Adv. Stud. Pure Math. 18-I, 1990, pp. 327--337.

\bibitem{Lahdili} A. Lahdili, {\em K\"ahler metrics with constant weighted scalar curvature and weighted {K}-stability}, Proc. Lond. Math. Soc. (3) {\bf 119} (2019), no. 4, 1065--1114.

\bibitem{Leg} E. Legendre, {\em Toric K\"ahler--Einstein metrics and convex compact polytopes}, J. Geom. Anal. {\bf 26} (2016), no. 1, 399--427.

\bibitem{LiWang2026} Y. Li and B. Wang, {\em On K\"ahler-Ricci shrinker surfaces}, Acta Math. {\bf 236} (2026), no. 1, 1--50.

\bibitem{McCann1995} R. J. McCann, {\em Existence and uniqueness of monotone measure-preserving maps}, Duke Math. J. {\bf 80} (1995), no. 2, 309--323.

\bibitem{Naber} A. Naber, {\em Noncompact shrinking four solitons with nonnegative curvature}, J. Reine Angew. Math. {\bf 645} (2010), 125--153.

\bibitem{Odaka} Y. Odaka, {\em On {S}un-{Z}hang's theory of {F}ano fibrations: weighted volumes, moduli and bubbling {F}ano fibrations}, Pure Appl. Math. Q. {\bf 22} (2026), no. 1, 295--337.

\bibitem{Perelman2002} G. Perelman, {\em The entropy formula for the Ricci flow and its geometric applications}, preprint, arXiv:0211159.

\bibitem{Rubin} D. Rubin, {\em The {M}onge-{A}mp\`ere equation with {G}uillemin boundary conditions}, Calc. Var. Partial Differential Equations {\bf 54} (2015), no. 1, 951--968.

\bibitem{SZ} S. Sun and J. Zhang, {\em K\"ahler--Ricci shrinkers and Fano fibrations}, preprint, arXiv:2410.09661 (2024).

\bibitem{Szekelyhidi} G. Sz\'ekelyhidi, {\em The partial {$C^0$}-estimate along the continuity method}, J. Amer. Math. Soc. {\bf 29} (2016), no. 2, 537--560.

\bibitem{TianZhu2000} G. Tian and X. Zhu, {\em Uniqueness of K\"ahler--Ricci solitons}, Acta Math. {\bf 184} (2000), no. 2, 271--305.

\bibitem{TianZhu02} G. Tian and X. Zhu, {\em A new holomorphic invariant and uniqueness of K\"ahler--Ricci solitons}, Comment. Math. Helv. {\bf 77} (2002), 297--325.

\bibitem{TosattiWangWeinkoveYang2015} V. Tosatti, Y. Wang, B. Weinkove propo and X. Yang, {\em $C^{2,\alpha}$ estimates for nonlinear elliptic equations in complex and almost complex geometry}, Calc. Var. Partial Differential Equations {\bf 54} (2015), no. 1, 431--453.

\bibitem{WangZhu2004} X.-J. Wang and X. Zhu, {\em K\"ahler--Ricci solitons on toric manifolds with positive first Chern class}, Adv. Math. {\bf 188} (2004), no. 1, 87--103.

\bibitem{Whitney} H. Whitney, {\em Differentiable even functions}, Duke Math. J. {\bf 10} (1943), 159--160.

\bibitem{Yau1978} S.-T. Yau, {\em On the Ricci curvature of a compact K\"ahler manifold and the complex Monge--Amp\`ere equation, I}, Comm. Pure Appl. Math. {\bf 31} (1978), no. 3, 339--411.



\end{thebibliography}
\end{document}